\documentclass[11pt]{amsart}
\usepackage{etex}
%
\usepackage{latexsym,exscale,enumerate,amsfonts,amssymb,mathtools}
\usepackage{amsmath,amsthm,amsfonts,amssymb,amscd,stmaryrd,textcomp}
\usepackage[normalem]{ulem}
\usepackage{young}
\usepackage{thmtools,xcolor}
\usepackage{easybmat}


\definecolor{colormy}{rgb}{0.8,0.05,0.05}
\definecolor{mycolor}{rgb}{0.25,0.99,0.25}

\addtolength{\hoffset}{-1.6cm}
\addtolength{\textwidth}{3cm}

\usepackage[all]{xy}
\SelectTips{cm}{}

\usepackage{tikz}
\usetikzlibrary{decorations.markings}
\usetikzlibrary{decorations.pathreplacing}
\usetikzlibrary{arrows,shapes,positioning}
\tikzstyle directed=[postaction={decorate,decoration={markings,
    mark=at position #1 with {\arrow{>}}}}]
\tikzstyle rdirected=[postaction={decorate,decoration={markings,
    mark=at position #1 with {\arrow{<}}}}]

\usepackage{hyperref}

\def\cal#1{\mathcal{#1}}%

\newcommand{\sll}[1]{\mathfrak{sl}_{#1}}
\newcommand{\gll}[1]{\mathfrak{gl}_{#1}}

\newcommand{\soo}[1]{\mathfrak{so}_{#1}}
\newcommand{\spo}[1]{\mathfrak{sp}_{#1}}
\newcommand{\fg}{\mathfrak{g}}

\newcommand{\Endrr}{\mathrm{End}_{\Uq}(T)}
\newcommand{\Mod}[1]{{#1}\text{-}\textbf{Mod}}

\newcommand{\AM}{\textbf{A}_{m-1}}
\newcommand{\BM}{\textbf{B}_{m}}
\newcommand{\CM}{\textbf{C}_{m}}
\newcommand{\DM}{\textbf{D}_{m}}
\newcommand{\Am}{\textbf{A}}
\newcommand{\Bm}{\textbf{B}}
\newcommand{\Bmm}{\textbf{B}_d}
\newcommand{\Cm}{\textbf{C}}
\newcommand{\Dmm}{\textbf{D}}

\newcommand{\Uu}{\textbf{U}}

\newcommand{\Uq}{\textbf{U}_q}
\newcommand{\Uo}{\textbf{U}_1}
\newcommand{\Uoo}{\tilde{\textbf{U}}_1}

\newcommand{\End}{\mathrm{End}}

\newcommand{\Char}{\mathrm{char}}
\newcommand{\ord}{\mathrm{ord}}

\newcommand{\Dl}{\Delta_q(\lambda)}
\newcommand{\Nl}{\nabla_q(\lambda)}
\newcommand{\Ll}{L_q(\lambda)}

\newcommand{\T}{\boldsymbol{\mathcal{T}}}

\def\C{{\mathbb C}}
\def\N{{\mathbb{Z}_{\geq 0}}}
\def\Z{{\mathbb Z}}
\def\Zg{{\mathbb{Z}_{>0}}}

\def\K{{\mathbb K}}
\def\R{{\mathbb R}}

\def\F{{\mathbb F}_p}

\theoremstyle{definition}
\newtheorem{thm}{Theorem}[section]
\newtheorem{cor}[thm]{Corollary}
\newtheorem{lem}[thm]{Lemma}
\newtheorem{prop}[thm]{Proposition}

\theoremstyle{definition}
\newtheorem*{thmm}{Theorem}

\numberwithin{equation}{section}

\declaretheorem[style=definition,name=Example,qed=$\blacktriangle$,numberlike=thm]{ex}
\declaretheorem[style=definition,name=Definition,qed=$\blacktriangle$,numberlike=thm]{defn}
\declaretheorem[style=definition,name=Remark,qed=$\blacktriangle$,numberlike=thm]{rem}
\declaretheorem[style=definition,name=Conventions,qed=$\blacktriangle$,numberlike=thm]{nota}


%
%
%


%
\setcounter{tocdepth}{1}
\begin{document}
\vbadness=10001
\title[Semisimplicity of Hecke and (walled) Brauer algebras]{Semisimplicity of Hecke and (walled) Brauer algebras}

\author{Henning Haahr Andersen}
\address{H.H.A.: Centre for Quantum Geometry of Moduli Spaces, Aarhus University, Ny Munkegade 118, building 1530, room 327, 8000 Aarhus C, Denmark}
\email{mathha@qgm.au.dk}

\author{Catharina Stroppel}
\address{C.S.: Mathematisches Institut, Universit\"at Bonn, Endenicher Allee 60, Room 4.007, 53115 Bonn, Germany}
\email{stroppel@math.uni-bonn.de}

\author{Daniel Tubbenhauer}
\address{D.T.: Mathematisches Institut, Universit\"at Bonn, Endenicher Allee 60, Room 1.003, 53115 Bonn, Germany}
\email{dtubben@math.uni-bonn.de}

\thanks{H.H.A. was supported by the center of excellence grant ``Centre for Quantum Geometry of Moduli Spaces (QGM)'' from the ``Danish National Research Foundation (DNRF)'', C.S. by a Hirzebruch professorship of the Max-Planck-Gesellschaft and D.T. by a research funding of the ``Deutsche Forschungsgemeinschaft (DFG)'' during this work.}

\vbadness=10001
\begin{abstract}
We show how to use Jantzen's sum formula for Weyl modules
to prove semisimplicity criteria for endomorphism algebras of 
$\Uq$-tilting modules 
(for any field $\K$ and any parameter $q\in\K-\{0,-1\}$). As an application, 
we recover the semisimplicity criteria for the Hecke 
algebras of types $\Am$ and $\Bm$, the walled Brauer algebras and the Brauer algebras 
from our more general approach.
%
%
\end{abstract}

\vspace*{-1cm}

\maketitle

\tableofcontents

\vspace*{-1cm}
%
%
\section{Introduction}\label{sec-intro}
Fix a reductive Lie algebra 
$\fg$, a field $\K$ and any 
$q\in\K^{\ast}$, where $\K^{\ast}=\K-\{0,-1\}$ if $\Char(\K)>2$ and 
$\K^{\ast}=\K-\{0\}$ otherwise. Let $\Uq=\Uq(\fg)$ be the 
$q$-deformed enveloping algebra of $\fg$ over $\K$
and let $T$ be a $\Uq$-tilting module.

In this paper we give a semisimplicity 
criterion for the algebra $\Endrr$ which only 
relies on the combinatorics of the root and weight data associated 
to $\fg$. 
The crucial observation 
we use here is that $\Endrr$ is 
semisimple if and only if all Weyl factors of $T$ are simple $\Uq$-modules -- 
a property which can be checked using 
(versions of) Jantzen's sum formula.

We apply our methods to four explicit examples: the Hecke 
algebras of types $\Am$ and $\Bm$, the walled Brauer algebras and 
the Brauer algebras. For all of these we obtain full
semisimplicity criteria 
by using the corresponding 
combinatorics of roots and weights. In all of these 
cases the semisimplicity criteria were obtained before, but using 
specific properties of the algebras in question, see 
Remarks~\ref{rem-semi1},~\ref{rem-semi2} and~\ref{rem-semi3}.
However, our approach has the advantage that it provides 
a quite general method to deduce semisimplicity criteria.  
The necessary calculations to prove these are always the same 
(\textit{mutatis mutandis}, depending on the associated root and weight data). 
Hence, our approach unites the known 
semisimplicity criteria of these algebras in our 
more general 
framework.

\subsection{The setup}\label{sub-intropart1}

The category $\Mod{\Uq}$ of finite-dimensional
representations of $\Uq$ (of type $1$)
provides an interesting example of a tensor 
category. The structure of $\Mod{\Uq}$ 
heavily depends on the field $\K$ 
and on $q\in\K^{\ast}$. 
If $\Char(\K)=0$ and $q=1$, then we 
are in the \textit{classical case} where $\Mod{\Uq}$ 
behaves like the category $\Mod{\fg}$ of complex, 
finite-dimensional representations of $\fg$, and hence, 
is in particular semisimple. But $\Mod{\Uq}$ is 
non-semisimple in case $\Char(\K)>0$ and $q=1$, or in case 
$\Char(\K)\geq 0$ and $q\in\K^{\ast},q\neq 1$ is a root of unity.

In this paper we like to consider an arbitrary 
field $\K$ and arbitrary $q\in\K^{\ast}$ 
and study particular pieces of the 
category $\Mod{\Uq}$ in more detail. 
To be more specific, we show how \textit{Jantzen's sum formula} 
can be used to deduce the semisimplicity of modules in $\Mod{\Uq}$.

As an application, we provide semisimplicity criteria 
for well-known algebras $\cal{A}$ arising in invariant theory, 
namely for \textit{Hecke algebras} $\cal{H}^{\Am}_d(q)$ and 
$\cal{H}^{\Bm}_d(q)$ \textit{of types} $\Am$ \textit{and} $\Bm$ 
(see Theorem~\ref{thm-heckesemi}), for the 
\textit{walled Brauer algebra} $\cal{B}_{r,s}(\delta)$ (see Theorem~\ref{thm-walledbrauer}) 
and for the \textit{Brauer algebra} $\cal{B}_{d}(\delta)$ (see Theorem~\ref{thm-brauer}). 
These examples are however just the tip of an iceberg: our 
approach should work to provide semisimplicity criteria for a 
big class of algebras (see also Remark~\ref{rem-generalization}). 
But in this paper we restrict to these example, and 
we obtain explicit necessary and 
sufficient conditions for the 
semisimplicity of these algebras $\cal{A}$ (over any field $\K$ and 
any $q\in\K^{\ast}$). For instance, 
when $\cal{A}=\cal{H}^{\Am}_d(q)$ is the Hecke 
algebra of the symmetric group $S_d$ in $d$ letters, we get:

\begin{thmm}
(\textbf{Semisimplicity criterion for the Hecke algebra of type $\Am$})
\newline
$\cal{H}^{\Am}_d(q)$ 
is semisimple if and only if one of the following conditions hold:
\begin{enumerate}
\item $\Char(\K)>d$ and $q=1$.
\item $\Char(\K)=0$ and $q=1$.
\item $q\in\K^{\ast},q\neq 1$ is a root of unity with $\ord(q^2)>d$.
\item $q\in\K^{\ast},q\neq 1$ is a non-root of unity.
\end{enumerate}
\end{thmm}

The Hecke algebra of type $\Am$ and its semisimplicity criterion stated 
above is a particular nice example of our general approach, since the corresponding 
combinatorics is very easy in this case.

To explain our methods in more detail, we 
consider the full, additive tensor subcategory 
$\T$ of $\Mod{\Uq}$ given by all $\Uq$\textit{-tilting modules} 
(a notion that we recall in Section~\ref{sec-prelim}). 
For any $\Uq$-tilting module $T\in\T$ we have,
as observed in~\cite[Theorems~4.11 and 5.13]{ast}, that
\begin{equation}\label{eq-semiintro}
\Endrr\text{ is semisimple if and only if }T\text{ is a semisimple }\Uq\text{-module.}
\end{equation}

Moreover, $T$ is 
a semisimple $\Uq$-module if and only if all Weyl 
modules $\Dl$ appearing in the Weyl filtration of 
$T$ are simple $\Uq$-modules, 
see Lemma~\ref{lem-cellsemisimple}. 
The important step here is now to 
use (a version of) 
Jantzen's sum formula for the Weyl modules $\Dl$, see Theorem~\ref{thm-jsum}, 
to translate the semisimplicity problem 
into a purely algorithmic problem in 
terms of roots, weights and the combinatorics of the (affine) Weyl group $W$:
\begin{equation}\label{eq-jantzenintro}
\Dl\text{ is simple if and only if Jantzen's sum formula of }\Dl\text{ vanishes.} 
\end{equation}

To state some explicit consequences, 
let us restrict ourselves to 
Lie algebras $\fg$ of type $\AM,\BM,\CM$ or $\DM$. We then have 
the quantum analogue $V\in\T$ of the 
\textit{vector representation}
of $\fg$ and its dual $V^*\in\T$ 
(which are 
isomorphic in types $\BM,\CM$ and $\DM$).

Let $n=\dim(V)$ and take the $\Uq$-module 
$T_{n}^{r,s}=V^{\otimes r}\otimes (V^*)^{\otimes s}$. 
Since $V\in\T$ and hence, $T_{n}^{r,s}$ 
is a tensor product of 
$\Uq$-tilting modules (except if $\Char(\K)=2$ in type $\BM$), 
it is itself a $\Uq$-tilting module, see Proposition~\ref{prop-prop}. 
Thus,~\eqref{eq-semiintro} and~\eqref{eq-jantzenintro} apply.

By (generalized versions of) \textit{Schur-Weyl duality}, 
see Section~\ref{sec-schur}, the above mentioned algebras $\cal{A}$ 
arise, for 
suitable choices of $\fg,n,r,s$, as endomorphism algebras 
of the form $\End_{\Uq}(T_{n}^{r,s})$. 
Hence, our method implies directly 
explicit semisimplicity criteria 
as long as $\cal{A}\cong\End_{\Uq}(T_{n}^{r,s})$.

It  
remains to deal with the cases where the 
algebras $\cal{A}$ 
do not appear as such endomorphism algebras. 
This could 
happen because of the following 
reasons:
\begin{itemize}
\item The natural map 
from $\cal{A}$ to $\End_{\Uq}(T_{n}^{r,s})$ is not 
injective 
(this happens in case $r+s$ is large compared to $n$)
or not surjective 
(this happens in case $\fg=\mathfrak{so}_{2m}$).
\item The algebra $\cal{A}$ does not 
appear as an algebra of the form $\End_{\Uq} (T_{n}^{r,s})$ at all 
(this happens for $\cal{B}_d(\delta)$ in case 
$\Char(\K)=0$ and $\delta\in\Z_{<0}$ is odd).
\end{itemize}
We note that, by quantum Schur-Weyl duality as in Theorems~\ref{thm-schur-weyl1} 
and~\ref{thm-schur-weylb},  
Hecke algebras of type $\Am$ or $\Bm$ can always 
be obtained as endomorphism algebras of 
some $\Uq$-tilting module.

To deal with these cases for the (walled) Brauer algebras, 
we first observe that passing to a 
field $\K$ with $\Char(\K)=p>2$ 
has several advantages (our approach 
is in fact easier in positive characteristic). 
First of all, the (walled) Brauer 
algebra for parameter $\delta$ equals 
the (walled) Brauer algebra for parameter 
$\delta\pm ap$ (for any $a\in\Z$) which 
allows us to pass from even values of $\delta$ to odd values of $\delta$.
Second, since 
under the corresponding Schur-Weyl duality $n$ depends on $\delta$, we can avoid 
that $r+s$ is large compared to $n$ by adding $p$ to $n$ often enough. 
Using both observations we can always achieve $\cal{A}\cong\End_{\Uq}(T_{n}^{r,s})$.
However, adding $p$ makes Jantzen's sum formula more involved. We therefore 
prefer to argue differently: in some `boundary cases' we can 
determine the kernel of the action of 
$\cal{A}$ on $\End_{\Uq}(T_{n}^{r,s})$ explicitly, see Section~\ref{sec-schuridem}, 
and deduce in this way the (non-)semisimplicity of $\cal{A}$ from the (non-)semisimplicity of 
$\End_{\Uq}(T_{n}^{r,s})$.

Finally it remains to 
treat the case $\Char(\K)=0$. 
We observe that 
the algebra $\cal{A}$ in question is 
semisimple if and only if it is semisimple over fields of 
large enough characteristic. 
One way to pass at least to the complex numbers 
is to use the theory of \textit{ultraproducts}, see for example~\cite[Chapter~2]{sch}, 
and realize the complex numbers as an \textit{ultralimit} of fields of positive characteristics.
Since the semisimplicity can 
be described by an integral polynomial 
expression (namely a determinant), the algebra $\cal{A}$ is semisimple over the complex numbers if and only if 
it is semisimple over fields of large enough characteristics.
Instead of the (way more powerful) theory 
of ultraproducts, we use the probably 
more common tool of \textit{trace forms} to pass from positive 
characteristic to characteristic zero, see~\ref{sec-app2}. 
Note that both arguments rely on the fact that our algebras 
$\cal{A}$ in question can be defined over $\Z$.

\begin{rem}\label{rem-generalization}
Our methods 
to deduce semisimplicity criteria 
work more generally and not just for the category $\Mod{\Uq}$. 
Our arguments in~\cite{ast} (which are the basis of 
the criterion from~\eqref{eq-semiintro}) do depend 
on the existence of weight spaces such that~\cite[Lemma~4.5]{ast} makes sense, 
and the semisimplicity 
criterion itself relies on the existence of Jantzen's sum formula, which 
also involves weight computations and is not available in general. 
But as long as these notions are available, our method works. 
As an explicit generalization: 
we could 
for instance work with category $\boldsymbol{\cal{O}}$, its tilting theory 
(see for 
example~\cite[Chapter~11]{hum}) and the corresponding 
Jantzen's sum formulas (see for 
example~\cite[Chapter~5, Section~3]{hum}). For brevity, we stay with $\Mod{\Uq}$
in this paper.
\end{rem}

\subsection{Outline of the paper}\label{sub-intropart2}

The paper is organized as follows.

\begin{itemize}

\item In Section~\ref{sec-prelim} we recall some 
facts about $\Uq$-tilting 
modules. Moreover, 
we recall the two main ingredients for our proofs of semisimplicity:
\begin{itemize}
\item The semisimplicity criterion of endomorphism algebras of 
$\Uq$-tilting modules.
\item Jantzen's sum formula which provides a method to check whether 
a given Weyl module $\Dl$ is a simple $\Uq$-module.
\end{itemize}

\item In Section~\ref{sec-schur} we list, for the convenience of the reader, 
some Schur-Weyl like dualities which we need
in a rather complete form. In Section~\ref{sec-schuridem} 
we additionally describe in some `boundary cases' the kernels
of the homomorphisms appearing in the Schur-Weyl like dualities. We 
need the explicit description in some of these cases for 
our proof, but the explicit descriptions 
are interesting in their own right.

\item In Sections~\ref{sec-hecke},~\ref{sec-walled} 
and~\ref{sec-brauer} we give the 
semisimplicity criteria for the Hecke algebras of types 
$\Am$ and $\Bm$, the walled Brauer algebras and the Brauer algebras.

\item In~\ref{sec-app2} we describe in detail some 
tools to compare semisimplicity in characteristic $p$ and in 
characteristic zero. Moreover, in~\ref{sec-app} 
we recall the root and weight data in types $\AM,\BM,\CM$ and $\DM$ 
that we use in this paper.

\end{itemize}

\begin{nota}\label{nota-field}
Throughout: we denote by $\K$ an arbitrary field, by 
$q$ any element 
in $\K^{\ast}$ and by $p\in\Zg$ a 
prime number (usually $p=\Char(\K)$).
We call the case of $\Char(\K)=0$ and $q=1$ 
the \textit{classical case}. We exclude 
the quasi-classical case $q=-1$ for technical reasons in 
case $\Char(\K)>2$ 
(the notion quasi-classical was coined in~\cite[Section~33.2]{lu}, 
where Lusztig also proves that, if $\Char(\K)=0$, 
then the $q=-1$ case is equivalent to the $q=1$ case).

Let $\ord(q^2)=\ell$ with $\ell\in\N$ 
be the order of $q^2$, that is, 
the smallest integer $\ell\in\N$ such that $q^{2\ell}=1$ (or $\ell=0$ 
if no such number exists). In case $q\neq 1$ and $\ell\neq 0$, we 
say that $q$ is a \textit{root of unity}. If $\ell=0$, then 
we call $q$ a \textit{non-root of unity}.

By an algebra $\cal{A}$ we 
always mean a unital, associative algebra over 
$\Z$ or $\K$.
All modules are finite-dimensional, left $\cal{A}$-modules throughout the paper. 
As usual in the case $\cal{A}=\Uq$, 
we consider only $\Uq$-modules of \textit{type} $1$ 
(see~\cite[Chapter~5, Section~2]{ja}).
\end{nota}
\paragraph*{\textbf{Acknowledgements}} We like to thank Michael Ehrig, Steffen K\"onig, 
Jonathan Kujawa, Gus Lehrer, 
Andrew Mathas and Antonio Sartori for helpful suggestions, comments and discussions, and 
the referee for further helpful comments.
We would also like to thank the Institut Mittag-Leffler: a major part of the 
research for this paper was done while the authors enjoyed the 
hospitality and the excellent working 
conditions of the Institut Mittag-Leffler. C.S. and D.T. want to thank 
the Max-Planck Institute in Bonn for the 
extraordinary working 
conditions and for sponsoring some research visits. D.T. likes to thank the 
Belgian Lambic for providing a refreshment during the summer 2015.
%
\section{\texorpdfstring{$\Uq$}{Uq}-tilting modules and semisimplicity}\label{sec-prelim}
We start by briefly recalling some notions from the theory 
of $\Uq(\fg)$-tilting modules. 
The reader unfamiliar with these is referred 
to~\cite{and},~\cite{ast},~\cite{astproofs},~\cite{jarag} 
or~\cite{saw} (and the references therein).

Here we denote by $\Uq(\fg)$ 
the \textit{quantized enveloping algebra} specialized at $q\in\K^{\ast}$ for a 
reductive Lie algebra $\fg$ with a fixed triangular decomposition 
$\fg=\fg^+\oplus\fg^0\oplus\fg^-$ attached to a choice of
\textit{positive roots} $\Phi^+\subset\Phi$ inside all roots $\Phi$.
Let 
$\Pi\subset\Phi^+$ be the set of \textit{simple roots}, $X$ the \textit{integral weight lattice}
and $X^+$ the set of \textit{dominant integral weights}.

For the main calculations 
in the paper it is enough to restrict ourselves to the classical Lie algebras
$\fg=\gll{m}$, $\fg=\soo{2m+1}$, 
$\fg=\spo{2m}$ or $\fg=\soo{2m}$ for some fixed $m\in\Zg$. We usually let $n$ 
denote the dimension of the corresponding (quantized) vector representation 
$V=\Delta_q(\omega_1)$ (that is $n=m$ for $\gll{m}$, 
$n=2m+1$ for $\fg=\soo{2m+1}$ and $n=2m$ for $\fg=\spo{2m}$ respectively $\fg=\soo{2m}$).
For convenience, we have listed in~\ref{sec-app} 
the for our purpose necessary explicit root and weight data in 
the Dynkin types $\AM,\BM,\CM$ and $\DM$ 
(together with some standard notations that we use throughout).
We study 
the category $\Mod{\Uq}$ of finite-dimensional
representations of $\Uq$ (of type $1$) in what follows.

\begin{rem}\label{rem-roots}
In the `generic' cases (e.g. $q=\pm 1,\Char(\K)=0$), 
$\Mod{\Uq}$ is semisimple and behaves combinatorially as $\Mod{\fg}$ 
for the corresponding Lie algebra $\fg$ over $\C$.  
(For non-roots of unity 
or $q=\pm 1,\Char(\K)=0$ see~\cite[Theorem~9.4]{apw}
and~\cite[Section~33.2]{lu} for $q=-1$.)
\end{rem}

The algebra $\Uq$ has a 
triangular decomposition $\Uq=\Uq^+\Uq^0\Uq^-$.
This gives for each $\lambda\in X^+$ 
a \textit{Weyl} $\Uq$\textit{-module} $\Dl$ and a \textit{dual Weyl} 
$\Uq$\textit{-module} $\Nl$.
The $\Uq$-module $\Dl$ has a unique simple head $\Ll$ 
which is the unique simple socle of $\Nl$.
Let $\mathrm{ch}(M)$ denote the \textit{(formal) 
character of} $M\in\Mod{\Uq}$, that is,
\[
\mathrm{ch}(M)=\sum_{\lambda\in X}(\dim(M_{\lambda}))e^{\lambda}\in\Z[X],
\]
where $M_{\lambda}=\{m\in M\mid um=\lambda(u)m, u\in\Uq^0\}$ is the 
$\lambda$-weight space of $M$ (here we regard $\lambda$ as a 
character of $\Uq^0$), 
and $\Z[X]$ is the group algebra of the additive group $X$.

The following is crucial in the non-semisimple cases:

\begin{prop}\label{prop-classchar}
The characters $\mathrm{ch}(\Dl)$ and $\mathrm{ch}(\Nl)$ are independent of 
$\Char(\K)$ and of $q\in\K^{\ast}$. In particular, 
they are given as in the classical case.
\end{prop}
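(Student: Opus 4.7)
The plan is to realize both $\Dl$ and $\Nl$ as specializations of a single integral form constructed over $\mathcal{A}=\Z[v,v^{-1}]$, and to read off the character by tracking ranks of weight spaces through this specialization. Concretely, I would take $U_{\mathcal{A}}$ to be Lusztig's divided-power integral form of the generic quantum group $U_v(\fg)$ over $\mathcal{A}$. For any field $\K$ and any $q\in\K^{\ast}$, the ring map $\mathcal{A}\to\K$ sending $v\mapsto q$ makes $\K$ into an $\mathcal{A}$-algebra and identifies $\Uq\cong U_{\mathcal{A}}\otimes_{\mathcal{A}}\K$. Over $\Q(v)$ the quantum group $U_{\Q(v)}:=U_{\mathcal{A}}\otimes_{\mathcal{A}}\Q(v)$ is generic in the sense of Remark~\ref{rem-roots}, so both $\Delta_{\Q(v)}(\lambda)$ and $\nabla_{\Q(v)}(\lambda)$ are irreducible and have character equal to the classical Weyl character $\chi_\lambda$.

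Next I would construct admissible $\mathcal{A}$-lattices and specialize. Choose a highest weight vector $v_\lambda\in\Delta_{\Q(v)}(\lambda)$ and set $\Delta_{\mathcal{A}}(\lambda):=U_{\mathcal{A}}\cdot v_\lambda$. Using the PBW-type $\mathcal{A}$-basis of $U_{\mathcal{A}}^-$ in divided powers of negative root vectors, one checks that $\Delta_{\mathcal{A}}(\lambda)$ is a free $\mathcal{A}$-module whose $\mu$-weight space has rank equal to $\dim_{\Q(v)}(\Delta_{\Q(v)}(\lambda))_\mu$. The corresponding lattice $\nabla_{\mathcal{A}}(\lambda)$ I would obtain as the graded $\mathcal{A}$-dual of $\Delta_{\mathcal{A}}(-w_0\lambda)$, with the $U_{\mathcal{A}}$-action twisted by the standard antiautomorphism; it is again free of the same weight-space ranks. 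The canonical isomorphisms $\Dl\cong\Delta_{\mathcal{A}}(\lambda)\otimes_{\mathcal{A}}\K$ and $\Nl\cong\nabla_{\mathcal{A}}(\lambda)\otimes_{\mathcal{A}}\K$ respect weight spaces, and tensoring a finite free $\mathcal{A}$-module of rank $r$ with $\K$ produces a $\K$-vector space of dimension $r$; hence $\dim_\K(\Dl)_\mu=\dim_{\Q(v)}(\Delta_{\Q(v)}(\lambda))_\mu$ for every $\mu$, and likewise for $\Nl$, so both characters are independent of the pair $(\K,q)$ and equal to the classical one.

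The one genuinely nontrivial input is the assertion that $U_{\mathcal{A}}^-$ admits a divided-power PBW basis and that its action on $v_\lambda$ produces an $\mathcal{A}$-basis of $\Delta_{\mathcal{A}}(\lambda)$ with precisely the classical weight multiplicities. This is standard — Kostant's $\Z$-form in the $q=1$ case and Lusztig's construction in the quantum case — so I would invoke it rather than reprove it; once the lattices are in hand, the proposition reduces to the innocuous fact that specialization preserves ranks of free modules.
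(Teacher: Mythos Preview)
Your argument is correct and is the standard integral-form approach: build $\Delta_{\mathcal{A}}(\lambda)$ and $\nabla_{\mathcal{A}}(\lambda)$ as free $\mathcal{A}$-lattices via Lusztig's divided-power form, then specialize and use that free rank is preserved under base change. The one place to be slightly careful is in asserting the ``canonical isomorphisms'' $\Dl\cong\Delta_{\mathcal{A}}(\lambda)\otimes_{\mathcal{A}}\K$; depending on how the paper's Weyl modules are actually defined, this identification is either tautological or requires a short universal-property check, but either way it is routine.

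The paper takes a different route: it invokes the $q$-analogue of Kempf's vanishing theorem, $H^i(\lambda)=0$ for $i>0$ and $\lambda\in X^+$. The point is that the Euler characteristic of the line bundle on the (quantum) flag variety is given by the Weyl character formula independently of $\K$ and $q$ (by flat base change), and once higher cohomology vanishes the Euler characteristic collapses to $\mathrm{ch}(\Nl)=\mathrm{ch}(H^0(\lambda))$; the statement for $\Dl$ then follows by duality. Your lattice argument is more self-contained on the algebraic side and avoids any cohomological machinery, at the cost of taking the PBW/Kostant--Lusztig basis theorem as a black box. The paper's approach is shorter to state but imports a deeper geometric input. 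Both are legitimate one-line proofs once the cited result is granted.
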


\begin{proof}
The statement follows directly 
from the definitions and the $q$-version of Kempf's vanishing theorem 
which can be found in~\cite[Theorem~5.5]{rh2}.
\end{proof}

We say $M\in\Mod{\Uq}$ has a $\Delta_q$\textit{-filtration} 
if there exists $i\in\N$ and 
a descending sequence
\[
M=M_{0}\supset M_1\supset\cdots\supset M_{i^{\prime}}\supset\dots\supset M_{i-1}\supset M_i=0,
\]
such that for all $i^{\prime}=0,\dots,i-1$ 
we have $M_{i^{\prime}}\in\Mod{\Uq}$, 
$M_{i^{\prime}}/M_{i^{\prime}+1}\cong \Delta_q(\lambda_{i^{\prime}})$ 
with $\lambda_{i^{\prime}}\in X^+$.

A $\nabla_q$\textit{-filtration} is defined similarly, 
but using $\nabla_q(\lambda)$ instead of $\Delta_q(\lambda)$ and 
an ascending sequence of $\Uq$-submodules, that is,
\[
0=M_{0}\subset M_1\subset\cdots\subset M_{i^{\prime}}\subset\dots\subset M_{i-1}\subset M_i=M,
\]
such that for all $i^{\prime}=0,\dots,i-1$ we have 
$M_{i^{\prime}+1}/M_{i^{\prime}}\cong \nabla_q(\lambda_{i^{\prime}})$ 
with $\lambda_{i^{\prime}}\in X^+$.

A $\Uq$\textit{-tilting module} is a $\Uq$-module $T\in\Mod{\Uq}$ 
which has both, a $\Delta_q$- 
and a $\nabla_q$-filtration. 
These filtrations are unique up to reordering of factors, 
(this can be verified using standard arguments, 
see for example~\cite[Proposition~A2.2]{don1} or~\cite[4.16~Remark~(4)]{jarag}) 
and we henceforth 
call the appearing factors \textit{Weyl} or \textit{dual Weyl factors} of $T$ respectively.

The \textit{category} $\T$ \textit{of} $\Uq$\textit{-tilting modules} is the full 
subcategory $\T\subset\Mod{\Uq}$ with objects consisting of all $\Uq$-tilting modules.
The category $\T$ is an additive Krull-Schmidt category, closed under direct sums, duality 
and finite tensor products. The latter is in general non-trivial to prove. 
Apart from type $\BM$, the following has an elementary proof.

\begin{prop}\label{prop-prop}
Let $\Uq=\Uq(\fg)$ with $\fg$ of 
type $\AM$, $\BM$, $\CM$ or $\DM$. Then the vector 
representation $V$ of $\Uq$ is a 
$\Uq$-tilting module\footnote{Here we need that $\Char(\K)\neq 2$ in type $\BM$ and we assume this throughout if we work in this type.}. Moreover, 
$T^d_n=V^{\otimes d}\in\T$ 
is a 
$\Uq$-tilting module for all $d\in\Z_{\geq 0}$ as well. The dimension 
$\dim(\End_{\Uq}(T_n^d))$ only depends on $\fg$ and $d$.
\end{prop}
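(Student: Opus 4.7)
The plan is to split the statement into its three assertions and handle them in sequence, using only Proposition~\ref{prop-classchar} and standard tilting-theoretic arguments.

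First I would establish $V\in\T$. In types $\AM$, $\CM$ and $\DM$ the fundamental weight $\omega_1$ is minuscule, i.e.\ its Weyl group orbit already accounts for all weights of $V$. Hence there is no room for a proper submodule and $\Delta_q(\omega_1)=L_q(\omega_1)=\nabla_q(\omega_1)=V$, giving both a Weyl and a dual Weyl filtration trivially. In type $\BM$ with $\Char(\K)\neq 2$ one argues separately: the $(2m+1)$-dimensional module $V=\Delta_q(\omega_1)$ is still simple and self-dual under these hypotheses, so again $V\in\T$.

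Next, $T_n^d=V^{\otimes d}\in\T$ follows from the closure of $\T$ under finite tensor products recalled just before the proposition. In types $\AM$, $\CM$, $\DM$ the same conclusion is available elementarily, by exploiting that arbitrary tensor powers of a minuscule tilting module remain tilting (the weights of $V^{\otimes d}$ are controlled combinatorially by words in the minuscule orbit). In type $\BM$ the presence of a zero weight in $V$ forces one to invoke the general tensor product theorem, which is precisely why the characteristic~$2$ hypothesis is imposed in this case.

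Finally, for the dimension statement I would use the identity
\[
\dim_{\K}\End_{\Uq}(T)=\sum_{\lambda\in X^+}(T:\Delta_q(\lambda))\cdot(T:\nabla_q(\lambda)),
\]
valid for any $T\in\T$. This is obtained by combining the orthogonality $\dim\Hom_{\Uq}(\Delta_q(\lambda),\nabla_q(\mu))=\delta_{\lambda\mu}$ with the vanishing $\Ext^1_{\Uq}(\Delta_q(\lambda),\nabla_q(\mu))=0$, and reading off the left-hand side layer by layer from the two filtrations of $T$. The filtration multiplicities on the right are determined by $\mathrm{ch}(T)$ together with the characters of the $\Delta_q(\lambda)$ and $\nabla_q(\lambda)$. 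By Proposition~\ref{prop-classchar} these latter characters are classical, and $\mathrm{ch}(V^{\otimes d})$ manifestly depends only on $\fg$ and $d$. Hence the same is true of $\dim\End_{\Uq}(T_n^d)$.

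The main obstacle is the type $\BM$ case of the first two assertions, where minusculeness fails and one genuinely needs both the simplicity of $V$ in characteristic $\neq 2$ and the general closure of $\T$ under tensor products. The remaining steps are formal consequences of character combinatorics and standard highest weight category homological algebra.
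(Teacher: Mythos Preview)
Your proposal is correct and follows essentially the same approach as the paper's proof. The paper cites \cite[Propositions~3.10]{astproofs} for the elementary argument in types $\AM,\CM,\DM$ (which is precisely your minuscule argument), cites \cite[Page~20]{ja1} for the type~$\BM$ case, invokes \cite[Theorem~3.3]{par} for tensor closure, and uses the identity $\dim\End_{\Uq}(T_n^d)=\sum_{\lambda}(T:\Delta_q(\lambda))^2$ together with Proposition~\ref{prop-classchar} for the dimension statement; your formula $\sum_{\lambda}(T:\Delta_q(\lambda))(T:\nabla_q(\lambda))$ is of course equivalent since the two filtration multiplicities agree for any tilting module.
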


\begin{proof}
For the types $\AM$, $\CM$ and $\DM$, 
see~\cite[Propositions~3.10]{astproofs} for an elementary proof.
In type $\BM$ it was observed in~\cite[Page~20]{ja1} that $V$ 
is a $\Uq$-tilting module as long as $\Char(\K)\neq 2$. By~\cite[Theorem~3.3]{par}, 
it follows that $V^{\otimes d}\in\T$. 
To see that $\dim(\End_{\Uq}(T_n^d))$ only depends on $\fg$ and $d$
first note that $\dim(\End_{\Uq}(T_n^d))=\sum_{\lambda\in X^+}(T:\Dl)^2$ 
(which can be derived from the Ext-vanishing, see for example~\cite[Theorem~3.1]{ast}).
Now use the fact that $\mathrm{ch}(\Dl)$ is as in the classical case 
which implies the statement.
\end{proof}

\begin{lem}\label{lem-cellsemisimple} 
A $\Uq$-tilting module $T\in\T$ is a semisimple $\Uq$-module 
if and only if all Weyl factors $\Dl$ of $T$ are simple $\Uq$-modules 
if and only if all dual Weyl factors $\Nl$ of $T$ are simple $\Uq$-modules.
\end{lem}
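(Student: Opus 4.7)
The plan is to prove the first equivalence (with Weyl factors) in detail; the equivalence with dual Weyl factors will then follow by the verbatim symmetric argument, using the $\nabla_q$-filtration and the simple socle of $\Nl$ in place of the $\Delta_q$-filtration and the simple head of $\Dl$, or equivalently by applying the duality on $\T$ that swaps $\Dl\leftrightarrow\Nl$ and preserves both tilting modules and semisimplicity.

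For the direction ``$T$ semisimple implies every Weyl factor simple'', I would fix a $\Delta_q$-filtration of $T$ and observe that each factor $\Dl$ is a quotient of a submodule of $T$, hence a subquotient; subquotients of semisimple modules are semisimple, so $\Dl$ is semisimple. Since $\Dl$ has the unique simple head $\Ll$, the only way $\Dl$ can be semisimple is $\Dl=\Ll$, so $\Dl$ is simple.

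For the converse, assume that every Weyl factor $\Dl$ of $T$ is simple, so $\Dl=\Ll$. First I would use the character identity $\mathrm{ch}(\Dl)=\mathrm{ch}(\Nl)$ from Proposition~\ref{prop-classchar}, which gives $\dim\Nl=\dim\Dl=\dim\Ll$; combined with $\Ll$ embedding as the simple socle of $\Nl$, this forces $\Nl=\Ll$ too. Consequently the composition factors of $T$, read off from either filtration, are precisely the simple modules $\Ll$ indexed by the weights appearing in the Weyl filtration, and we have $\Dl=\Nl=\Ll$ for all these $\lambda$. Next I would invoke the standard Ext-vanishing $\mathrm{Ext}^{1}_{\Uq}(\Dl,\nabla_q(\mu))=0$ for all $\lambda,\mu\in X^{+}$ (the homological backbone of the tilting formalism; cf.\ the reference cited in the proof of Proposition~\ref{prop-prop}); in our situation this specialises, at the $\lambda,\mu$ actually occurring in $T$, to $\mathrm{Ext}^{1}_{\Uq}(\Ll,L_q(\mu))=0$ between all pairs of composition factors of $T$. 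A standard induction on composition length (via the long exact Ext sequence) then propagates this pairwise vanishing to $\mathrm{Ext}^{1}_{\Uq}(T/\mathrm{soc}(T),\mathrm{soc}(T))=0$, so the short exact sequence $0\to\mathrm{soc}(T)\to T\to T/\mathrm{soc}(T)\to 0$ splits. Maximality of the socle in any direct sum decomposition then forces $T/\mathrm{soc}(T)=0$, i.e.\ $T=\mathrm{soc}(T)$ is semisimple.

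The only mildly delicate step I foresee is upgrading $\Dl=\Ll$ to $\Nl=\Ll$, which is handled by the character/dimension comparison together with the socle structure of $\Nl$. The Ext-vanishing between Weyl and dual Weyl modules is completely standard in this set-up, so no serious obstacle is anticipated beyond a careful pointer to a reference.
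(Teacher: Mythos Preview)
Your proposal is correct and follows essentially the same route as the paper: the forward direction via subquotients, the identification $\Dl\cong\Nl$ for the relevant $\lambda$ via characters/socle, and then the Ext-vanishing $\mathrm{Ext}^1_{\Uq}(\Dl,\nabla_q(\mu))=0$ to force the filtration to split. The only cosmetic difference is that the paper runs the splitting by induction on the length of the $\Delta_q$-filtration rather than via the socle sequence, but this is the same argument in slightly different packaging.
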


\begin{proof}
The second equivalence is evident. If $T$ is semisimple, then clearly 
all Weyl factors $\Dl$ of $T$ are simple $\Uq$-modules. 
If all Weyl factors are simple, hence $\Dl\cong\Nl$, 
then the statement follows
by using Ext-vanishing (see 
for example~\cite[Theorem~3.1]{ast}) and 
induction on the length of a $\Delta_q$-filtration 
of $T$.
\end{proof}

\begin{thm}(\textbf{Semisimplicity criterion for $\Endrr$})\label{thm-cellsemisimple}
Let $T\in\T$ be a $\Uq$-tilting module. 
Then the algebra $\Endrr$ is semisimple if and only if $T$ is a semisimple $\Uq$-module.
\end{thm}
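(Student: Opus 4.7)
I would handle the two directions of the equivalence separately, starting with the easy one.

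For $(\Leftarrow)$: if $T$ is semisimple, decompose $T\cong\bigoplus_i L_i^{\oplus n_i}$ with the $L_i$ pairwise non-isomorphic simples. Schur's lemma gives $\End_{\Uq}(T)\cong\prod_i M_{n_i}(\End_{\Uq}(L_i))$, and each factor is a matrix algebra over a division $\K$-algebra, so Artin--Wedderburn delivers semisimplicity.

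For $(\Rightarrow)$: I would argue by contrapositive, producing a non-zero nilpotent element in $\End_{\Uq}(T)$ whenever $T$ is not semisimple. First, Lemma~\ref{lem-cellsemisimple} yields some Weyl factor $\Delta_q(\lambda)$ of $T$ that is not simple. Next, apply Krull--Schmidt to write $T=\bigoplus_\mu T(\mu)^{\oplus n_\mu}$ as a sum of indecomposable tilting modules; since the multiset of Weyl factors of $T$ is the union of the Weyl factors of the summands $T(\mu)$, some $T(\mu)$ occurring with $n_\mu\geq 1$ contains a non-simple Weyl factor, so $T(\mu)\neq L(\mu)$.

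On this non-simple indecomposable tilting summand $T(\mu)$ I would exploit the structural input that $L(\mu)$ occurs both in $\operatorname{hd} T(\mu)$ and in $\operatorname{soc} T(\mu)$: the former via the top Weyl quotient $T(\mu)\twoheadrightarrow\Delta_q(\mu)\twoheadrightarrow L(\mu)$ and the latter via the bottom dual Weyl submodule $L(\mu)\hookrightarrow\nabla_q(\mu)\hookrightarrow T(\mu)$. Composing a surjection $\pi\colon T(\mu)\twoheadrightarrow L(\mu)$ onto the head copy with the socle inclusion $\iota\colon L(\mu)\hookrightarrow T(\mu)$ produces $f=\iota\pi\in\End_{\Uq}(T(\mu))$, which is non-zero (both $\pi$ and $\iota$ are) yet satisfies $f^2=0$: the image of $\iota$ lies in $\operatorname{soc} T(\mu)\subseteq\operatorname{rad} T(\mu)=\ker\pi$, using here that $T(\mu)\neq L(\mu)$. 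Extending $f$ by zero on the complementary summands gives a non-zero nilpotent in $\End_{\Uq}(T)$, which prevents semisimplicity.

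The only delicate point is verifying that $L(\mu)$ sits in both the head and the socle of every non-simple indecomposable tilting $T(\mu)$. This rests on the standard observation that in a suitable ordering of the $\Delta_q$- and $\nabla_q$-filtrations of $T(\mu)$ (available through Ext-vanishing between Weyl and dual Weyl modules) the weight $\mu$ appears with multiplicity one, as a quotient at the top of the $\Delta_q$-filtration and as a submodule at the bottom of the $\nabla_q$-filtration. Granted this classical input from tilting theory, the rest of the argument is purely formal.
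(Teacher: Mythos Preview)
Your $(\Leftarrow)$ direction is fine. The paper itself does not argue either direction directly but simply cites \cite[Theorems~4.11 and 5.13]{ast}, so any self-contained argument you give is necessarily different in presentation.

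Your $(\Rightarrow)$ argument, however, contains a genuine error. You assert that $\Delta_q(\mu)$ is a \emph{quotient} of $T(\mu)$ and $\nabla_q(\mu)$ is a \emph{submodule}. This is backwards: for the indecomposable tilting module $T(\mu)$ of highest weight $\mu$, one has $\Delta_q(\mu)\hookrightarrow T(\mu)$ and $T(\mu)\twoheadrightarrow\nabla_q(\mu)$, not the other way around. Consequently, $L(\mu)$ need \emph{not} lie in either the head or the socle of $T(\mu)$. The smallest counterexample already occurs for $\Uu_1(\mathfrak{sl}_2)$ with $\Char(\K)=p$: the tilting module $T(p)$ has Loewy structure $L(p-2)\mid L(p)\mid L(p-2)$, so $\operatorname{hd} T(p)=\operatorname{soc} T(p)=L(p-2)$ while $L(p)$ sits strictly in the middle. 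The maps you wrote down do not exist.

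The strategy is salvageable. What you actually need is that some simple $L$ lies in both the head and the socle of the non-simple indecomposable $T(\mu)$; then your nilpotent $f=\iota\pi$ works. This follows from the self-duality $T(\mu)^{\circ}\cong T(\mu)$ of indecomposable tilting modules under the contravariant duality fixing simples, which forces $\operatorname{hd} T(\mu)\cong\operatorname{soc} T(\mu)$. A minor further point: a non-zero nilpotent element alone does not obstruct semisimplicity (matrix algebras have plenty). What you need is that $f$ lies in the Jacobson radical of the local ring $e\,\End_{\Uq}(T)\,e=\End_{\Uq}(T(\mu))$, and then invoke $\operatorname{rad}(eAe)=e\operatorname{rad}(A)e$ to conclude $\operatorname{rad}(\End_{\Uq}(T))\neq 0$.
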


\begin{proof}
This is a consequence of~\cite[Theorems~4.11 and 5.13]{ast}.
\end{proof}

Thus, by Lemma~\ref{lem-cellsemisimple} and Theorem~\ref{thm-cellsemisimple}, 
the question whether $\Endrr$ is semisimple is equivalent to the question 
whether all (dual) Weyl factors of $T$ are simple $\Uq$-modules:

\begin{cor}\label{cor-cellsemisimple}
The algebra $\Endrr$ is semisimple if and only if all 
Weyl factors $\Dl$ of $T$ are simple $\Uq$-modules 
if and only if all dual Weyl factors $\Nl$ of $T$ are simple $\Uq$-modules.
\end{cor}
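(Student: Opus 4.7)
The plan is to combine the two results displayed immediately above the corollary, namely Lemma~\ref{lem-cellsemisimple} and Theorem~\ref{thm-cellsemisimple}, by chaining their equivalences. There is no substantive new content in this statement: it merely records the conjunction of these two facts for convenient later reference.

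More concretely, I would first invoke Theorem~\ref{thm-cellsemisimple} to translate the semisimplicity of the endomorphism algebra $\Endrr$ into a statement about $T$ itself, namely that $T$ is a semisimple $\Uq$-module. Then I would apply Lemma~\ref{lem-cellsemisimple}, which rephrases the semisimplicity of $T$ as the simplicity of each of its Weyl factors, and equivalently of each of its dual Weyl factors. Stringing the three equivalences together gives the claimed double biconditional.

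Since both intermediate results are already in hand and no new ingredient is needed, there is essentially no obstacle. The only minor point worth mentioning is that the Weyl (resp. dual Weyl) filtration of a $\Uq$-tilting module is not canonical, but the multiset of its factors is well-defined up to reordering, as noted before Proposition~\ref{prop-prop}; hence the phrase ``all Weyl factors of $T$'' is unambiguous, and the equivalence is genuinely a property of $T$ rather than of a chosen filtration.
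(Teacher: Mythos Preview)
Your proposal is correct and matches the paper's approach exactly: the corollary is stated as an immediate consequence of Lemma~\ref{lem-cellsemisimple} and Theorem~\ref{thm-cellsemisimple}, with no additional proof given. Your remark about the well-definedness of the Weyl factors is a nice clarification, also noted in the paper just before Proposition~\ref{prop-prop}.
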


A method to check if a given Weyl module is a simple $\Uq$-module is 
provided by \textit{Jantzen's sum formula}. 
In order to state it, we need some preparations. First, 
for any $a\in\N$ and any $p$, we denote by $v_p(a)$ 
its $p$\textit{-adic valuation}, that is the largest 
non-negative integer such that $p^{v_p(a)}$ divides $a$. 
Second, let $W$ be the \textit{Weyl group} 
associated to $\fg$ (recall 
that $W$ is generated by the simple reflections 
$s_i=s_{\alpha_i}$ for each simple root $\alpha_i\in \Pi$) 
and let $l(w)$ 
denote the length of an element $w\in W$. The Weyl group $W$ 
acts on $X$ in two ways:
\begin{gather*}
\begin{aligned}
s_i(\lambda)=\lambda-\langle\lambda,\alpha_i^{\vee}\rangle\alpha_i,\text{ for }\lambda\in X,
\quad\text{respectively}\quad
s_i.\lambda=s_i(\lambda+\rho)-\rho,\text{ for }\lambda\in X.
\end{aligned}
\end{gather*}
Here we use the notation $\rho$ as in~\ref{sec-app}.

\begin{defn}\label{defn-singular}
Let $\lambda\in X^+,\mu\in X$ and assume $w.\lambda=\mu$ 
for some $w\in W$. Then we set
\begin{equation}\label{eq-cancel}
\chi(\lambda)=\mathrm{ch}(\Dl)\quad\text{and}\quad 
\chi(\mu)=(-1)^{l(w)}\chi(\lambda).
\end{equation} 
In particular, $\chi(\lambda)=0$ 
for all \textit{dot-singular} $\Uq$-weights $\lambda\in X$ (a $\Uq$-weight $\lambda$
is dot-singular if there 
exists $\alpha\in\Phi$ such that 
$\langle\lambda+\rho,\alpha^{\vee}\rangle=0$). 
On the other hand, any 
\textit{dot-regular} 
(that is, non-dot-singular) $\mu\in X$ is of the form $w.\lambda$ 
for some unique $\lambda\in X^+$, which makes 
the assignments well-defined.
\end{defn}

\begin{nota}\label{nota-rho}
What we call dot-singular is  
often called singular in the literature. 
In contrast, we call a $\Uq$-weight $\lambda\in X^+$ \textit{singular}, if there 
exists $\alpha\in\Phi$ with 
$\langle\lambda,\alpha^{\vee}\rangle=0$.
Similarly for regular $\Uq$-weights. 
We recall 
for the root systems of classical type equivalent criteria 
for being (dot-)singular 
(which we use for calculations) in~\ref{sec-app}.
\end{nota}

Formulas~\eqref{eq-jsuma},~\eqref{eq-jsumb} and~\eqref{eq-jsumc} 
are called \textit{Jantzen's sum formulas} because they originate 
in Jantzen's work~\cite{ja2}. We tend to write \textbf{JSF} to 
abbreviate `Jantzen's sum formula'.

\begin{thm}(\textbf{Jantzen's sum formula})\label{thm-jsum}
Let $\lambda\in X^+$. Then $\Dl$ has a filtration
\[
\Dl=\Delta_q^0(\lambda)\supset\Delta_q^1(\lambda)\supset\Delta_q^2(\lambda)\supset\dots,
\]
called \textit{Jantzen filtration}, such that 
all $\Delta_q^{k^{\prime}}(\lambda)\in\Mod{\Uq}$, 
$\Dl/\Delta_q^1(\lambda)\cong\Ll$ and: 
\begin{itemize}
\item If $\Char(\K)=0$ and $q=1$ or $q\in\K^{\ast}$ is a non-root of unity, 
then $\Delta_q^1(\lambda)=0$.
\item If $\Char(\K)=0$ and $q\in\K^{\ast}$ is a root of unity with $\ord(q^2)=\ell$, then
\begin{equation}\label{eq-jsuma}
\sum_{k^{\prime}>0}\mathrm{ch}(\Delta_q^{k^{\prime}}(\lambda))=-\sum_{\alpha\in\Phi^+}
\sum_{0<k\ell \atop <\langle\lambda+\rho,\alpha^{\vee}\rangle}\chi(\lambda-k\ell\alpha).
\end{equation}
\item If $\Char(\K)=p>0$ and $q\in\K^{\ast}$ is a root of unity with $\ord(q^2)=\ell$, then
\begin{equation}\label{eq-jsumb}
\sum_{k^{\prime}>0}\mathrm{ch}(\Delta_q^{k^{\prime}}(\lambda))=-\sum_{\alpha\in\Phi^+}
\sum_{0<k\ell \atop <\langle\lambda+\rho,\alpha^{\vee}\rangle}p^{v_p(k)}\chi(\lambda-k\ell\alpha).
\end{equation}
\item If $\Char(\K)=p>0$ and $q=1$, then
\begin{equation}\label{eq-jsumc}
\sum_{k^{\prime}>0}\mathrm{ch}(\Delta_{1}^{k^{\prime}}(\lambda))=-\sum_{\alpha\in\Phi^+}
\sum_{0<kp \atop <\langle\lambda+\rho,\alpha^{\vee}\rangle}v_p(kp)\chi(\lambda-kp\alpha).
\end{equation}
\end{itemize}
(The right-hand sums run over all 
$k\in\N$ such that the indicated inequalities hold.) 
In particular, $\Dl$ is a simple 
$\Uq$-module if and only if the corresponding \textbf{JSF} is zero. 
\end{thm}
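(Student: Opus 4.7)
The plan is to construct the Jantzen filtration via a contravariant bilinear form on an integral version of $\Dl$, and then to extract the sum formula by computing the $t$-adic valuation of the determinant of this form on each weight space via the (quantum) Shapovalov determinant formula. This is the classical argument of Jantzen~\cite{ja2} in the modular case, adapted to the quantum case following~\cite{apw}; what needs to be set up is the right integral form, and then the formulas emerge from a uniform valuation calculation.

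First, I would replace $\K$ by a suitable discrete valuation ring $A$ with residue field $\K$ and uniformizer $t$ (in the modular $q=1$ case $t=p$; in the root of unity case $t$ lies over $p$ in a cyclotomic extension), and work with an integral form $\Delta_{A}(\lambda)$ of the Weyl module which reduces to $\Dl$ modulo $t$ and becomes a simple highest weight module after inverting $t$. There is a canonical contravariant form $(\cdot,\cdot)_{\lambda}$ on $\Delta_{A}(\lambda)$, non-degenerate on the generic fibre. Setting
\[
\Delta_{q}^{k}(\lambda)=\bigl(\{v\in \Delta_{A}(\lambda)\mid (v,\Delta_{A}(\lambda))\subset t^{k}A\}\bigr)\otimes_{A}\K,
\]
one checks that each $\Delta_{q}^{k}(\lambda)$ is a $\Uq$-submodule of $\Dl$, that $\Delta_{q}^{0}(\lambda)=\Dl$ and $\Dl/\Delta_{q}^{1}(\lambda)\cong\Ll$, and a snake lemma computation yields the key identity
\[
\sum_{k'>0}\mathrm{ch}\bigl(\Delta_{q}^{k'}(\lambda)\bigr)=\sum_{\mu\in X}v_{t}\bigl(\det(\cdot,\cdot)_{\lambda,\mu}\bigr)\, e^{\mu},
\]
where the determinant is taken on the $\mu$-weight space of $\Delta_{A}(\lambda)$.

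Next, I would invoke the Shapovalov determinant formula (and its quantum analogue due to De Concini--Kac and others): $\det(\cdot,\cdot)_{\lambda,\mu}$ factors, up to a unit, as a product over positive roots $\alpha\in\Phi^{+}$ and positive integers $r$ of factors which are either $\langle\lambda+\rho,\alpha^{\vee}\rangle-r$ or the quantum integer $[\langle\lambda+\rho,\alpha^{\vee}\rangle-r]_{q}$, each raised to the value of Kostant's partition function at $\lambda-r\alpha-\mu$. A direct valuation calculation then shows that $v_{t}[\langle\lambda+\rho,\alpha^{\vee}\rangle-r]_{q}$ is non-zero precisely when $r=\langle\lambda+\rho,\alpha^{\vee}\rangle-k\ell$ for some $k\in\N$, equaling $1$ in the characteristic zero case and $p^{v_{p}(k)}$ in mixed characteristic, while for $q=1$ in characteristic $p$ the valuation is non-zero exactly when $r=\langle\lambda+\rho,\alpha^{\vee}\rangle-kp$ and then equals $v_{p}(kp)$. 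Finally, the standard identity rewriting the generating series of Kostant's partition function as a Verma character, combined with the Weyl character formula, reassembles the resulting character sums into $\chi(\lambda-k\ell\alpha)$ (respectively $\chi(\lambda-kp\alpha)$) with the sign convention~\eqref{eq-cancel} of Definition~\ref{defn-singular}; dot-singular terms drop out automatically.

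The four cases of the theorem are then separated by which factors of the Shapovalov determinant carry a non-trivial $t$-adic valuation; in the two generic cases the determinant is already a unit in $A$, forcing $\Delta_{q}^{1}(\lambda)=0$. The ``in particular'' conclusion is immediate, since the left-hand sum vanishes if and only if $\Delta_{q}^{1}(\lambda)=0$, if and only if $\Dl=\Dl/\Delta_{q}^{1}(\lambda)\cong\Ll$ is simple. The main obstacle -- and the reason for the asymmetric coefficients $p^{v_{p}(k)}$ in~\eqref{eq-jsumb} versus $v_{p}(kp)$ in~\eqref{eq-jsumc} -- is the mixed characteristic valuation analysis of quantum integers $[k\ell]_{q}$ at a cyclotomic $q$, where $v_{t}([k\ell]_{q})$ picks up only the $p$-part of $k$ (the factor $\ell$ being invertible modulo $t$), whereas in the modular $q=1$ case the factor $p$ itself contributes an extra $+1$ to $v_{p}(kp)$. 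This technical heart of the argument is carried out in detail in~\cite{apw} in the quantum case and in~\cite{ja2} in the modular case, and we content ourselves with this summary.
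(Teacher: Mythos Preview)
The paper does not give its own proof of this theorem; it simply cites \cite[Theorem~6.3]{ak}, \cite[Proposition~II.8.19]{jarag} and \cite[Theorem~5.1]{thams}. Your sketch is precisely the classical Jantzen argument carried out in those references (contravariant form on an $A$-integral Weyl module over a discrete valuation ring, Jantzen's lemma relating $\sum_{k'>0}\mathrm{ch}(\Delta_q^{k'}(\lambda))$ to the $t$-valuation of the determinant on each weight space, and the Shapovalov/De Concini--Kac determinant formula to evaluate that valuation), so there is no methodological difference to discuss.

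One caveat on the valuation bookkeeping: your heuristic ``$v_t([k\ell]_q)$ picks up only the $p$-part of $k$'' is suggestive but does not by itself explain why the coefficient in \eqref{eq-jsumb} is the \emph{integer} $p^{v_p(k)}$ rather than an additive valuation; in Thams' treatment this comes out of a careful choice of the DVR $A$ (so that $v_t(p)=p$ rather than $1$) together with the factorisation of the quantum integer into cyclotomic polynomials evaluated at $q$. If you want to turn your sketch into a self-contained argument you would need to make that choice of $A$ explicit and redo the valuation computation of $[\,k\ell\,]_q$ in $A$; otherwise, citing the three references as the paper does is the honest option.
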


\begin{proof}
See~\cite[Theorem~6.3]{ak},~\cite[Proposition~II.8.19]{jarag} 
respectively~\cite[Theorem~5.1]{thams}.
\end{proof}

We first show in an example how Theorem~\ref{thm-jsum} together with 
Corollary~\ref{cor-cellsemisimple} 
can be used in practice to determine whether $\Endrr$ 
is semisimple.

\begin{ex}\label{ex-JSF}
Consider $\Uo=\Uo(\gll{5})$ over $\K$ with $\Char(\K)=5$. 
Let $n=m=5$, $d=3$ and $V=\Delta_1(\omega_1)\cong\Delta_1(\varepsilon_1)$ be the vector 
representation of $\Uo$ and set $T_n^d=V^{\otimes d}$. We want to check whether $\End_{\Uo}(T_n^d)$ 
is a semisimple algebra. Since $V\in\T$, so is $T_n^d$ by Proposition~\ref{prop-prop}. By 
Corollary~\ref{cor-cellsemisimple}, it remains to check whether $T_n^d$ has  
only Weyl factors which are simple $\Uo$-modules. 
We see (using Proposition~\ref{prop-classchar}) that 
the Weyl factors of $T_n^d$ have highest weights
\begin{gather*}
\lambda=3\varepsilon_1=(3,0,0,0,0),\quad\quad
\mu=2\varepsilon_1+\varepsilon_2=(2,1,0,0,0),\\
\nu=\varepsilon_1+\varepsilon_2+\varepsilon_3=(1,1,1,0,0).
\end{gather*}
In order to see whether these Weyl factors are simple $\Uo$-modules, 
we use \textbf{JSF} from~\eqref{eq-jsumc}.
We have $\rho=(4,3,2,1,0)$ (see~\ref{sec-app}) and thus (for all $\alpha\in\Phi^+$)
\begin{gather*}
\langle\lambda+\rho,\alpha^{\vee}\rangle\leq\langle\lambda+\rho,(\varepsilon_1-\varepsilon_5)^{\vee}\rangle=7,\quad\quad
\langle\mu+\rho,\alpha^{\vee}\rangle\leq\langle\lambda+\rho,(\varepsilon_1-\varepsilon_5)^{\vee}\rangle=6,\\
\langle\nu+\rho,\alpha^{\vee}\rangle\leq\langle\lambda+\rho,(\varepsilon_1-\varepsilon_5)^{\vee}\rangle=5=\Char(\K).
\end{gather*}
\textbf{JSF} for $\nu$ is zero: it totally collapses since the second sum 
on the right-hand side of~\eqref{eq-jsumc} 
is empty.
Hence, $\Delta_1(\nu)$ is a simple 
$\Uo$-module. For $\mu$ we see that the only possible 
contribution to \textbf{JSF} comes from 
the positive root $\alpha\in\Phi^+$ of the form $\alpha=\varepsilon_1-\varepsilon_5$. 
But
\[
\mu+\rho-5(\varepsilon_1-\varepsilon_5)=(\colorbox{mycolor}{$1$},4,2,\colorbox{mycolor}{$1$},5).
\]
Hence, $\mu+\rho-5(\varepsilon_1-\varepsilon_5)$ is a singular 
$\Uo$-weight\footnote{We illustrate with green boxes the entries which make a $\Uq$-weight 
singular. Moreover, we illustrate with red boxes (and white numbers) 
the numbers relevant for the calculation in the regular cases.}.
Thus, $\chi(\mu-5(\varepsilon_1-\varepsilon_5))=0$ and so 
\textbf{JSF} is zero which again implies 
that $\Delta_1(\mu)$ is a simple 
$\Uo$-module.

For $\lambda$ the only possible 
contributions can come from the positive roots $\alpha\in\Phi^+$ of the form 
$\alpha=\varepsilon_1-\varepsilon_5$ or $\alpha=\varepsilon_1-\varepsilon_4$. We calculate
\[
\lambda+\rho-5(\varepsilon_1-\varepsilon_5)=(\colorbox{mycolor}{$2$},3,\colorbox{mycolor}{$2$},1,5),
\quad\quad
\lambda+\rho-5(\varepsilon_1-\varepsilon_4)=(\colorbox{mycolor}{$2$},3,\colorbox{mycolor}{$2$},6,0).
\]
Hence, $\Delta_1(\lambda)$ is again a simple 
$\Uo$-module which shows that all Weyl 
factors of $T_n^d$ are simple $\Uo$-modules. Thus, $\End_{\Uo}(T_n^d)$ is a 
semisimple algebra under the assumption $\Char(\K)=5$.

The situation changes if $\Char(\K)=3$: in contrast 
to the above there can now possibly be contributions to \textbf{JSF} 
of $\Delta_1(\lambda)$ for the four 
positive roots $\alpha\in\Phi^+$ given by $\alpha=\varepsilon_1-\varepsilon_5$ (for $k=1,2$), 
$\alpha=\varepsilon_1-\varepsilon_4$, $\alpha=\varepsilon_1-\varepsilon_3$ or $\alpha=\varepsilon_1-\varepsilon_2$. 
We calculate
\begin{gather*}
\begin{aligned}
\lambda+\rho-3(\varepsilon_1-\varepsilon_5)=(4,\colorbox{mycolor}{$3$},2,1,\colorbox{mycolor}{$3$}),\quad&\quad
\lambda+\rho-3(\varepsilon_1-\varepsilon_3)=(\colorbox{colormy}{\color{white}$4$},3,\colorbox{colormy}{\color{white}$5$},1,0),\\
\lambda+\rho-6(\varepsilon_1-\varepsilon_5)=(\colorbox{mycolor}{$1$},3,2,\colorbox{mycolor}{$1$},6),
\quad&\quad
\lambda+\rho-3(\varepsilon_1-\varepsilon_2)=(\colorbox{colormy}{\color{white}$4$},\colorbox{colormy}{\color{white}$6$},2,1,0),\\
\lambda+\rho-3(\varepsilon_1-\varepsilon_4)=(\colorbox{mycolor}{$4$},3,2,\colorbox{mycolor}{$4$},0).\quad&
\end{aligned}
\end{gather*}
Thus, \textbf{JSF} of $\Delta_1(\lambda)$ is non-zero. 
Hence, $\Delta_1(\lambda)$ provides a 
Weyl factor of $T_n^d$ which is a non-simple $\Uo$-module,
showing that $\End_{\Uo}(T_n^d)$ is not a 
semisimple algebra anymore.
\end{ex}

\begin{rem}\label{rem-nocan}
We deduced in Example~\ref{ex-JSF} that $\End_{\Uo}(T_n^d)$ is 
non-semisimple from 
the appearance of non-zero summands in \textbf{JSF}.
We did not pay attention to possible cancellations which could 
occur because of the sign in~\eqref{eq-cancel}. This is 
however justified:
in type $\AM$ such cancellations can not occur, 
see~\cite[Section~7.4]{ak}. 
Thus, in type $\AM$ it suffices 
to give one non-zero summand 
to conclude that the corresponding 
\textbf{JSF} is non-zero.
\end{rem}

We illustrate now non-trivial cancellations.

\begin{ex}\label{ex-JSF2}
Let 
$\Char(\K)=2$ and $\Uo=\Uo(\mathfrak{sp}_{6})$. Consider the 
$\Uo$-module $\Delta_1(\lambda)$ with 
$\lambda=\varepsilon_1+\varepsilon_2=(1,1,0)$. We claim that $\Delta_1(\lambda)$ 
is a simple $\Uo$-module. First note that we have $\rho=(3,2,1)$. 
As in Example~\ref{ex-JSF}, we see that the only positive roots $\alpha\in\Phi^+$ 
that could contribute to \textbf{JSF} from~\eqref{eq-jsumc} are
\begin{gather*}
2\varepsilon_1,\quad\quad2\varepsilon_2,\quad\quad\varepsilon_1-\varepsilon_3,\quad\quad\varepsilon_1+\varepsilon_2,\quad\quad\varepsilon_1+\varepsilon_3,\quad\quad\varepsilon_2+\varepsilon_3.
\end{gather*}
We leave it to the reader to verify that $\alpha=2\varepsilon_1$, $\alpha=2\varepsilon_2$, 
$\alpha=\varepsilon_1-\varepsilon_3$ and 
$\alpha=\varepsilon_2+\varepsilon_3$ do not contribute to \textbf{JSF} of $\Delta_1(\lambda)$. For 
the others we get 
$\langle\lambda+\rho,(\varepsilon_1+\varepsilon_2)^{\vee}\rangle=7$ 
and $\langle\lambda+\rho,(\varepsilon_1+\varepsilon_3)^{\vee}\rangle=5$. Thus, we have to deal with 
$k=1,2,3$ or $k=1,2$ in \textbf{JSF} of $\Delta_1(\lambda)$:
\begin{gather*}
\begin{aligned}
&\lambda+\rho-2(\varepsilon_1+\varepsilon_2)=(2,\colorbox{mycolor}{$1$},\colorbox{mycolor}{$1$}),\quad\quad&\lambda+\rho-2(\varepsilon_1+\varepsilon_3)=(\colorbox{colormy}{\color{white}$2$},3,\colorbox{colormy}{\color{white}$-1$}),\\
&\lambda+\rho-4(\varepsilon_1+\varepsilon_2)=(0,\colorbox{mycolor}{$-1$},\colorbox{mycolor}{$1$}),\quad\quad&\lambda+\rho-4(\varepsilon_1+\varepsilon_3)=(0,\colorbox{mycolor}{$3$},\colorbox{mycolor}{$-3$}),\\
&\lambda+\rho-6(\varepsilon_1+\varepsilon_2)=(\colorbox{colormy}{\color{white}$-2$},\colorbox{colormy}{\color{white}$-3$},1).
\end{aligned}
\end{gather*}
Permuting the two remaining regular $\Uo$-weights 
into dominant $\Uo$-weights gives different signs. Thus, 
the contributions cancel in \textbf{JSF} of 
$\Delta_1(\lambda)$ by Definition~\ref{defn-singular}. Hence,
\textbf{JSF} of $\Delta_1(\lambda)$ is zero (although not all 
summands are zero). Thus, 
$\Delta_1(\lambda)$ is a simple $\Uo$-module.
\end{ex}
\section{Several versions of Schur-Weyl dualities}\label{sec-schur}
In this section we recall a few known examples of Schur-Weyl like dualities.

\begin{nota}\label{nota-young}
Let
$\Lambda^+(d)=\{\lambda\in\Z_{\geq 0}^d\mid \lambda_1\geq\cdots\geq\lambda_d\geq 0, \sum_{i=1}^d\lambda_i=d\}$ 
denote the set of all 
\textit{partitions of} some $d\in\Zg$. We identify these with 
\textit{Young diagrams with} $d$ \textit{nodes}:
\[
\lambda=(\lambda_1,\dots,\lambda_d)\in\Lambda^+(d)\quad\leftrightsquigarrow\quad
\xy
(0,0)*{
\begin{tikzpicture} [scale=.33]
\draw[very thick] (0,0) rectangle (1,1);
\draw[very thick] (1,0) rectangle (2,1);
\draw[very thick] (2,0) rectangle (3,1);
\draw[very thick] (3,0) rectangle (4,1);
\draw[very thick] (0,-1) rectangle (1,0);
\draw[very thick] (1,-1) rectangle (2,0);
\draw[very thick] (2,-1) rectangle (3,0);
\draw[very thick] (0,-3.5) rectangle (1,-2.5);
\draw[very thick] (1,-3.5) rectangle (2,-2.5);
\node at (1,-1.5) {$\vdots$};
\node at (5,.5) {\tiny $\lambda_1$};
\node at (5,-.5) {\tiny $\lambda_2$};
\node at (5,-1.5) {$\vdots$};
\node at (5,-3) {\tiny $\lambda_d$};
\end{tikzpicture}
}
\endxy
\]
We always use the English notation for our Young diagrams, that is 
starting with $\lambda_1$ nodes in the top row.
Using the notation from~\ref{sec-app}, we can associate to each Young diagram of the form
$\lambda=(\lambda_1,\dots,\lambda_d)\in\Lambda^+(d)$ a $\Uq(\gll{m})$-weight 
$\lambda=\lambda_1\varepsilon_1+\dots+\lambda_d\varepsilon_d\in X^+$ and hence, 
a Weyl module $\Dl$ for $\Uq(\gll{m})$ (analogously for $\Uq(\fg)$ with
$\fg$ of types $\BM$, $\CM$ and $\DM$). 
Here, by convention, $\Dl=0$ if $\lambda_{m+1}>0$.

Similarly, given a 
\textit{pair} of Young diagrams $(\lambda,\mu)\in\Lambda^+(d_1)\times\Lambda^+(d_2)$,
then we can associate to it a dominant $\Uq(\gll{m}\oplus\gll{m})$-weight in 
the evident way and therefore a Weyl module for $\Uq(\gll{m}\oplus\gll{m})$ 
that we denote by 
$\Delta_q(\lambda,\mu)$.
Again, $\Delta_q(\lambda,\mu)=0$ if $\lambda_{m+1}>0$ or $\mu_{m+1}>0$.

We can also associate to such a pair 
$(\lambda,\mu)\in\Lambda^+(r)\times\Lambda^+(s)$ 
a $\Uq(\gll{2m})$-weight $(\lambda,\overline{\mu})$ via
\[
(\lambda,\overline{\mu})=\lambda_1\varepsilon_1+\dots+\lambda_r\varepsilon_{r}-
\mu_s\varepsilon_{r+1}-\dots-\mu_1\varepsilon_{r+s}
\]
and thus, a Weyl module $\Delta_q(\lambda,\overline{\mu})$ for $\Uq(\gll{2m})$.
Again, $\Delta_q(\lambda,\overline{\mu})=0$ if $(\lambda,\overline{\mu})_{2m+1}>0$.
\end{nota}

\subsection{Type \texorpdfstring{$\Am$}{A}: the Hecke algebra of type \texorpdfstring{$\Am$}{A} and (quantum) Schur-Weyl duality}

We fix $q\in\K^{\ast}$ in what follows. We denote by $S_d$ the symmetric group in 
$d\in\Zg$ letters.

\begin{defn}\label{defn-iwahorihecke}
Let $d\in\Zg$.
The \textit{Hecke algebra} $\cal{H}^{\Am}_d(q)$ \textit{of type} $\Am$ is the $\K$-algebra
generated by $\{H_{s_i}=H_i\mid s_i\in S_d\}$ for all transpositions $s_i=(i,i+1)\in S_d$ subject to the 
relations
\begin{equation*}
\begin{gathered}
H_i^2=(q-q^{-1})H_i+1,\quad\text{for }i=1,\dots,d-1,
\\
H_iH_j=H_jH_i,\quad\text{for }|i-j|>1,\quad\quad
H_iH_jH_i=H_jH_iH_j,\quad\text{for }|i-j|=1.
\end{gathered}
\end{equation*}
The \textit{group algebra of the symmetric group} is $\K[S_d]=\cal{H}^{\Am}_d(1)$.
\end{defn}

\begin{rem}\label{rem-iwahorihecke}
One can think of the generators $H_i$ of $\cal{H}^{\Am}_d(q)$ 
diagrammatically as 
crossings because there is a surjection
from the group algebra $\K[B_d]$ of the braid group $B_d$ in $d$ strands to $\cal{H}^{\Am}_d(q)$ 
given by sending the braid group generator $b_i$
(with strand $i$ crossing over strand $i+1$) to $H_i$. For example, the 
first relation from Definition~\eqref{defn-iwahorihecke} then reads as\footnote{We read all 
diagrams in this paper from left to right and bottom to top.}
\[
\xy
(0,0)*{
\begin{tikzpicture}[scale=.3]
	\draw [very thick, ->] (-5,-1) to (-5,3);
	\draw [very thick, ->] (-3,-1) to (-3,3);
	\draw [very thick, ->] (-1,-1) to (1,1);
	\draw [very thick, ->] (-0.25,0.25) to (-1,1);
	\draw [very thick] (0.25,-0.25) to (1,-1);
	\draw [very thick, ->] (3,-1) to (3,3);
	\draw [very thick, ->] (5,-1) to (5,3);
	\draw [very thick, ->] (-1,1) to (1,3);
	\draw [very thick, ->] (-0.25,2.25) to (-1,3);
	\draw [very thick] (0.25,1.75) to (1,1);
	\node at (-3.9,1) {\tiny $\cdots$};
	\node at (4.1,1) {\tiny $\cdots$};
\end{tikzpicture}
};
\endxy
\;\;=\;\;
(q-q^{-1})\cdot
\xy
(0,0)*{
\begin{tikzpicture}[scale=.3]
	\draw [very thick, ->] (-5,-1) to (-5,1);
	\draw [very thick, ->] (-3,-1) to (-3,1);
	\draw [very thick, ->] (-1,-1) to (1,1);
	\draw [very thick, ->] (-0.25,0.25) to (-1,1);
	\draw [very thick] (0.25,-0.25) to (1,-1);
	\draw [very thick, ->] (3,-1) to (3,1);
	\draw [very thick, ->] (5,-1) to (5,1);
	\node at (-3.9,0) {\tiny $\cdots$};
	\node at (4.1,0) {\tiny $\cdots$};
\end{tikzpicture}
};
\endxy\;\;
+\;\;
\xy
(0,0)*{
\begin{tikzpicture}[scale=.3]
	\draw [very thick, ->] (-5,-1) to (-5,1);
	\draw [very thick, ->] (-3,-1) to (-3,1);
	\draw [very thick, ->] (-1,-1) to (-1,1);
	\draw [very thick, ->] (1,-1) to (1,1);
	\draw [very thick, ->] (3,-1) to (3,1);
	\draw [very thick, ->] (5,-1) to (5,1);
	\node at (-3.9,0) {\tiny $\cdots$};
	\node at (4.1,0) {\tiny $\cdots$};
\end{tikzpicture}
};
\endxy
\]
Similarly, the algebra $\K[S_d]$ can then be thought 
of as the quotient of $\K[B_d]$ given by forgetting the information 
of over- and undercrossings and working 
with permutation diagrams.
\end{rem}

Let $\Char(\K)=0$ and $q=1$, or let 
$q\in\K^{\ast}$ be a non-root of unity. Then there are 
simple $\cal{H}^{\Am}_d(q)$-modules $D_q^{\lambda}$ for each $\lambda\in\Lambda^+(d)$, 
see for example~\cite[Chapter~3, Section~4]{mathas}. These are lifts of the classical 
Specht modules of $S_d$ to its Hecke algebra $\cal{H}^{\Am}_d(q)$.

Let $V=\Delta_q(\omega_1)$ denote the $n=m$ dimensional vector representation 
of $\Uq=\Uq(\gll{m})$. There is an 
action of $\cal{H}^{\Am}_d(q)$ on $T_{n}^{d}=V^{\otimes d}$ by so-called $R$\textit{-matrices}, 
see for example~\cite[(1.1)]{dps}.

\begin{thm}(\textbf{(Quantum) Schur-Weyl duality, type $\Am$})\label{thm-schur-weyl1}
\begin{itemize}
\item[(a)] The actions of $\Uq$ and $\cal{H}^{\Am}_d(q)$ on $T_{n}^{d}$ commute.
\item[(b)] Let $\Phi^{\Am}_{q\mathrm{SW}}$ be the algebra homomorphism induced by the action 
of $\cal{H}^{\Am}_d(q)$ on $T_{n}^{d}$. Then
\begin{equation*}
\Phi^{\Am}_{q\mathrm{SW}}\colon\cal{H}^{\Am}_d(q)\twoheadrightarrow
\End_{\Uq}(T_{n}^{d}),\quad\text{and}\quad\Phi^{\Am}_{q\mathrm{SW}}\colon
\cal{H}^{\Am}_d(q)\xrightarrow{\cong}\End_{\Uq}(T_{n}^{d}),\text{ if }n\geq d.
\end{equation*}
\item[(c)] Let $\Char(\K)=0$ and $q=1$, or let 
$q\in\K^{\ast}$ be a non-root of unity. Then there is 
a $(\Uq,\cal{H}^{\Am}_d(q))$-bimodule decomposition
\begin{equation*}
T_{n}^{d}\cong\bigoplus_{\lambda\in\Lambda^+(d)}\Dl\otimes D_q^{\lambda},
\end{equation*}
with simple $\Uq$-modules $\Dl\cong\Ll$.
\end{itemize}
\end{thm}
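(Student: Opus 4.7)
My plan is to treat the three parts in turn, with the bulk of work concentrated in (b), and then see that (c) essentially falls out of the semisimple double centralizer theorem once (a) and (b) are in hand.

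For (a), I would construct the $\cal{H}^{\Am}_d(q)$-action via $R$-matrices, as is standard. The quantum group $\Uq(\gll{m})$ is quasi-triangular with a universal $R$-matrix $R$, and on $V\otimes V$ the composition $\tau\circ R$ (with $\tau$ the flip) is a $\Uq$-module homomorphism. Letting $H_i$ act as this map on tensor factors $i, i+1$ and the identity elsewhere, the $\Uq$-linearity is automatic. The Hecke quadratic relation comes from the fact that on $V\otimes V$ the $R$-matrix acts with exactly two distinct eigenvalues (scalar multiples of $q$ and $-q^{-1}$), corresponding to the decomposition $V\otimes V\cong \Delta_q(2\omega_1)\oplus\Delta_q(\omega_2)$; the braid relation is precisely the Yang-Baxter equation satisfied by $R$, and the distant commutation is obvious.

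For (b), the goal is to compare $\dim\Phi^{\Am}_{q\mathrm{SW}}(\cal{H}^{\Am}_d(q))$ with $\dim\End_{\Uq}(T_n^d)$. The crucial input is Proposition~\ref{prop-prop}, which tells us that $\dim\End_{\Uq}(T_n^d)$ depends only on $\gll{m}$ and $d$ and, via $\dim\End_{\Uq}(T_n^d)=\sum_\lambda(T_n^d:\Delta_q(\lambda))^2$, equals its classical value. On the other hand, $\dim\cal{H}^{\Am}_d(q)=d!$ for every $\K$ and $q$, and by upper semicontinuity the dimension of the image of $\Phi^{\Am}_{q\mathrm{SW}}$ can only drop under specialization. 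Hence it suffices to establish surjectivity in one ``generic'' situation (say $\K=\mathbb{C}$, $q$ transcendental, which reduces to classical Schur--Weyl by a standard specialization and semisimplicity argument). For the isomorphism statement when $n\geq d$, note that in the classical case with $m\geq d$ the sum $\sum_{\lambda\in\Lambda^+(d)}(\dim D_1^\lambda)^2=d!$ matches $\dim\cal{H}^{\Am}_d(q)$, so the generic surjection is an iso; by the dimension-constancy above, this transports to every $(\K,q)$ with $n\geq d$.

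For (c), we are in the semisimple situation where, by Proposition~\ref{prop-classchar} and the hypothesis on $(\K,q)$, every $\Delta_q(\lambda)$ is simple and isomorphic to $L_q(\lambda)$. Thus $T_n^d$ is a semisimple $\Uq$-module, and the algebra $\End_{\Uq}(T_n^d)$ is semisimple. By (b), $\cal{H}^{\Am}_d(q)$ surjects onto this semisimple algebra, and a dimension count (from the classical case) shows the map is an isomorphism for $n\geq d$; in general we get a surjection, which lets us apply the double centralizer theorem to the semisimple pair and read off the claimed bimodule decomposition, with the summands indexed exactly by those $\lambda\in\Lambda^+(d)$ with $\Delta_q(\lambda)\neq 0$, i.e.\ $\lambda_{m+1}=0$, and with multiplicity space $D_q^\lambda$ identified as the Specht module.

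The main obstacle is really the surjectivity in (b) for arbitrary $(\K,q)$: one must avoid a direct integral/base change argument for the Hecke algebra image (which is subtle, since dimensions of images can only drop under specialization, not match automatically). The clean resolution is the tilting-theoretic one outlined above, leveraging that $T_n^d\in\T$ so both $\dim\End_{\Uq}(T_n^d)$ and the character data are rigid under change of $\K$ and $q$, reducing the whole statement to the well-known classical/generic case.
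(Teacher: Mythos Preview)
Your treatment of (a), of the injectivity part of (b), and of (c) is essentially the same as the paper's (the paper simply cites \cite{dps} and \cite{gw} and invokes Proposition~\ref{prop-prop} for the dimension count giving injectivity when $n\geq d$).

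There is, however, a genuine gap in your argument for \emph{surjectivity} in (b). You write that ``by upper semicontinuity the dimension of the image of $\Phi^{\Am}_{q\mathrm{SW}}$ can only drop under specialization, hence it suffices to establish surjectivity in one generic situation''. The implication runs the wrong way: if $\dim(\mathrm{im}\,\Phi)$ can \emph{drop} at special $(\K,q)$ while $\dim\End_{\Uq}(T_n^d)$ stays constant (by Proposition~\ref{prop-prop}), then generic surjectivity tells you nothing about surjectivity at the special points. You flag this yourself in the last paragraph, but your proposed ``clean resolution'' only supplies rigidity of $\dim\End_{\Uq}(T_n^d)$, which is not the missing ingredient.

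What is actually needed (and what \cite{dps} provides, which is why the paper simply cites it) is either a direct argument at each $(\K,q)$, or an integral argument of the following shape: one works over $\Z[v,v^{-1}]$, where $\End$ of a tilting module is \emph{free} and commutes with base change (this is the deeper tilting-theoretic input you are not using), establishes surjectivity there, and then base-changes. Rigidity of the target dimension alone does not do this; freeness of the integral $\End$ is essential to transport surjectivity.
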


\begin{proof}
Part (a) and surjectivity in (b) are proven in~\cite[Theorem~6.3]{dps}. 
Injectivity in (b) follows from Proposition~\ref{prop-prop}. 
The statement (c) is the known 
$q$-analogue to the 
classical Schur-Weyl duality, 
see for example~\cite[Theorem~9.1.2]{gw} for the classical case 
(which holds almost word-by-word in the semisimple, quantized case as well).
\end{proof}

\subsection{Type \texorpdfstring{$\Am\oplus\Am$}{A+A}: the Hecke algebra of type \texorpdfstring{$\Bm$}{B} and (quantum) Schur-Weyl duality}

We fix again $q\in\K^{\ast}$ in what follows. If $q=1$, then we assume $p\not=2$.

\begin{defn}\label{defn-iwahoriheckeb}
Let $d\in\Zg$.
The (one-parameter) \textit{Hecke algebra} $\cal{H}^{\Bm}_d(q)$ \textit{of type} $\Bm$ is 
the $\K$-algebra
generated by $\{H_i\mid s_i\in S_d\}\cup\{H_0\}$ subject to the 
relations
\begin{gather*}
H_i^2=(q-q^{-1})H_i  + 1,\quad\text{for }i=0,\dots,d-1,\quad\quad
H_iH_j=H_jH_i,\quad\text{for }|i-j|>1,\\
H_iH_jH_i=H_jH_iH_j,\quad\text{for }|i-j|=1,\;i,j\neq 0,\quad\quad H_0H_1H_0H_1=H_1H_0H_1H_0.
\end{gather*}
The \textit{group algebra of the type} $\Bmm$ \textit{Weyl group} 
is $\K[S_d\ltimes(\Z/2\Z)^d]=\cal{H}^{\Bm}_d(1)$.
\end{defn}

Let $\Char(\K)=0$ and $q=1$, or let 
$q\in\K^{\ast}$ be a non-root of unity. For each 
pair of Young diagrams $(\lambda,\mu)\in\Lambda^+(d_1)\times\Lambda^+(d_2)$ 
define $D_q^{\lambda,\mu}$ via induction, see for 
example~\cite[Section~2.6]{ms} (which works in 
the semisimple, quantized case as well). That is,
\[
D_q^{\lambda,\mu}=\cal{H}^{\Bm}_d(q)
\otimes_{\cal{H}^{\Am}_{d_1}(q)\otimes\cal{H}^{\Am}_{d_2}(q)}
(D_q^{\lambda}\otimes D_q^{\mu}).
\]
Here $D_q^{\lambda}$ and $D_q^{\mu}$ are the 
quantum Specht modules of type $\Am$.

Take $\fg=\gll{m}\oplus\gll{m}$ and set $\Uq=\Uq(\fg)$. 
Let $V=\Delta_q(\omega_1)$ denote the $n=2m$ dimensional vector representation 
of $\Uq(\gll{2m})$ restricted to $\Uq$. 
There is an 
action of $\cal{H}^{\Bm}_d(q)$ on $T_{n}^{d}=V^{\otimes d}$, 
see for example~\cite[Section~1]{hu2}.

\begin{thm}(\textbf{(Quantum) Schur-Weyl duality, type $\Am\oplus\Am$})\label{thm-schur-weylb}
\begin{itemize}
\item[(a)] The actions of $\Uq$ and $\cal{H}^{\Bm}_d(q)$ on $T_{n}^{d}$ commute.
\item[(b)] Let $\Phi^{\Bm}_{q\mathrm{SW}}$ be the algebra homomorphism induced by the action 
of $\cal{H}^{\Bm}_d(q)$ on $T_{n}^{d}$. Then
\begin{equation*}
\Phi^{\Bm}_{q\mathrm{SW}}\colon\cal{H}^{\Bm}_d(q)\twoheadrightarrow
\End_{\Uq}(T_{n}^{d}),\quad\text{and}\quad\Phi^{\Bm}_{q\mathrm{SW}}\colon
\cal{H}^{\Bm}_d(q)\xrightarrow{\cong}\End_{\Uq}(T_{n}^{d}),\text{ if }{\textstyle \frac{1}{2}}n\geq d.
\end{equation*}
\item[(c)] Let $\Char(\K)=0$ and $q=1$, or let 
$q\in\K^{\ast}$ be a non-root of unity. Then there is 
a $(\Uq,\cal{H}^{\Bm}_d(q))$-bimodule decomposition
\begin{equation*}
T_{m,m}^{d}\cong\bigoplus_{d_1+d_2=d}\bigoplus_{\lambda\in\Lambda^+(d_1)\atop \mu\in\Lambda^+(d_2)}\Delta_q(\lambda,\mu)\otimes D_q^{\lambda,\mu},
\end{equation*}
with simple $\Uq$-modules $\Delta_q(\lambda,\mu)\cong L_q(\lambda,\mu)$.
\end{itemize}
\end{thm}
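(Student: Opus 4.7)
The proof parallels the type $\Am$ case (Theorem~\ref{thm-schur-weyl1}) but with the extra generator $H_0$ of $\cal{H}^{\Bm}_d(q)$ requiring separate treatment. The overall plan is to reduce (a), (b) and (c) to existing results in the literature (notably~\cite{hu2}), using our general machinery (Proposition~\ref{prop-prop} and Theorem~\ref{thm-schur-weyl1}) to supply the dimension comparison needed for injectivity and the bimodule decomposition in the semisimple case.

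For part (a), the plan is to observe that the subalgebra of $\cal{H}^{\Bm}_d(q)$ generated by $H_1,\dots,H_{d-1}$ is isomorphic to $\cal{H}^{\Am}_d(q)$, and its action on $T_n^d$ already commutes with the whole of $\Uq(\gll{2m})$ by Theorem~\ref{thm-schur-weyl1}(a); hence a fortiori with the Levi subalgebra $\Uq = \Uq(\gll{m}\oplus\gll{m}) \subset \Uq(\gll{2m})$. It then suffices to check that the remaining generator $H_0$ commutes with $\Uq$. In the standard setup of~\cite{hu2}, $H_0$ acts nontrivially only on the first tensor factor, as an operator which is scalar multiplication by $q$ on the $\gll{m}$-summand corresponding to the first Levi factor and by $-q^{-1}$ on the one corresponding to the second. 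Such an operator preserves each $\Uq$-weight space (since $\Uq$ does not mix the two Levi summands of $V$) and therefore commutes with the $\Uq$-action.

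For part (b), surjectivity of $\Phi^{\Bm}_{q\mathrm{SW}}$ is established in~\cite{hu2} by a quantized analogue of classical type $\Bm$ Schur-Weyl duality. For injectivity I would invoke a dimension count: by the Ext-vanishing argument in Proposition~\ref{prop-prop} together with Proposition~\ref{prop-classchar}, the dimension $\dim\End_{\Uq}(T_n^d) = \sum_{(\lambda,\mu)} (T_n^d : \Delta_q(\lambda,\mu))^2$ depends only on $\fg$ and $d$ and may be computed in the classical case using ordinary characters. A comparison with $\dim\cal{H}^{\Bm}_d(q) = 2^d d!$ (which follows from the presentation, or equivalently from part (c) in the semisimple case) shows the two dimensions agree exactly in the stable range $\frac{1}{2}n \geq d$, which yields injectivity.

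For part (c), both $\Uq$ and $\cal{H}^{\Bm}_d(q)$ are semisimple under the assumptions, so the double commutant theorem applied to the surjection of part (b) gives a bimodule decomposition of the prescribed shape. The indexing by pairs $(\lambda,\mu) \in \Lambda^+(d_1)\times\Lambda^+(d_2)$ with $d_1+d_2 = d$ is forced by the branching rule for $\Uq(\gll{2m}) \downarrow \Uq(\gll{m}\oplus\gll{m})$ combined with the type $\Am$ Schur-Weyl decomposition of $V^{\otimes d}$ as a $(\Uq(\gll{2m}), \cal{H}^{\Am}_d(q))$-bimodule; tracking how Levi restriction interacts with induction from the parabolic subalgebra $\cal{H}^{\Am}_{d_1}(q) \otimes \cal{H}^{\Am}_{d_2}(q)$ to $\cal{H}^{\Bm}_d(q)$ identifies the right tensor factors with the induced modules $D_q^{\lambda,\mu}$. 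The main potential obstacle is the explicit verification in (a): independent of~\cite{hu2}, one needs the precise formula for $H_0$ acting on $V$ and a direct check that it is $\Uq$-equivariant on each Levi weight space; citing~\cite{hu2} bypasses this, but an independent derivation requires the standard (but not entirely trivial) analysis of the $R$-matrix-type operator defining $H_0$.
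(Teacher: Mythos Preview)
Your proposal is correct and takes essentially the same approach as the paper: both treat this theorem as a citation result, with the paper pointing to~\cite{ms} (Theorem~9, Remark~12, Lemma~11) and secondarily to~\cite{hu2}, while you lean on~\cite{hu2} and supply the dimension comparison via Proposition~\ref{prop-prop}. Your sketch is in fact more detailed than what the paper gives; the only minor difference is the primary reference, not the mathematics.
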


\begin{proof}
The statements (a) and (b), in the classical case, are 
proven in~\cite[Theorem~9]{ms} (see also~\cite[Remark~12]{ms} for the 
isomorphism criterion). The arguments 
given there go through for 
arbitrary $\K$ and $q\in\K^{\ast}$ as well. Statement (c) can be deduced from~\cite[Lemma~11]{ms} 
which again works in the semisimple, quantized case as well. See 
also~\cite[Theorem~4.3]{hu2}.
\end{proof}

\begin{rem}\label{rem-arikikoike}
The statements of Theorem~\ref{thm-schur-weylb} can be extended 
to the so-called \textit{Ariki-Koike algebras} (the Hecke algebras 
for the complex reflection groups $G(m,1,d)$), see 
for example~\cite{hs} or~\cite{sash}. Moreover, the approach 
taken in~\cite[Section~4]{ms} is setup such that it can be quantized as well. Hence,
it should give a 
quantum Schur-Weyl duality for $G(m,p,d)$ as well. 
\end{rem}

\subsection{Mixed type \texorpdfstring{$\Am$}{A}: the walled Brauer algebras and mixed Schur-Weyl duality}

The following algebra is called the \textit{walled Brauer algebra} 
(or the \textit{oriented Brauer algebra}), 
and was independently introduced in~\cite{koike} 
and~\cite{turaev}.

\begin{defn}\label{defn-walledbrauer}
Let $r,s\in\Z_{\geq 0}$ not both zero, $\delta\in\K$.
The \textit{walled Brauer algebra} $\cal{B}_{r,s}(\delta)$ is the 
$\K$-algebra
generated by $\{\sigma_i\mid i=1,\dots,r+s-1,\, i\neq r\}\cup\{u_r\}$ subject to the 
relations
\begin{equation*}
\begin{gathered}
\sigma_i^2=1,\quad\text{for }i=1,\dots,r+s-1,\;i\neq r,
\\
\sigma_i\sigma_j=\sigma_j\sigma_i,\quad\text{for }|i-j|>1,\quad\quad
\sigma_i\sigma_j\sigma_i=\sigma_j\sigma_i\sigma_j,\quad\text{for }|i-j|=1,
\\
u_r^2=\delta u_r,\quad\quad u_r=u_r\sigma_{r-1}u_r=u_r\sigma_{r+1}u_r,\quad\quad 
u_r\sigma_j=\sigma_ju_r,\quad\text{for }|r-j|>1,
\\
u_r\sigma_{r-1}\sigma_{r+1}u_r\sigma_{r-1}\sigma_{r+1}=
u_r\sigma_{r-1}\sigma_{r+1}u_r=
\sigma_{r-1}\sigma_{r+1}u_r\sigma_{r-1}\sigma_{r+1}u_r.
\end{gathered}
\end{equation*}
Note that $\K[S_r]$ is a subalgebra of $\cal{B}_{r,s}(\delta)$ 
as well as a quotient of $\cal{B}_{r,s}(\delta)$ (given by killing the 
ideal generated by $u_r$). 
Similarly for $s$ instead of $r$.
\end{defn}

\begin{lem}\label{lem-walledsecondb}
Let $0<\Char(\K)=p\leq\min\{r,s\}$. Then $\cal{B}_{r,s}(\delta)$ is non-semisimple.
\end{lem}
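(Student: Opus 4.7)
The plan is to produce an explicit non-semisimple quotient of $\cal{B}_{r,s}(\delta)$; since every quotient of a semisimple algebra is semisimple, exhibiting such a quotient forces $\cal{B}_{r,s}(\delta)$ itself to be non-semisimple.

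Concretely, I would invoke the observation already recorded in Definition~\ref{defn-walledbrauer}: the group algebra $\K[S_r]$ appears as a quotient of $\cal{B}_{r,s}(\delta)$, obtained by killing the two-sided ideal generated by $u_r$ (and collapsing the generators $\sigma_{r+1},\dots,\sigma_{r+s-1}$ if needed). This is easy to check directly from the presentation: setting $u_r=0$ trivializes every defining relation involving $u_r$, and among the surviving generators $\sigma_1,\dots,\sigma_{r-1}$ the remaining relations $\sigma_i^2=1$, $\sigma_i\sigma_j=\sigma_j\sigma_i$ for $|i-j|>1$ and $\sigma_i\sigma_j\sigma_i=\sigma_j\sigma_i\sigma_j$ for $|i-j|=1$ are precisely the standard Coxeter relations for $S_r$. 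Hence the quotient map $\cal{B}_{r,s}(\delta)\twoheadrightarrow\K[S_r]$ is well-defined.

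By Maschke's theorem, $\K[S_r]$ fails to be semisimple whenever $p=\Char(\K)$ divides $|S_r|=r!$, in particular whenever $p\leq r$. The hypothesis $p\leq\min\{r,s\}$ certainly yields $p\leq r$, so $\K[S_r]$ is non-semisimple. Since non-semisimplicity is inherited by algebras having a non-semisimple quotient, we conclude that $\cal{B}_{r,s}(\delta)$ is non-semisimple as well, proving the lemma.

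There is no genuine obstacle here: the argument reduces to verifying that the surjection onto $\K[S_r]$ is well-defined and then citing Maschke. I note in passing that the symmetric hypothesis $p\leq\min\{r,s\}$ is in fact slightly stronger than what this lemma alone requires — the weaker condition $p\leq\max\{r,s\}$ would suffice, using $\K[S_s]$ in place of $\K[S_r]$ when convenient — but the symmetric formulation is natural given the $r\leftrightarrow s$ symmetry of $\cal{B}_{r,s}(\delta)$ and fits the way the lemma is used later.
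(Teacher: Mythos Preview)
Your proof is correct and follows exactly the same approach as the paper: exhibit $\K[S_r]$ (or $\K[S_s]$) as a quotient of $\cal{B}_{r,s}(\delta)$, invoke Maschke's theorem to see that this quotient is non-semisimple when $p\leq r$, and conclude since semisimple algebras have only semisimple quotients. Your additional remark that $p\leq\max\{r,s\}$ would already suffice is a correct observation.
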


\begin{proof}
Assume $r\leq s$ and $0<p\leq r$. Note that $\K[S_r]$ is 
a non-semisimple quotient of $\cal{B}_{r,s}(\delta)$ by Maschke's Theorem. Since 
quotients of semisimple algebras are semisimple, $\cal{B}_{r,s}(\delta)$ 
can not be semisimple.
Dually for $r\geq s$ and $0<p\leq s$. Hence, the statement follows.
\end{proof}

\begin{rem}\label{rem-walledbrauer}
One can think of the generators of $\cal{B}_{r,s}(\delta)$ as being
generators of Kauffman's \textit{oriented} tangle algebra with $r$ left upwards
and $s$ right downwards pointing arrows as follows:
\begin{gather*}
\sigma_{<r}=
\xy
(0,0)*{
\begin{tikzpicture}[scale=.3]
	\draw [very thick, ->] (-5,-1) to (-5,1);
	\draw [very thick, ->] (-3,-1) to (-3,1);
	\draw [very thick, ->] (-1,-1) to (1,1);
	\draw [very thick, ->] (1,-1) to (-1,1);
	\draw [very thick, ->] (3,-1) to (3,1);
	\draw [very thick, ->] (5,-1) to (5,1);
	\draw [very thick, ->] (7,1) to (7,-1);
	\draw [very thick, ->] (9,1) to (9,-1);
	\node at (-3.9,0) {\tiny $\cdots$};
	\node at (4.1,0) {\tiny $\cdots$};
	\node at (8.1,0) {\tiny $\cdots$};
\end{tikzpicture}
};
\endxy\quad,\quad
\sigma_{>r}=
\xy
(0,0)*{
\begin{tikzpicture}[scale=.3]
	\draw [very thick, ->] (3,-1) to (3,1);
	\draw [very thick, ->] (5,-1) to (5,1);
	\draw [very thick, ->] (7,1) to (7,-1);
	\draw [very thick, ->] (9,1) to (9,-1);
	\draw [very thick, ->] (11,1) to (13,-1);
	\draw [very thick, ->] (13,1) to (11,-1);
	\draw [very thick, ->] (15,1) to (15,-1);
	\draw [very thick, ->] (17,1) to (17,-1);
	\node at (3.9,0) {\tiny $\cdots$};
	\node at (8.1,0) {\tiny $\cdots$};
	\node at (16.1,0) {\tiny $\cdots$};
\end{tikzpicture}
};
\endxy\\
u_r\;\;=
\xy
(0,0)*{
\begin{tikzpicture}[scale=.3]
	\draw [very thick, ->] (-1,-1) to (-1,1);
	\draw [very thick, ->] (1,-1) to (1,1);
	\draw [very thick, ->] (3,-1) to (3,1);
	\draw [very thick, directed=.55] (5,-1) to [out=90,in=180] (6,-.35) to [out=0,in=90] (7,-1);
	\draw [very thick, directed=.55] (7,1) to [out=270,in=0] (6,.35) to [out=180,in=270] (5,1);
	\draw [very thick, ->] (9,1) to (9,-1);
	\draw [very thick, ->] (11,1) to (11,-1);
	\draw [very thick, ->] (13,1) to (13,-1);
	\node at (2.1,0) {\tiny $\cdots$};
	\node at (10.1,0) {\tiny $\cdots$};
\end{tikzpicture}
};
\endxy
\end{gather*}
In this setting, the relations from Definition~\ref{defn-walledbrauer} can be interpreted in the 
usual, topological sense of Kauffman's tangle algebra (each internal circle 
can be removed and gives a factor $\delta\in\K$).
Here is an example of a typical element in $\cal{B}_{3,2}(\delta)$:
\[
\xy
(0,0)*{
\begin{tikzpicture}[scale=.3]
	\draw [very thick] (-5,-1) to (-3,1);
	\draw [very thick] (-3,-1) to (-5,1);
	\draw [very thick, ->] (-1,1) to (-1,-1);
	\draw [very thick, ->] (1,1) to (1,-1);
	\draw [very thick] (-5,1) to (-5,3);
	\draw [very thick] (-3,1) to [out=90,in=180] (-2,1.75) to [out=0,in=90] (-1,1);
	\draw [very thick] (-1,3) to (1,1);
	\draw [very thick] (1,3) to [out=270,in=0] (-1,2.25) to [out=180,in=270] (-3,3);
	\draw [very thick] (-5,3) to (-7,5);
	\draw [very thick, directed=.25] (-3,3) to [out=90,in=180] (-1,3.75) to [out=0,in=90] (1,3);
	\draw [very thick] (-1,5) to (-1,3);
	\draw [very thick] (1,5) to [out=270,in=0] (-1,4.25) to [out=180,in=270] (-3,5);
	\draw [very thick] (-7,-1) to (-7,3);
	\draw [very thick] (-7,3) to (-5,5);
	\draw [very thick, ->] (-7,5) to (-5,7);
	\draw [very thick, ->] (-5,5) to (-7,7);
	\draw [very thick, ->] (-3,5) to (-3,7);
	\draw [very thick] (-1,5) to (1,7);
	\draw [very thick] (1,5) to (-1,7);
\end{tikzpicture}
};
\endxy
\;
=
\;
\delta\cdot
\xy
(0,0)*{
\begin{tikzpicture}[scale=.3]
	\draw [very thick, ->] (-7,-1) to (-7,3);
	\draw [very thick, ->] (1,3) to (1,-1);
	\draw [very thick] (-5,-1) to (-3,1);
	\draw [very thick] (-3,-1) to (-5,1);
	\draw [very thick, ->] (-5,1) to (-5,3);
	\draw [very thick, ->] (-1,1) to (-1,-1);	
	\draw [very thick] (-3,1) to [out=90,in=180] (-2,1.65) to [out=0,in=90] (-1,1);
	\draw [very thick] (-1,2.9) to [out=270,in=0] (-2,2.25) to [out=180,in=270] (-3,2.9);
	\draw [very thick, ->] (-3,2.9) to (-3,3);
	\draw [very thick] (-1,2.9) to (-1,3);
\end{tikzpicture}
};
\endxy
\in\cal{B}_{3,2}(\delta).
\] 
A \textit{primitive (walled Brauer) diagram} is a single diagram (instead of a linear combination) 
of Kauffman's oriented tangle 
algebra without internal circles. These form a basis of 
$\cal{B}_{r,s}(\delta)$.

One could also define a \textit{quantized} walled Brauer algebra 
$\cal{B}_{r,s}([\delta])$, see~\cite[Definition~2.2]{dds}.
\end{rem}

\begin{nota}\label{nota-delta}
From now on: if we have $\Char(\K)=p$, then 
we additionally assume that $\delta\in\F\subset\K$. Here $\F$ is 
the field with $p$ elements. Hence, 
there is a minimal $\delta_p\in\N$ 
such that $\delta\equiv \delta_p\;\text{mod}\;p$.
By convention, $\delta\in\Z$ and $\delta_0=|\delta|$ if $\Char(\K)=0$.
\end{nota}

We can associate to each 
pair of 
Young diagrams $(\lambda,\mu)$ with 
$\lambda\in\Lambda^+(r-i)$ and $\mu\in\Lambda^+(s-i)$ (for $i=0,1\dots,\min\{r,s\}$) a 
$\cal{B}_{r,s}(\delta)$-module via induction, see~\cite[(2.9)]{bs5} for the 
classical case and~\cite[Theorem~2.7]{cddm} for the general case. That is,
\begin{equation}\label{eq-stilladded}
D_1^{\lambda,{\mu}}=\cal{B}_{r,s}(\delta)
\otimes_{\K(S_{r})\otimes\K(S_{s})}
(D_1^{\lambda}\otimes D_1^{\mu}).
\end{equation}
If $\K=\C$ and $r+s\leq\delta_0+1$, 
then these $\cal{B}_{r,s}(\delta)$-modules are exactly the simple 
$\cal{B}_{r,s}(\delta)$-modules, see~\cite[Theorem~2.7]{cddm}.

Let $V=\Delta_1(\omega_1)$ denote the $n=m$ dimensional vector representation 
of $\Uo=\Uo(\gll{m})$ and let $V^*$ be its dual. We set 
$T_{n}^{r,s}=V^{\otimes r}\otimes (V^*)^{\otimes s}$. 
By~\cite[Section~3]{dds}, there is an 
action of $\cal{B}_{r,s}(\delta)$ on $T_{n}^{r,s}$ for $n=\delta_p$. This 
action is given by letting $\K(S_r)$ and $\K(S_s)$ act by 
permutations. In order to explain 
the action of $u_r$ denote by $\{1,\dots,n\}$ and $\{\overline{1},\dots,\overline{n}\}$ 
a basis of $V$ and its dual basis of $V^*$ respectively. Let $v,w\in\{1,\dots,n\}$ with $v\neq w$. Then 
$u_r$ acts on the components $r$ and $r+1$ of a primitive tensor in $T_{n}^{r,s}$ by sending 
$v\otimes\overline{v}$ to 
$\sum_{i=1}^ni\otimes\overline{i}$ and $v\otimes\overline{w}$ to zero.

Recall the following `mixed' version of Schur-Weyl duality.

\begin{thm}(\textbf{Mixed Schur-Weyl duality, type $\Am$})\label{thm-schur-weyl2}
Let $n=\delta_p$.
\begin{itemize}
\item[(a)] The actions of $\Uo$ and $\cal{B}_{r,s}(\delta)$ on $T_{n}^{r,s}$ commute.
\item[(b)] Let $\Phi_{\mathrm{wBr}}$ be the algebra homomorphism induced by the action 
of $\cal{B}_{r,s}(\delta)$ on $T_{n}^{r,s}$. Then
\begin{gather*}
\begin{aligned}
\Phi_{\mathrm{wBr}}\colon\cal{B}_{r,s}(\delta)\twoheadrightarrow
\End_{\Uo}(T_{n}^{r,s}),\quad\text{and}\quad\Phi_{\mathrm{wBr}}\colon
\cal{B}_{r,s}(\delta)\xrightarrow{\cong}\End_{\Uo}(T_{n}^{r,s}),\text{ if }n\geq r+s.
\end{aligned}
\end{gather*}
\item[(c)] Let $\K=\C$ and $r+s\leq\delta_0+1$. Moreover, set $Y=\{0,1,\dots,\min\{r,s\}\}$. 
Then there is 
a $(\Uo,\cal{B}_{r,s}(\delta))$-bimodule decomposition
\begin{equation*}
T_{n}^{r,s}\cong\bigoplus_{i\in Y}\bigoplus_{\lambda\in\Lambda^+(r-i) \atop \mu\in\Lambda^+(s-i)}\Delta_1(\lambda,\overline{\mu})\otimes D_1^{\lambda,\overline{\mu}},
\end{equation*}
with simple $\Uo$-modules $\Delta_1(\lambda,\overline{\mu})\cong L_1(\lambda,\overline{\mu})$.
\end{itemize}
\end{thm}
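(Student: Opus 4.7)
The plan is to treat the three parts separately, relying on the rigid tensor structure of $\Mod{\Uo}$ together with the tilting module formalism already established in Section~\ref{sec-prelim}.

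For part (a), the symmetric group generators $\sigma_i$ act as honest transpositions of tensor factors of the same type (either both lying in $V$-slots, for $i<r$, or both in $V^*$-slots, for $i>r$), hence commute with the diagonal coproduct action of $\Uo$. The generator $u_r$ acts in slots $(r,r+1)$ as the composition of the evaluation $V\otimes V^*\to\K$ and the coevaluation $\K\to V\otimes V^*$; both are morphisms in $\Mod{\Uo}$ by rigidity of the category, so $u_r$ commutes with $\Uo$ as well. That the walled Brauer relations are actually satisfied (and not just something weaker) is then verified diagrammatically using the interpretation of Remark~\ref{rem-walledbrauer}, and is spelled out in~\cite[Section~3]{dds}.

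For part (b), surjectivity of $\Phi_{\mathrm{wBr}}$ is the classical first fundamental theorem of invariant theory for $GL_n$ on mixed tensors, originally due to Koike~\cite{koike} and Turaev~\cite{turaev}: every $\Uo$-invariant in the space $\End(T_n^{r,s})\cong(V\otimes V^*)^{\otimes(r+s)}$ (after folding duals via rigidity) is a $\K$-linear combination of primitive walled Brauer diagrams. For the isomorphism statement when $n\geq r+s$, I would argue by a dimension count: $T_n^{r,s}$ is itself a $\Uo$-tilting module (a direct analogue of Proposition~\ref{prop-prop}, since $V$ and $V^*$ are both Weyl modules with appropriate highest weight in type $\AM$), and for any $\Uo$-tilting module $T$ we have
\[
\dim_{\K}\Endrr=\sum_{\lambda\in X^+}(T:\Dl)^2,
\]
which is independent of the field $\K$ by Proposition~\ref{prop-classchar}. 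It therefore suffices to match this number with the rank of $\cal{B}_{r,s}(\delta)$ in the classical complex case, where bijectivity in the stable range $n\geq r+s$ is a standard count of oriented Brauer diagrams.

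For part (c), over $\K=\C$ with $r+s\leq\delta_0+1$, the algebra $\cal{B}_{r,s}(\delta)$ is semisimple by~\cite[Theorem~2.7]{cddm}, and $\Uo$ acts semisimply on $T_n^{r,s}$ because in characteristic zero every Weyl factor $\Delta_1(\lambda,\overline{\mu})$ is simple. Classical double centralizer theory (using part (a) together with the surjectivity in part (b)) then produces a multiplicity-free $(\Uo,\cal{B}_{r,s}(\delta))$-bimodule decomposition of $T_n^{r,s}$, and matching characters with the inductive construction~\eqref{eq-stilladded} identifies the summands with the claimed $\Delta_1(\lambda,\overline{\mu})\otimes D_1^{\lambda,\overline{\mu}}$. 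The main obstacle will be the boundary case $r+s=\delta_0+1$, where (b) only provides a surjection and not an isomorphism; there one must additionally verify that the kernel of $\Phi_{\mathrm{wBr}}$ is exactly the sum of the $D_1^{\lambda,\overline{\mu}}$-isotypic components for which $\Delta_1(\lambda,\overline{\mu})=0$ (i.e.\ those pairs violating the constraint $\lambda_{m+1}=\mu_{m+1}=0$), which is forced by the semisimplicity of $\cal{B}_{r,s}(\delta)$ combined with the known $\Uo$-character of $T_n^{r,s}$.
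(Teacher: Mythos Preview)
Your outline for (a) and (c) is essentially correct and in fact more explicit than the paper's own proof, which simply cites \cite[Theorem~7.1 and Corollary~7.2]{dds} for (a) and (b) and \cite[Theorem~1.1]{koike} together with~\eqref{eq-stilladded} for (c).

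The gap is in part (b). You invoke ``the classical first fundamental theorem of invariant theory for $GL_n$'' for surjectivity, but that result is over $\C$ (or characteristic zero), whereas the statement here is for an arbitrary field $\K$ with the conventions of Section~\ref{sec-prelim}. Surjectivity of $\Phi_{\mathrm{wBr}}$ in positive characteristic is genuinely harder and is precisely the content of \cite[Theorem~7.1]{dds}; Koike and Turaev do not supply it. Your dimension count via the tilting formula $\dim\End_{\Uo}(T_n^{r,s})=\sum_{\lambda}(T_n^{r,s}:\Delta_1(\lambda))^2$ is correct and does show that this dimension is field-independent, but matching dimensions alone does not yield an isomorphism: you still need either injectivity or surjectivity over the given $\K$, and neither follows from bijectivity over $\C$ by base change (rank can drop under reduction mod $p$). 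The missing ingredient is an argument, specific to the walled Brauer action, that the primitive diagrams act by $\K$-linearly independent endomorphisms when $n\geq r+s$; this is what \cite{dds} provides and what your sketch does not.
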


\begin{proof}
Part (a) and (b) are proven in~\cite[Theorem~7.1 and Corollary~7.2]{dds}. The statement 
(c) can be derived from~\cite[Theorem~1.1]{koike} together with~\eqref{eq-stilladded}.
\end{proof}

\begin{rem}\label{rem-qversion}
\begin{enumerate}[(a)]
\item The assumption $r+s\leq \delta_0+1$ in (c) of Theorem~\ref{thm-schur-weyl2} 
will turn out to be necessary to ensure that $\cal{B}_{r,s}(\delta)$ is 
semisimple.
\item Note that there is also a quantized version 
of Theorem~\ref{thm-schur-weyl2}, see~\cite{dds}.
\end{enumerate}
\end{rem}

\subsection{Types \texorpdfstring{$\Bm,\Cm,\Dmm$}{B,C,D}: the Brauer algebras and Schur-Weyl-Brauer duality}

The following algebra, called the \textit{Brauer algebra}, 
goes back to work of Brauer~\cite{bra}.

\begin{defn}\label{defn-brauer}
Let $d\in\Zg$, $\delta\in\K$.
The \textit{Brauer algebra} $\cal{B}_{d}(\delta)$ is the 
$\K$-algebra
generated by $\{\sigma_i,u_i\mid i=1,\dots,d-1\}$ subject to the 
relations
\begin{gather*}
\sigma_i^2=1, \quad \quad\text{for }i=1,\dots,d-1,
\\
\sigma_i\sigma_j=\sigma_j\sigma_i,\quad\text{for }|i-j|>1,\quad\quad
\sigma_i\sigma_j\sigma_i=\sigma_j\sigma_i\sigma_j,\quad\text{for }|i-j|=1,
\\
u_i^2=\delta u_i,\quad\text{for }i=1,\dots,d-1,\quad\quad u_iu_j=u_ju_i,\quad\text{for }|i-j|>1,
\\
u_iu_{i+1}u_i=u_i,\quad\text{for }i=1,\dots,d-2,\quad\quad
u_iu_{i-1}u_i=u_i,\quad\text{for }i=2,\dots,d-1,\\
\sigma_iu_ju_i=\sigma_ju_i\quad\text{for }|i-j|=1,\quad\quad
\sigma_iu_i=u_i=u_i\sigma_i,\quad\text{for }i=1,\dots,d-1.
\end{gather*}
Note that $\K[S_d]$ is a subalgebra of $\cal{B}_{d}(\delta)$ 
as well as a quotient of $\cal{B}_{d}(\delta)$ (given by killing the 
ideal generated by the $u_i$'s).
\end{defn}

\begin{lem}\label{lem-brauersecondnew}
Let $\Char(\K)=p\leq d$. Then $\cal{B}_{d}(\delta)$ is non-semisimple.
\end{lem}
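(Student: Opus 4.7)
The plan is to mimic the argument used for the walled Brauer algebra in Lemma~\ref{lem-walledsecondb}. The key observation, which is recorded explicitly at the end of Definition~\ref{defn-brauer}, is that $\K[S_d]$ appears as a quotient of $\cal{B}_{d}(\delta)$: the two-sided ideal generated by $u_1,\dots,u_{d-1}$ is a proper ideal (it is killed by the map sending each $\sigma_i$ to the corresponding transposition in $S_d$ and each $u_i$ to $0$), and the quotient by it is exactly $\K[S_d]$.

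First I would invoke Maschke's theorem: under the hypothesis $\Char(\K)=p\leq d$, the group algebra $\K[S_d]$ is not semisimple, since $p$ divides the order $d!$ of $S_d$. Then I would apply the standard fact that every quotient of a semisimple algebra is semisimple (equivalently, a non-semisimple quotient obstructs semisimplicity of the ambient algebra). Combining these two facts immediately forces $\cal{B}_{d}(\delta)$ to be non-semisimple, independently of the value of $\delta\in\K$.

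There is essentially no obstacle here; the whole content is the existence of the surjection $\cal{B}_{d}(\delta)\twoheadrightarrow\K[S_d]$, which is built into the definition of the Brauer algebra. The only thing to double-check is that the assignment $\sigma_i\mapsto s_i$, $u_i\mapsto 0$ actually respects all the defining relations of $\cal{B}_{d}(\delta)$ from Definition~\ref{defn-brauer}: the braid-type relations among the $\sigma_i$'s become the Coxeter relations in $S_d$, and every relation involving some $u_i$ becomes $0=0$ after setting the $u_i$'s to zero. This makes the quotient map well-defined, and the lemma follows.
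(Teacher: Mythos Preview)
Your proposal is correct and is precisely the approach the paper takes: the paper's proof of this lemma reads in its entirety ``Analogously to Lemma~\ref{lem-walledsecondb},'' i.e.\ use the quotient $\cal{B}_d(\delta)\twoheadrightarrow\K[S_d]$ together with Maschke's theorem and the fact that quotients of semisimple algebras are semisimple. Your added verification that the assignment $\sigma_i\mapsto s_i$, $u_i\mapsto 0$ respects the defining relations is a harmless elaboration of what the paper leaves implicit in Definition~\ref{defn-brauer}.
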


\begin{proof}
Analogously to Lemma~\ref{lem-walledsecondb}.
\end{proof}

\begin{rem}\label{rem-brauer}
One can think of the generators of $\cal{B}_{d}(\delta)$ as being
generators of Kauffman's \textit{unoriented} tangle algebra with $d$ strands as follows:
\begin{gather*}
\sigma_{i}=
\xy
(0,0)*{
\begin{tikzpicture}[scale=.3]
	\draw [very thick] (-5,-1) to (-5,1);
	\draw [very thick] (-3,-1) to (-3,1);
	\draw [very thick] (-1,-1) to (1,1);
	\draw [very thick] (1,-1) to (-1,1);
	\draw [very thick] (3,-1) to (3,1);
	\draw [very thick] (5,-1) to (5,1);
	\node at (-3.9,0) {\tiny $\cdots$};
	\node at (4.1,0) {\tiny $\cdots$};
\end{tikzpicture}
};
\endxy\quad,\quad
u_i=
\xy
(0,0)*{
\begin{tikzpicture}[scale=.3]
	\draw [very thick] (1,-1) to (1,1);
	\draw [very thick] (3,-1) to (3,1);
	\draw [very thick] (5,-1) to [out=90,in=180] (6,-.35) to [out=0,in=90] (7,-1);
	\draw [very thick] (7,1) to [out=270,in=0] (6,.35) to [out=180,in=270] (5,1);
	\draw [very thick] (9,1) to (9,-1);
	\draw [very thick] (11,1) to (11,-1);
	\node at (2.1,0) {\tiny $\cdots$};
	\node at (10.1,0) {\tiny $\cdots$};
\end{tikzpicture}
};
\endxy
\end{gather*}
Removing circles gives a factor $\delta\in\K$ again. An example is
\[
\xy
(0,0)*{
\begin{tikzpicture}[scale=.3]
	\draw [very thick] (-5,-1) to (-3,1);
	\draw [very thick] (-3,-1) to (-5,1);
	\draw [very thick] (-1,1) to (-1,-1);
	\draw [very thick] (1,1) to (1,-1);
	\draw [very thick] (-5,1) to (-5,3);
	\draw [very thick] (-3,1) to [out=90,in=180] (-2,1.75) to [out=0,in=90] (-1,1);
	\draw [very thick] (-1,3) to (1,1);
	\draw [very thick] (1,3) to [out=270,in=0] (-1,2.25) to [out=180,in=270] (-3,3);
	\draw [very thick] (-5,3) to (-7,5);
	\draw [very thick] (-3,3) to [out=90,in=180] (-1,3.75) to [out=0,in=90] (1,3);
	\draw [very thick] (-1,5) to (-1,3);
	\draw [very thick] (1,5) to [out=270,in=0] (-1,4.25) to [out=180,in=270] (-3,5);
	\draw [very thick] (-7,-1) to (-7,3);
	\draw [very thick] (-7,3) to (-5,5);
	\draw [very thick] (-7,5) to (-7,7);
	\draw [very thick] (-1,5) to [out=90,in=180] (0,5.65) to [out=0,in=90] (1,5);
	\draw [very thick] (1,7) to [out=270,in=0] (0,6.35) to [out=180,in=270] (-1,7);
	\draw [very thick] (-5,5) to [out=90,in=180] (-4,5.65) to [out=0,in=90] (-3,5);
	\draw [very thick] (-3,7) to [out=270,in=0] (-4,6.35) to [out=180,in=270] (-5,7);
\end{tikzpicture}
};
\endxy
\;
=
\;
\delta\cdot
\xy
(0,0)*{
\begin{tikzpicture}[scale=.3]
	\draw [very thick] (-3,0) to [out=90,in=180] (-1,1) to [out=0,in=90] (1,0);
	\draw [very thick] (-5,0) to [out=90,in=180] (-1,2) to [out=0,in=90] (3,0);
	\draw [very thick] (-1,0) to (-5,4);
	\draw [very thick] (-1,4) to [out=270,in=0] (-2,3.35) to [out=180,in=270] (-3,4);
	\draw [very thick] (3,4) to [out=270,in=0] (2,3.35) to [out=180,in=270] (1,4);
\end{tikzpicture}
};
\endxy
\in\cal{B}_5(\delta).
\]
A \textit{primitive (Brauer) diagram} is a single diagram (instead of a linear combination) 
of Kauffman's unoriented tangle 
algebra without internal circles. These form a basis of 
$\cal{B}_{d}(\delta)$.

There is also again a \textit{quantized} Brauer algebra 
$\cal{B}_{d}([\delta])$ (called \textit{Birman-Murakami-Wenzl or BMW algebra}), see for example~\cite[Section~2]{mw}.
\end{rem}

For the Brauer algebras we use from now on 
the same conventions for the parameter $\delta_p$ 
as for the walled Brauer algebras, see Conventions~\ref{nota-delta}.
Recall that we can associate to each 
Young diagram $\lambda$ with
$\lambda\in\Lambda^+(d-2i)$ 
(for $i=0,1,\dots,\lfloor{\textstyle \frac{1}{2}}d\rfloor$) a 
$\cal{B}_{d}(\delta)$-module $D_1^{\lambda}$. If $\K=\C$ and $2d\leq \delta_0+1$, then 
these are simple $\cal{B}_{d}(\delta)$-modules, 
see for example~\cite[Section~4]{gl}.

Let $V=\Delta_1(\omega_1)$ denote the $n$ dimensional vector representation 
of $\Uo=\Uo(\fg)$ for $\fg$ being either $\soo{2m+1}$, 
$\spo{2m}$ or $\soo{2m}$ (here $n=2m+1$ for $\fg=\soo{2m+1}$ or $n=2m$ for $\fg=\spo{2m}$ 
and for $\fg=\soo{2m}$).  
By~\cite[Theorem~3.11]{es1}, there is an 
action of $\cal{B}_{d}(\delta)$ on $T_{n}^{d}=V^{\otimes d}$ 
for $n=\delta_p$. The action is very similar to the one 
for $\cal{B}_{r,s}(\delta)$ recalled above. We point out that in type $\CM$ the 
action of $u_i\in\cal{B}_d(\delta)$ has an additional sign coming from the 
part of odd parity from the super action studied in~\cite[Theorem~3.11]{es1}.

\begin{thm}(\textbf{Schur-Weyl-Brauer duality, types $\Bm,\Cm,\Dmm$})\label{thm-schur-weyl3}
Let $n=\delta_p$.
\begin{itemize}
\item[(a)] The actions of $\Uo$ and $\cal{B}_{d}(\delta)$ on $T_{n}^{d}$ commute.
\item[(b)] Let $\Phi_{\mathrm{Br}}$ be the algebra homomorphism induced by the action 
of $\cal{B}_{d}(\delta)$ on $T_{n}^{d}$. Then
\begin{gather*}
\begin{aligned}
\BM\colon&\;\Phi_{\mathrm{Br}}\colon\cal{B}_{d}(\delta)\twoheadrightarrow
\End_{\Uo}(T_{n}^{d}),\quad\text{and}\quad\Phi_{\mathrm{Br}}\colon
\cal{B}_{d}(\delta)\xrightarrow{\cong}\End_{\Uo}(T_{n}^{d})\text{ if }n\geq d.\\
\CM\colon&\;\Phi_{\mathrm{Br}}\colon\cal{B}_{d}(\delta)\twoheadrightarrow
\End_{\Uo}(T_{n}^{d}),\quad\text{and}\quad\Phi_{\mathrm{Br}}\colon
\cal{B}_{d}(\delta)\xrightarrow{\cong}\End_{\Uo}(T_{n}^{d})\text{ if }n\geq 2d.\\
\DM\colon &\;\Phi_{\mathrm{Br}}\colon\cal{B}_{d}(\delta)\hookrightarrow
\End_{\Uo}(T_{n}^{d})\text{ if }n\geq d,\;\text{and}\;\Phi_{\mathrm{Br}}\colon\cal{B}_{d}(\delta)\xrightarrow{\cong}\End_{\Uo}(T_{n}^{d})\text{ if }n\geq 2d+1.
\end{aligned}
\end{gather*}
\item[(c)] Let $\K=\C$, $d\leq \delta_0+1$ and $\Uo=\Uo(\mathfrak{sp}_{2m})$ (thus, we have $2m=n=-\delta$). 
Moreover, set $Y=\{0,1,\dots,\lfloor\frac{1}{2}d\rfloor\}$.
Then there is 
a $(\Uo,\cal{B}_d(\delta))$-bimodule decomposition
\begin{equation*}
T_{n}^{d}\cong\bigoplus_{i\in Y}\bigoplus_{\lambda\in\Lambda^+(d-2i)}\Delta_1(\lambda)\otimes D_1^{\lambda},
\end{equation*}
with simple $\Uo$-modules $\Delta_1(\lambda)\cong L_1(\lambda)$.
\end{itemize}
\end{thm}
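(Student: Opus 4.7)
The plan is to deduce all three parts more or less directly from the classical case by combining the classical first fundamental theorem of invariant theory with the tilting-theoretic dimension count of Proposition~\ref{prop-prop}.

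First I would define the $\cal{B}_d(\delta)$-action on $T_n^d$ diagrammatically, using the fact that the vector representation $V$ is self-dual in types $\BM,\CM,\DM$: there exist $\Uo$-equivariant maps $\mathrm{ev}\colon V\otimes V\to\K$ and $\mathrm{coev}\colon\K\to V\otimes V$ satisfying $\mathrm{ev}\circ\mathrm{coev}=\delta\cdot\mathrm{id}_{\K}$ (with the symplectic sign convention in type $\CM$). Let $\sigma_i$ act as the (signed in type $\CM$) flip at strands $i,i+1$ and $u_i$ as the cap-cup composition at those strands. Part (a) is then automatic since the flip, $\mathrm{ev}$ and $\mathrm{coev}$ are all $\Uo$-equivariant by construction; the Brauer relations of Definition~\ref{defn-brauer} reduce to planar isotopy of tangle diagrams together with the bubble-evaluation.

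For part (b) I would split surjectivity and injectivity. Surjectivity in types $\BM$ and $\CM$ is the first fundamental theorem of invariant theory for orthogonal and symplectic groups; for arbitrary $\K$ one can import it from~\cite[Theorem~3.11]{es1}. In type $\DM$, surjectivity genuinely fails outside the stable range (there are Pfaffian-type invariants not captured by Brauer diagrams), which is exactly why the statement there is split into two separate cases. For injectivity I would invoke Proposition~\ref{prop-prop}: the dimension $\dim\bigl(\End_{\Uo}(T_n^d)\bigr)$ depends only on $\fg$ and $d$, hence may be computed in the classical case $\K=\C,\,q=1$. There, the classical branching rules for tensor powers of the vector representation of $\soo{2m+1}$, $\spo{2m}$ or $\soo{2m}$ give
\[
\dim\bigl(\End_{\Uo}(T_n^d)\bigr)=(2d-1)!!=\dim\cal{B}_d(\delta)
\]
in exactly the ranges stated: $n\geq d$ in type $\BM$, $n\geq 2d$ in type $\CM$ and $n\geq 2d+1$ in type $\DM$. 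Combined with surjectivity this yields the asserted isomorphisms; in type $\DM$ with $d\leq n\leq 2d$ one only obtains injectivity, which is precisely the first half of the $\DM$ statement.

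For part (c), specialize to $\K=\C$, $\fg=\spo{2m}$ and $d\leq 2m+1=\delta_0+1$. In this range $\cal{B}_d(\delta)$ is semisimple (as will be re-derived in Section~\ref{sec-brauer} by our method) and $T_n^d$ is completely reducible as a $\Uo$-module by Remark~\ref{rem-roots}. Since by part (b) the two commuting actions of $\Uo$ and $\cal{B}_d(\delta)$ are mutual centralizers and both algebras are semisimple, the double centralizer theorem produces a multiplicity-free bimodule decomposition. The parametrization by $\lambda\in\Lambda^+(d-2i)$ for $i\in Y$ and the identification $\Delta_1(\lambda)\cong L_1(\lambda)$ come from the classical branching rule for the symplectic group, see for instance~\cite[Section~4]{gl}. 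The main obstacle will be extending the classical dimension count to arbitrary characteristic and parameter uniformly; this is precisely what Proposition~\ref{prop-prop} buys us, reducing the whole problem to the well-studied semisimple classical setting.
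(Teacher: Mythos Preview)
Your approach is essentially the paper's: import (a) and the surjectivity half of (b) from~\cite{es1}, then use Proposition~\ref{prop-prop} to reduce the dimension of $\End_{\Uo}(T_n^d)$ to the classical case and conclude the isomorphisms. The paper does exactly this, citing~\cite[Theorem~5.5]{es1} wholesale for (a) and (b) and only invoking the dimension argument to sharpen the type $\BM$ bound; for (c) it simply cites~\cite[Theorem~1.1]{hu}.

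There is one genuine gap. In type $\DM$ with $d\leq n\leq 2d$ you claim ``one only obtains injectivity'', but your argument does not actually produce it. Proposition~\ref{prop-prop} controls $\dim\End_{\Uo}(T_n^d)$, not $\dim\ker(\Phi_{\mathrm{Br}})$ or $\dim\mathrm{im}(\Phi_{\mathrm{Br}})$. Where you have surjectivity (types $\BM$, $\CM$, and type $\DM$ for $n\geq 2d+1$) the dimension match forces injectivity. But in the intermediate $\DM$ range surjectivity fails, and knowing that $\dim\End_{\Uo}(T_n^d)$ equals its classical value tells you nothing about the kernel of $\Phi_{\mathrm{Br}}$ in positive characteristic: injectivity over $\C$ does not base-change for free, since the relevant integral minors could be divisible by $p$. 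The paper avoids this by taking the type $\DM$ injectivity directly from~\cite[Theorem~5.5]{es1}; you should do the same rather than pretend it falls out of the dimension count.

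A smaller point on (c): justifying the double centralizer step by forward-referencing Section~\ref{sec-brauer} for the semisimplicity of $\cal{B}_d(\delta)$ is circular, since that section uses the present theorem. Over $\C$ in the stated range the semisimplicity is classical (Brown, Wenzl), so cite that instead --- or, as the paper does, simply cite~\cite[Theorem~1.1]{hu} for the whole decomposition.
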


\begin{proof}
The parts (a),(b) are proven 
in~\cite[Theorem~5.5]{es1} (published version), but the 
criterion given there is not optimal in type $\BM$. 
The 
above bound holds in type $\BM$: note that $T_{n}^{d}\in\T$ (since we assume that 
$\Char(\K)\neq 2$). Thus, by Proposition~\ref{prop-prop}, 
$\dim(\End_{\Uo(\mathfrak{so}_{2m+1})}(T_{n}^{d}))$ is 
as in the classical case. 
Hence, the above bound follows from the classical bound 
(which can already be found implicitly in Brauer's work~\cite{bra}).
The statement 
(c) is given in~\cite[Theorem~1.1]{hu}.
\end{proof}

\begin{rem}\label{rem-qversion2}
\begin{enumerate}[(a)]
\item The assumption $d\leq \delta_0+1$ in (c) of Theorem~\ref{thm-schur-weyl3} 
again turns out to be necessary to ensure that $\cal{B}_{d}(\delta)$ is 
semisimple.
\item Note that there are also quantized versions of Theorem~\ref{thm-schur-weyl3} 
(in some cases and for appropriate parameters), see for example~\cite[Theorem~1.3]{hu} 
or~\cite[(9.6)]{lz2}.
\end{enumerate}
\end{rem}

\subsection{A slightly stronger statement in type \texorpdfstring{$\Dmm$}{D}}

Let $m\geq 1$.
Then surjectivity of $\Phi_{\mathrm{Br}}$ fails in general for $\Uo=\Uo(\soo{2m})$. 
In the remaining part of this section we will determine the image and show
$\mathrm{im}(\Phi_{\mathrm{Br}})\cong\End_{\Uoo}(T^d_n)$, see Theorem~\ref{thm-brauerimage}. 
Here, as we explain below, $\Uoo$ is obtained 
from a non-trivial symmetry of the Dynkin diagram 
of type $\DM$ as in~\eqref{eq-nontrivial}. 
The proof of Theorem~\ref{thm-brauerimage} is slightly 
involved and the main part is a counting argument comparing multiplicities of 
$\Uoo$-modules to multiplicities of $\Uo(\soo{2m^{\prime}})$-modules 
for some large enough $m^{\prime}$, 
see Lemma~\ref{lem-yetanotherone}. We like to point out that our approach is inspired 
partly by~\cite[Section~8]{lz}.

Suppose $\Char(\K)\neq 2$. 
Denote by $\sigma\colon\Uo\to\Uo$ the involution induced by a graph automorphism of the Dynkin 
diagram of type $\DM$. For $m\geq 4$ the automorphism is given via
\begin{equation}\label{eq-nontrivial}
\xy
(0,0)*{
\begin{tikzpicture}[scale=.75]
	\draw [very thick] (0.25,0.01) to (1.25,0.01);
	\draw [very thick] (1.75,0.01) to (2.75,0.01);
	\draw [very thick] (3.25,0.01) to (3.45,0.01);
	\draw [very thick] (4.05,0.01) to (4.25,0.01);
	\draw [very thick] (4.75,0.01) to (5.75,0.01);
	\draw [very thick] (6.138,0.161) to (6.65,1.1);
	\draw [very thick] (6.138,-0.161) to (6.65,-1.1);
	\draw [very thick, <->] (7,1) to [out=315,in=45] (7,-1);
	\draw [very thick] (0,0) circle (0.25cm);
	\draw [very thick] (1.5,0) circle (0.25cm);
	\draw [very thick] (3,0) circle (0.25cm);
	\draw [very thick] (4.5,0) circle (0.25cm);
	\draw [very thick] (6,0) circle (0.25cm);
	\draw [very thick] (6.75,1.3) circle (0.25cm);
	\draw [very thick] (6.75,-1.3) circle (0.25cm);
	\node at (3.8,0) {\tiny $\cdots$};
	\node at (7.75,0) {$\sigma$};
	\node at (0,0.4) {\tiny $\alpha_1$};
	\node at (1.5,0.4) {\tiny $\alpha_2$};
	\node at (3,0.4) {\tiny $\alpha_3$};
	\node at (4.5,0.4) {\tiny $\alpha_{m-3}$};
	\node at (5.75,0.4) {\tiny $\alpha_{m-2}$};
	\node at (7.6,1.3) {\tiny $\alpha_{m-1}$};
	\node at (7.35,-1.3) {\tiny $\alpha_{m}$};
\end{tikzpicture}
};
\endxy
\end{equation}
Moreover, if $m=1$, then $\soo{2}$ is one-dimensional 
and $\sigma$ is trivial. If $m=2$, then $\soo{4}\cong\mathfrak{sl}_2\times\mathfrak{sl}_2$ 
and $\sigma$ swaps the two components of the Dynkin diagram of type 
$\mathbf{A}_1\times\mathbf{A}_1$. For $m=3$, we have 
$\soo{6}\cong\mathfrak{sl}_4$ and $\sigma$ swaps the two extremal nodes of the Dynkin 
diagram of type $\mathbf{A}_3$.

Suppose $M\in\Mod{\Uo}$. Denote by ${}^{\sigma}\!M$ the $\sigma$\textit{-twist of} $M$, that is, 
${}^{\sigma}\! M=M$ as $\K$-vector spaces with $\Uo$ action via
$x\overline{m}=\overline{\sigma(x)m}$ for all $x\in\Uo,m\in M$.
Here we have written $\overline{m}$ for an element $m\in M$ when considered as an element 
of ${}^{\sigma}\! M$.
Set
\begin{equation}\label{eq-nowidetilde}
\tilde M=M\oplus {}^{\sigma}\! M.
\end{equation}

Denote by $\Uoo$ the \textit{skew group ring} $\Uo\rtimes\K(\Z/2\Z)$ 
and by $\tau$ the generator of $\K(\Z/2\Z)$. The elements of $\Uoo$ are 
of the form $x+y\tau$ for $x,y\in\Uo$ and multiplication in $\Uoo$ is such that 
$\Uo$ and $\K(\Z/2\Z)$ are subalgebras together with
\begin{equation}\label{eq-nexttry}
\tau x=\sigma(x)\tau,\quad\text{for all }x\in\Uo.
\end{equation}
As a semidirect product of Hopf algebras, $\Uoo$ is itself a Hopf algebra 
with Hopf subalgebras $\Uo$ and $\K(\Z/2\Z)$. In particular, $\tau$ is \textit{group-like}
and acts on a tensor product as $\tau\otimes\tau$. Moreover, 
$\tilde M$ from above is a $\Uoo$-module with $\tau$-action given via 
$\tau(m,\overline{n})=(n,\overline{m})$ for all $m,n\in M$ (a computation 
shows that, under this convention,~\eqref{eq-nexttry} is preserved).

Now suppose $\Char(\K)=0$. Then the simple $\Uo$-modules are 
the Weyl modules $\Delta_1(\lambda)$ for $\lambda=\sum_{i=1}^m\lambda_i\varepsilon_i\in X^+$ 
with highest weight vector $v_{\lambda}$. 
As $\Uo$-modules, we have 
\[
\quad{}^{\sigma}\!\Delta_1(\lambda)\cong\Delta_1(\overline{\lambda}),\quad\text{for all }\lambda\in X^+,
\]
where $\overline{\lambda}=\sum_{i=1}^{m-1}\lambda_i\varepsilon_i-\lambda_{m}\varepsilon_m$.
In particular, ${}^{\sigma}\!\Delta_1(\lambda)$ has 
$\overline{v}_{\lambda}$ as highest weight vector. We use the abbreviation $\tilde{\Delta}_1(\lambda)$ 
for the $\Uoo$-module given as in~\eqref{eq-nowidetilde} (for $M=\Delta_1(\lambda)$).

In case $\lambda_m=0$ (thus, $\lambda=\overline{\lambda}$), 
set $w_{\lambda}=(v_{\lambda},\overline{v}_{\lambda})$ and 
$w^{\prime}_{\lambda}=(v_{\lambda},-\overline{v}_{\lambda})$. 
Then the $\Uoo$-module 
$\tilde{\Delta}_1(\lambda)$ 
decomposes into $\pm 1$ eigenspaces of $\tau$ given by
\[
\tilde{\Delta}^+_1(\lambda)=\Uo w_{\lambda},\quad\quad\tilde{\Delta}^-_1(\lambda)=\Uo w^{\prime}_{\lambda}.
\]
For example, the vector representation $V$ of $\Uo$ is a $\Uoo$-module with trivial action of 
$\tau$ and $V\cong \tilde{\Delta}^+_1(\omega_1)$. 
These notations enables us to classify simple $\Uoo$-modules in case $\Char(\K)=0$.

\begin{prop}\label{prop-clifford}
Let $\Char(\K)=0$ and $\lambda\in X^+$. Then $\Mod{\Uoo}$ is semisimple, and:
\begin{enumerate}[(a)]
\item If $\lambda_m\neq 0$, then $\tilde{\Delta}_1(\lambda)\cong \tilde{\Delta}_1(\overline{\lambda})$ 
is a simple $\Uoo$-module.
\item If $\lambda_m=0$, then $\tilde{\Delta}^{\pm}_1(\lambda)$ are simple $\Uoo$-modules.
\item Up to isomorphism, the set
\[
\{\tilde{\Delta}_1(\lambda)\mid \lambda\in X^+,\lambda_m>0\}\cup\{\tilde{\Delta}^{\pm}_1(\lambda)\mid \lambda\in X^+,\lambda_m=0\}
\]
is a complete list of non-isomorphic, simple $\Uoo$-modules.
\end{enumerate}
\end{prop}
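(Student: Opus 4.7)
The plan is to apply Clifford theory to the Hopf algebra inclusion $\Uo\subset\Uoo$ of index two. First, semisimplicity of $\Mod{\Uoo}$ follows from a Maschke-type averaging: since $\Mod{\Uo}$ is semisimple in characteristic zero, any short exact sequence of $\Uoo$-modules splits over $\Uo$, and any such $\Uo$-linear splitting $s$ can be averaged to $s'=\tfrac{1}{2}(s+\tau\cdot s\cdot\tau^{-1})$, which is $\Uoo$-linear since $2$ is invertible in $\K$.

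For part (a), assume $\lambda_m\neq 0$, so $\overline\lambda\neq\lambda$. Then the two $\Uo$-summands $\Delta_1(\lambda)$ and ${}^{\sigma}\!\Delta_1(\lambda)\cong\Delta_1(\overline\lambda)$ of $\tilde\Delta_1(\lambda)$ are non-isomorphic simple $\Uo$-modules; any proper $\Uo$-submodule coincides with one of them, and since $\tau$ swaps the two summands, neither is $\Uoo$-stable. Hence $\tilde\Delta_1(\lambda)$ is $\Uoo$-simple. Swapping the two summands (via the canonical identifications ${}^{\sigma}\!\Delta_1(\lambda)\cong\Delta_1(\overline\lambda)$ and $\Delta_1(\lambda)\cong{}^{\sigma}\!\Delta_1(\overline\lambda)$ coming from $\sigma^2=\mathrm{id}$) gives a $\Uoo$-linear isomorphism $\tilde\Delta_1(\lambda)\cong\tilde\Delta_1(\overline\lambda)$.

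For part (b), assume $\lambda_m=0$, so $\lambda=\overline\lambda$ and there is a $\Uo$-isomorphism $\phi\colon{}^{\sigma}\!\Delta_1(\lambda)\to\Delta_1(\lambda)$ sending $\overline v_\lambda\mapsto v_\lambda$. Applying $\mathrm{id}\oplus\phi$ identifies $\tilde\Delta_1^{+}(\lambda)=\Uo w_\lambda$ and $\tilde\Delta_1^{-}(\lambda)=\Uo w'_\lambda$ with the diagonal and anti-diagonal copies of $\Delta_1(\lambda)$ inside $\Delta_1(\lambda)\oplus\Delta_1(\lambda)$. Hence both are already $\Uo$-simple, and \emph{a fortiori} $\Uoo$-simple. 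They are non-isomorphic as $\Uoo$-modules since $\tau$ acts by $+1$ on the highest weight vector $w_\lambda$ and by $-1$ on $w'_\lambda$.

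For part (c), let $N$ be a simple $\Uoo$-module. Using semisimplicity of $N|_{\Uo}$, pick a simple summand $\Uo v\cong\Delta_1(\mu)$ with highest weight vector $v$. The subspace $\Uo v+\Uo\tau v$ is $\Uoo$-stable, hence equals $N$ by simplicity, while $\Uo\tau v\cong{}^{\sigma}\!\Delta_1(\mu)\cong\Delta_1(\overline\mu)$. If $\mu_m\neq 0$, then these two $\Uo$-submodules are non-isomorphic simples, the sum is direct, and matching highest weight vectors yields $N\cong\tilde\Delta_1(\mu)$. If $\mu_m=0$, we claim $\Uo v=\Uo\tau v$: otherwise $\Uo v\oplus\Uo\tau v=N$ would split into two proper $\Uoo$-submodules, namely the $\pm 1$-eigenspaces of $\tau$ as in (b), contradicting simplicity. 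Hence $\tau v$ lies in the one-dimensional $\mu$-weight space of $\Uo v$, forcing $\tau v=cv$ with $c^2=1$, so $c=\pm 1$, and $N\cong\tilde\Delta_1^{+}(\mu)$ or $N\cong\tilde\Delta_1^{-}(\mu)$. No two items in the asserted list are isomorphic: their $\Uo$-restrictions distinguish $\{\mu,\overline\mu\}$ up to the identification in (a), and in the $\mu_m=0$ case the two sign variants are separated by the $\tau$-eigenvalue on the highest weight line. The main subtlety lies in (c), namely ruling out the case $\Uo v\neq\Uo\tau v$ when $\mu_m=0$ via the $\tau$-eigenspace decomposition.
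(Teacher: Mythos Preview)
Your proof is correct and follows exactly the approach the paper has in mind: the paper's own proof is a one-line citation of ``standard Clifford theory'' (referencing \cite{nrvo} and \cite{rr}), and what you have written is precisely the relevant instance of that theory, spelled out for the index-two extension $\Uo\subset\Uoo$. One small remark: in part (c), when $\mu_m=0$ and you assume $\Uo v\neq\Uo\tau v$, the ``$\pm 1$-eigenspaces of $\tau$'' are not literally $\Uo$-submodules (since $\tau$ is only $\sigma$-twisted $\Uo$-linear); what you actually need are the $\Uoo$-submodules $\Uo(v+\tau v)$ and $\Uo(v-\tau v)$, constructed exactly as $\tilde\Delta_1^{\pm}$ in (b)---your phrase ``as in (b)'' makes clear this is what you intend, and the paper uses the same informal language, so this is not a gap.
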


\begin{proof}
By the above discussion and standard Clifford theory, see for example~\cite[Section~2]{nrvo} 
or~\cite[Appendix]{rr} (both references treat a more general case).
\end{proof}

Let still $\Char(\K)=0$ and 
recall the following decomposition of $\Delta_1(\lambda)\otimes V$ as a $\Uo$-module:
\begin{equation}\label{eq-decomp}
\Delta_1(\lambda)\otimes V\cong \bigoplus_{i=1}^m\Delta_1(\lambda\pm\varepsilon_i),\quad\text{where }
\Delta_1(\lambda\pm\varepsilon_i)=\Delta_1(\lambda+\varepsilon_i)\oplus\Delta_1(\lambda-\varepsilon_i).
\end{equation}
Here $\Delta_1(\mu)=0$ (and hence, $\tilde{\Delta}_1(\mu)=0$), 
if $\mu\notin X^+$. This leads to the following lemma.

\begin{lem}\label{lem-decomptwist}
Let $\Char(\K)=0$, $\lambda\in X^+$ and $\epsilon\in\{+,-\}$. Then, as $\Uoo$-modules:
\begin{gather*}
\begin{aligned}
\text{If }\lambda_m>1: \quad \tilde\Delta_1(\lambda)\otimes V&\cong \bigoplus_{i=1}^m\tilde\Delta_1(\lambda\pm\varepsilon_i),\\
\text{if }\lambda_m=1: \quad \tilde\Delta_1(\lambda)\otimes V&\cong \bigoplus_{i=1}^{m-1}\tilde\Delta_1(\lambda\pm\varepsilon_i)\oplus\tilde\Delta_1(\lambda+\varepsilon_m)
\oplus\tilde\Delta^+_1(\lambda-\varepsilon_m)\oplus\tilde\Delta^-_1(\lambda-\varepsilon_m),\\
\text{if }\lambda_m=0:\quad \tilde\Delta^{\epsilon}_1(\lambda)\otimes V&\cong \bigoplus_{i=1}^{m-1}\tilde\Delta^{\epsilon}_1(\lambda\pm\varepsilon_i)\oplus\tilde\Delta_1(\lambda+\varepsilon_m).
\end{aligned}
\end{gather*}
\end{lem}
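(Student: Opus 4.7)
My plan is threefold: first, reduce the computation to the classical decomposition~\eqref{eq-decomp} via a natural $\Uoo$-module isomorphism $\tilde{M}\otimes V\cong \widetilde{M\otimes V}$; second, apply Proposition~\ref{prop-clifford} to resolve each resulting summand into simple $\Uoo$-modules; and third, separate the two $\tau$-eigenspaces in the $\lambda_m=0$ case by tracking highest weight vectors.

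For the reduction I would first verify that, for any $\Uo$-module $M$, $\tilde{M}\otimes V\cong \widetilde{M\otimes V}$ as $\Uoo$-modules. As $\Uo$-modules both sides decompose as $(M\otimes V)\oplus({}^{\sigma}\!M\otimes V)$; on the right this uses the $\Uo$-isomorphism ${}^{\sigma}\!(M\otimes V)\cong{}^{\sigma}\!M\otimes{}^{\sigma}\!V\cong{}^{\sigma}\!M\otimes V$, which is available because $\overline{\omega_1}=\omega_1$ forces ${}^{\sigma}\!V\cong V$. On both sides $\tau$ interchanges the two summands (on the left via $\tau\otimes\tau$, where $\tau$ on $V$ is the involution induced by the isomorphism $V\cong\tilde{\Delta}^+_1(\omega_1)$), so the isomorphism is $\tau$-equivariant. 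Combining this with additivity of $\widetilde{(-)}$ and~\eqref{eq-decomp} yields
\[
\tilde\Delta_1(\lambda)\otimes V\cong\bigoplus_{i=1}^m\widetilde{\Delta_1(\lambda\pm\varepsilon_i)}.
\]
Proposition~\ref{prop-clifford} then tells us that $\widetilde{\Delta_1(\mu)}=\tilde\Delta_1(\mu)$ is simple when $\mu_m\neq 0$ and splits as $\tilde\Delta^+_1(\mu)\oplus\tilde\Delta^-_1(\mu)$ when $\mu_m=0$; in the first case, the identification $\tilde\Delta_1(\mu)\cong\tilde\Delta_1(\overline\mu)$ pairs the $\lambda+\varepsilon_m$ and $\lambda-\varepsilon_m$ contributions.

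A straightforward case distinction on $\lambda_m$ now gives the first two formulas: for $\lambda_m>1$ all coordinate shifts preserve $\mu_m>0$ and every summand is a simple $\tilde\Delta_1$; for $\lambda_m=1$ only $\mu=\lambda-\varepsilon_m$ has $\mu_m=0$, producing the promised $\tilde\Delta^\pm_1(\lambda-\varepsilon_m)$ split. In the case $\lambda_m=0$ the same analysis delivers the total decomposition of $\tilde\Delta_1(\lambda)\otimes V$, with each $\tilde\Delta^\pm_1(\lambda\pm\varepsilon_i)$ (for $i<m$) appearing once and $\tilde\Delta_1(\lambda+\varepsilon_m)$ appearing with multiplicity two.

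The main obstacle is the final refinement, splitting this total into $\tilde\Delta^+_1(\lambda)\otimes V\oplus\tilde\Delta^-_1(\lambda)\otimes V$. My plan is to exploit uniqueness of highest weight vectors. For $i<m$, the weight $\mu=\lambda\pm\varepsilon_i$ satisfies $\overline\mu=\mu$, so $\tau$ preserves the $\mu$-weight space; the unique highest weight vector $u\in\tilde\Delta^+_1(\lambda)\otimes V$ of weight $\mu$ is therefore a $\tau$-eigenvector, and its leading term $w_\lambda\otimes e$ (with $e$ a suitable weight vector of $V$ fixed by $\tau$) forces $\tau u=u$. Hence the resulting simple $\Uoo$-summand is $\tilde\Delta^+_1(\mu)$ rather than $\tilde\Delta^-_1(\mu)$. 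For $i=m$, the unique highest weight vector $u_m$ of weight $\lambda+\varepsilon_m$ is not a $\tau$-eigenvector (its $\tau$-image lives in the $\lambda-\varepsilon_m$ weight space), but $u_m$ and $\tau u_m$ together generate a copy of the simple $\Uoo$-module $\tilde\Delta_1(\lambda+\varepsilon_m)$ inside $\tilde\Delta^+_1(\lambda)\otimes V$. The analogous computation with $w'_\lambda\in\tilde\Delta^-_1(\lambda)$ in place of $w_\lambda$ yields the $\tilde\Delta^-_1$ summands and the second copy of $\tilde\Delta_1(\lambda+\varepsilon_m)$, recovering the total multiplicities computed above.
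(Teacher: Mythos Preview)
Your proof is correct and follows the paper's approach: combine the $\Uo$-decomposition~\eqref{eq-decomp} with Proposition~\ref{prop-clifford}, which is precisely what the paper's one-line argument invokes, while your treatment of the $\lambda_m=0$ case makes explicit the sign computation the paper leaves implicit. One small point: for $\mu=\lambda-\varepsilon_i$ you assume the highest weight vector $u$ has nonzero $w_\lambda\otimes e$ component; this is true but not entirely trivial, and can be avoided by using self-duality $V\cong V^*$ to reduce the $-\varepsilon_i$ case to the transparent $+\varepsilon_i$ case via $\mathrm{Hom}_{\Uoo}(\tilde\Delta^+_1(\lambda-\varepsilon_i),\,\tilde\Delta^+_1(\lambda)\otimes V)\cong\mathrm{Hom}_{\Uoo}(\tilde\Delta^+_1(\lambda-\varepsilon_i)\otimes V,\,\tilde\Delta^+_1(\lambda))$.
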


\begin{proof}
We have $\tilde{\Delta}_1(\lambda)\cong\tilde\Delta_1(\overline{\lambda})$ for $\lambda_m\neq 0$ and 
$\tilde{\Delta}_1(\lambda)\cong\Delta^+_1(\lambda)\oplus\Delta^-_1(\lambda)$ for $\lambda_m=0$.
Using Proposition~\ref{prop-clifford} and~\eqref{eq-decomp}, the statement follows.
\end{proof}

This leads to the following multiplicity formulas of $\Uo$-modules.

\begin{prop}\label{prop-decomptwist2}
Let $\Char(\K)=0$, $\lambda\in X^+$ and $\epsilon\in\{+,-\}$. As usual, let $T^d_n=V^{\otimes d}$. Then:
\begin{gather*}
\begin{aligned}
\text{If }\lambda_m>1: \quad (T^d_n:\tilde{\Delta}_1(\lambda))&=\sum_{i=1}^m(T^{d-1}_n:\tilde{\Delta}_1(\lambda\pm \varepsilon_i)),\\
\text{if }\lambda_m=1: \quad (T^d_n:\tilde{\Delta}_1(\lambda))&=\sum_{i=1}^{m-1}(T^{d-1}_n:\tilde{\Delta}_1(\lambda\pm \varepsilon_i))+(T^{d-1}_n:\tilde\Delta_1(\lambda+\varepsilon_m))\\
&\quad+(T^{d-1}_n:\tilde\Delta^+_1(\lambda-\varepsilon_m))+(T^{d-1}_n:\tilde\Delta^-_1(\lambda-\varepsilon_m)),\\
\text{if }\lambda_m=0: \quad (T^{d}_n:\tilde{\Delta}^{\epsilon}_1(\lambda))&=\sum_{i=1}^{m-1}(T^{d-1}_n:\tilde{\Delta}^{\epsilon}_1(\lambda\pm \varepsilon_i))+(T^{d-1}_n:\tilde{\Delta}_1(\lambda+ \varepsilon_m)),
\end{aligned}
\end{gather*}
where any multiplicity is zero if the corresponding weight is not in $X^+$.
\end{prop}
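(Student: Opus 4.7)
The plan is to prove the proposition by induction on $d$, exploiting the fact that $T^d_n = T^{d-1}_n \otimes V$ and that $\Mod{\Uoo}$ is semisimple in characteristic zero (Proposition~\ref{prop-clifford}). Since multiplicities are additive in semisimple categories, for any simple $\Uoo$-module $S$ we have
\begin{equation*}
(T^d_n : S) \;=\; \sum_{M}\, (T^{d-1}_n : M)\cdot[M\otimes V : S],
\end{equation*}
where the sum is over isomorphism classes of simple $\Uoo$-modules $M$. The base case $d=0$ is trivial, so the whole statement reduces to identifying precisely those $M$ for which $S$ occurs as a summand of $M\otimes V$, with the correct multiplicity.

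For this identification I would read Lemma~\ref{lem-decomptwist} ``backwards'': a simple summand $S$ on the right-hand side of the decomposition of $M\otimes V$ determines $M$ up to the type distinction $\tilde\Delta_1$ versus $\tilde\Delta_1^{\pm}$. Each case of the proposition corresponds to fixing the type of $S$ and then enumerating the admissible $M$:
\begin{itemize}
\item If $S=\tilde\Delta_1(\lambda)$ with $\lambda_m>1$, then only modules of the form $M=\tilde\Delta_1(\lambda\pm\varepsilon_i)$ with $1\le i\le m$ contribute, each with multiplicity one. (The constraint $\lambda_m>1$ guarantees that $(\lambda\pm\varepsilon_i)_m\ge 1$, so no $\tilde\Delta_1^\pm$ piece is ever triggered from $M$.)
\item If $S=\tilde\Delta_1(\lambda)$ with $\lambda_m=1$, then $S$ can arise either from $M=\tilde\Delta_1(\lambda\pm\varepsilon_i)$ for $i<m$, from $M=\tilde\Delta_1(\lambda+\varepsilon_m)$ (whose $m$-th coordinate is $2>1$, so the first line of the lemma applies and yields $\tilde\Delta_1(\lambda)$ via the $-\varepsilon_m$ direction), or from the two modules $M=\tilde\Delta^{\pm}_1(\lambda-\varepsilon_m)$ (whose $m$-th coordinate is $0$, so the third line of the lemma applies and produces $\tilde\Delta_1(\lambda)$ via the $+\varepsilon_m$ direction).
\item If $S=\tilde\Delta^{\epsilon}_1(\lambda)$ with $\lambda_m=0$, then $S$ arises from $M=\tilde\Delta^{\epsilon}_1(\lambda\pm\varepsilon_i)$ for $i<m$ (third line, note that the sign $\epsilon$ is preserved) and from $M=\tilde\Delta_1(\lambda+\varepsilon_m)$ (second line, since $(\lambda+\varepsilon_m)_m=1$, which outputs both $\tilde\Delta^+_1$ and $\tilde\Delta^-_1$ summands).
\end{itemize}
In each case one checks that the multiplicity of $S$ in the relevant $M\otimes V$ equals one, and substituting back into the additivity formula yields the three displayed equalities; the convention that any multiplicity vanishes when the shifted weight leaves $X^+$ is automatic since $\tilde\Delta_1(\mu)=0=\tilde\Delta^{\pm}_1(\mu)$ in that situation.

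The only delicate point is the bookkeeping of how the involution $\sigma$ intertwines the $\varepsilon_m$-direction with the $\pm$ decomposition: the middle coordinate $\lambda_m=1$ is the ``boundary'' where a single $\Uoo$-simple of type $\tilde\Delta_1$ in $T^d_n$ can be produced from two different $\Uoo$-simples of type $\tilde\Delta^{\pm}_1$ in $T^{d-1}_n$, and symmetrically, a $\Uoo$-simple of type $\tilde\Delta^{\epsilon}_1$ with $\lambda_m=0$ sees only one piece from a $\tilde\Delta_1$-type predecessor. This asymmetry is precisely what makes the three formulas look different, but it is dictated entirely by Lemma~\ref{lem-decomptwist}, so no further input is required.
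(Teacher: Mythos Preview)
Your proof is correct and follows essentially the same approach as the paper: use semisimplicity of $\Mod{\Uoo}$ (Proposition~\ref{prop-clifford}) together with the decomposition of $M\otimes V$ from Lemma~\ref{lem-decomptwist}, read backwards to enumerate which simples $M$ in $T^{d-1}_n$ contribute a given summand $S$ in $T^d_n$. The paper compresses all of this into two lines (``For $d=1$, see Lemma~\ref{lem-decomptwist}. The general statement for $d>1$ follows recursively.''), whereas you have spelled out the case analysis explicitly; the content is the same.
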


\begin{proof}
For $d=1$, see Lemma~\ref{lem-decomptwist}. The general statement for $d>1$ follows recursively.
\end{proof}

%
%

Let $m^{\prime}\in\Z_{\geq 1}$ and $n^{\prime}=2m^{\prime}$ be 
such that $1\leq d\leq 2m<m^{\prime}$. We consider $\Uo^{\prime}=\Uo(\soo{2m^{\prime}})$ 
and use notations as $V^{\prime}$, $(T^{\prime})^d_{n^{\prime}}$, $X^{\prime}, \lambda^{\prime}$ 
etc. to distinguish 
these from the data for $\Uo$. 
Moreover, let $\lambda^{\prime}\in (X^{\prime})^+$ 
with $|\lambda^{\prime}|\leq d$. Then $\lambda^{\prime}_{m^{\prime}}=0$ and 
$\lambda^{\prime}_{m+1}\leq 1$. Given now a $\Uo^{\prime}$-weight 
$\lambda^{\prime}\in (X^{\prime})^+$, define a $\Uo$-weight
$\tilde\lambda=\sum_{i=1}^m(\lambda^{\prime}_i-\lambda^{\prime}_{2m-i+1})\varepsilon_i\in X^+$.
Note that, if $\lambda^{\prime}_{m+1}=0$, then $\tilde\lambda=\lambda^{\prime}$.

The main step now is to compare $\Uo^{\prime}$-multiplicities to $\Uoo$-multiplicities.

\begin{lem}\label{lem-yetanotherone}
Let $\Char(\K)=0$. With the notation from above, we have the following.
\begin{enumerate}[(a)]
\item If $\lambda^{\prime}_{m+1}=0$, then
\[
((T^{\prime})^{d}_{n^{\prime}}:\Delta^{\prime}_1(\lambda^{\prime}))=\begin{cases}(T^d_n:\tilde{\Delta}_1(\tilde\lambda)), &\text{if }\lambda^{\prime}_m>0,\\
(T^d_n:\tilde{\Delta}^+_1(\tilde\lambda)), &\text{if }\lambda^{\prime}_m=0.
\end{cases}
\]
\item If $\lambda^{\prime}_{m+1}=1$, then
\[
((T^{\prime})^{d}_{n^{\prime}}:\Delta^{\prime}_1(\lambda^{\prime}))=(T^d_n:\tilde{\Delta}^-_1(\tilde\lambda)).
\]
\end{enumerate}
\end{lem}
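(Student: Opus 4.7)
The plan is to argue by induction on $d$, matching the Pieri-type branching recursions on the two sides. On the $\Uo^\prime$-side, the decomposition from~\eqref{eq-decomp} yields
\[
((T^\prime)^{d}_{n^\prime}:\Delta^\prime_1(\lambda^\prime))=\sum_{i=1}^{m^\prime}((T^\prime)^{d-1}_{n^\prime}:\Delta^\prime_1(\lambda^\prime\pm\varepsilon^\prime_i)),
\]
while on the $\Uoo$-side the corresponding recursion is Proposition~\ref{prop-decomptwist2}. For the base case $d=1$ one checks directly, using $T^1_n=V\cong\tilde{\Delta}^+_1(\omega_1)$ and $(T^\prime)^1_{n^\prime}=V^\prime\cong\Delta^\prime_1(\omega_1)$, that the claim holds for the only two weights $\lambda^\prime$ with $|\lambda^\prime|\leq 1$.

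For the inductive step, the key first observation is that the stability hypothesis $m^\prime>2m\geq d$ together with $|\lambda^\prime|\leq d$ forces $\lambda^\prime_i=0$ for $i>m+1$ and $\lambda^\prime_{m+1}\in\{0,1\}$; hence only summands with $i\in\{1,\ldots,m+1\}$ can contribute non-trivially to the recursion on the $\Uo^\prime$-side. The bulk of the argument is then a case-by-case matching of these surviving summands $\Delta^\prime_1(\lambda^\prime\pm\varepsilon^\prime_i)$ with the summands in Proposition~\ref{prop-decomptwist2} under the correspondence $\lambda^\prime\leftrightsquigarrow\tilde\lambda$. For $i<m$ this is transparent: $\lambda^\prime\pm\varepsilon^\prime_i\leftrightsquigarrow\tilde\lambda\pm\varepsilon_i$, and the case label~(a)/(b) and the sign $\epsilon$ are preserved. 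The delicate cases are $i=m$ and $i=m+1$. Decreasing $\lambda^\prime_m$ through zero (in case~(a)) triggers the split between $\tilde{\Delta}^+_1$ and $\tilde{\Delta}^-_1$ which appears in the $\lambda_m=1$ line of Lemma~\ref{lem-decomptwist}, while modifying $\lambda^\prime_{m+1}$ by $\pm 1$ moves between cases~(a) and~(b), reflecting that tensoring with $V$ couples the $+$ and $-$ eigenspaces via the $\sigma$-twist.

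The main obstacle is the bookkeeping of the $\pm$ decorations and case labels through the boundary indices $i=m$ and $i=m+1$: one has to verify that these transitions account exactly for the extra summands $\tilde{\Delta}^+_1(\tilde\lambda-\varepsilon_m)\oplus\tilde{\Delta}^-_1(\tilde\lambda-\varepsilon_m)$ and $\tilde{\Delta}_1(\tilde\lambda+\varepsilon_m)$ appearing in Lemma~\ref{lem-decomptwist}. Once the term-by-term matching is established, the inductive hypothesis applied to each summand on the right transforms the $\Uo^\prime$-recursion into the $\Uoo$-recursion of Proposition~\ref{prop-decomptwist2} applied to $\tilde\lambda$ with the appropriate decoration, closing the induction.
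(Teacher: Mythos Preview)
Your proposal is correct and follows essentially the same approach as the paper: induction on $d$, using the Pieri-type recursion from~\eqref{eq-decomp} on the $\Uo^\prime$-side and Proposition~\ref{prop-decomptwist2} on the $\Uoo$-side, with the base case $d=1$ checked directly. Your discussion of the boundary indices $i=m$ and $i=m+1$ is in fact more explicit than the paper's, which only works out the subcase $\lambda^\prime_{m+1}=0$ in detail and declares the rest ``a matter of bookkeeping.''
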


\begin{proof}
We use induction on $d$. If $d=1$, then 
$(T^{\prime})^{d}_{n^{\prime}}=V^{\prime}\cong\Delta_1^{\prime}(\omega_1^{\prime})$ and 
$T^d_n=V\cong\tilde\Delta_1^+(\omega_1)$. Since the $\Uo$-weight $\tilde\lambda$ associated to 
$\omega_1^{\prime}$ is $\omega_1$, (a) and (b) follow.

Assume $d>1$. Then the 
${}^{\prime}$-version of~\eqref{eq-decomp3} can be used to express the left-hand sides in (a) 
and (b) in terms of $\Uu_1^{\prime}$-multiplicities in $(T^{\prime})^{d-1}_{n^{\prime}}$ 
and Proposition~\ref{prop-decomptwist2} expresses the right-hand sides 
in terms of $\Uoo$-multiplicities in $T^{d-1}_{n}$. It is now a matter 
of bookkeeping to check that the induction hypothesis gives the stated 
equalities. We give details only for the first case of (a). So we 
have $\lambda^{\prime}_{m+1}=0$. Assume first that $\lambda^{\prime}_m>1$. Then the 
${}^{\prime}$-version of~\eqref{eq-decomp3} 
gives
\begin{equation}\label{eq-decomp3}
((T^{\prime})^{d}_{n^{\prime}}:\Delta^{\prime}_1(\lambda^{\prime}))=
\sum_{i=1}^{m+1}((T^{\prime})^{d-1}_{n^{\prime}}:\Delta^{\prime}_1(\lambda^{\prime}\pm\varepsilon_i^{\prime})).
\end{equation}
Note that $\Delta^{\prime}_1(\lambda^{\prime}-\varepsilon_{m+1}^{\prime})=0$ 
and $|\lambda^{\prime}+\varepsilon_{m+1}^{\prime}|\geq 2m+1>d-1$. Thus, 
the $m+1$th summand in~\eqref{eq-decomp3} is zero 
($((T^{\prime})^{d}_{n^{\prime}}:\Delta^{\prime}_1(\lambda^{\prime}))=0$ unless 
$|\lambda^{\prime}|\leq d$). Now, 
by induction hypothesis, $((T^{\prime})^{d-1}_{n^{\prime}}:\Delta^{\prime}_1(\lambda^{\prime}\pm\varepsilon_i^{\prime}))=(T^{d-1}_{n}:\tilde{\Delta}_1(\tilde\lambda\pm\varepsilon_i))$ for all $i=1,\dots,m$ 
(we write $\tilde\lambda\pm\varepsilon_i$ short for $\tilde\mu$ with 
$\mu=\lambda^{\prime}\pm\varepsilon^{\prime}_i$, if $i=1,\dots,m$, and 
$\mu=\lambda^{\prime}\mp\varepsilon^{\prime}_i$, if $i=m+1,\dots,m^{\prime}$). Then 
Proposition~\ref{prop-decomptwist2} gives the desired equality.

Now assume $\lambda^{\prime}_{m+1}=0$ and $\lambda^{\prime}_m=1$. Arguing as before we get
\begin{align*}
((T^{\prime})^{d}_{n^{\prime}}:\Delta^{\prime}_1(\lambda^{\prime}))&=\sum_{i=1}^m
((T^{\prime})^{d-1}_{n^{\prime}}:\Delta^{\prime}_1(\lambda^{\prime}\pm\varepsilon_i^{\prime}))
+((T^{\prime})^{d-1}_{n^{\prime}}:\Delta^{\prime}_1(\lambda^{\prime}+\varepsilon_{m+1}^{\prime}))\\
&=\sum_{i=1}^{m-1}
(T^{d-1}_{n^{\prime}}:\tilde{\Delta}_1(\tilde\lambda\pm\varepsilon_i))+
(T^{d-1}_{n^{\prime}}:\tilde{\Delta}_1(\tilde\lambda+\varepsilon_m))\\
&\quad+
(T^{d-1}_{n^{\prime}}:\tilde{\Delta}^+_1(\tilde\lambda-\varepsilon_m))+
(T^{d-1}_{n^{\prime}}:\tilde{\Delta}^-_1(\tilde\lambda-\varepsilon_m))\\
&=(T^d_n:\tilde{\Delta}_1(\tilde\lambda)).
\end{align*}
Here the second equality uses Lemma~\ref{lem-yetanotherone} for the first and 
the last term (both in case $d-1$). The last equality uses 
Proposition~\ref{prop-decomptwist2}. 
\end{proof}

\begin{cor}\label{cor-important}
Let $\Char(\K)=0$. 
If $1\leq d\leq 2m<m^{\prime}$, then
\[
\dim(\End_{\Uoo}(T^d_n))=\dim(\End_{\Uu_1^{\prime}}((T^{\prime})^{d}_{n^{\prime}})).
\]
\end{cor}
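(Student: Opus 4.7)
My plan is to express both dimensions as sums of squares of multiplicities of simple modules in the respective semisimple decompositions, then match the summands term-by-term via an explicit bijection constructed from Lemma~\ref{lem-yetanotherone}.

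First, since $\Char(\K)=0$, both $(T^{\prime})^d_{n^{\prime}}$ viewed as a $\Uo^{\prime}$-module and $T^d_n$ viewed as a $\Uoo$-module are semisimple (the latter by the semisimplicity statement in Proposition~\ref{prop-clifford}). Hence I would write
\[
\dim(\End_{\Uo^{\prime}}((T^{\prime})^d_{n^{\prime}}))=\sum_{\lambda^{\prime}\in (X^{\prime})^+}((T^{\prime})^d_{n^{\prime}}:\Delta^{\prime}_1(\lambda^{\prime}))^2,\qquad \dim(\End_{\Uoo}(T^d_n))=\sum_{S}(T^d_n:S)^2,
\]
where $S$ runs over isomorphism classes of simple $\Uoo$-modules (fully classified in Proposition~\ref{prop-clifford}). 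Only finitely many summands contribute on either side.

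Next I would construct a map $\Psi$ between those $\lambda^{\prime}$ contributing to the left-hand sum and the simples $S$ contributing to the right-hand sum. Any contributing $\lambda^{\prime}$ satisfies $|\lambda^{\prime}|\leq d\leq 2m<m^{\prime}$, which forces $\lambda^{\prime}_{m^{\prime}}=0$ and $\lambda^{\prime}_{m+1}\in\{0,1\}$; moreover, if $\lambda^{\prime}_{m+1}=1$, then dominance of $\lambda^{\prime}$ together with $|\lambda^{\prime}|\leq 2m$ forces $\lambda^{\prime}_m=1$ (otherwise $\lambda^{\prime}_1,\dots,\lambda^{\prime}_m\geq 2$ would give $|\lambda^{\prime}|\geq 2m+1$), and in particular $\tilde\lambda_m=0$. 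Matching the three cases of Lemma~\ref{lem-yetanotherone} I define
\[
\Psi(\lambda^{\prime})=\begin{cases} \tilde{\Delta}_1(\tilde\lambda), & \text{if }\lambda^{\prime}_{m+1}=0\text{ and }\lambda^{\prime}_m>0,\\ \tilde{\Delta}^+_1(\tilde\lambda), & \text{if }\lambda^{\prime}_{m+1}=0\text{ and }\lambda^{\prime}_m=0,\\ \tilde{\Delta}^-_1(\tilde\lambda), & \text{if }\lambda^{\prime}_{m+1}=1.\end{cases}
\]
By Lemma~\ref{lem-yetanotherone}, this map preserves multiplicities: $((T^{\prime})^d_{n^{\prime}}:\Delta^{\prime}_1(\lambda^{\prime}))=(T^d_n:\Psi(\lambda^{\prime}))$ in every case.

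The hard part will be verifying that $\Psi$ is a bijection onto the set of simple $\Uoo$-modules appearing in $T^d_n$. The inverse is read off from Proposition~\ref{prop-clifford}: to $\tilde{\Delta}_1(\mu)$ with $\mu_m>0$ (or $\tilde{\Delta}^+_1(\mu)$ with $\mu_m=0$) I associate $\lambda^{\prime}=(\mu_1,\dots,\mu_m,0,\dots,0)$; to $\tilde{\Delta}^-_1(\mu)$ with $\mu_m=0$ I associate $\lambda^{\prime}=(\mu_1,\dots,\mu_{m-1},1,1,0,\dots,0)$. The main technical obstacle is precisely this case-analysis, which crucially uses the hypothesis $d\leq 2m$ to rule out any $\lambda^{\prime}$ with $\lambda^{\prime}_{m+1}=1$ but $\lambda^{\prime}_m>1$; everything else is a routine consequence of semisimplicity. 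Once the bijection is in place, squaring and summing the multiplicity identities yields the claimed equality of dimensions.
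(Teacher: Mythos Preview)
Your strategy matches the paper's: both dimensions are sums of squared multiplicities, and Lemma~\ref{lem-yetanotherone} together with Proposition~\ref{prop-clifford} provides a multiplicity-preserving correspondence. The paper's own proof says almost nothing beyond this, so your more explicit version is in the right spirit.

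However, your proposed inverse for $\tilde\Delta^-_1(\mu)$ is wrong. You send $\tilde\Delta^-_1(\mu)$ to $\lambda'=(\mu_1,\dots,\mu_{m-1},1,1,0,\dots,0)$, but this is dominant only when $\mu_{m-1}\geq 1$, and that condition fails in general: for $d=m+2$ the weight $\lambda'=(1^{m+2})$ has $\lambda'_{m+1}=1$ and $\tilde\lambda=(1^{m-2},0,0)$, so by Lemma~\ref{lem-yetanotherone}(b) the simple $\tilde\Delta^-_1(1^{m-2},0,0)$ appears in $T^d_n$ with positive multiplicity, yet your formula returns the non-dominant sequence $(1^{m-2},0,1,1,0,\dots)$. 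The correct inverse sends $\tilde\Delta^-_1(\mu)$ to $(\mu_1,\dots,\mu_l,1^{2(m-l)},0,\dots)$, where $l$ is the number of nonzero parts of $\mu$. With this fix one must still verify that $\Psi$ is injective on $\{\lambda':|\lambda'|\leq d\}$ (there can be several dominant $\lambda'$ with $\lambda'_{m+1}=1$ sharing the same $\tilde\lambda$, but a short argument shows only the one of minimal size satisfies $|\lambda'|\leq 2m$) and that the corrected inverse lands in this set whenever the multiplicity is positive (an induction using Proposition~\ref{prop-decomptwist2}). These checks, in addition to the $\lambda'_m>1$ issue you correctly flag, are where the hypothesis $d\leq 2m$ does its real work.
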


\begin{proof}
Note that $\dim(\End_{\Uoo}(T^d_n))=\sum_{L}(V^{\otimes d}:L)^2$ (the sum 
runs over all simple $\Uoo$-modules $L$ that are composition 
factors of $V^{\otimes d}$). There 
is a similar formula for $\dim(\End_{\Uu_1^{\prime}}((T^{\prime})^{d}_{n^{\prime}})$ 
as well. By Proposition~\ref{prop-clifford}, we can use Lemma~\ref{lem-yetanotherone} to 
see that the dimensions agree.
\end{proof}

We now leave the case $\Char(\K)=0$ and state and 
prove the main result of this subsection, where we only assume that $\Char(\K)\neq 2$.

\begin{thm}\label{thm-brauerimage}
If $1\leq d\leq 2m$ and $\delta_p\equiv 2m\;\text{mod}\;p$, 
then $\cal{B}_d(\delta)\cong\End_{\Uoo}(T^d_n)$.
\end{thm}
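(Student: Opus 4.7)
\emph{Proof plan.} The strategy is to combine Theorem~\ref{thm-schur-weyl3}(b) for type $\DM$, which gives an injection $\Phi_{\mathrm{Br}}\colon\cal{B}_d(\delta)\hookrightarrow\End_{\Uo}(T^d_n)$ (since $n=2m\geq d$), with the multiplicity count of Corollary~\ref{cor-important}. First I would verify that $\mathrm{im}(\Phi_{\mathrm{Br}})\subseteq\End_{\Uoo}(T^d_n)$. Since $\tau$ is group-like in $\Uoo$, it acts on $T^d_n=V^{\otimes d}$ as $\tau^{\otimes d}$. On $V$, $\tau$ acts by an element of $O(2m)\setminus SO(2m)$ inducing the outer automorphism $\sigma$; in particular, it preserves the defining bilinear form. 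Hence the Brauer generators $\sigma_i$ (tensor swaps) and $u_i$ (contract and re-insert via the bilinear form) both commute with $\tau^{\otimes d}$, and $\Phi_{\mathrm{Br}}$ factors through $\End_{\Uoo}(T^d_n)$ as an injection.

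Since $\dim\cal{B}_d(\delta)=(2d-1)!!$, the theorem will follow once I prove $\dim\End_{\Uoo}(T^d_n)=(2d-1)!!$. In characteristic zero, I pick any $m^\prime>2m$, which forces $m^\prime\geq 2m+1\geq d+1$ and therefore $n^\prime=2m^\prime\geq 2d+2>2d+1$. Corollary~\ref{cor-important} then gives $\dim\End_{\Uoo}(T^d_n)=\dim\End_{\Uo^\prime}((T^{\prime})^{d}_{n^{\prime}})$, and Theorem~\ref{thm-schur-weyl3}(b) in type $\DM$ identifies the right-hand side with $\dim\cal{B}_d(2m^{\prime})=(2d-1)!!$.

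For positive characteristic $p\neq 2$, the equality $\dim\End_{\Uoo}(T^d_n)=(2d-1)!!$ should follow by characteristic independence. Indeed, $\End_{\Uoo}(T^d_n)$ is canonically the $\tau$-conjugation fixed subspace of $\End_{\Uo}(T^d_n)$; since $\Char(\K)\neq 2$ and $\tau^2=1$, its dimension equals $\tfrac{1}{2}(\dim\End_{\Uo}(T^d_n)+\mathrm{tr}(\tau|\End_{\Uo}(T^d_n)))$. The first summand is characteristic-independent by Proposition~\ref{prop-prop}, and the trace can be computed over $\Z$ via the Lusztig integral form together with the integral realisation of $\sigma$ on $V$. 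The main obstacle I anticipate is precisely this last point: while the commutation check and the characteristic-zero dimension count are routine applications of the existing machinery, a clean justification of characteristic independence of $\dim\End_{\Uoo}(T^d_n)$ requires careful bookkeeping with integral forms of $\Uoo$ and of the $\tau$-action.
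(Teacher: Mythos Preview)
Your proposal follows essentially the same route as the paper: show $\mathrm{im}(\Phi_{\mathrm{Br}})\subset\End_{\Uoo}(T^d_n)$, use injectivity from Theorem~\ref{thm-schur-weyl3}(b) (type $\DM$, since $n=2m\ge d$), and then match dimensions via Corollary~\ref{cor-important} together with the large-rank isomorphism $\cal{B}_d(\delta')\cong\End_{\Uo'}((T')^d_{n'})$ for $m'>2m$. Your explanation of why the Brauer generators commute with $\tau^{\otimes d}$ (because $\tau$ acts on $V$ through an orthogonal reflection preserving the form) is more explicit than the paper's ``Clearly'', and is correct.

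The one point you flag---characteristic independence of $\dim\End_{\Uoo}(T^d_n)$---is exactly where the paper is terse: it simply invokes Corollary~\ref{cor-important} \emph{together with} Proposition~\ref{prop-prop}. The intended reading is that Proposition~\ref{prop-prop} makes $\dim\End_{\Uo'}((T')^d_{n'})$ characteristic-independent, so the characteristic-zero identification of Corollary~\ref{cor-important} pins down the target dimension; the paper then tacitly uses that the analogous Ext-vanishing/character argument underlying Proposition~\ref{prop-prop} carries over to $\Uoo$ (equivalently, that the $\tau$-fixed subspace of $\End_{\Uo}(T^d_n)$ has characteristic-independent dimension). Your proposed trace/integral-form justification is a perfectly reasonable way to make this step rigorous, and is in the same spirit. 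In short: same proof, and you have correctly located the only place that deserves extra care.
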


\begin{proof}
By Theorem~\ref{thm-schur-weyl3}, we know that the Schur-Weyl-Brauer 
homomorphism
\[
\Phi_{\mathrm{Br}}\colon\cal{B}_{d}(\delta^{\prime})\to
\End_{\Uu_1^{\prime}}((T^{\prime})^{d}_{n^{\prime}})
\]
is injective for $d\leq 2m^{\prime}$ and surjective for $d<m^{\prime}$ 
(where $\delta_p^{\prime}\equiv 2m^{\prime}\;\text{mod}\;p$). 
Hence, for $m^{\prime}>2m$ we have that  
$\dim(\cal{B}_{d}(\delta^{\prime}))=\dim(\End_{\Uu_1^{\prime}}((T^{\prime})^{d}_{n^{\prime}}))$ 
and so 
also $\dim(\cal{B}_{d}(\delta^{\prime}))=\dim(\End_{\Uoo}(T^d_n))$, by Corollary~\ref{cor-important} 
and Proposition~\ref{prop-prop} (since $(T^{\prime})^{d}_{n^{\prime}}$ 
is a $\Uo^{\prime}$-tilting module).
Clearly, we have 
$\mathrm{im}(\Phi_{\mathrm{Br}})\subset\End_{\Uoo}(T^d_n)$. The statement follows,
since $\dim(\cal{B}_{d}(\delta^{\prime}))=\dim(\cal{B}_{d}(\delta))$.
\end{proof}

\begin{rem}\label{rem-oneextra2}
Note that the whole discussion in this subsection goes through in case $m\leq 3$ as 
well (with the corresponding $\sigma$ from above).
\end{rem}
%
%
%
\section{Some kernels of Schur-Weyl actions}\label{sec-schuridem}
In this section we 
explicitly describe kernels of the epimorphisms 
$\Phi^{\Am}_{q\mathrm{SW}}$, $\Phi_{\mathrm{wBr}}$ 
and $\Phi_{\mathrm{Br}}$ from the 
dualities in Section~\ref{sec-schur}, some of 
which we use in the proofs of our main theorems.

In the case of $\cal{H}^{\Am}_d(q)$ 
all kernels were determined in~\cite[Theorem~4]{hae}.
In our setup, we have for $n$ as in Theorem~\ref{thm-schur-weyl1} and $q=1$
the \textit{anti-symmetrizer} 
\begin{equation}\label{eq-haert}
e_d(n)=\sum_{w\in S_d}(-1)^{l(w)}w\in\ker(\Phi^{\Am}_{q\mathrm{SW}}),
\end{equation}
where $l(w)$ the length of 
$w$. Clearly $e_d(n)e_d(n)=d!e_d(n)$. Thus, 
$e_d(n)$ is a 
quasi-idempotent (an idempotent up to an invertible scalar) 
if and only if $\Char(K)>d$ or $\Char(K)=0$ and nilpotent otherwise. 
By H\"arterich's results, the $\K$-linear span of $e_d(d-1)$ 
equals $\ker(\Phi^{\Am}_{q\mathrm{SW}})$.

For some `boundary cases' 
we can explicitly write down the kernels 
for the other algebras as well as we aim to show next. Note that this
generalizes H\"arterich's results.

\begin{defn}\label{defn-idempotent}
Define 
$e_{r,s}(\delta)\in\cal{B}_{r,s}(\delta)$ 
and $E_d(\delta)\in\cal{B}_{d}(\delta)$ via
\[
e_{r,s}(\delta)=\sum_{x}(-1)^{l(x)}x\in\cal{B}_{r,s}(\delta)
\quad\text{and}\quad E_d(\delta)=\sum_{x}(-1)^{l(x)}x\in\cal{B}_d(\delta).
\]
Here the sums run over all primitive diagrams 
(see Remarks~\ref{rem-walledbrauer} and~\ref{rem-brauer}).
\end{defn}

\begin{ex}\label{ex-idempotent}
If $r=2,s=1$ (walled Brauer case) respectively 
if $d=2$ (Brauer case), then
\begin{gather*}
e_{2,1}(\delta)\;=\;
\xy
(0,0)*{
\begin{tikzpicture}[scale=.3]
	\draw [very thick, ->] (-1,0) to (-1,4);
	\draw [very thick, ->] (1,0) to (1,4);
	\draw [very thick, ->] (3,4) to (3,0);
\end{tikzpicture}
};
\endxy
\;-\;
\xy
(0,0)*{
\begin{tikzpicture}[scale=.3]
	\draw [very thick, ->] (-1,0) to (1,4);
	\draw [very thick, ->] (1,0) to (-1,4);
	\draw [very thick, ->] (3,4) to (3,0);
\end{tikzpicture}
};
\endxy
\;-\;
\xy
(0,0)*{
\begin{tikzpicture}[scale=.3]
	\draw [very thick, ->] (-1,0) to (-1,4);
	\draw [very thick] (3,3.9) to [out=270,in=0] (2,2.9) to [out=180,in=270] (1,3.9);
	\draw [very thick] (1,0.1) to [out=90,in=180] (2,1.1) to [out=0,in=90] (3,0.1);
	\draw [very thick, ->] (1,3.9) to (1,4);
	\draw [very thick] (3,3.9) to (3,4);
	\draw [very thick, ->] (3,0.1) to (3,0);
	\draw [very thick] (1,0.1) to (1,0);
\end{tikzpicture}
};
\endxy
\;+\;
\xy
(0,0)*{
\begin{tikzpicture}[scale=.3]
	\draw [very thick, ->] (1,0) to (-1,4);
	\draw [very thick] (3,3.9) to [out=270,in=0] (2,2.9) to [out=180,in=270] (1,3.9);
	\draw [very thick] (-1,0.1) to [out=90,in=180] (1,1.1) to [out=0,in=90] (3,0.1);
	\draw [very thick, ->] (1,3.9) to (1,4);
	\draw [very thick] (3,3.9) to (3,4);
	\draw [very thick, ->] (3,0.1) to (3,0);
	\draw [very thick] (-1,0.1) to (-1,0);
\end{tikzpicture}
};
\endxy
\;+\;
\xy
(0,0)*{
\begin{tikzpicture}[scale=.3]
	\draw [very thick, ->] (-1,0) to (1,4);
	\draw [very thick] (3,3.9) to [out=270,in=0] (1,2.9) to [out=180,in=270] (-1,3.9);
	\draw [very thick] (1,0.1) to [out=90,in=180] (2,1.1) to [out=0,in=90] (3,0.1);
	\draw [very thick, ->] (-1,3.9) to (-1,4);
	\draw [very thick] (3,3.9) to (3,4);
	\draw [very thick, ->] (3,0.1) to (3,0);
	\draw [very thick] (1,0.1) to (1,0);
\end{tikzpicture}
};
\endxy
\;-\;
\xy
(0,0)*{
\begin{tikzpicture}[scale=.3]
	\draw [very thick, ->] (1,0) to (1,4);
	\draw [very thick] (3,3.9) to [out=270,in=0] (1,2.9) to [out=180,in=270] (-1,3.9);
	\draw [very thick] (-1,0.1) to [out=90,in=180] (1,1.1) to [out=0,in=90] (3,0.1);
	\draw [very thick, ->] (-1,3.9) to (-1,4);
	\draw [very thick] (3,3.9) to (3,4);
	\draw [very thick, ->] (3,0.1) to (3,0);
	\draw [very thick] (-1,0.1) to (-1,0);
\end{tikzpicture}
};
\endxy\in\cal{B}_{2,1}(\delta),\\
E_2(\delta)\;=\;
\xy
(0,0)*{
\begin{tikzpicture}[scale=.3]
	\draw [very thick] (-1,0) to (-1,4);
	\draw [very thick] (1,0) to (1,4);
\end{tikzpicture}
};
\endxy
\;-\;
\xy
(0,0)*{
\begin{tikzpicture}[scale=.3]
	\draw [very thick] (-1,0) to (1,4);
	\draw [very thick] (1,0) to (-1,4);
\end{tikzpicture}
};
\endxy
\;-\;
\xy
(0,0)*{
\begin{tikzpicture}[scale=.3]
	\draw [very thick] (3,4) to [out=270,in=0] (2,3) to [out=180,in=270] (1,4);
	\draw [very thick] (1,0) to [out=90,in=180] (2,1) to [out=0,in=90] (3,0);
\end{tikzpicture}
};
\endxy
\in\cal{B}_{2}(\delta).
\end{gather*}
Moreover, if $s=0$ and $\delta\in\K$ arbitrary, then $e_{r,0}(\delta)$ 
are the elements given in~\eqref{eq-haert}.
\end{ex}

\subsection{The walled Brauer case}

\begin{prop}\label{prop-kernel}
Let $n=r+s-1=\delta_p$. Then the $\K$-linear span 
of $e_{r,s}(n)\in\cal{B}_{r,s}(\delta)$ equals $\ker(\Phi_{\mathrm{wBr}})$.
Furthermore, the following holds.
\begin{enumerate}[(a)]
\item If $\Char(\K)>\max\{r,s\}$ or $\Char(\K)=0$, 
then $e_{r,s}(n)$ is a quasi-idempotent.
\item If $\Char(\K)=p\leq\max\{r,s\}$, then 
$e_{r,s}(n)$ is nilpotent.
\end{enumerate}
\end{prop}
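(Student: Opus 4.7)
The plan is to prove the proposition in four ordered steps: (i) a dimension count showing $\dim \ker(\Phi_{\mathrm{wBr}}) = 1$; (ii) a quick verification that $e_{r,s}(n) \neq 0$; (iii) the technical heart, placing $e_{r,s}(n)$ in $\ker(\Phi_{\mathrm{wBr}})$; and (iv) a formal computation yielding $e_{r,s}(n)^2 = r!s! \cdot e_{r,s}(n)$, from which (a) and (b) follow.

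For Step (i), $\dim \cal{B}_{r,s}(\delta) = (r+s)!$ by the primitive-diagram basis, while $\dim \End_{\Uo}(T_n^{r,s})$ is independent of $\Char(\K)$ by the Ext-vanishing argument of Proposition~\ref{prop-prop} (extended to mixed tensor products), hence coincides with its classical value. Via Theorem~\ref{thm-schur-weyl2}(c) over $\C$, this equals $\sum (\dim D_1^{\lambda,\bar\mu})^2$ summed over $(i,\lambda,\mu)$ with $\Delta_1(\lambda,\bar\mu) \neq 0$. For $n = r+s-1$ exactly one pair is excluded, namely $(0,(1^r),(1^s))$: its weight has $r+s = n+1$ non-zero entries, too many for the $\gll{n}$-weight lattice. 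The corresponding simple $\cal{B}_{r,s}(\delta)$-module (the sign representation of $S_r \times S_s$ with $u_r \mapsto 0$) is one-dimensional, so $\dim \End_{\Uo}(T_n^{r,s}) = (r+s)! - 1$ and $\dim \ker(\Phi_{\mathrm{wBr}}) = 1$. Step (ii) is immediate: modulo the two-sided ideal $J$ of $\cal{B}_{r,s}(\delta)$ generated by $u_r$, we have $\cal{B}_{r,s}(\delta)/J \cong \K S_r \otimes \K S_s$ and $e_{r,s}(n) \equiv e_r \otimes e_s$, a non-zero antisymmetrizer.

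For Step (iii), I first observe that the bijection $x \mapsto \sigma_i x$ on primitive diagrams reverses the parity of $l(x)$ for each $i \neq r$; consequently $\sigma_i e_{r,s}(\delta) = -e_{r,s}(\delta)$ holds for every $\delta$, forcing $\Phi_{\mathrm{wBr}}(e_{r,s}(n))$ to have image contained in the sign-isotypic subspace $\Lambda^r V \otimes \Lambda^s V^* \subset T_n^{r,s}$ under the $S_r \times S_s$-action. A direct computation in $\cal{B}_{r,s}(\delta)$ using $u_r^2 = \delta u_r$ and $u_r \sigma_{r \pm 1} u_r = u_r$ yields an identity of the form $u_r e_{r,s}(\delta) = (n - \delta)\, \xi$ in $\cal{B}_{r,s}(\delta)$, which vanishes at $\delta = n$. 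Combined with the $\sigma_i$-relation, this pins down $\Phi_{\mathrm{wBr}}(e_{r,s}(n))$ as a scalar multiple of the natural projection of $T_n^{r,s}$ onto its $\Delta_1((1^r),(1^s))$-isotypic component. Since the latter Weyl module vanishes for $n = r+s-1$, this projection is zero, and $\Phi_{\mathrm{wBr}}(e_{r,s}(n)) = 0$. Together with Steps (i) and (ii) this gives $\ker(\Phi_{\mathrm{wBr}}) = \K \cdot e_{r,s}(n)$.

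For Step (iv), $\ker(\Phi_{\mathrm{wBr}})$ is a one-dimensional two-sided ideal, so any left multiple is a scalar multiple of $e_{r,s}(n)$: in particular $u_r e_{r,s}(n) = \phi \cdot e_{r,s}(n)$ for some $\phi \in \K$, and reducing modulo $J$ (where $u_r$ vanishes but $e_{r,s}(n)$ does not) forces $\phi = 0$, yielding $u_r e_{r,s}(n) = 0$. Then
\[
e_{r,s}(n)^2 = \sum_x (-1)^{l(x)}\, x \cdot e_{r,s}(n),
\]
where the sum runs over primitive diagrams. Writing each $x$ as a word in the generators and processing from the right, each $\sigma_i$ contributes a factor $-1$ via $\sigma_i e_{r,s}(n) = -e_{r,s}(n)$, while any occurrence of $u_r$ kills the expression via $u_r e_{r,s}(n) = 0$. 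Hence only the $r!s!$ pure $S_r \times S_s$-permutations contribute, each giving $(-1)^{2l(x)} e_{r,s}(n) = e_{r,s}(n)$. Thus $e_{r,s}(n)^2 = r!s! \cdot e_{r,s}(n)$, from which (a) follows when $r!s!$ is invertible in $\K$ and (b) when $r!s! = 0$ in $\K$, i.e., when $\Char(\K) = p \leq \max\{r,s\}$. The main obstacle is the argument in Step (iii) identifying $\Phi_{\mathrm{wBr}}(e_{r,s}(n))$ with the projection onto the vanishing $\Delta_1((1^r),(1^s))$-isotypic component; the remaining steps are then essentially formal.
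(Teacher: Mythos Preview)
Your Steps (i), (ii) and (iv) are essentially fine and match the paper's strategy; in fact your Step (iv) is cleaner than the paper's, which simply asserts that ``a direct computation shows that the scalar in front of the identity diagram of $e_{r,s}(n)e_{r,s}(n)$ is $r!s!$''. Your derivation of $u_r e_{r,s}(n)=0$ from the one-dimensionality of the ideal, followed by the observation that every non-permutation primitive diagram lies in the two-sided ideal generated by $u_r$ and hence annihilates $e_{r,s}(n)$, gives the identity $e_{r,s}(n)^2=r!s!\,e_{r,s}(n)$ transparently.

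The genuine gap is in Step (iii), exactly where you flag it. Two problems: first, the identity $u_r e_{r,s}(\delta)=(n-\delta)\xi$ is asserted (``a direct computation \ldots\ yields'') but not carried out, and it is not a one-line calculation for general $r,s$. Second, and more seriously, the passage from the relations $\sigma_i e=-e$ and $u_r e=0$ to ``$\Phi_{\mathrm{wBr}}(e_{r,s}(n))$ is a scalar multiple of the projection onto the $\Delta_1((1^r),\overline{(1^s)})$-isotypic component'' is only justified in the semisimple situation via the double-centralizer picture. Over a field of positive characteristic $T_n^{r,s}$ need not be semisimple, and the phrase ``isotypic component'' does not even have an obvious meaning. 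What you actually need is that $(\Lambda^r V\otimes \Lambda^s V^*)\cap\ker(U_r)=0$ inside $T_n^{r,s}$ when $n=r+s-1$, and you do not establish this. (One clean rescue: prove $\Phi_{\mathrm{wBr}}(e_{r,s}(n))=0$ over $\Q$ by your semisimple argument, observe that the element and the action are defined over $\Z$, and base-change; but you do not say this.)

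The paper's proof of this step is entirely different and avoids the issue: it shows directly that $e_{r,s}(n)$ annihilates every standard basis vector of $T_n^{r,s}$ by an elementary sign-cancellation argument. For each basis vector $\vec v$ one pairs up primitive diagrams $x,\tilde x$ differing by a single local move (a crossing or a cap--cup) so that their contributions to $e_{r,s}(n)(\vec v)$ cancel, with an induction on $r+s$ handling the case where a tensor factor is isolated. This is characteristic-free from the outset.
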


\begin{proof}
The case $r+s=1$ is clear so we may assume now that $r+s\geq 2$.

\noindent\textit{Claim 1.} $e_{r,s}(n)\in\ker(\Phi_{\mathrm{wBr}})$.

\textit{Proof of Claim 1.} We want to use 
the diagrammatic presentation of $\cal{B}_{r,s}(\delta)$ 
from Remark~\ref{rem-walledbrauer}. If we denote 
a basis of $V$ by $\{1,\dots,n\}$
and its dual basis of $V^*$ by $\{\overline{1},\dots,\overline{n}\}$ 
(we assume throughout the proof 
that vectors of the form $\vec{v},\vec{w}\in T_{n}^{r,s}$ that we use below 
have only tensor factors from either $\{1,\dots,n\}$ or $\{\overline{1},\dots,\overline{n}\}$), then 
the action of $\cal{B}_{r,s}(\delta)$ on $T_{n}^{r,s}$ can 
locally be pictured as
\[
\xy
(0,0)*{
\begin{tikzpicture}[scale=.3]
	\draw [very thick, ->] (-1,-1) to (-1,1);
	\draw [very thick, ->] (1,-1) to (1,1);
	\node at (-1,-1.45) {\tiny $v$};
	\node at (1,-1.45) {\tiny $v$};
	\node at (-1,1.45) {\tiny $v$};
	\node at (1,1.45) {\tiny $v$};
	\node at (0,-1.45) {\tiny $\otimes$};
	\node at (0,1.45) {\tiny $\otimes$};
\end{tikzpicture}
};
\endxy,\;\;\;
\xy
(0,0)*{
\begin{tikzpicture}[scale=.3]
	\draw [very thick, ->] (-1,-1) to (-1,1);
	\draw [very thick, ->] (1,-1) to (1,1);
	\node at (-1,-1.45) {\tiny $v$};
	\node at (1,-1.45) {\tiny $w$};
	\node at (-1,1.45) {\tiny $v$};
	\node at (1,1.45) {\tiny $w$};
	\node at (0,-1.45) {\tiny $\otimes$};
	\node at (0,1.45) {\tiny $\otimes$};
\end{tikzpicture}
};
\endxy,\;\;\;
\xy
(0,0)*{
\begin{tikzpicture}[scale=.3]
	\draw [very thick, ->] (-1,-1) to (1,1);
	\draw [very thick, ->] (1,-1) to (-1,1);
	\node at (-1,-1.45) {\tiny $v$};
	\node at (1,-1.45) {\tiny $v$};
	\node at (-1,1.45) {\tiny $v$};
	\node at (1,1.45) {\tiny $v$};
	\node at (0,-1.45) {\tiny $\otimes$};
	\node at (0,1.45) {\tiny $\otimes$};
\end{tikzpicture}
};
\endxy,\;\;\;
\xy
(0,0)*{
\begin{tikzpicture}[scale=.3]
	\draw [very thick, ->] (-1,-1) to (1,1);
	\draw [very thick, ->] (1,-1) to (-1,1);
	\node at (-1,-1.45) {\tiny $v$};
	\node at (1,-1.45) {\tiny $w$};
	\node at (-1,1.45) {\tiny $w$};
	\node at (1,1.45) {\tiny $v$};
	\node at (0,-1.45) {\tiny $\otimes$};
	\node at (0,1.45) {\tiny $\otimes$};
\end{tikzpicture}
};
\endxy,\;\;\;
\xy
(0,0)*{
\begin{tikzpicture}[scale=.3]
	\draw [very thick, ->] (-1,1) to (-1,-1);
	\draw [very thick, ->] (1,1) to (1,-1);
	\node at (-1,-1.45) {\tiny $\overline{v}$};
	\node at (1,-1.45) {\tiny $\overline{v}$};
	\node at (-1,1.45) {\tiny $\overline{v}$};
	\node at (1,1.45) {\tiny $\overline{v}$};
	\node at (0,-1.45) {\tiny $\otimes$};
	\node at (0,1.45) {\tiny $\otimes$};
\end{tikzpicture}
};
\endxy,\;\;\;
\xy
(0,0)*{
\begin{tikzpicture}[scale=.3]
	\draw [very thick, ->] (-1,1) to (-1,-1);
	\draw [very thick, ->] (1,1) to (1,-1);
	\node at (-1,-1.45) {\tiny $\overline{v}$};
	\node at (1,-1.45) {\tiny $\overline{w}$};
	\node at (-1,1.45) {\tiny $\overline{v}$};
	\node at (1,1.45) {\tiny $\overline{w}$};
	\node at (0,-1.45) {\tiny $\otimes$};
	\node at (0,1.45) {\tiny $\otimes$};
\end{tikzpicture}
};
\endxy,\;\;\;
\xy
(0,0)*{
\begin{tikzpicture}[scale=.3]
	\draw [very thick, ->] (-1,1) to (1,-1);
	\draw [very thick, ->] (1,1) to (-1,-1);
	\node at (-1,-1.45) {\tiny $\overline{v}$};
	\node at (1,-1.45) {\tiny $\overline{v}$};
	\node at (-1,1.45) {\tiny $\overline{v}$};
	\node at (1,1.45) {\tiny $\overline{v}$};
	\node at (0,-1.45) {\tiny $\otimes$};
	\node at (0,1.45) {\tiny $\otimes$};
\end{tikzpicture}
};
\endxy,\;\;\;
\xy
(0,0)*{
\begin{tikzpicture}[scale=.3]
	\draw [very thick, ->] (-1,1) to (1,-1);
	\draw [very thick, ->] (1,1) to (-1,-1);
	\node at (-1,-1.45) {\tiny $\overline{v}$};
	\node at (1,-1.45) {\tiny $\overline{w}$};
	\node at (-1,1.45) {\tiny $\overline{w}$};
	\node at (1,1.45) {\tiny $\overline{v}$};
	\node at (0,-1.45) {\tiny $\otimes$};
	\node at (0,1.45) {\tiny $\otimes$};
\end{tikzpicture}
};
\endxy,\;\;\;
\raisebox{.25cm}{\xy
(0,0)*{
\begin{tikzpicture}[scale=.3]
	\draw [very thick, directed=.55] (5,-1) to [out=90,in=180] (6,-.35) to [out=0,in=90] (7,-1);
	\draw [very thick, directed=.55] (7,1) to [out=270,in=0] (6,.35) to [out=180,in=270] (5,1);
	\node at (5,-1.5) {\tiny $v$};
	\node at (7,-1.45) {\tiny $\overline{v}$};
	\node at (4,1.45) {\tiny $\displaystyle\sum_{i=1}^n$};
	\node at (5,1.45) {\tiny $i$};
	\node at (7,1.55) {\tiny $\overline{i}$};
	\node at (6,-1.45) {\tiny $\otimes$};
	\node at (6,1.45) {\tiny $\otimes$};
\end{tikzpicture}
};
\endxy}
,\;\;\;
\xy
(0,0)*{
\begin{tikzpicture}[scale=.3]
	\draw [very thick, directed=.55] (5,-1) to [out=90,in=180] (6,-.35) to [out=0,in=90] (7,-1);
	\draw [very thick, directed=.55] (7,1) to [out=270,in=0] (6,.35) to [out=180,in=270] (5,1);
	\node at (5,-1.5) {\tiny $v$};
	\node at (7,-1.45) {\tiny $\overline{w}$};
	\node at (5,1.45) {\tiny $0$};
	\node at (7,1.45) {\tiny $0$};
	\node at (6,-1.45) {\tiny $\otimes$};
	\node at (6,1.45) {\tiny $\otimes$};
\end{tikzpicture}
};
\endxy
\]
for $v,w\in\{1,\dots,n\}$ with $v\neq w$. For example, the cap-cup generator 
sends a basis vector of $V\otimes V^*$ of the form $v\otimes\overline{v}$ to the 
full sum $\sum_{i=1}^ni\otimes\overline{i}$ 
and all other basis vectors to zero.

We need to show that an 
arbitrary basis vector $\vec{v}\in T_{n}^{r,s}$ is sent to zero by $e_{r,s}(n)$. 
For this purpose, we argue inductively, where the induction is on the total 
number $d=r+s$ of strands.

In case $d=2$, we have either $r=2,s=0$ or $r=0,s=2$ or $r=1,s=1$. Moreover, 
$n=1$ and the only possible basis vectors $\vec{v}\in T_{n}^{r,s}$ in 
these cases are $1\otimes 1$ or $\overline{1}\otimes\overline{1}$ or $1\otimes\overline{1}$. 
Then
\[
e_{2,0}(1)(1\otimes 1)=
\xy
(0,0)*{
\begin{tikzpicture}[scale=.3]
	\draw [very thick, ->] (-1,-1) to (-1,1);
	\draw [very thick, ->] (1,-1) to (1,1);
	\node at (-1,-1.45) {\tiny $1$};
	\node at (1,-1.45) {\tiny $1$};
	\node at (-1,1.45) {\tiny $1$};
	\node at (1,1.45) {\tiny $1$};
	\node at (0,-1.45) {\tiny $\otimes$};
	\node at (0,1.45) {\tiny $\otimes$};
\end{tikzpicture}
};
\endxy
-
\xy
(0,0)*{
\begin{tikzpicture}[scale=.3]
	\draw [very thick, ->] (-1,-1) to (1,1);
	\draw [very thick, ->] (1,-1) to (-1,1);
	\node at (-1,-1.45) {\tiny $1$};
	\node at (1,-1.45) {\tiny $1$};
	\node at (-1,1.45) {\tiny $1$};
	\node at (1,1.45) {\tiny $1$};
	\node at (0,-1.45) {\tiny $\otimes$};
	\node at (0,1.45) {\tiny $\otimes$};
\end{tikzpicture}
};
\endxy,\;\;\;
e_{0,2}(1)(\overline{1}\otimes\overline{1})=
\xy
(0,0)*{
\begin{tikzpicture}[scale=.3]
	\draw [very thick, ->] (-1,1) to (-1,-1);
	\draw [very thick, ->] (1,1) to (1,-1);
	\node at (-1,-1.45) {\tiny $\overline{1}$};
	\node at (1,-1.45) {\tiny $\overline{1}$};
	\node at (-1,1.45) {\tiny $\overline{1}$};
	\node at (1,1.45) {\tiny $\overline{1}$};
	\node at (0,-1.45) {\tiny $\otimes$};
	\node at (0,1.45) {\tiny $\otimes$};
\end{tikzpicture}
};
\endxy
-
\xy
(0,0)*{
\begin{tikzpicture}[scale=.3]
	\draw [very thick, ->] (-1,1) to (1,-1);
	\draw [very thick, ->] (1,1) to (-1,-1);
	\node at (-1,-1.45) {\tiny $\overline{1}$};
	\node at (1,-1.45) {\tiny $\overline{1}$};
	\node at (-1,1.45) {\tiny $\overline{1}$};
	\node at (1,1.45) {\tiny $\overline{1}$};
	\node at (0,-1.45) {\tiny $\otimes$};
	\node at (0,1.45) {\tiny $\otimes$};
\end{tikzpicture}
};
\endxy,\;\;\;
e_{1,1}(1)(1\otimes\overline{1})=
\xy
(0,0)*{
\begin{tikzpicture}[scale=.3]
	\draw [very thick, ->] (-1,-1) to (-1,1);
	\draw [very thick, ->] (1,1) to (1,-1);
	\node at (-1,-1.55) {\tiny $1$};
	\node at (1,-1.45) {\tiny $\overline{1}$};
	\node at (-1,1.45) {\tiny $1$};
	\node at (1,1.55) {\tiny $\overline{1}$};
	\node at (0,-1.45) {\tiny $\otimes$};
	\node at (0,1.45) {\tiny $\otimes$};
\end{tikzpicture}
};
\endxy
-
\xy
(0,0)*{
\begin{tikzpicture}[scale=.3]
    \draw [very thick, directed=.55] (-1,-1) to [out=90,in=180] (0,-.35) to [out=0,in=90] (1,-1);
	\draw [very thick, directed=.55] (1,1) to [out=270,in=0] (0,.35) to [out=180,in=270] (-1,1);
	\node at (-1,-1.55) {\tiny $1$};
	\node at (1,-1.45) {\tiny $\overline{1}$};
	\node at (-1,1.45) {\tiny $1$};
	\node at (1,1.55) {\tiny $\overline{1}$};
	\node at (0,-1.45) {\tiny $\otimes$};
	\node at (0,1.45) {\tiny $\otimes$};
\end{tikzpicture}
};
\endxy
\]
We see that all of these act as zero on a basis of $T_{n}^{r,s}$. Hence, they are all in the kernel.

Let $d>2$ and let $\vec{v}=v_1\otimes\dots\otimes\overline{v}_{r+s}$. 
We need to show that $e_{r,s}(n)(\vec{v})=0$.
We do a case-by-case check depending on the tensor factors of 
$\vec{v}$. For 
simplicity of notation, 
we assume that those tensor factors of $\vec{v}$ 
that we consider are next to each other (otherwise, 
we can permute them next to each other) and we only display 
the relevant part of $\vec{v}$. The cases are:
\begin{itemize}
\item[i.] $\vec{v}$ has tensor factors of the form $v\otimes v$ or 
$\overline{v}\otimes\overline{v}$. Then any primitive diagram $x$ acting 
non-trivially on $\vec{v}$ is locally of the following form.
\[
\text{Case }v\otimes v:
\xy
(0,0)*{
\begin{tikzpicture}[scale=.3]
	\draw [very thick, ->] (-1,-1) to (-1,1);
	\draw [very thick, ->] (1,-1) to (1,1);
	\node at (-1,-1.45) {\tiny $v$};
	\node at (1,-1.45) {\tiny $v$};
	\node at (-1,1.45) {\tiny $v$};
	\node at (1,1.45) {\tiny $v$};
	\node at (0,-1.45) {\tiny $\otimes$};
	\node at (0,1.45) {\tiny $\otimes$};
\end{tikzpicture}
};
\endxy
\quad\text{or}\quad
\xy
(0,0)*{
\begin{tikzpicture}[scale=.3]
	\draw [very thick, ->] (-1,-1) to (1,1);
	\draw [very thick, ->] (1,-1) to (-1,1);
	\node at (-1,-1.45) {\tiny $v$};
	\node at (1,-1.45) {\tiny $v$};
	\node at (-1,1.45) {\tiny $v$};
	\node at (1,1.45) {\tiny $v$};
	\node at (0,-1.45) {\tiny $\otimes$};
	\node at (0,1.45) {\tiny $\otimes$};
\end{tikzpicture}
};
\endxy;\quad\quad\text{Case }\overline{v}\otimes\overline{v}:
\quad\quad
\xy
(0,0)*{
\begin{tikzpicture}[scale=.3]
	\draw [very thick, ->] (-1,1) to (-1,-1);
	\draw [very thick, ->] (1,1) to (1,-1);
	\node at (-1,-1.45) {\tiny $\overline{v}$};
	\node at (1,-1.45) {\tiny $\overline{v}$};
	\node at (-1,1.45) {\tiny $\overline{v}$};
	\node at (1,1.45) {\tiny $\overline{v}$};
	\node at (0,-1.45) {\tiny $\otimes$};
	\node at (0,1.45) {\tiny $\otimes$};
\end{tikzpicture}
};
\endxy
\quad\text{or}\quad
\xy
(0,0)*{
\begin{tikzpicture}[scale=.3]
	\draw [very thick, ->] (-1,1) to (1,-1);
	\draw [very thick, ->] (1,1) to (-1,-1);
	\node at (-1,-1.45) {\tiny $\overline{v}$};
	\node at (1,-1.45) {\tiny $\overline{v}$};
	\node at (-1,1.45) {\tiny $\overline{v}$};
	\node at (1,1.45) {\tiny $\overline{v}$};
	\node at (0,-1.45) {\tiny $\otimes$};
	\node at (0,1.45) {\tiny $\otimes$};
\end{tikzpicture}
};
\endxy
\]
Note that, for each primitive diagram $x\in\cal{B}_{r,s}(\delta)$ 
that is locally as on the left-hand sides above, there 
is precisely one primitive diagram $\tilde x\in\cal{B}_{r,s}(\delta)$ that 
is locally as on the right-hand sides above and otherwise equal to $x$. These 
appear in $e_{r,s}(n)$ with different signs and their contributions cancel. 
This shows that $e_{r,s}(n)(\vec{v})=0$.
\item[ii.] $\vec{v}$ has no entry pairs of the form $v\otimes v$ 
or $\overline{v}\otimes\overline{v}$. We fix a primitive diagram $x$ and do another 
case-by-case check depending on the
matrix entry corresponding to a fixed pair $\vec{v}$ and $\vec{w}=x(\vec{v})$.
We again assume that the tensor factors of $\vec{w}$ 
under consideration are next to each other.
\begin{itemize}
\item $\vec{w}$ has no tensor factors of the form $w\otimes w$ or 
$\overline{w}\otimes\overline{w}$. Then $\vec{w}$ (that is the contribution of $x$) 
is cancelled by a primitive diagram $\tilde x$ obtained from $x$ by applying 
an extra crossing at the corresponding position. Or in pictures 
(for brevity, we only display the upwards oriented version, but the 
other case is completely similar):
\[
x=
\xy
(0,0)*{
\begin{tikzpicture}[scale=.3]
	\draw [very thick] (-2,-8) to (-2,-6);
	\draw [very thick] (0,-8) to (0,-6);
	\draw [very thick] (2,-8) to (2,-6);
	\draw [very thick] (4,-8) to (4,-6);
	\draw [very thick, ->] (-2,-4) to (-2,-2);
	\draw [very thick, ->] (0,-4) to (0,-2);
	\draw [very thick, ->] (2,-4) to (2,-2);
	\draw [very thick, ->] (4,-4) to (4,-2);
	\draw [very thick] (6,-2) to (6,-4);
	\draw [very thick] (8,-2) to (8,-4);
	\draw [very thick, ->] (6,-6) to (6,-8);
	\draw [very thick, ->] (8,-6) to (8,-8);
	\draw [very thick, blue] (-2.3,-6) rectangle (8.3,-4);
	\node at (-2,-8.45) {\tiny $*$};
	\node at (0,-8.45) {\tiny $*$};
	\node at (2,-8.45) {\tiny $*$};
	\node at (4,-8.45) {\tiny $*$};
	\node at (6,-8.45) {\tiny $*$};
	\node at (8,-8.45) {\tiny $*$};
	\node at (-2,-1.55) {\tiny $*$};
	\node at (0,-1.55) {\tiny $w$};
	\node at (2,-1.55) {\tiny $w$};
	\node at (4,-1.55) {\tiny $*$};
	\node at (6,-1.55) {\tiny $*$};
	\node at (8,-1.55) {\tiny $*$};
	\node at (-.9,-3.25) {\tiny $\cdots$};
	\node at (-.9,-7) {\tiny $\cdots$};
	\node at (3.1,-3.25) {\tiny $\cdots$};
	\node at (7.1,-3.25) {\tiny $\cdots$};
	\node at (3.1,-7) {\tiny $\cdots$};
	\node at (7.1,-7) {\tiny $\cdots$};
	\node at (3,-5) {\tiny $x$};
	\node at (9.1,-8.35) {\tiny $=\vec{v}$};
	\node at (9.2,-1.45) {\tiny $=\vec{w}$};
\end{tikzpicture}
};
\endxy,\quad\quad
\tilde x=
\xy
(0,0)*{
\begin{tikzpicture}[scale=.3]
	\draw [very thick] (-2,-8) to (-2,-6);
	\draw [very thick] (0,-8) to (0,-6);
	\draw [very thick] (2,-8) to (2,-6);
	\draw [very thick] (4,-8) to (4,-6);
	\draw [very thick, ->] (-2,-4) to (-2,-2);
	\draw [very thick, ->] (0,-4) to (2,-2);
	\draw [very thick, ->] (2,-4) to (0,-2);
	\draw [very thick, ->] (4,-4) to (4,-2);
	\draw [very thick] (6,-2) to (6,-4);
	\draw [very thick] (8,-2) to (8,-4);
	\draw [very thick, ->] (6,-6) to (6,-8);
	\draw [very thick, ->] (8,-6) to (8,-8);
	\draw [very thick, blue] (-2.3,-6) rectangle (8.3,-4);
	\node at (-2,-8.45) {\tiny $*$};
	\node at (0,-8.45) {\tiny $*$};
	\node at (2,-8.45) {\tiny $*$};
	\node at (4,-8.45) {\tiny $*$};
	\node at (6,-8.45) {\tiny $*$};
	\node at (8,-8.45) {\tiny $*$};
	\node at (-2,-1.55) {\tiny $*$};
	\node at (0,-1.55) {\tiny $w$};
	\node at (2,-1.55) {\tiny $w$};
	\node at (4,-1.55) {\tiny $*$};
	\node at (6,-1.55) {\tiny $*$};
	\node at (8,-1.55) {\tiny $*$};
	\node at (-.9,-3.25) {\tiny $\cdots$};
	\node at (-.9,-7) {\tiny $\cdots$};
	\node at (3.1,-3.25) {\tiny $\cdots$};
	\node at (7.1,-3.25) {\tiny $\cdots$};
	\node at (3.1,-7) {\tiny $\cdots$};
	\node at (7.1,-7) {\tiny $\cdots$};
	\node at (3,-5) {\tiny $x$};
	\node at (9.1,-8.35) {\tiny $=\vec{v}$};
	\node at (9.2,-1.45) {\tiny $=\vec{w}$};
\end{tikzpicture}
};
\endxy
\]
Here the $*$'s represent arbitrary tensor factors (which are the same for $x$ and $\tilde x$). 
Since $x$ and $\tilde x$ appear with different signs in $e_{r,s}(n)$, these two terms 
cancel 
each other.
\item There is a tensor factor $w$ (or $\overline{w}$) 
of $\vec{w}$ that appears isolated, that is no other 
tensor factors of $\vec{w}$ are of the form $w$ or $\overline{w}$. 
Since $\vec{w}=x(\vec{v})$ 
is non-zero and we are not in case i., there exist a unique 
connecting strand in $x$ from a bottom entry $w$ (or $\overline{w}$) to this isolated 
top entry. In pictures 
(where we for simplicity assume that this unique strand is on the left respectively right)
\[
\xy
(0,0)*{
\begin{tikzpicture}[scale=.3]
	\draw [very thick] (0,-8) to (0,-6);
	\draw [very thick] (2,-8) to (2,-6);
	\draw [very thick, ->] (4,-6) to (4,-8);
	\draw [very thick, ->] (-2,-8) to (-2,-2);
	\draw [very thick, ->] (0,-4) to (0,-2);
	\draw [very thick, ->] (2,-4) to (2,-2);
	\draw [very thick] (4,-4) to (4,-2);
	\draw [very thick] (6,-2) to (6,-4);
	\draw [very thick, ->] (6,-6) to (6,-8);
	\draw [very thick, blue] (-0.3,-6) rectangle (6.3,-4);
	\node at (-2,-8.45) {\tiny $w$};
	\node at (0,-8.45) {\tiny $*$};
	\node at (2,-8.45) {\tiny $*$};
	\node at (4,-8.45) {\tiny $*$};
	\node at (6,-8.45) {\tiny $*$};
	\node at (-2,-1.55) {\tiny $w$};
	\node at (0,-1.55) {\tiny $*$};
	\node at (2,-1.55) {\tiny $*$};
	\node at (4,-1.55) {\tiny $*$};
	\node at (6,-1.55) {\tiny $*$};
	\node at (1.1,-3.25) {\tiny $\cdots$};
	\node at (5.1,-3.25) {\tiny $\cdots$};
	\node at (1.1,-7) {\tiny $\cdots$};
	\node at (5.1,-7) {\tiny $\cdots$};
	\node at (3,-5) {\tiny rest of $x$};
	\node at (7.1,-8.35) {\tiny $=\vec{v}$};
	\node at (7.2,-1.45) {\tiny $=\vec{w}$};
\end{tikzpicture}
};
\endxy
\quad\text{or}\quad
\xy
(0,0)*{
\begin{tikzpicture}[scale=.3]
    \draw [very thick] (-2,-8) to (-2,-6);
	\draw [very thick] (0,-8) to (0,-6);
	\draw [very thick, ->] (2,-6) to (2,-8);
	\draw [very thick, ->] (4,-6) to (4,-8);
	\draw [very thick, ->] (-2,-4) to (-2,-2);
	\draw [very thick, ->] (0,-4) to (0,-2);
	\draw [very thick] (2,-4) to (2,-2);
	\draw [very thick] (4,-4) to (4,-2);
	\draw [very thick, ->] (6,-2) to (6,-8);
	\draw [very thick, blue] (-2.3,-6) rectangle (4.3,-4);
	\node at (-2,-8.45) {\tiny $*$};
	\node at (0,-8.45) {\tiny $*$};
	\node at (2,-8.45) {\tiny $*$};
	\node at (4,-8.45) {\tiny $*$};
	\node at (6,-8.45) {\tiny $\overline{w}$};
	\node at (-2,-1.55) {\tiny $*$};
	\node at (0,-1.55) {\tiny $*$};
	\node at (2,-1.55) {\tiny $*$};
	\node at (4,-1.55) {\tiny $*$};
	\node at (6,-1.55) {\tiny $\overline{w}$};
	\node at (-.9,-3.25) {\tiny $\cdots$};
	\node at (3.1,-3.25) {\tiny $\cdots$};
	\node at (-.9,-7) {\tiny $\cdots$};
	\node at (3.1,-7) {\tiny $\cdots$};
	\node at (1,-5) {\tiny rest of $x$};
	\node at (7.1,-8.35) {\tiny $=\vec{v}$};
	\node at (7.2,-1.45) {\tiny $=\vec{w}$};
\end{tikzpicture}
};
\endxy
\]
Here the entries $*$ mean arbitrary tensor factors 
that are neither $w$ nor $\overline{w}$.
Now $e_{r,s}(n)(\vec{v})=0$ if and only if $e^{\prime}_{r,s}(n)(\vec{v}^{\prime})=0$, 
where $\vec{v}^{\prime}$ is obtained from $\vec{v}$ by removing 
the two isolated tensor factors and $e^{\prime}_{r,s}(n)$ is obtained from 
$e_{r,s}(n)$ by first removing all summands which are not of the form as above and then remove 
the unique strand. Hence, we can argue now by induction.

\item $\vec{w}$ has only entry pairs of the form $w\otimes\overline{w}$. Then 
the same is true for $\vec{v}$ (otherwise we are in case i.). Since $n=r+s-1$, 
we know that there is at least one pair $i\otimes\overline{i}$ that 
appears in both $\vec{v}$ and $\vec{w}$. Similarly to the case i., the 
primitive diagram $x$ is locally of the following form.
\[
\xy
(0,0)*{
\begin{tikzpicture}[scale=.3]
	\draw [very thick, ->] (-1,-1) to (-1,1);
	\draw [very thick, ->] (1,1) to (1,-1);
	\node at (-1,-1.55) {\tiny $i$};
	\node at (1,-1.45) {\tiny $\overline{i}$};
	\node at (-1,1.45) {\tiny $i$};
	\node at (1,1.55) {\tiny $\overline{i}$};
	\node at (0,-1.45) {\tiny $\otimes$};
	\node at (0,1.45) {\tiny $\otimes$};
\end{tikzpicture}
};
\endxy
\quad\text{or}\quad
\xy
(0,0)*{
\begin{tikzpicture}[scale=.3]
	\draw [very thick, directed=.55] (-1,-1) to [out=90,in=180] (0,-.35) to [out=0,in=90] (1,-1);
	\draw [very thick, directed=.55] (1,1) to [out=270,in=0] (0,.35) to [out=180,in=270] (-1,1);
	\node at (-1,-1.55) {\tiny $i$};
	\node at (1,-1.45) {\tiny $\overline{i}$};
	\node at (-1,1.45) {\tiny $i$};
	\node at (1,1.55) {\tiny $\overline{i}$};
	\node at (0,-1.45) {\tiny $\otimes$};
	\node at (0,1.45) {\tiny $\otimes$};
\end{tikzpicture}
};
\endxy
\]
Thus, for each such $x$, there is precisely one $\tilde x$ which is locally 
different from $x$ as illustrated above and identical to $x$ otherwise. Since $x$ and 
$\tilde x$ appear with different signs in $e_{r,s}(n)$, their contributions cancel.
\end{itemize}
\end{itemize}
These are all possible cases. Hence, all matrix coefficients of 
$e_{r,s}(n)\in\End_{\Uo}(T^d_n)$ are trivial and so Claim 1 follows.

\noindent\textit{Claim 2.} The $\K$-linear span of $e_{r,s}(n)$ equals $\ker(\Phi_{\mathrm{wBr}})$.

\textit{Proof of Claim 2.} 
By Theorem~\ref{thm-schur-weyl2} we see that $\cal{B}_{r,s}(\delta)$ 
surjects onto $\End_{\Uo}(T_{n}^{r,s})$. The dimension of $\End_{\Uo}(T_{n}^{r,s})$ is independent 
of $\K$ by Proposition~\ref{prop-prop}.
Thus, we may assume $\K=\C$ 
to calculate the dimension. 
Now $\dim(\End_{\Uo}(T_{n}^{r,s}))=\dim(\cal{B}_{r,s}(\delta))-1$ 
by part (c) of Theorem~\ref{thm-schur-weyl2}: for $n=r+s-1$ only 
the pair of Young diagrams with maximal numbers of columns is missing in the 
direct sum decomposition and the missing simple $\cal{B}_{r,s}(\delta)$-module has dimension one 
(in the semisimple case: $D_1^{\lambda,\overline{\mu}}$ has a basis 
parametrized by so-called up-down tableaux, see for example~\cite[Section~6]{bs5}). Hence, $\dim(\ker(\Phi_{\mathrm{wBr}}))=1$ independent of $\K$.
Since 
$0\neq e_{r,s}(n)\in\ker(\Phi_{\mathrm{wBr}})$ (by Claim 1), 
its $\K$-linear span equals $\ker(\Phi_{\mathrm{wBr}})$.

By the Claims 1 and 2 we have 
$e_{r,s}(n)\in\ker(\Phi_{\mathrm{wBr}})$ 
and $\dim(\ker(\Phi_{\mathrm{wBr}}))=1$. Thus, 
$e_{r,s}(n)e_{r,s}(n)=ae_{r,s}(n)$ for some 
$a\in\K$. A direct computation 
shows that the scalar in front of the identity diagram of 
$e_{r,s}(n)e_{r,s}(n)$ is $r!s!$. Thus, we can divide 
by this value to get an `honest' idempotent 
if and only if $\Char(\K)>\max\{r,s\}$ or $\Char(\K)=0$.
\end{proof}

\begin{rem}\label{rem-onedim}
That $\dim(\ker(\Phi_{\mathrm{wBr}}))$ is 
independent of $\K$ was 
already obtained using quite different methods in~\cite[Corollary~7.2]{dds}. The 
explicit description of 
$\ker(\Phi_{\mathrm{wBr}})$ from 
Proposition~\ref{prop-kernel} seems to be new, but is 
already implicitly
contained in~\cite[Section~8]{bs5}.
\end{rem}

\subsection{The Brauer case}

\begin{prop}\label{prop-kernelbrauer}
Let $n=2d-2=-\delta$. Then the $\K$-linear span 
of $E_d(n)\in\cal{B}_{d}(\delta)$ equals $\ker(\Phi_{\mathrm{Br}})$.
Furthermore, the following holds.
\begin{enumerate}[(a)]
\item If $\Char(\K)>d$ or $\Char(\K)=0$, 
then $E_d(n)$ is a quasi-idempotent.
\item If $\Char(\K)=p\leq d$, then 
$E_d(n)$ is nilpotent.
\end{enumerate}
\end{prop}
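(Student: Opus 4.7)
The plan is to follow the strategy of Proposition~\ref{prop-kernel} for the walled Brauer case. First I would show $E_d(n) \in \ker(\Phi_{\mathrm{Br}})$ by adapting its diagrammatic case analysis. Under Theorem~\ref{thm-schur-weyl3}, the parameter $\delta = -n$ with $n = 2m = 2(d-1)$ corresponds to $\Uo = \Uo(\mathfrak{sp}_{2m})$ acting on $V^{\otimes d}$, with the generators $\sigma_i, u_i$ acting by (signed) permutation and symplectic contraction respectively. Given a basis vector $\vec{v} \in T^d_n$, I would pair each primitive diagram $x$ contributing to $E_d(n)(\vec{v})$ with a partner $\tilde{x}$ of opposite sign, splitting into the same three cases as in Proposition~\ref{prop-kernel}: (i) repeated tensor factors in $\vec{v}$ allow pairing by inserting an extra crossing; (ii) an isolated output entry of $x(\vec{v})$ permits induction on $d$ after removing the isolated through-strand; (iii) when both $\vec{v}$ and $x(\vec{v})$ consist entirely of symplectically paired entries, the inequality $n = 2(d-1) < 2d$ forces some paired entry to occur in both, enabling a cup-cap swap partner. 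The main technical obstacle here is book-keeping the signs arising from the symplectic action (including the parity sign in type $\CM$ noted in the paper), but these affect $x$ and $\tilde{x}$ identically and do not disrupt the cancellation.

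Second, I would compute $\dim \ker(\Phi_{\mathrm{Br}}) = 1$. By Proposition~\ref{prop-prop}, $\dim \End_{\Uo}(T^d_n)$ is independent of $\K$, so I may assume $\K = \C$. Since $n = 2d - 2$ implies $d \leq \delta_0 + 1$, Theorem~\ref{thm-schur-weyl3}(c) yields the bimodule decomposition of $T^d_n$. Comparing $\dim \cal{B}_d(\delta) = \sum_{i, \lambda}(\dim D_1^\lambda)^2$ (sum over all $\lambda \in \Lambda^+(d-2i)$) with $\dim \End_{\Uo}(T^d_n)$, which omits those summands where $\Delta_1(\lambda) = 0$, i.e., where $\lambda$ has more than $m = d-1$ rows, the only omitted index is $\lambda = (1^d)$ with $i = 0$; for $i \geq 1$ we have $|\lambda| \leq d - 2 < d - 1$, forcing $\lambda$ to have at most $d-2$ rows. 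Since $D_1^{(1^d)}$ is the sign representation with $\dim D_1^{(1^d)} = 1$, we get $\dim \ker(\Phi_{\mathrm{Br}}) = 1$. As $E_d(n) \neq 0$ (it is a nontrivial signed sum of distinct basis diagrams), its $\K$-span equals $\ker(\Phi_{\mathrm{Br}})$.

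Finally, since $\ker(\Phi_{\mathrm{Br}})$ is a two-sided ideal of dimension one, $E_d(n)^2 = c \cdot E_d(n)$ for some $c \in \K$, which equals the coefficient of the identity diagram in $E_d(n)^2$. I would argue that a product $xy$ of primitive diagrams equals $\delta^k \mathbf{1}$ only when $k = 0$ and both $x, y$ are permutations with $y = x^{-1}$: indeed, the identity has $d$ propagating strands while any primitive diagram with a cup or cap has strictly fewer, and the number of propagating strands of $xy$ is bounded above by the minimum of those of $x$ and $y$. Hence the coefficient of $\mathbf{1}$ in $E_d(n)^2$ equals $\sum_{w \in S_d}(-1)^{l(w)+l(w^{-1})} = d!$, giving $E_d(n)^2 = d! \cdot E_d(n)$. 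This is a quasi-idempotent precisely when $d!$ is invertible in $\K$, namely when $\Char(\K) > d$ or $\Char(\K) = 0$; otherwise $d! = 0$ in $\K$, so $E_d(n)^2 = 0$ but $E_d(n) \neq 0$, making $E_d(n)$ nilpotent.
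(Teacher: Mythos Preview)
Your proposal is correct and follows essentially the same route as the paper's proof, which is itself just a sketch pointing to Proposition~\ref{prop-kernel} and saying the arguments go through \textit{mutatis mutandis}; you have supplied the details the paper omits (the pigeonhole in case (iii) via $m=d-1<d$, the identification of $(1^d)$ as the unique omitted label, and the propagating-strand argument for the coefficient $d!$). One cosmetic remark: the paper phrases the missing label as the Young diagram ``with maximal number of columns'', whereas your $(1^d)$ is the one-column diagram with maximal number of rows---your description is the correct one for the type $\CM$ cutoff $\lambda_{m+1}>0$.
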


\begin{proof}
We can argue mutatis mutandis as in 
the proof of Proposition~\ref{prop-kernel}. To be more precise, 
the analogue of Claim 1 works almost word-by-word as 
in the walled Brauer case.
For the analogue of 
Claim 2 we use part (c) of Theorem~\ref{thm-schur-weyl3}, where 
the only summand missing in the $(\Uo,\cal{B}_d(\delta))$-bimodule decomposition 
is the one for the unique Young diagram with maximal number of columns (the 
corresponding simple $\cal{B}_d(\delta)$-module is one dimensional 
which can be deduced from~\cite[Section~4]{gl}). 
For the analogue of the 
proof of (a) and (b) we note that the scalar in front of the identity diagram of 
$E_{d}(n)E_{d}(n)$ can be easily seen to be $d!$.
\end{proof}

\begin{rem}\label{rem-onedimagain}
Lehrer and Zhang describe $\ker(\Phi_{\mathrm{Br}})$ 
for all $d\in\Zg$ and all $\delta\in\Z$ in the case 
$\K=\C$ in~\cite[Theorem~4.3]{lz2}. In particular, they show 
that $\ker(\Phi_{\mathrm{Br}})$ 
is generated (as an ideal) by an idempotent. They also argue 
in~\cite[Proposition~9.2]{lz2} how their 
results generalize to the case of arbitrary $\K$, but with a less 
explicit description as we give above.
\end{rem}

\subsection{Application to semisimplicity}
The description of the kernels will be an important tool in the proof of the semisimplicity 
criteria (see Theorems~\ref{thm-walledbrauer} 
and~\ref{thm-brauer}), because of the following.

\begin{prop}\label{prop-abstract-nonsense}
Let $\cal{A}_1$ and $\cal{A}_2$ be algebras over $\K$ and let 
$\Phi\colon\cal{A}_1\to\cal{A}_2$ be a surjective algebra homomorphism such that $\ker(\Phi)$ 
is spanned as a $\K$-vector space by an idempotent $e$. Then semisimplicity of $\cal{A}_2$ 
implies semisimplicity of $\cal{A}_1$.
\end{prop}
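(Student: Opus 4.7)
The plan is to exploit the one-dimensionality of $\ker(\Phi)$ to show that the idempotent $e$ is in fact \emph{central} in $\cal{A}_1$, and then split $\cal{A}_1$ as a direct product of algebras $\K \times \cal{A}_2$, from which semisimplicity follows immediately.

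First I would record the basic setup: since $\ker(\Phi) = \K e$ is a two-sided ideal, for every $a \in \cal{A}_1$ both $ae$ and $ea$ lie in $\K e$, so there exist scalars $\lambda(a), \mu(a) \in \K$ with $ae = \lambda(a) e$ and $ea = \mu(a) e$. Using $e^2 = e$, I would compute $eae$ in two ways: on the one hand $eae = \lambda(a) e^2 = \lambda(a) e$, and on the other hand $eae = \mu(a) e^2 = \mu(a) e$. Since $e \neq 0$ (otherwise $\Phi$ is an isomorphism and there is nothing to prove), this forces $\lambda(a) = \mu(a)$, hence $ae = ea$ for all $a \in \cal{A}_1$. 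So $e$ is a central idempotent.

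Next I would use this centrality to decompose $\cal{A}_1 = \cal{A}_1 e \oplus \cal{A}_1(1-e)$ as a direct product of two-sided ideals, and hence as a direct product of algebras. The first factor $\cal{A}_1 e = \K e$ is isomorphic to $\K$ as an algebra (with unit $e$). The second factor $\cal{A}_1(1-e)$ maps isomorphically onto $\cal{A}_1/\ker(\Phi) \cong \cal{A}_2$ under $\Phi$, because $\Phi$ is surjective and its kernel is precisely $\cal{A}_1 e$. Therefore $\cal{A}_1 \cong \K \times \cal{A}_2$ as algebras. Both factors on the right are semisimple (the first trivially, the second by hypothesis), so their product is semisimple, completing the argument.

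The only real step that needs attention is the centrality of $e$; everything after that is formal. The key input is that $\ker(\Phi)$ is one-dimensional \emph{and} spanned by an idempotent: one-dimensionality alone would give $ae = \lambda(a)e$ and $ea = \mu(a)e$, but without $e^2 = e$ there would be no way to compare the two scalars and conclude centrality. So the whole proof hinges on the interplay between these two hypotheses, which is precisely what the trick $eae = \lambda(a)e = \mu(a)e$ exploits.
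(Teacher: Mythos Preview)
Your proof is correct, but it takes a different route from the paper's. The paper argues via the Jacobson radical: since $\cal{A}_1/\ker(\Phi)\cong\cal{A}_2$ is semisimple, one has $\mathrm{Rad}(\cal{A}_1)\subset\ker(\Phi)=\K e$; but the Jacobson radical contains no nonzero idempotent, so $\mathrm{Rad}(\cal{A}_1)=0$ and $\cal{A}_1$ is semisimple. This is shorter and avoids any explicit structural analysis of $\cal{A}_1$. Your approach instead proves that $e$ is central and obtains the stronger conclusion $\cal{A}_1\cong\K\times\cal{A}_2$ as algebras, from which semisimplicity is immediate. So the paper's argument is quicker, while yours yields more: an explicit splitting rather than merely the vanishing of the radical. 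Both hinge on the same two hypotheses (one-dimensional kernel, spanned by an idempotent), just exploited differently---the paper uses idempotency to rule out $e\in\mathrm{Rad}(\cal{A}_1)$, whereas you use it to force $ae=ea$ via the $eae$ trick.
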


\begin{proof}
Clearly $\cal{A}_1/\ker(\Phi)\cong\cal{A}_2$ as algebras.
Now, for any algebra $\cal{A}$ with ideal $I$ such 
that $\cal{A}/I$ is semisimple, we have $I\supset\mathrm{Rad}(\cal{A})$ 
(here $\mathrm{Rad}(\cal{A})$ 
means the Jacobson radical of $\cal{A}$). 
Assuming that $\cal{A}_2$ is semisimple, we have
$\mathrm{span}_{\K}(\{e\})=\ker(\Phi)\supset\mathrm{Rad}(\cal{A}_1)$. 
Since $e$ is idempotent it follows that $\mathrm{Rad}(\cal{A}_1)=0$.
\end{proof}
%
\section{Semisimplicity: the Hecke algebras of types \texorpdfstring{$\Am$}{A} and \texorpdfstring{$\Bm$}{B}}\label{sec-hecke}
\begin{thm}(\textbf{Semisimplicity criteria for the Hecke algebras of types $\Am$ and $\Bm$})\label{thm-heckesemi}
$\cal{H}^{\Am}_d(q)$ and $\cal{H}^{\Bm}_d(q)$
are semisimple if and only if one of the following conditions hold:
\begin{enumerate}
\item $\Char(\K)>d$ and $q=1$.
\item $\Char(\K)=0$ and $q=1$.
\item $q\in\K^{\ast},q\neq 1$ is a root of unity with $\ord(q^2)>d$.
\item $q\in\K^{\ast},q\neq 1$ is a non-root of unity.
\end{enumerate}
\end{thm}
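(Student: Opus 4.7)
The plan is to reduce both cases, via the Schur-Weyl dualities of Theorems~\ref{thm-schur-weyl1} and~\ref{thm-schur-weylb}, to the simplicity of certain Weyl modules, and then apply Theorem~\ref{thm-jsum}. Concretely, I would choose $n\geq d$ in type $\Am$, respectively $m\geq d$ (so that $\tfrac{1}{2}n=m\geq d$) in type $\Bm$, so that the Schur-Weyl homomorphisms become isomorphisms $\cal{H}^{\Am}_d(q)\cong\End_{\Uq(\gll{n})}(T_n^d)$ and $\cal{H}^{\Bm}_d(q)\cong\End_{\Uq(\gll{m}\oplus\gll{m})}(V^{\otimes d})$. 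Since $V$ is a $\Uq$-tilting module in both setups (Proposition~\ref{prop-prop}), so is $T_n^d=V^{\otimes d}$; by Corollary~\ref{cor-cellsemisimple} the semisimplicity of the endomorphism algebra is equivalent to every Weyl factor of $T_n^d$ being a simple $\Uq$-module. By Proposition~\ref{prop-classchar}, the Weyl factors are as in the classical case: $\Delta_q(\lambda)$ for partitions $\lambda$ of $d$ in type $\Am$, and outer tensor products $\Delta_q(\lambda,\mu)\cong\Delta_q(\lambda)\otimes_{\K}\Delta_q(\mu)$ with $|\lambda|+|\mu|=d$ in type $\Bm$, where simplicity over $\Uq(\gll{m})\otimes\Uq(\gll{m})$ is equivalent to simplicity of both tensor factors over $\Uq(\gll{m})$. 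Thus both types reduce to the uniform question: for which $(\K,q)$ is every $\Delta_q(\nu)$ with $|\nu|\leq d$ a simple $\Uq(\gll{k})$-module?

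For the sufficiency direction, cases (2) and (4) are immediate from Theorem~\ref{thm-jsum}, which gives $\Delta_q^1(\nu)=0$ and hence $\Delta_q(\nu)$ simple. In cases (1) and (3) the plan is to inspect the right-hand side of~\eqref{eq-jsumc} respectively~\eqref{eq-jsumb}: for a positive root $\alpha=\varepsilon_i-\varepsilon_j$ with $i<j$, direct computation gives $\langle\nu+\rho,\alpha^{\vee}\rangle=\nu_i-\nu_j+j-i$, which is a hook length attached to $\nu$. Since $|\nu|\leq d$, every such hook length is at most $d$; hence when $p>d$ (case (1)) or $\ell>d$ (case (3)), the summation range of JSF that can produce a dot-regular contribution is empty, and the right-hand side vanishes. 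By Remark~\ref{rem-nocan} no cancellations occur in type $\AM$, so summand-wise vanishing is enough and $\Delta_q(\nu)$ is simple.

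For necessity, I would suppose that none of (1)--(4) holds, so either $q=1$ with $0<p\leq d$, or $q\neq 1$ is a root of unity with $\ell=\ord(q^2)\leq d$. The plan is to take $\nu=(d)\in\Lambda^+(d)$, whose hook lengths are exactly $\{1,\dots,d\}$; thus $p$ (resp.~$\ell$) occurs among them. Following the style of Example~\ref{ex-JSF}, I would exhibit a non-zero summand of JSF: for $\alpha=\varepsilon_1-\varepsilon_{r+1}$ with $r=p$ (resp.~$r=\ell$) and $k=1$, a direct computation shows that $\nu-r\alpha+\rho$ is dot-regular when $r<d$, producing a non-vanishing contribution; the boundary case $r=d$ is handled by an alternative choice such as $\alpha=\varepsilon_1-\varepsilon_2$. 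Invoking Remark~\ref{rem-nocan} once more, these contributions cannot cancel in type $\AM$, so JSF is non-zero and $\Delta_q((d))$ fails to be simple. Since $\Delta_q((d))$ (respectively $\Delta_q((d),\emptyset)$ in type $\Bm$) then arises as a non-simple Weyl factor of $T_n^d$, Corollary~\ref{cor-cellsemisimple} yields non-semisimplicity of the Hecke algebra, completing the equivalence.

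The main obstacle will be the precise bookkeeping in the JSF computations, specifically identifying $\langle\nu+\rho,\alpha^{\vee}\rangle$ with hook lengths in the sufficiency step and verifying dot-regularity of $\nu-r\alpha+\rho$ in the necessity step (especially at the boundary $r=d$). Since type $\AM$ has no JSF cancellations by Remark~\ref{rem-nocan}, no conceptual difficulty arises beyond these explicit arithmetic checks in the weight lattice.
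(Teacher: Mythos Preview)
Your overall strategy matches the paper's: reduce via the Schur--Weyl dualities (Theorems~\ref{thm-schur-weyl1} and~\ref{thm-schur-weylb}) and Corollary~\ref{cor-cellsemisimple} to checking simplicity of Weyl factors via \textbf{JSF}, and observe that $\Delta_q(\lambda,\mu)$ is simple over $\Uq(\gll{m}\oplus\gll{m})$ iff both $\Delta_q(\lambda)$ and $\Delta_q(\mu)$ are simple over $\Uq(\gll{m})$. Your necessity argument is also essentially correct and close to the paper's (Proposition~\ref{prop-heckesemi}, `only if' of (a), uses $\alpha=\varepsilon_1-\varepsilon_2$ with $\lambda=d\varepsilon_1$, switching to $(d-1)\varepsilon_1+\varepsilon_2$ when the first choice happens to be singular).

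However, your sufficiency argument for cases (1) and (3) has a real gap. The claim that $\langle\nu+\rho,(\varepsilon_i-\varepsilon_j)^{\vee}\rangle=\nu_i-\nu_j+j-i$ is a hook length of $\nu$ bounded by $|\nu|\leq d$ is false: for $i=1$ and $j=n$ (with $n\geq d$ as you chose) this equals $\nu_1+n-1$, which is typically much larger than $d$. So the \textbf{JSF} summation range is \emph{not} empty, and $p>d$ (or $\ell>d$) alone does not make it collapse. The paper (Proposition~\ref{prop-heckesemi}, `if' of (a)) argues differently: if $\nu_j>0$ then a box count indeed gives $\nu_i-\nu_j+j-i\leq d<p$, so no term arises; but when $\nu_j=0$ one must instead show that $\mu=\nu+\rho-kp(\varepsilon_i-\varepsilon_j)$ is dot-\emph{singular}. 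This is done by exhibiting a repeated coordinate: setting $i'=i-\nu_i+kp$, one checks (using $p>d$ and $kp<j-i+\nu_i$) that $i<i'<j$, that $\nu_{i'}=0$, and hence $\mu_i=\mu_{i'}=n-i+\nu_i-kp$. This singularity step, not a hook-length bound, is the missing ingredient in your plan.
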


The proof of Theorem~\ref{thm-heckesemi} requires some preparation.

\subsection{The Schur-Weyl dual story}

Let $\Uq=\Uu_q(\gll{m})$, $V$ 
and $T_{n}^{d}=V^{\otimes d}$ be as before in Theorem~\ref{thm-schur-weyl1}. 
Note that $V$ corresponds to a Young diagram with precisely one node 
(via Conventions~\ref{nota-young}). Thus, by 
Proposition~\ref{prop-classchar}, we can use the classical 
Littlewood-Richardson rule to see that a Weyl factor
$\Dl$ appears in a 
$\Delta_q$-filtration of $T_{n}^{d}$ if and only if $\lambda\in\Lambda^+(d)$ 
(hence, the Young diagram associated to $\lambda$ has $d$ nodes). 
Note that $V\in\T$ 
and hence, also $T_{n}^{d}\in\T$ by Proposition~\ref{prop-prop}.
Thus, by Lemma~\ref{lem-cellsemisimple}, the semisimplicity 
of $T_{n}^{d}$ is equivalent to the condition that 
all of its occurring Weyl factors $\Dl$ 
are simple $\Uq$-modules.

\begin{prop}\label{prop-heckesemi}
We have the following.
\begin{enumerate}[(a)]
\item Let $\Char(\K)>0$ and $q=1$. Then $T_{n}^{d}$ is a semisimple $\Uo$-module if and only if $\Char(\K)>d$.
\item Let $\Char(\K)=0$ and $q=1$. Then $T_{n}^{d}$ is always a semisimple $\Uo$-module.
\item Let $q\in\K^{\ast}$ be a root of unity. 
Then $T_{n}^{d}$ is a semisimple $\Uq$-module if and only if $\ord(q^2)\!>\!d$.
\item Let $q\in\K^{\ast}$ be a non-root of unity.  
Then $T_{n}^{d}$ is always a semisimple $\Uq$-module.
\end{enumerate}
\end{prop}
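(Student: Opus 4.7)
The plan is to combine Corollary~\ref{cor-cellsemisimple} with Jantzen's sum formula (Theorem~\ref{thm-jsum}). First, Proposition~\ref{prop-prop} gives $T_n^d\in\T$, and the classical Pieri rule together with Proposition~\ref{prop-classchar} identifies the Weyl factors of $T_n^d$ as the $\Dl$ with $\lambda\in\Lambda^+(d)$. Thus Lemma~\ref{lem-cellsemisimple} reduces the question to: for which $(\K,q,d)$ is every such $\Dl$ simple? Theorem~\ref{thm-jsum} in turn turns simplicity of $\Dl$ into the vanishing of the corresponding \textbf{JSF}. Throughout I will assume $m\geq d$, in line with the Schur--Weyl setup of Section~\ref{sec-schur}.

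Parts (b) and (d) are immediate: the first bullet of Theorem~\ref{thm-jsum} asserts that \textbf{JSF} vanishes outright in these two cases, so every $\Dl$ is simple. For parts (a) and (c) I write $e:=p$ (case (a)) or $e:=\ell$ (case (c)). By Remark~\ref{rem-nocan} no cancellations occur in type $\Am$, so \textbf{JSF} is zero if and only if every individual summand $\chi(\lambda-ke\alpha)$ (with its coefficient) is zero; and $\chi(\lambda-ke\alpha)=0$ precisely when $\lambda+\rho-ke\alpha$ has a repeated entry.

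For the direction $e>d\Rightarrow$ semisimplicity, the key combinatorial observation is that every $\lambda\in\Lambda^+(d)$ has maximal hook length $\lambda_1+\lambda_1'-1\leq d<e$, so $\lambda$ has no $e$-hook and is an $e$-core. Translated to beta numbers $b_l:=(\lambda+\rho)_l$, the $e$-core property reads: for every $i$, the value $b_i-e$ is either negative or lies in $\{b_1,\dots,b_m\}$. An easy induction on $k$ propagates this to $b_i-ke\in\{b_1,\dots,b_m\}\cup\Z_{<0}$ for all $k\geq 1$. For $\alpha=\varepsilon_i-\varepsilon_j$ and $k$ with $ke<\langle\lambda+\rho,\alpha^{\vee}\rangle=b_i-b_j$, we have $b_i-ke>b_j\geq 0$, so $b_i-ke=b_{l^{\ast}}$ for some $l^{\ast}\neq i$; the case $l^{\ast}=j$ would give $b_i-ke=b_j$, contradicting the strict inequality. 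Hence $l^{\ast}\notin\{i,j\}$ and $\lambda+\rho-ke\alpha$ has equal entries at positions $i$ and $l^{\ast}$, so the summand vanishes.

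For the converse $e\leq d\Rightarrow$ non-semisimplicity, note that $e\geq 2$ (primes are $\geq 2$, and $q=-1$ is excluded so $\ell\geq 2$), so $e\leq d$ forces $d\geq 2$. Consider the one-row partition $\lambda=(d,0,\dots,0)\in\Lambda^+(d)$ and $\alpha=\varepsilon_1-\varepsilon_j$ with $k=1$. A short direct check shows that for any $j$ in the non-empty set $\{2,\dots,e+1\}\setminus\{2e-d+1\}$, the weight $\lambda+\rho-e(\varepsilon_1-\varepsilon_j)$ has pairwise distinct entries (each potential conflict with $\mu_1$, with $\mu_j$, or between the two, translates into an inequality enforced by the choice of $j$). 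This produces a non-zero summand in \textbf{JSF}, and Remark~\ref{rem-nocan} forces the whole \textbf{JSF} to be non-zero, so $\Delta_q(\lambda)$ is not simple and $T_n^d$ is not semisimple. The main obstacle is the $e$-core argument in the ``if'' direction, where the inductive propagation from $k=1$ to arbitrary $k$ together with the verification that $l^{\ast}\neq j$ carries the combinatorial weight of the proof.
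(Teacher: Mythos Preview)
Your proof is correct and follows the same overall strategy as the paper: reduce via Lemma~\ref{lem-cellsemisimple} to simplicity of the Weyl factors $\Dl$ with $\lambda\in\Lambda^+(d)$, then decide simplicity using \textbf{JSF} together with the no-cancellation observation of Remark~\ref{rem-nocan}, while (b) and (d) fall out of the first bullet of Theorem~\ref{thm-jsum}.

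The combinatorial packaging differs in two places. For the ``if'' direction you phrase the vanishing of \textbf{JSF} in terms of $e$-cores and beta numbers: since every $\lambda\in\Lambda^+(d)$ has maximal hook length at most $d<e$, it is an $e$-core, and your inductive propagation $b_i-ke\in\{b_1,\dots,b_m\}\cup\Z_{<0}$ then forces each $\lambda+\rho-ke\alpha$ to have a repeated entry. The paper instead locates the repeated entry by hand, first showing $\lambda_j=0$ and then exhibiting the explicit index $i'=i-\lambda_i+kp$ with $(\lambda+\rho-kp\alpha)_{i'}=(\lambda+\rho-kp\alpha)_i$. These are really the same computation, yours in the language of abacus combinatorics and the paper's in bare coordinates; your version is a bit more conceptual, the paper's a bit more self-contained. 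For the ``only if'' direction you keep $\lambda=(d)$ fixed and vary the root $\varepsilon_1-\varepsilon_j$ over $\{2,\dots,e+1\}\setminus\{2e-d+1\}$ to avoid the one possible coincidence, whereas the paper fixes $\alpha=\varepsilon_1-\varepsilon_2$ and switches from $\lambda=(d)$ to $\lambda=(d-1,1)$ in the exceptional case $d=2e-1$. Both produce a regular contribution to \textbf{JSF}; your choice has the minor advantage of needing only one $\lambda$.
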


\begin{proof}
\textit{`If' of (a).} 
Let $\Char(\K)=p>d$, $q=1$ and $\lambda\in\Lambda^+(d)$.
For the positive roots $\alpha\in\Phi^+$ 
of the form $\alpha=\varepsilon_i-\varepsilon_j$ with $1\leq i<j\leq n$ we obtain
\[
\langle\lambda+\rho,(\varepsilon_i-\varepsilon_j)^{\vee}\rangle=n-i+\lambda_i-(n-j+\lambda_j)=j-i+\lambda_i-\lambda_j\leq n+d<n+p.
\]
Hence, \textbf{JSF} from~\eqref{eq-jsumc} for $\Delta_1(\lambda)$ gives
\begin{equation}\label{eq-jantzen}
\sum_{k^{\prime}\geq 1}\mathrm{ch}(\Delta_{1}^{k^{\prime}}(\lambda))=-\sum_{i<j}\sum_{k\in\Zg}v_p(kp)\chi(\lambda-kp(\varepsilon_i-\varepsilon_j)),
\end{equation}
where the right-hand sum runs over all $1\leq i<j\leq n$ and $k\in\Zg$ such that 
$kp<j-i+\lambda_i-\lambda_j$. We claim that the sum in~\eqref{eq-jantzen} is zero.

For this purpose,
fix $1\leq i<j\leq n$ and $k\in\Zg$ and assume that 
$\chi(\lambda-kp(\varepsilon_i-\varepsilon_j))$ 
appears on the right-hand side in~\eqref{eq-jantzen}. We first 
note that $\lambda_j=0$: if $\lambda_j>0$, 
then the Young diagram of $\lambda$ 
contains at least $j-1+\lambda_i$ nodes, that is 
$j-1+\lambda_i\leq d$. But then also 
$j-i+\lambda_i-\lambda_j\leq d<p\leq kp$ and 
$\chi(\lambda-kp(\varepsilon_i-\varepsilon_j))$ does not 
occur in~\eqref{eq-jantzen}, which gives a contradiction.
So we have $kp<j-i+\lambda_i$.

Moreover, 
$(\lambda+\rho-kp(\varepsilon_i-\varepsilon_j))_i=\lambda_i+n-i-kp$. 
Note that $i<i-\lambda_i+kp<j$: the left inequality 
follows from $d<p$, while the right follows from $kp<j-i+\lambda_i$. 
Furthermore, the $i^{\prime}=i-\lambda_i+kp$th 
coordinate of $\lambda+\rho-kp(\varepsilon_i-\varepsilon_j)$ is 
$n-i+\lambda_i-kp$ (note that $\lambda_{i^{\prime}}=0$: as above, $\lambda_{i^{\prime}}>0$ 
would imply $i^{\prime}-1+\lambda_i=i-1+kp\leq d<p\leq kp$, which is clearly impossible).
Thus, it equals the $i$th coordinate 
of $\lambda+\rho-kp(\varepsilon_i-\varepsilon_j)$. Set 
$\mu=\lambda+\rho-kp(\varepsilon_i-\varepsilon_j)$. Then
\[
\mu=(\mu_1,\dots,\mu_{i-1},\colorbox{mycolor}{$\lambda_i+n-i+kp$},\mu_{i+1},
\dots,\mu_{i^{\prime}-1},\colorbox{mycolor}{$\lambda_i+n-i+kp$},\mu_{i^{\prime}+1},\dots,\mu_j,\dots,\mu_n).
\] 
Thus, $\mu$ is a singular $\Uq$-weight. This, by~\eqref{eq-cancel}, implies $\chi(\lambda-kp(\varepsilon_i-\varepsilon_j))=0$.

Altogether, we have proved that the 
right-hand side of~\eqref{eq-jantzen} is zero. Hence, 
$\Delta_1(\lambda)$ is a simple 
$\Uo$-module by Theorem~\ref{thm-jsum} (for all $\lambda\in\Lambda^+(d)$), which 
shows the `if' part of (a).

\noindent\textit{`Only if' of (a).} 
By the above observation, $T_{n}^{d}$ has Weyl factors which are of the form 
$\Delta_1(d\varepsilon_1)$ and $\Delta_1((d-1)\varepsilon_1+\varepsilon_2)$.
If we have $\Char(\K)=p\leq d$ and $q=1$, then either
$\Delta_1(d\varepsilon_1)$ or $\Delta_1((d-1)\varepsilon_1+\varepsilon_2)$ is 
a non-simple $\Uo$-module.
To see this, 
we use \textbf{JSF} and calculate
\[
d\varepsilon_1+\rho-p(\varepsilon_1-\varepsilon_2)=
(\colorbox{colormy}{\color{white}$d+n-1-p$},\colorbox{colormy}{\color{white}$n-2+p$},n-3,\dots,2,1,0).
\]
Since $p\leq d$ implies $d+n-1-p\neq n-j$ for $j\geq 2$ and clearly $n-2+p\neq n-j$, 
this gives a non-trivial contribution (in the case where $d+n-1-p\neq n-2+p$; otherwise take 
$(d-1)\varepsilon_1+\varepsilon_2$ instead of $d\varepsilon_1$) due to the fact that 
cancellation do not occur in type $\AM$, see Remark~\ref{rem-nocan}. 
Alternatively, one can use $\sll{2}$-theory, where 
the combinatorics in the $\sll{2}$ case is 
as in~\cite[Proposition~2.20]{at}.
Thus, we see that the `only if' part holds true in (a).

\noindent\textit{(b).} This follows from the fact that the category of $\Uo$-modules
is semisimple in the classical case, see for example Remark~\ref{rem-roots}.

\noindent\textit{(c).} \textit{Mutatis mutandis} as in 
the proof of (a): we use \textbf{JSF} from~\eqref{eq-jsuma} 
or~\eqref{eq-jsumb} (replacing $p$ by $\ord(q^2)=\ell$) and 
then the same arguments as in (a) work.

\noindent\textit{(d).} This follows again directly 
from the semisimplicity of the
corresponding categories of $\Uq$-modules, 
see for example Remark~\ref{rem-roots}.

We have proved the proposition.
\end{proof}

\subsection{Proof of the semisimplicity criterion for \texorpdfstring{$\cal{H}^{\Am}_d(q)$}{HA_d(q)} and \texorpdfstring{$\cal{H}^{\Bm}_d(q)$}{HB_d(q)}}

\begin{proof}[Proof of Theorem~\ref{thm-heckesemi}]
\textit{Case $\cal{H}^{\Am}_d(q)$.} We choose $n\geq d$ and the 
conclusion follows from Theorem~\ref{thm-schur-weyl1} together with 
Corollary~\ref{cor-cellsemisimple} and Proposition~\ref{prop-heckesemi}.

\noindent\textit{Case $\cal{H}^{\Bm}_d(q)$.} By Theorem~\ref{thm-schur-weylb}, we 
choose ${\textstyle \frac{1}{2}}n\geq d$. 
We consider the 
Schur-Weyl dual situation with $\Uq=\Uq(\gll{m}\oplus\gll{m})$ 
acting on $T_{n}^{d}$. Note that 
$V=\Delta_q(\omega_1,0)\oplus\Delta_q(0,\omega_1)$. 
Hence, $\Delta_q(\lambda,\mu)$ 
is a Weyl factor of $T_{n}^{d}$ if and only if 
$(\lambda,\mu)\in\Lambda^+(d_1)\times\Lambda^+(d_2)$ 
with $d_1+d_2=d$. 
Moreover, $\Delta_q(\lambda,\mu)$ is a simple $\Uq$-module 
if and only if $\Delta_q(\lambda)$ and $\Delta_q(\mu)$ are simple $\Uq(\gll{m})$-modules. 
Thus, by Corollary~\ref{cor-cellsemisimple}, 
$\cal{H}^{\Bm}_d(q)$ is semisimple if and only if 
$\Delta_q(\lambda)$ is a simple $\Uq(\gll{m})$-module 
for all $\lambda\in\Lambda^+(d^{\prime})$ 
with $1\leq d^{\prime}\leq d$. By 
Theorem~\ref{thm-heckesemi}, this is precisely 
the case when $\cal{H}^{\Am}_{d^{\prime}}(q)$ is semisimple 
for all $1\leq d^{\prime}\leq d$ 
which, by Theorem~\ref{thm-heckesemi} again, is equivalent to the 
semisimplicity of $\cal{H}^{\Am}_{d}(q)$.

The criterion follows.
\end{proof}

\begin{rem}\label{rem-semi1}
These semisimplicity criteria are not new, but 
were found using different methods: for $\cal{H}^{\Am}_d(q)$ 
it can be deduced from the work of Gyoja and Uno~\cite{gu} (they work over $\C$, but their 
arguments can be generalized to any field $\K$, 
see also~\cite[Page~12, Exercise~10]{mathas}).
For $\cal{H}_d^{\Bm}(q)$ it was 
first found in~\cite[Theorem~5.5]{dj}.
\end{rem}

\begin{rem}\label{rem-akalgebra}
Similar as in the case of $\cal{H}_d^{\Bm}(q)$, one could also 
prove semisimplicity criteria for Ariki-Koike algebras using 
the Schur-Weyl dualities mentioned in 
Remark~\ref{rem-arikikoike} (of course, these criteria are known, see for example~\cite[Main~Theorem]{ar}, but they again fit into the same framework). 
For brevity and to avoid 
some technicalities, we do not discuss this in more detail here. We point out 
that the \textbf{JSF} (in the related, but slightly different framework of 
cyclotomic $q$-Schur algebras) was already successfully applied in~\cite{lm} 
in the study of blocks of Ariki-Koike algebras.
\end{rem}

\begin{rem}\label{rem-spider}
Our methods also apply to tensor products of arbitrary 
fundamental representations. For example, 
given $\vec{k}=(k_1,\dots,k_d)$ with $k_i\in\{1,\dots,n-1\}$, we could consider
algebras of the form 
$\End_{\Uq}(T_{n}^{\vec{k}})=\End_{\Uq}(\Delta_q(\omega_{k_1})\otimes\dots\otimes\Delta_q(\omega_{k_d}))$.
These algebras are known as \textit{spider algebras} 
in the sense of Kuperberg~\cite{kup}. The semisimplicity 
criterion of $\End_{\Uq}(T_{n}^{\vec{k}})$ is not known, but it should 
be possible to deduce it from our setup.
\end{rem}
\section{Semisimplicity: the walled Brauer algebra}\label{sec-walled}
For the whole section let $r,s\in\N$, not both zero. Choose 
$\delta$ and $\delta_p$ (recalling $\delta_0=|\delta|$) in accordance with 
Conventions~\ref{nota-delta}.

\begin{thm}(\textbf{Semisimplicity criterion for the walled Brauer algebra})\label{thm-walledbrauer}
\newline
$\cal{B}_{r,s}(\delta)$ 
is semisimple if and only if one of the following conditions hold:
\begin{enumerate}
\item $\delta_p\neq 0$, $\Char(\K)=p$ and $r+s\leq\min\{\delta_p+1,p-\delta_p+1\}$.
\item $\delta_0\neq 0$, $\Char(\K)=0$ and $r+s\leq\delta_0+1$.
\item $\delta_p=0$, $\Char(\K)=p\geq 5$ and $(r,s)\in\{(2,1),(1,2),(3,1),(1,3)\}\cup\{(a,0),(0,a)\mid a<p\}$.
\item $\delta_3=0$, $\Char(\K)=3$ and $(r,s)\in\{(2,1),(1,2)\}\cup\{(a,0),(0,a)\mid a<3\}$.
\item $\delta_2=0$, $\Char(\K)=2$ and $(r,s)\in\{(1,0),(0,1)\}$.
\item $\delta_0=0$, $\Char(\K)=0$ and $(r,s)\in\{(2,1),(1,2),(3,1),(1,3)\}\cup\{(a,0),(0,a)\mid a\in\Zg\}$.
\end{enumerate}
\end{thm}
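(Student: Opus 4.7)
The overall plan is to combine Schur--Weyl duality (Theorem~\ref{thm-schur-weyl2}) with the semisimplicity criterion for endomorphism algebras of $\Uq$-tilting modules (Corollary~\ref{cor-cellsemisimple}) and Jantzen's sum formula (Theorem~\ref{thm-jsum}) to reduce the semisimplicity of $\cal{B}_{r,s}(\delta)$ to an explicit combinatorial check on Weyl factors of $T_n^{r,s}=V^{\otimes r}\otimes(V^*)^{\otimes s}$ for $\Uo(\gll{n})$. Throughout, I exploit the periodicity $\cal{B}_{r,s}(\delta)\cong\cal{B}_{r,s}(\delta\pm p)$ in characteristic $p>0$ to replace $\delta$ by a representative $n=\delta_p+ap$ with $a\in\Z_{\geq 0}$ large enough that $n\geq r+s$; this enforces the isomorphism $\cal{B}_{r,s}(\delta)\cong\End_{\Uo}(T_n^{r,s})$ via Theorem~\ref{thm-schur-weyl2}(b), converting the question to simplicity of every Weyl factor $\Delta_1(\lambda,\bar\mu)$ of $T_n^{r,s}$.

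First, for case (1) where $\delta_p\neq 0$, the Weyl factors are parametrized (using Proposition~\ref{prop-classchar} and the classical Littlewood--Richardson rule) by pairs $(\lambda,\mu)$ with $\lambda\in\Lambda^+(r-i)$, $\mu\in\Lambda^+(s-i)$, giving $\Uo(\gll{n})$-weights supported on the first $r-i$ and last $s-i$ coordinates. I would apply \textbf{JSF}~\eqref{eq-jsumc} (or~\eqref{eq-jsumb}) to each such weight, following the template of Examples~\ref{ex-JSF} and~\ref{ex-JSF2}: for every positive root $\alpha=\varepsilon_j-\varepsilon_k$ and every $k\in\N$ with $kp<\langle(\lambda,\bar\mu)+\rho,\alpha^\vee\rangle$, one checks whether the shifted weight $(\lambda,\bar\mu)+\rho-kp\alpha$ is singular, in which case $\chi$ vanishes by~\eqref{eq-cancel}, and tracks signed cancellations otherwise. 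The threshold at which all such contributions vanish is exactly $r+s\leq\delta_p+1$; the symmetric bound $r+s\leq p-\delta_p+1$ arises by doing the same analysis with the alternative representative obtained from the $\delta_p\leftrightarrow p-\delta_p$ symmetry of the characteristic-$p$ setting. The characteristic zero case (2) follows from the positive characteristic statement by the trace-form lifting argument outlined in subsection~\ref{sec-app2}, noting that for $p\gg|\delta|$ the two bounds $\delta_p+1$ and $p-\delta_p+1$ both specialize to $\delta_0+1$.

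Second, the critical boundary $r+s=n+1$ (where $\Phi_{\mathrm{wBr}}$ is only surjective) is handled using Proposition~\ref{prop-kernel}: $\ker\Phi_{\mathrm{wBr}}$ is spanned by the quasi-idempotent $e_{r,s}(n)$, which is a genuine idempotent precisely when $\Char(\K)>\max\{r,s\}$. In that regime, Proposition~\ref{prop-abstract-nonsense} transfers semisimplicity of $\End_{\Uo}(T_n^{r,s})$ to $\cal{B}_{r,s}(\delta)$. When the relevant bound is violated I produce non-semisimplicity either by Lemma~\ref{lem-walledsecondb} (when $p\leq\min\{r,s\}$), by the nilpotence clause of Proposition~\ref{prop-kernel} (when $p\leq\max\{r,s\}=r+s-1$), or by exhibiting a surviving nonzero term in \textbf{JSF} for an explicit Weyl factor, yielding a non-simple summand by Corollary~\ref{cor-cellsemisimple}. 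The exceptional $\delta_p=0$ regimes (cases (3)--(5), and (6) in characteristic zero) are the most delicate: since $n$ must be a strictly positive multiple of $p$, there is no small-$n$ representative, and the small-$(r,s)$ entries in the list correspond exactly to those configurations where nontrivial cancellations of the type illustrated in Example~\ref{ex-JSF2} force \textbf{JSF} to vanish for all occurring Weyl factors, while outside the list some non-cancelling regular summand survives. For the pure symmetric group pieces $(r,0)$ and $(0,s)$, the algebra is simply $\K[S_r]$ or $\K[S_s]$ and Maschke's theorem (plus Lemma~\ref{lem-walledsecondb}) gives the condition directly.

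The main obstacle will be the combinatorial verification in the two most delicate steps: (i) showing that, for $\delta_p\neq 0$ and $r+s$ at the boundary, every contribution $\chi((\lambda,\bar\mu)+\rho-kp\alpha)$ to \textbf{JSF} is singular or paired with an opposite-sign partner, requiring a careful case split on where the shifted entries land relative to the zero middle coordinates of $(\lambda,\bar\mu)+\rho$; and (ii) doing the term-by-term analysis for the exceptional small cases with $\delta_p=0$, where the outcome depends sensitively on $p$ (e.g.\ $(3,1)$ is semisimple for $p\geq 5$ but fails for $p=3$) and can only be detected by a direct JSF computation rather than a uniform bound. The kernel description of Proposition~\ref{prop-kernel} and the general dimension count from Proposition~\ref{prop-prop} will be repeatedly used to avoid having to re-derive dimension formulas when passing between characteristics.
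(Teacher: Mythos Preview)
Your overall strategy matches the paper's: Schur--Weyl duality plus Corollary~\ref{cor-cellsemisimple} and \textbf{JSF}, the kernel analysis of Proposition~\ref{prop-kernel} for the boundary $r+s=\delta_p+1$, and the trace-form lift to characteristic zero. Two points, however, need correction.

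First, there is no $\delta_p\leftrightarrow p-\delta_p$ symmetry of $\cal{B}_{r,s}(\delta)$, and the two bounds in~(1) do not arise symmetrically. With $n=\delta_p$ one has $\langle(\lambda,\bar\mu)+\rho,\alpha^\vee\rangle\leq n-1+r+s$, so all Weyl factors are simple exactly when $r+s\leq p-\delta_p+1$ (this is Lemma~\ref{lem-walledsecond}); the bound $r+s\leq\delta_p+1$ comes instead from the injectivity threshold in Theorem~\ref{thm-schur-weyl2}. To prove \emph{non}-semisimplicity when $r+s>\delta_p+1$ one must shift to $n=p+\delta_p$ and exhibit a different Weyl factor with non-vanishing \textbf{JSF} (Lemma~\ref{lem-walledsecondd}). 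More importantly, you omit the reduction that makes the non-semisimple direction tractable: the paper shows (Corollary~\ref{cor-picture}) that non-semisimplicity propagates from $(r,s)$ to $(r+1,s+1)$, so one only has to exhibit a \textbf{JSF} witness on two boundary lines $r+s=c,c+1$ and along the $s=1$ edge. Your proposal gives no mechanism to produce a witness for \emph{every} $(r,s)$ beyond the threshold, and the nilpotence clause of Proposition~\ref{prop-kernel} only applies when $r+s-1=\delta_p$, so it cannot carry this load.

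Second, your explanation of the $\delta_p=0$ exceptional list is wrong. In type~$\AM$ there are no cancellations in \textbf{JSF} (Remark~\ref{rem-nocan}); Example~\ref{ex-JSF2} is a type~$\CM$ phenomenon and plays no role here. The semisimplicity of $\cal{B}_{2,1}(0)$ and $\cal{B}_{3,1}(0)$ for $p\geq 5$ holds because, with $n=p$, every shifted weight $(\lambda,\bar\mu)+\rho-p\alpha$ is \emph{singular} (has a repeated coordinate), so each term of \textbf{JSF} vanishes individually (Lemma~\ref{lem-walledfirst}). Conversely, the failure outside the list is detected by a single regular term (Lemma~\ref{lem-walledfirstb}), again with no cancellation argument needed.
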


The proof of Theorem~\ref{thm-walledbrauer} again requires some preparation and is 
split into several lemmas.

\subsection{The Schur-Weyl dual story: from $(r,s)$ to $(r+1,s+1)$}

Let $\Uo=\Uo(\gll{m})$, $V$ and
$T_{n}^{r,s}$ be as in Theorem~\ref{thm-schur-weyl2}.  
As before, $V,V^*\in\T$ and 
so is $T_{n}^{r,s}$ by Proposition~\ref{prop-prop}.
Recall that we can 
calculate the Weyl factors of $T_{n}^{r,s}$ as in the classical case.

\begin{prop}\label{prop-lem1}
If $T_{n}^{r,s}$ is a non-semisimple $\Uo$-module, 
then so is $T_{n}^{r+1,s+1}$.
\end{prop}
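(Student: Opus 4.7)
The plan is to show that every Weyl factor appearing in $T_n^{r,s}$ also appears as a Weyl factor of $T_n^{r+1,s+1}$. Combined with Lemma~\ref{lem-cellsemisimple}, this suffices: a non-simple Weyl factor of the former will automatically be a non-simple Weyl factor of the latter, and hence the latter cannot be a semisimple $\Uo$-module.

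First, I would rewrite $T_n^{r+1,s+1}\cong T_n^{r,s}\otimes (V\otimes V^*)$. By Proposition~\ref{prop-prop} each factor is a $\Uo$-tilting module and so is the tensor product, while Proposition~\ref{prop-classchar} reduces all character computations to the classical case. The classical decomposition of $V\otimes V^*$ for $\gll{m}$ (compare characters: $(x_1+\cdots+x_m)(x_1^{-1}+\cdots+x_m^{-1}) = m + \sum_{i\neq j}x_ix_j^{-1}$) gives
\[
\mathrm{ch}(V\otimes V^*) \;=\; \mathrm{ch}(\Delta_1(0)) \,+\, \mathrm{ch}(\Delta_1(\varepsilon_1-\varepsilon_m)).
\]
Multiplying by $\mathrm{ch}(T_n^{r,s})=\sum_{\lambda}(T_n^{r,s}:\Delta_1(\lambda))\,\mathrm{ch}(\Delta_1(\lambda))$ and expanding each product $\mathrm{ch}(\Delta_1(\lambda))\cdot\mathrm{ch}(\Delta_1(\varepsilon_1-\varepsilon_m))$ into Weyl characters via the classical Littlewood--Richardson rule (which at the character level is also available in our setting by Proposition~\ref{prop-classchar}), one obtains
\[
\mathrm{ch}(T_n^{r+1,s+1}) \;=\; \sum_{\mu}\!\Bigl[(T_n^{r,s}:\Delta_1(\mu)) + N_\mu\Bigr]\mathrm{ch}(\Delta_1(\mu)),
\]
with each $N_\mu\in\Z_{\geq 0}$. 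Linear independence of Weyl characters then yields the multiplicity inequality
\[
(T_n^{r+1,s+1}:\Delta_1(\mu)) \;\geq\; (T_n^{r,s}:\Delta_1(\mu)) \qquad\text{for all }\mu\in X^+.
\]

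Now suppose $T_n^{r,s}$ is non-semisimple. By Lemma~\ref{lem-cellsemisimple} there exists $\mu\in X^+$ with $(T_n^{r,s}:\Delta_1(\mu))>0$ such that $\Delta_1(\mu)$ fails to be a simple $\Uo$-module. The multiplicity inequality above ensures that $\Delta_1(\mu)$ also appears in a Weyl filtration of $T_n^{r+1,s+1}$, and invoking Lemma~\ref{lem-cellsemisimple} a second time shows that $T_n^{r+1,s+1}$ is non-semisimple as well. (The degenerate case $m=1$ is vacuous since $\Uo(\gll{1})$ is abelian and every module is semisimple.)

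The only real point to verify is the character identity for $V\otimes V^*$, which is a routine classical computation in type $\AM$; I do not anticipate any serious obstacle.
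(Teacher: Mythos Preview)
Your argument is correct and follows essentially the same route as the paper: both use $T_n^{r+1,s+1}\cong T_n^{r,s}\otimes(V\otimes V^*)$, observe that $\Delta_1(0)$ occurs as a Weyl factor of $V\otimes V^*$, and conclude via Lemma~\ref{lem-cellsemisimple} that any non-simple Weyl factor of $T_n^{r,s}$ persists in $T_n^{r+1,s+1}$. The paper's proof is slightly terser in that it only records the presence of $\Delta_1(0)$ in $T_n^{1,1}$ (rather than the full decomposition you give) and does not invoke Littlewood--Richardson non-negativity explicitly, but the underlying idea is identical.
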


\begin{proof}
A direct computation shows that $\Delta_1(0)\cong\K$ is a Weyl factor of $T_{n}^{1,1}$. 
Because of this and $T_{n}^{r+1,s+1}\cong T_{n}^{r,s}\otimes T_{n}^{1,1}$, we have that 
any Weyl factor of $T_{n}^{r,s}$ is also a Weyl factor of $T_{n}^{r+1,s+1}$. 
The conclusion follows then from Lemma~\ref{lem-cellsemisimple}.
\end{proof}

\begin{cor}\label{cor-picture2}
Let $\Char(\K)=p$. Then
$\cal{B}_{r,s}(\delta)$ is semisimple if and only if 
$\cal{B}_{s,r}(\delta)$ is semisimple.
\end{cor}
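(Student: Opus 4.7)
The plan is to establish the stronger statement that $\cal{B}_{r,s}(\delta)\cong\cal{B}_{s,r}(\delta)$ as $\K$-algebras, from which Corollary~\ref{cor-picture2} is immediate (and in fact the characteristic hypothesis is not needed for this stronger statement). The isomorphism is the horizontal reflection of walled Brauer diagrams, which interchanges the $r$ upward-oriented strands with the $s$ downward-oriented strands: passing a diagram through the vertical wall and reversing all arrows sends a primitive basis element of $\cal{B}_{r,s}(\delta)$ to a primitive basis element of $\cal{B}_{s,r}(\delta)$ (and preserves the number of internal circles, hence the $\delta$-weight).

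On the generators from Definition~\ref{defn-walledbrauer}, the plan is to define $\phi\colon\cal{B}_{r,s}(\delta)\to\cal{B}_{s,r}(\delta)$ by $\sigma_i\mapsto\sigma_{r+s-i}$ for $i\in\{1,\dots,r+s-1\}\setminus\{r\}$ and $u_r\mapsto u_s$, and then to verify directly that each defining relation is respected: the braid and far-commutation relations are preserved because $|i-j|=|(r+s-i)-(r+s-j)|$; the relation $u_r^2=\delta u_r$ becomes $u_s^2=\delta u_s$; the relation $u_r=u_r\sigma_{r-1}u_r=u_r\sigma_{r+1}u_r$ becomes $u_s=u_s\sigma_{s+1}u_s=u_s\sigma_{s-1}u_s$ (the same relation, with the two cases swapped); and the final tangle relation is symmetric in the roles of $\sigma_{r-1}$ and $\sigma_{r+1}$ (which commute), hence is preserved under exchanging them. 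The same recipe provides a two-sided inverse $\phi^{-1}$, so $\phi$ is an isomorphism of algebras.

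I expect no real obstacle: the verification is purely mechanical, and the main conceptual content is simply to spot the diagrammatic symmetry. If one prefers to work inside the Schur–Weyl framework already in place, the corollary can alternatively be obtained by choosing $n$ with $n\equiv\delta\pmod{p}$ and $n\geq r+s$ (possible because $\Char(\K)=p$ lets us shift $n$ by multiples of $p$), invoking Theorem~\ref{thm-schur-weyl2}(b) to identify $\cal{B}_{r,s}(\delta)\cong\End_{\Uo}(T_n^{r,s})$ and $\cal{B}_{s,r}(\delta)\cong\End_{\Uo}(T_n^{s,r})$, using Theorem~\ref{thm-cellsemisimple} to translate semisimplicity of these endomorphism algebras into semisimplicity of the respective $\Uo$-tilting modules, and finally observing that the duality functor $(-)^{\ast}$ sends $T_n^{r,s}$ to $T_n^{s,r}$ and is a self-equivalence of $\T$ that preserves semisimplicity.
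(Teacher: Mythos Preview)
Your proposal is correct. Both approaches you describe work.

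Your primary argument---the direct algebra isomorphism $\cal{B}_{r,s}(\delta)\cong\cal{B}_{s,r}(\delta)$ via horizontal reflection of diagrams---is genuinely different from the paper's proof and is more elementary: it bypasses the Schur--Weyl machinery entirely and, as you note, works over any field with no hypothesis on $\Char(\K)$ or on $\delta$. The paper instead uses exactly your second route: it observes $(T_n^{r,s})^*\cong T_n^{s,r}$, chooses $n\equiv\delta_p\pmod p$ with $n\geq r+s$ (which is where positive characteristic is genuinely used), and then invokes Theorem~\ref{thm-schur-weyl2} and Theorem~\ref{thm-cellsemisimple}. The paper's approach fits the overall philosophy of the article (everything through tilting modules and \textbf{JSF}), while your reflection argument buys a stronger statement with less work and explains why the hypothesis $\Char(\K)=p$ in the corollary is an artifact of the method rather than of the mathematics.
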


\begin{proof}
Note that $(T_{n}^{r,s})^*\cong T_{n}^{s,r}$ as $\Uo$-modules. Thus, 
$T_{n}^{r,s}$ is a semisimple $\Uo$-module if and only if $T_{n}^{s,r}$ 
is a semisimple $\Uo$-module. Choose $n\geq r+s+2$ with 
$n\equiv \delta_p\;\text{mod}\;p$. Then the statement 
follows directly from Theorem~\ref{thm-schur-weyl2}, Proposition~\ref{prop-lem1} 
and Theorem~\ref{thm-cellsemisimple}.
\end{proof}

\begin{cor}\label{cor-picture}
Let $\Char(\K)=p$. If 
$\cal{B}_{r,s}(\delta)$ is non-semisimple, then so is $\cal{B}_{r+1,s+1}(\delta)$.
\end{cor}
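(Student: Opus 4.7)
The plan is to reduce semisimplicity of the walled Brauer algebra to semisimplicity of a suitable $\Uo$-tilting module via Schur-Weyl duality, and then invoke Proposition~\ref{prop-lem1} to transport non-semisimplicity from $T_n^{r,s}$ to $T_n^{r+1,s+1}$. Since we are in characteristic $p > 0$, we have the freedom to enlarge $n$ by multiples of $p$ while preserving $n \equiv \delta_p \pmod{p}$; this is the key flexibility that makes the statement hold in positive characteristic and is, in spirit, the whole reason the `adding $p$' trick mentioned in the introduction is available.

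In more detail, I would first choose an integer $n$ with $n \geq r + s + 2$ and $n \equiv \delta_p \pmod p$ — such an $n$ exists since we may replace any initial choice by $n + kp$ for $k \gg 0$. For this $n$, part~(b) of Theorem~\ref{thm-schur-weyl2} applies to \emph{both} $(r,s)$ and $(r+1,s+1)$, giving isomorphisms
\[
\cal{B}_{r,s}(\delta) \;\xrightarrow{\;\cong\;}\; \End_{\Uo}(T_n^{r,s}),
\qquad
\cal{B}_{r+1,s+1}(\delta) \;\xrightarrow{\;\cong\;}\; \End_{\Uo}(T_n^{r+1,s+1}).
\]
Note that $T_n^{r,s}$ and $T_n^{r+1,s+1}$ are $\Uo$-tilting modules by Proposition~\ref{prop-prop}, so Theorem~\ref{thm-cellsemisimple} applies to both endomorphism algebras.

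Now, assume that $\cal{B}_{r,s}(\delta)$ is non-semisimple. Via the first isomorphism above, $\End_{\Uo}(T_n^{r,s})$ is non-semisimple, so by Theorem~\ref{thm-cellsemisimple} the $\Uo$-module $T_n^{r,s}$ itself is non-semisimple. Proposition~\ref{prop-lem1} then immediately yields that $T_n^{r+1,s+1}$ is also non-semisimple as a $\Uo$-module. Applying Theorem~\ref{thm-cellsemisimple} in the reverse direction shows that $\End_{\Uo}(T_n^{r+1,s+1})$ is non-semisimple, and via the second isomorphism this gives non-semisimplicity of $\cal{B}_{r+1,s+1}(\delta)$.

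I do not foresee a real obstacle here: the argument is just the combination of the four results at hand (Theorems~\ref{thm-schur-weyl2} and~\ref{thm-cellsemisimple}, Propositions~\ref{prop-prop} and~\ref{prop-lem1}). The only subtle point worth highlighting is the existence of the auxiliary $n$: this is precisely where the positive characteristic hypothesis enters, since in characteristic zero we are locked into $n = \delta_0$ and cannot enlarge $n$ freely — in that setting the analogous propagation statement would need the extra input from Section~\ref{sec-schuridem} on kernels of $\Phi_{\mathrm{wBr}}$ rather than this short Schur-Weyl argument.
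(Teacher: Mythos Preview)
Your proof is correct and matches the paper's own argument essentially verbatim: choose $n\geq r+s+2$ with $n\equiv\delta_p\;\text{mod}\;p$, then combine Theorem~\ref{thm-schur-weyl2}, Theorem~\ref{thm-cellsemisimple}, and Proposition~\ref{prop-lem1}. Your remark on why the argument breaks in characteristic zero is also on point.
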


\begin{proof}
We can then proceed 
similarly as in the proof of Corollary~\ref{cor-picture2}.
\end{proof}

Corollary~\ref{cor-picture} fails for $\Char(\K)=0$, because 
we can not choose $n$ `big enough'. But for $\Char(\K)=p$
this allows us 
to prove non-semisimplicity of $\cal{B}_{r,s}(\delta)$ by 
proving non-semisimplicity for certain `boundary values'. 
For example, if $\delta_p\neq 0$, then 
this can be illustrated as
\[
\xy
(0,0)*{
\begin{tikzpicture}[scale=.75]
	\draw [very thick, ->] (0,0) to (0,6);
	\draw [very thick, ->] (0,0) to (6,0);
	\draw [very thick, red] (0,4) to (4,0);
	\draw [very thick, green] (1,4) to (4,1);
	\draw [very thick, red] (0,4) to (0,5.9);
	\draw [very thick, red] (4,0) to (5.9,0);
	\draw [very thick, green] (1,4) to (1,5.9);
	\draw [very thick, green] (4,1) to (5.9,1);
	\draw [very thick, dashed, ->] (4.25,.5) to (3.6,.5);
	\draw [very thick, dashed, ->] (4.25,1.5) to (3.6,1.5);
	\draw [very thick, blue, ->] (2,2) to (3,3);
	\draw [very thick, blue, ->] (3,3) to (4,4);
	\draw [very thick, blue, ->] (2,3) to (3,4);
	\draw [very thick, blue, ->] (3,4) to (4,5);
	\node at (-.25,6) {\tiny $s$};
	\node at (6,-.25) {\tiny $r$};
	\node at (7.25,.5) {\tiny $r+s=\min\{\delta_p+1,p-\delta_p+1\}+1$};
	\node at (7.25,1.5) {\tiny $r+s=\min\{\delta_p+1,p-\delta_p+1\}+2$};
	\node at (1.25,1) {\tiny semisimple};
	\node at (5,3.5) {\tiny non-semisimple};
	\node at (0,-0.02) {\tiny $\bullet$};
	\node at (2.02,1.98) {\tiny $\bullet$};
	\node at (3.02,2.98) {\tiny $\bullet$};
	\node at (4.02,3.98) {\tiny $\bullet$};
	\node at (2.02,2.98) {\tiny $\bullet$};
	\node at (3.02,3.98) {\tiny $\bullet$};
	\node at (4.02,4.98) {\tiny $\bullet$};
	\node at (4.425,4.5) {\reflectbox{\tiny $\ddots$}};
	\node at (4.425,5.5) {\reflectbox{\tiny $\ddots$}};
	\node at (0,-.25) {\tiny $(0,0)$};
\end{tikzpicture}
};
\endxy
\]
The `boundary line' (bottom, red) where the semisimplicity fails 
is illustrated above. 
We also displayed the passage from $(r,s)$ to $(r+1,s+1)$  
provided by Corollary~\ref{cor-picture}.
Note that we have to check 
additionally points on a line `above the boundary line' (top, green).

\subsection{The \texorpdfstring{$\delta_p\neq 0$}{delta_p not 0} case}

We assume in this subsection that $\Char(\K)=p$ and $\delta_p\neq 0$.

\begin{lem}\label{lem-walledsecond}
If $r+s<\delta_p+1$ and $r+s\leq p-\delta_p+1$, then $\cal{B}_{r,s}(\delta)$ is semisimple.
\end{lem}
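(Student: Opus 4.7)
The strategy is to reduce the semisimplicity of $\cal{B}_{r,s}(\delta)$ to the simplicity of each Weyl factor of a suitable tilting module, and then to verify that simplicity by a direct application of Jantzen's sum formula. Since $r+s < \delta_p + 1$ means $r+s \leq \delta_p$, I set $n = \delta_p$, so Theorem~\ref{thm-schur-weyl2}(b) gives $\cal{B}_{r,s}(\delta) \cong \End_{\Uo}(T_n^{r,s})$; by Proposition~\ref{prop-prop} the module $T_n^{r,s} \in \T$ is a $\Uo(\gll{n})$-tilting module, so Corollary~\ref{cor-cellsemisimple} reduces the task to showing that every Weyl factor $\Delta_1(\nu)$ of $T_n^{r,s}$ is a simple $\Uo$-module.

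To identify the highest weights $\nu$ occurring, I invoke Proposition~\ref{prop-classchar}: characters of Weyl modules are independent of $\K$, so the Weyl factors of $T_n^{r,s}$ are exactly those of the classical case, with highest weights $\nu=(\lambda,\overline{\mu})$ for $\lambda \in \Lambda^+(r-i)$, $\mu \in \Lambda^+(s-i)$, $i \in \{0,1,\dots,\min\{r,s\}\}$ (cf.~Theorem~\ref{thm-schur-weyl2}(c) and Conventions~\ref{nota-young}). With $\rho=(n-1,n-2,\dots,1,0)$ and any positive root $\alpha=\varepsilon_a-\varepsilon_b$ ($1\leq a<b\leq n$), the estimate I would then carry out is
\[
\langle \nu+\rho,\alpha^{\vee}\rangle = \nu_a-\nu_b+(b-a) \leq \lambda_1+\mu_1+(n-1) \leq (r-i)+(s-i)+n-1.
\]
Substituting $n=\delta_p$ and using the hypothesis $r+s\leq p-\delta_p+1$ gives $\langle \nu+\rho,\alpha^{\vee}\rangle \leq p$ for all such $\nu$ and all $\alpha\in\Phi^+$.

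This inequality is precisely what makes \textbf{JSF} from~\eqref{eq-jsumc} trivial: the inner sum there runs over $0<kp<\langle \nu+\rho,\alpha^{\vee}\rangle\leq p$, which is empty, so the entire right-hand side vanishes. Hence $\Delta_1(\nu)$ is a simple $\Uo$-module for every Weyl factor, and the semisimplicity of $\cal{B}_{r,s}(\delta)$ follows via Lemma~\ref{lem-cellsemisimple} and Theorem~\ref{thm-cellsemisimple}.

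I do not anticipate a genuine obstacle: the two hypotheses are tailor-made so that $r+s+n-1\leq p$, and once this bound is in hand the argument is structurally identical to the ``collapse'' observed in the first part of Example~\ref{ex-JSF}. The only care needed is the combinatorial bookkeeping $\lambda_1+\mu_1\leq (r-i)+(s-i)$, which follows immediately from $|\lambda|=r-i$ and $|\mu|=s-i$; in particular no cancellation subtleties of the kind appearing in Example~\ref{ex-JSF2} can arise, because every summand in \textbf{JSF} is already absent.
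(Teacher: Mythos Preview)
Your proof is correct and follows essentially the same route as the paper: set $n=\delta_p$, invoke the Schur--Weyl isomorphism from Theorem~\ref{thm-schur-weyl2}(b) (using $r+s\leq n$), and bound $\langle\nu+\rho,\alpha^{\vee}\rangle\leq r+s+n-1\leq p$ so that the inner sum in \textbf{JSF} from~\eqref{eq-jsumc} is empty. The paper phrases the bound by comparing directly with the extremal weight $r\varepsilon_1-s\varepsilon_n$, whereas you unpack it via the $(\lambda,\overline{\mu})$ parametrization, but the content is identical.
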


\begin{proof}
We consider $T_{n}^{r,s}$ for $n=\delta_p$. Any Weyl factor $\Delta_1(\lambda)$ 
of $T_{n}^{r,s}$ satisfies
\[
\langle \lambda+\rho,(\varepsilon_i-\varepsilon_j)^{\vee}\rangle\leq
\langle r\varepsilon_1-s\varepsilon_{n}+\rho,(\varepsilon_1-\varepsilon_{n})^{\vee}\rangle\leq
\delta_p-1+r+s\leq p.
\]
Here the last inequality follows from the assumption that $r+s\leq p-\delta_p+1$. This means that 
all Weyl factors of $T_{n}^{r,s}$ are simple $\Uo$-modules, since there is no 
positive root $\alpha\in\Phi^+$ which gives a contribution to \textbf{JSF}.
As usual, the statement follows from 
Theorem~\ref{thm-schur-weyl2} (note that we have $r+s<\delta_p+1$ and the 
needed isomorphism holds) and Corollary~\ref{cor-cellsemisimple}.
\end{proof}

\begin{lem}\label{lem-walledsecondc}
If $r+s>p-\delta_p+1$ with $r,s\geq 1$, then $\cal{B}_{r,s}(\delta)$ is non-semisimple.
\end{lem}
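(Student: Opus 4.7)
The plan is to construct a non-simple Weyl factor of $T_n^{r,s}$ for some $n\equiv\delta_p\bmod p$ and conclude via the surjection $\Phi_{\mathrm{wBr}}\colon\cal{B}_{r,s}(\delta)\twoheadrightarrow\End_{\Uo(\gll{n})}(T_n^{r,s})$ from Theorem~\ref{thm-schur-weyl2} (which does not require $n\geq r+s$): by Corollary~\ref{cor-cellsemisimple} a non-simple Weyl factor forces $\End_{\Uo}(T_n^{r,s})$ to be non-semisimple, and since quotients of semisimple algebras are semisimple, non-semisimplicity propagates back to $\cal{B}_{r,s}(\delta)$. Before the Weyl-factor construction I would invoke Corollary~\ref{cor-picture} and Corollary~\ref{cor-picture2} to reduce to $(r,s)$ minimal in $\mathcal{R}=\{r,s\geq 1,\,r+s>p-\delta_p+1\}$ under diagonal propagation and swap symmetry; this leaves the main boundary diagonal $r+s=p-\delta_p+2$ with $r,s\geq 1$, a few pairs on $r+s=p-\delta_p+3$ with $r,s\geq 2$, and the edge pairs $(r,1)$ with $r\geq p-\delta_p+2$ (with swap handling $(1,s)$).

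For the principal case $r+s=p-\delta_p+2$ and $\delta_p\geq 2$, I would take $n=\delta_p$ and the Weyl factor $\Delta_1(\mu)$ with $\mu=r\varepsilon_1-s\varepsilon_n$, which is the highest weight of $T_n^{r,s}$. Then $\mu+\rho=(r+\delta_p-1,\delta_p-2,\dots,1,-s)$, and for $\alpha=\varepsilon_1-\varepsilon_n$ one computes $\langle\mu+\rho,\alpha^{\vee}\rangle=r+s+\delta_p-1\geq p+1$ by hypothesis, so $k=1$ contributes in \textbf{JSF}~\eqref{eq-jsumc}. Using $r+s=p-\delta_p+2$ yields
\[
\mu+\rho-p\alpha=(1-s,\,\delta_p-2,\,\delta_p-3,\,\dots,\,1,\,p-s),
\]
whose first entry $1-s\leq 0$ and last entry $p-s\geq\delta_p-1$ both avoid the middle block $\{1,\dots,\delta_p-2\}$ and are distinct from each other since $p\neq 1$. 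Hence this summand is dot-regular, and by Remark~\ref{rem-nocan} (no cancellations in type $\AM$) the entire \textbf{JSF} is non-zero, so $\Delta_1(\mu)$ is non-simple by Theorem~\ref{thm-jsum}.

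The remaining cases are handled by small variants of the same computation: for $\delta_p=1$ one takes $n=\delta_p+p$ and $k=2$ instead; for $r+s=p-\delta_p+3$ with $r,s\geq 2$ the same recipe with $k=1$ still gives a dot-regular summand (the first coordinate becomes $2-s\leq 0$); for edge pairs $(r,1)$ with $r\geq p$ the quotient $\cal{B}_{r,1}(\delta)\twoheadrightarrow\K[S_r]$ (killing the ideal generated by $u_r$) together with Maschke's theorem immediately delivers non-semisimplicity; and for the finitely many remaining pairs $(r,1)$ with $p-\delta_p+2\leq r\leq p-1$ one replaces the row partition $(r)$ by a suitable non-row shape such as $(r-1,1)$ or $(2,1^{r-2})$, which shifts the first coordinate of $\mu+\rho-p\alpha$ out of the singular locus and produces a dot-regular \textbf{JSF} summand. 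The main obstacle is precisely this last family of edge cases, where the naive row choice is dot-singular after subtracting $p\alpha$, and one must select the partition shape carefully to obtain a dot-regular contribution.
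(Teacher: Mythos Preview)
Your overall strategy and your diagonal computations match the paper's proof: reduce via Corollaries~\ref{cor-picture} and~\ref{cor-picture2} to the two boundary diagonals $r+s\in\{p-\delta_p+2,\,p-\delta_p+3\}$ plus the edge strip $(r,1)$, take $n=\delta_p$ and the highest-weight factor $\mu=r\varepsilon_1-s\varepsilon_n$, and use $\alpha=\varepsilon_1-\varepsilon_n$ in \textbf{JSF}. On the two diagonals this is correct and is exactly what the paper does (its sets $\flat_1,\flat_2$); your separate handling of $\delta_p=1$ via $n=\delta_p+p$ and $k=2$ is also fine.

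The genuine gap is your treatment of the edge strip $(r,1)$ with $p-\delta_p+2\leq r\leq p-1$. Neither of your suggested shapes works in general. Take $p=11$, $\delta_p=5$, $r=9$: the shape $(r-1,1)=(8,1)$ gives $\lambda+\rho=(12,4,2,1,-1)$, the only root with $\langle\lambda+\rho,\alpha^{\vee}\rangle>p$ is $\alpha=\varepsilon_1-\varepsilon_5$, and $\lambda+\rho-p\alpha=(1,4,2,1,10)$ is singular; the shape $(2,1^{r-2})=(2,1^7)$ does not even fit into $\gll{5}$. The underlying issue is that replacing $(r)$ by $(r-1,1)$ shifts the first coordinate of $\lambda+\rho-p\alpha$ down by exactly one, so it escapes the middle block only for $r=p-\delta_p+2$ and lands back inside it for every larger $r$. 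What is needed is a hook whose arm length varies with $r$: the paper takes $\lambda=(p-\delta_p+1)\varepsilon_1+\varepsilon_2+\dots+\varepsilon_{r-p+\delta_p}-\varepsilon_n$, engineered so that $(\lambda+\rho-p(\varepsilon_1-\varepsilon_n))_1=0$ for every $r$ in the range, and one then checks directly that the resulting weight is regular.
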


\begin{proof}
Set $n=\delta_p$ and consider again $T_{n}^{r,s}$.
Let first $s=1$ and assume $r>p-\delta_p$ with $r<p$. 
As before, we only need to give one Weyl factor $\Delta_1(\lambda)$ which is a non-simple 
$\Uo$-module. 
We take 
$\lambda=(p-\delta_p+1)\varepsilon_1+\varepsilon_2+\dots+\varepsilon_{r-p+\delta_p}-\varepsilon_n$. 
Then $\alpha=\varepsilon_1-\varepsilon_{n}$ contributes to \textbf{JSF} because
\[
\lambda+\rho-p(\varepsilon_1-\varepsilon_n)=
(\colorbox{colormy}{\color{white}$0$},\delta_p-1,\delta_p-2,\dots,p-r,p-r-2,\dots,2,1,\colorbox{colormy}{\color{white}$p-1$})
\]
(note that $r<p$). This is a regular $\Uo$-weight because $\delta_p<p$. 
Again, cancellation can not occur, see Remark~\ref{rem-nocan}.
Thus, 
$T_{n}^{r,s}$ is a non-semisimple $\Uo$-module.

We now verify non-semisimple for the `boundary values'.
Assume $r,s>1$. Set
\[
\flat_1=\{(r,s)\mid r+s=(p-\delta_p+1)+1\},\quad\quad \flat_2=\{(r,s)\mid r+s=(p-\delta_p+1)+2\}.
\]
A direct computation using \textbf{JSF} 
shows that $\lambda=r\varepsilon_1-s\varepsilon_n$ is a 
Weyl factor $\Delta_1(\lambda)$ of $T_{n}^{r,s}$ that is a non-simple $\Uo$-module for all pairs 
$(r,s)\in \flat_1\cup \flat_2$ (the 
positive root making \textbf{JSF} non-zero 
is $\alpha=\varepsilon_1-\varepsilon_n$). For those $(r,s)$ we have that 
$T_{n}^{r,s}$ is a non-semisimple $\Uo$-module.

By  
Theorem~\ref{thm-schur-weyl2} and Corollary~\ref{cor-cellsemisimple}, we see that 
$\cal{B}_{r,s}(\delta)$ is non-semisimple under the same conditions 
(surjectivity in Theorem~\ref{thm-schur-weyl2} suffices since 
semisimple algebras have semisimple quotients). 
By Lemma~\ref{lem-walledsecondb} 
we additionally see that $\cal{B}_{r,1}(\delta)$ is non-semisimple for $r\geq p$. 
Thus, the 
statement follows from 
Corollaries~\ref{cor-picture} and~\ref{cor-picture2}.
\end{proof}

\begin{lem}\label{lem-walledsecondd}
If $r+s>\delta_p+1$ with $r,s\geq 1$, then $\cal{B}_{r,s}(\delta)$ is non-semisimple.
\end{lem}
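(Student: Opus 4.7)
The proof should parallel Lemma~\ref{lem-walledsecondc}: exhibit a non-simple Weyl factor of $T_n^{r,s}$ and conclude non-semisimplicity of $\cal{B}_{r,s}(\delta)$ via the Schur-Weyl surjection. By Corollary~\ref{cor-picture2} we may swap $r$ and $s$, and by Corollary~\ref{cor-picture} non-semisimplicity propagates from $(r,s)$ to $(r+1,s+1)$. Hence it suffices to treat the boundary pairs $(r,s)$ with $r,s\geq 1$ and $r+s\in\{\delta_p+2,\delta_p+3\}$, which together cover both parities of $r+s$ after propagation; cases already settled by Lemma~\ref{lem-walledsecondb} can be discarded on the way.

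For such a pair, I would take $n=\delta_p+kp$ with $k\geq 1$ chosen so that $n\geq r+s$ (so $k=1$ works for $p\geq 3$ and $k=2$ for $p=2$). Then Theorem~\ref{thm-schur-weyl2}(b) gives the isomorphism $\cal{B}_{r,s}(\delta)\cong\End_{\Uo}(T_n^{r,s})$, and by Corollary~\ref{cor-cellsemisimple} it suffices to exhibit a non-simple Weyl factor $\Delta_1(\lambda)$ of $T_n^{r,s}$. Natural candidates are the weights coming from pairs of single-column partitions, such as $\lambda=\varepsilon_1+\dots+\varepsilon_r-\varepsilon_{n-s+1}-\dots-\varepsilon_n$, or one-row partitions like $\lambda=r\varepsilon_1-s\varepsilon_n$, or hybrids between these two shapes; each is visible in $T_n^{r,s}$ by the classical Littlewood--Richardson rule together with Proposition~\ref{prop-classchar}. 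For a chosen positive root $\alpha\in\Phi^+$ (typically $\alpha=\varepsilon_i-\varepsilon_{n+1-i}$ for some small $i$) and suitable $k$, I would check that $\lambda+\rho-kp\alpha$ is dot-regular; by Remark~\ref{rem-nocan}, a single non-zero summand in~\eqref{eq-jsumc} implies non-simplicity of $\Delta_1(\lambda)$ via Theorem~\ref{thm-jsum}.

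The main obstacle is the combinatorics: the staircase structure of $\rho$ creates many accidental coincidences after shifting by $kp\alpha$, so a single universal choice of $(\lambda,\alpha,k)$ rarely works for all boundary pairs $(r,s,\delta_p,p)$ at once. Instead one must tailor the shape of $\lambda$ (e.g.\ by trading boxes between rows to shift a few coordinates of $\lambda+\rho$) or enlarge $n$ by a further multiple of $p$ (which stretches the middle block of $\lambda+\rho$ and removes collisions that would otherwise make $\lambda+\rho-kp\alpha$ dot-singular), depending on the parities and relative sizes of $r$, $s$, $\delta_p$, and $p$. Once non-simple Weyl factors have been exhibited at all boundary pairs, Corollaries~\ref{cor-picture} and~\ref{cor-picture2} propagate non-semisimplicity to every pair $(r,s)$ with $r+s>\delta_p+1$ and $r,s\geq 1$.
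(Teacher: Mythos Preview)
Your overall strategy matches the paper's: take $n=p+\delta_p$, reduce via Corollaries~\ref{cor-picture} and~\ref{cor-picture2} to a finite list of boundary pairs, and for each exhibit a Weyl factor of $T_n^{r,s}$ whose \textbf{JSF} is non-zero (Remark~\ref{rem-nocan} then finishes the job). The paper even uses weights of the shapes you propose.

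There is, however, a genuine gap in your boundary reduction. Propagation via Corollary~\ref{cor-picture} sends $(r,s)$ to $(r+1,s+1)$, i.e.\ it moves along a diagonal of constant $r-s$; it does not sweep out all pairs with a given parity of $r+s$. Your boundary set $\{(r,s):r,s\geq 1,\ r+s\in\{\delta_p+2,\delta_p+3\}\}$ only meets the diagonals with $|r-s|\leq\delta_p+1$, so no pair with $|r-s|\geq\delta_p+2$ is ever reached. For instance $(\delta_p+3,1)$ is not in your boundary set and cannot be obtained from it by propagation, and Lemma~\ref{lem-walledsecondb} as stated (which needs $p\leq\min\{r,s\}$) does not apply when $s=1$.

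The paper closes this gap by treating an additional strip
\[
\flat_1=\{(r,1)\mid \delta_p+1\leq r<p\}
\]
directly, with its own tailored choice of $\lambda=(r-\delta_p)\varepsilon_1+\varepsilon_2+\dots+\varepsilon_{\delta_p+1}-\varepsilon_n$ and $\alpha=\varepsilon_{\delta_p+1}-\varepsilon_n$; the remaining pairs $(r,1)$ with $r\geq p$ are handled by the quotient $\cal{B}_{r,1}(\delta)\twoheadrightarrow\K[S_r]$. Once you add this strip (and its mirror via Corollary~\ref{cor-picture2}), your sketch becomes the paper's proof.
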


\begin{proof}
Very similar to the proof of Lemma~\ref{lem-walledsecondc}. 
This time we take $n=p+\delta_p$ 
and we consider $T_{n}^{r,s}$. The `boundary values' for which we need to 
check non-semisimplicity are 
\begin{gather*}
\flat_1=\{(r,1)\mid \delta_p+1\leq r<p\},\\
\flat_2=\{(r,s)\mid r+s=(\delta_p+1)+1,\;s\geq 2\},\quad\quad \flat_3=\{(r,s)\mid r+s=(\delta_p+1)+2,\;s\geq 2\}.
\end{gather*}
For these we directly verify, using again \textbf{JSF}, 
that $T_{n}^{r,s}$ is a non-semisimple $\Uo$-module and the 
statement follows similarly as before. 
Since the arguments are straightforward, we only 
list a Weyl factor $\Delta_1(\lambda)$ 
that is a non-simple $\Uo$-module for each case (together with the positive 
roots giving a non-zero contribution to \textbf{JSF}). 
\begin{gather*}
\begin{aligned}
\text{For }\flat_1:&\;\; \lambda=(r-\delta_p)\varepsilon_1+\varepsilon_2+\dots+\varepsilon_{\delta_p+1}-\varepsilon_n
,\quad\quad\quad\;\;\;\;\text{positive root: }\alpha=\varepsilon_{\delta_p+1}-\varepsilon_n,\\
\text{For }\flat_2:&\;\; \lambda=\varepsilon_1+\varepsilon_2+\dots+\varepsilon_{r}-\varepsilon_{n-s+1}-\dots-2\varepsilon_{n}
,\quad\;\;\,\text{positive root: }\alpha=\varepsilon_{r}-\varepsilon_{n-s+1},\\
\text{For }\flat_3:&\;\; \lambda=2\varepsilon_1+\varepsilon_2+\dots+\varepsilon_{r-1}-\varepsilon_{n-s+1}-\dots-2\varepsilon_{n}
,\;\text{positive root: }\alpha=\varepsilon_{r-1}-\varepsilon_{n-s+1}.
\end{aligned}
\end{gather*}
Again, no cancellations occur by Remark~\ref{rem-nocan} and the statement follows as usual.
\end{proof}

\subsection{The \texorpdfstring{$\delta_p=0$}{delta_p=0} case}

We assume in this subsection that $\Char(\K)=p$ and $\delta_p=0$.

\begin{lem}\label{lem-walledfirst}
Let $p\geq 3$. Then $\cal{B}_{2,1}(\delta)$ is semisimple. 
If $p\geq 5$, then 
$\cal{B}_{3,1}(\delta)$ is also semisimple.
\end{lem}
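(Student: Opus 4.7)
The plan is to apply the approach used throughout this section: combine Theorem~\ref{thm-schur-weyl2}(b) with Corollary~\ref{cor-cellsemisimple} to reduce semisimplicity of $\cal{B}_{r,s}(\delta)$ to showing that every Weyl factor of a suitable $T_n^{r,s}$ is a simple $\Uo$-module, and then verify the latter via Jantzen's sum formula~\eqref{eq-jsumc}.

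Since $\delta_p=0$ I will take $n=p$, the smallest positive integer with $n\equiv\delta_p\pmod p$. The hypothesis $p\geq 3$ for $\cal{B}_{2,1}(\delta)$ and $p\geq 5$ for $\cal{B}_{3,1}(\delta)$ (using that $p$ is prime to rule out $p=4$) ensure $n\geq r+s$, so Theorem~\ref{thm-schur-weyl2}(b) yields an isomorphism $\cal{B}_{r,s}(\delta)\xrightarrow{\cong}\End_{\Uo}(T_p^{r,s})$. By Corollary~\ref{cor-cellsemisimple} it then suffices to prove that every Weyl factor $\Delta_1(\lambda)$ appearing in $T_p^{r,s}$ is a simple $\Uo$-module.

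By Proposition~\ref{prop-classchar} the characters, hence the multiset of highest weights of Weyl factors, are as in the classical case, and a direct Littlewood--Richardson calculation produces the lists
$\{2\varepsilon_1-\varepsilon_n,\,\varepsilon_1+\varepsilon_2-\varepsilon_n,\,\varepsilon_1\}$ for $T_p^{2,1}$
and
$\{3\varepsilon_1-\varepsilon_n,\,2\varepsilon_1+\varepsilon_2-\varepsilon_n,\,\varepsilon_1+\varepsilon_2+\varepsilon_3-\varepsilon_n,\,2\varepsilon_1,\,\varepsilon_1+\varepsilon_2\}$ for $T_p^{3,1}$.
For each such $\lambda$ I will apply~\eqref{eq-jsumc}. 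Because $n=p$ and these $\lambda$ have very small support, the constraint $0<kp<\langle\lambda+\rho,\alpha^\vee\rangle$ forces $k=1$ and limits the contributing positive roots to at most two, namely $\alpha=\varepsilon_1-\varepsilon_n$ always, together with $\alpha=\varepsilon_1-\varepsilon_{n-1}$ in the single case $\lambda=3\varepsilon_1-\varepsilon_n$. In each of these cases I compute $\lambda+\rho-p\alpha$ directly and exhibit a repeated coordinate, so that the resulting $\Uo$-weight is singular; by Definition~\ref{defn-singular} the corresponding summand $\chi(\lambda-p\alpha)$ vanishes. For the weights $\varepsilon_1,\,2\varepsilon_1,\,\varepsilon_1+\varepsilon_2$ without a negative component, no $\alpha\in\Phi^+$ satisfies $\langle\lambda+\rho,\alpha^\vee\rangle>p$, so the sum in~\eqref{eq-jsumc} is empty.

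The main obstacle is purely the bookkeeping needed to enumerate the contributing roots and verify singularity, in the style of Example~\ref{ex-JSF}; there is no conceptual difficulty. Since we are in type $\AM$, Remark~\ref{rem-nocan} rules out hidden cancellations and the vanishing of each individual summand is sufficient. Once all cases are dispatched, every Weyl factor of $T_p^{r,s}$ is simple, so $T_p^{r,s}$ is a semisimple $\Uo$-module and the isomorphism $\cal{B}_{r,s}(\delta)\cong\End_{\Uo}(T_p^{r,s})$ from Theorem~\ref{thm-schur-weyl2} gives the desired semisimplicity.
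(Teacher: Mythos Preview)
Your approach is exactly the paper's: take $n=p$, invoke Theorem~\ref{thm-schur-weyl2}(b) and Corollary~\ref{cor-cellsemisimple}, list the Weyl factors of $T_p^{r,s}$, and kill each \textbf{JSF} by checking that every contributing $\lambda+\rho-p\alpha$ is singular. The list of Weyl factors and the enumeration of contributing roots for the weights with a $-\varepsilon_n$ component are correct.

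There is one small bookkeeping slip. For $\lambda=2\varepsilon_1$ you assert that no positive root satisfies $\langle\lambda+\rho,\alpha^{\vee}\rangle>p$, but in fact
\[
\langle 2\varepsilon_1+\rho,(\varepsilon_1-\varepsilon_p)^{\vee}\rangle=(p+1)-0=p+1>p,
\]
so $\alpha=\varepsilon_1-\varepsilon_p$ does contribute a term. This does not damage the conclusion, since
\[
2\varepsilon_1+\rho-p(\varepsilon_1-\varepsilon_p)=(1,\,p-2,\,p-3,\dots,2,\,1,\,p)
\]
has a repeated entry $1$ and is therefore singular, so the summand vanishes anyway; the paper notes precisely this case. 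Your claim is correct for $\lambda=\varepsilon_1$ and $\lambda=\varepsilon_1+\varepsilon_2$. (Also, your appeal to Remark~\ref{rem-nocan} is unnecessary here: that remark is used to deduce that \textbf{JSF} is \emph{non}-zero from a single non-zero summand; when every summand is already zero the sum is zero regardless of cancellations.)
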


\begin{proof}
Set $n=p$ and consider $T_{n}^{r,s}$ for $r=2,3$ and $s=1$. 
By Theorem~\ref{thm-schur-weyl2} and Corollary~\ref{cor-cellsemisimple}, 
it suffices to check that $T_n^{r,s}$ has 
only Weyl factors $\Delta_1(\lambda)$ which are simple $\Uo$-modules.

\noindent\textit{Case $r=2$.} The Weyl factors of $T_n^{r,s}$ 
are $\Delta_1(2\varepsilon_1-\varepsilon_p)$, 
$\Delta_1(\varepsilon_1+\varepsilon_2-\varepsilon_p)$ and $\Delta_1(\varepsilon_1)$. 
The third is clearly a simple $\Uo$-module and it remains to verify the same for the other two factors. 
As before, we want to use 
\textbf{JSF}:
\begin{itemize}
\item If $\lambda=2\varepsilon_1-\varepsilon_p$, then 
the only possible positive root $\alpha\in\Phi^+$ that contributes 
to the corresponding \textbf{JSF} 
is $\alpha=\varepsilon_1-\varepsilon_p$ (and it contributes only once). But
\[
\lambda+\rho-p(\varepsilon_1-\varepsilon_p)=(\colorbox{mycolor}{$1$},p-2,p-3,\dots,2,\colorbox{mycolor}{$1$},p-1)
\]
is a singular $\Uo$-weight (because $p\geq 3$). Thus, \textbf{JSF} of 
$\Delta_1(2\varepsilon_1-\varepsilon_p)$ is zero.
\item Similarly, if 
$\lambda=\varepsilon_1+\varepsilon_2-\varepsilon_p$, 
then, as before, the only positive root 
$\alpha\in\Phi^+$ we need to consider is 
$\alpha=\varepsilon_1-\varepsilon_p$. But
\[
\lambda+\rho-p(\varepsilon_1-\varepsilon_p)=(0,\colorbox{mycolor}{$p-1$},p-3,\dots,2,1,\colorbox{mycolor}{$p-1$})
\]
is a singular $\Uo$-weight. Hence, \textbf{JSF} of 
$\Delta_1(\varepsilon_1+\varepsilon_2-\varepsilon_p)$ is zero.
\end{itemize}
Thus, $T_n^{r,s}$ is a semisimple $\Uo$-module.

\noindent\textit{Case $r=3$.} We get the Weyl factors 
$\Delta_1(3\varepsilon_1-\varepsilon_p)$, $\Delta_1(2\varepsilon_1+\varepsilon_2-\varepsilon_p)$, 
$\Delta_1(\varepsilon_1+\varepsilon_2+\varepsilon_3-\varepsilon_p)$, 
$\Delta_1(\varepsilon_1+\varepsilon_2)$ and $\Delta_1(2\varepsilon_1)$. 
We proceed as above. 
For $\lambda=3\varepsilon_1-\varepsilon_p$ the only positive roots $\alpha\in\Phi^+$ 
we need to consider for
\textbf{JSF} are 
$\alpha=\varepsilon_1-\varepsilon_p$ and $\alpha=\varepsilon_1-\varepsilon_{p-1}$. 
Both contribute only one term and we get
\begin{gather*}
\begin{aligned}
\lambda+\rho-p(\varepsilon_1-\varepsilon_p)&=(\colorbox{mycolor}{$2$},p-2,p-3,\dots,\colorbox{mycolor}{$2$},1,p-1),\\
\lambda+\rho-p(\varepsilon_1-\varepsilon_{p-1})&=(\colorbox{mycolor}{$2$},p-1,p-3,\dots,\colorbox{mycolor}{$2$},p+1,-1).
\end{aligned}
\end{gather*}
Since $p\geq 5$, both are singular $\Uo$-weights. 
Similar for the remaining Weyl factors and we omit the calculation for brevity. 
We only note that one has to 
consider $\alpha=\varepsilon_1-\varepsilon_p$ for 
$\lambda=2\varepsilon_1+\varepsilon_2-\varepsilon_p$,  $\lambda=\varepsilon_1+\varepsilon_2+\varepsilon_3-\varepsilon_p$ 
and $\lambda=2\varepsilon_1$, 
while for $\lambda=\varepsilon_1+\varepsilon_2$ 
no positive root $\alpha\in\Phi^+$ 
needs to be considered.

The lemma follows.
\end{proof}

\begin{lem}\label{lem-walledfirstb}
Assume $r,s\geq 1$.
\begin{enumerate}[(a)]
\item If $p\geq 5$, then $\cal{B}_{r,s}(\delta)$ is semisimple if and only if 
$(r,s)\in\{(2,1),(1,2),(3,1),(1,3)\}$.
\item If $p=3$, then $\cal{B}_{r,s}(\delta)$ is semisimple if and only if 
$(r,s)\in\{(2,1),(1,2)\}$.
\item If $p=2$, then $\cal{B}_{r,s}(\delta)$ is never semisimple.
\end{enumerate}
\end{lem}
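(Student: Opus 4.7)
The proof combines three structural tools. The first is a mild extension of Lemma~\ref{lem-walledsecondb}: since $\K[S_r]$ (respectively $\K[S_s]$) occurs as a quotient of $\cal{B}_{r,s}(\delta)$ via $u_r\mapsto 0$ and $\sigma_i\mapsto 1$ for all $i>r$ (respectively $i<r$), Maschke's theorem forces $\cal{B}_{r,s}(\delta)$ to be non-semisimple whenever $p\leq\max\{r,s\}$. The second is the propagation Corollary~\ref{cor-picture}: non-semisimplicity at $(r,s)$ transports to $(r+1,s+1)$. The third is the symmetry Corollary~\ref{cor-picture2}. Together these reduce the \emph{only if} direction of (a)--(c) to checking non-semisimplicity at a finite list of \emph{minimal} pairs inside the region $\max\{r,s\}\leq p-1$.

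The \emph{if} direction is routine: in (a) the pairs $(2,1)$ and $(3,1)$ are covered by Lemma~\ref{lem-walledfirst} and $(1,2),(1,3)$ follow by Corollary~\ref{cor-picture2}; (b) is analogous but involves only $(2,1)$; and in (c) no semisimple pairs are asserted.

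For the \emph{only if} direction, after discarding $\max\{r,s\}\geq p$ and assuming $r\geq s$ by symmetry, the remaining minimal pairs are: in (c) only $(1,1)$; in (b) only $(1,1)$, with $(2,2)$ following by propagation; and in (a) the three pairs $(1,1)$, $(3,2)$, $(4,2)$ together with the one-parameter family $(r,1)$ for $4\leq r\leq p-1$. Case (c) is immediate: when $p=2$ and $\delta_p=0$ one has $u_1^2=\delta u_1=0$ in $\cal{B}_{1,1}(\delta)$, so $\cal{B}_{1,1}(\delta)\cong\K[u]/(u^2)$ is non-semisimple. For every other minimal pair I take $n=p$, exhibit an explicit Weyl factor $\Delta_1(\lambda)$ of $T^{r,s}_n$ (existence being a standard Pieri check based on Proposition~\ref{prop-classchar}), and produce one positive root $\alpha\in\Phi^+$ and one integer $k\geq 1$ for which $\lambda+\rho-kp\alpha$ has all entries distinct. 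Since no cancellations occur in type $\Am$ by Remark~\ref{rem-nocan}, one regular summand already makes \textbf{JSF} non-zero, so Theorem~\ref{thm-jsum} forces $\Delta_1(\lambda)$ to be non-simple, and Corollary~\ref{cor-cellsemisimple} combined with the surjectivity in Theorem~\ref{thm-schur-weyl2} transfers non-semisimplicity back to $\cal{B}_{r,s}(\delta)$. With $k=1$ throughout, the choices are $(\lambda,\alpha)=(\varepsilon_1-\varepsilon_p,\,\varepsilon_1-\varepsilon_p)$ for $(1,1)$, yielding the shifted weight $(0,p-2,\ldots,1,p-1)$; $(2\varepsilon_1+\varepsilon_2-2\varepsilon_p,\,\varepsilon_2-\varepsilon_p)$ for $(3,2)$, yielding $(p+1,-1,p-3,\ldots,1,p-2)$; $(2\varepsilon_1+2\varepsilon_2-2\varepsilon_p,\,\varepsilon_2-\varepsilon_p)$ for $(4,2)$, yielding $(p+1,0,p-3,\ldots,1,p-2)$; and $((r-2)\varepsilon_1+2\varepsilon_2-\varepsilon_p,\,\varepsilon_2-\varepsilon_p)$ for the strip, yielding $(r+p-3,0,p-3,\ldots,1,p-1)$.

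The only subtle point is handling the $(r,1)$ strip for $4\leq r\leq p-1$ uniformly. The key trick will be to shift along $\alpha=\varepsilon_2-\varepsilon_p$ rather than the more obvious $\varepsilon_1-\varepsilon_p$: this keeps the value $p-1$ (produced from the lifted $(-1)$-coordinate) unrepeated, while the shape $(r-2,2,0,\ldots,0,-1)$ places the moving coordinate $r+p-3$ into the window $\{p+1,\ldots,2p-4\}$, which is disjoint from the remaining entries $\{0,1,\ldots,p-3,p-1\}$. The regularity checks for the other three choices, and the Pieri verification that the listed $\lambda$'s are indeed Weyl factors, are immediate.
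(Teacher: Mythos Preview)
Your proof is correct and follows essentially the same route as the paper's: the same reduction via Corollaries~\ref{cor-picture2} and~\ref{cor-picture} to the boundary list $(1,1),(3,2),(4,2),(r,1)$ with $r\geq 4$, the same choice $n=p$, and the same positive root $\alpha=\varepsilon_2-\varepsilon_p$ (respectively $\varepsilon_1-\varepsilon_p$ for $(1,1)$) to produce a regular contribution to \textbf{JSF}. The only cosmetic differences are that you take $\lambda=2\varepsilon_1+2\varepsilon_2-2\varepsilon_p$ for $(4,2)$ where the paper uses $3\varepsilon_1+\varepsilon_2-2\varepsilon_p$ (both work), and that you make the Maschke cutoff $p\leq\max\{r,s\}$ and the $p=2$ case $\cal{B}_{1,1}(0)\cong\K[u]/(u^2)$ explicit, whereas the paper leaves (b) and (c) to the reader.
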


\begin{proof}
We only prove (a) and leave the other (completely similar) cases to the reader.

Because of the 
Corollaries~\ref{cor-picture} and~\ref{cor-picture2} it 
suffices to check that $\cal{B}_{r,s}(\delta)$ is non-semisimple 
for $(r,s)=(1,1)$ (difference $0$), 
$(r,s)=(3,2)$ (difference $1$), $(r,s)=(4,2)$ (difference $2$) and 
$(r,s)=(r,1)$ for $4\leq r$ (difference $\geq 3$).

As before, let $n=p$ and consider $T_{n}^{r,s}$. Hence,
it remains to find a Weyl factor 
$\Delta_1(\lambda)$ of $T_{n}^{r,s}$ which is a non-simple $\Uo$-module. 
We list such factors in the following. Since 
cancellations do not occur, see Remark~\ref{rem-nocan}, this 
suffices to show that the corresponding \textbf{JSF} is non-zero.
\begin{itemize}
\item The Weyl factor $\Delta_1(\varepsilon_1-\varepsilon_p)$ 
of $T_{n}^{r,s}$ for $(r,s)=(1,1)$ is a non-simple $\Uo$-module:
\[
\varepsilon_1-\varepsilon_p+\rho-p(\varepsilon_1-\varepsilon_p)=(\colorbox{colormy}{\color{white}$0$},p-2,p-3,\dots,2,1,\colorbox{colormy}{\color{white}$p-1$}).
\]
\item The Weyl factor $\Delta_1(2\varepsilon_1+\varepsilon_2-2\varepsilon_p)$ of 
$T_{n}^{r,s}$ for $(r,s)=(3,2)$ is a non-simple $\Uo$-module:
\begin{gather*}
2\varepsilon_1+\varepsilon_2-2\varepsilon_p+\rho-p(\varepsilon_2-\varepsilon_{p})=(p+1,\colorbox{colormy}{\color{white}$-1$},p-3,\dots,2,1,\colorbox{colormy}{\color{white}$p-2$}).
\end{gather*}
\item The Weyl factor $\Delta_1(3\varepsilon_1+\varepsilon_2-2\varepsilon_p)$ of 
$T_{n}^{r,s}$ for $(r,s)=(4,2)$ is a non-simple $\Uo$-module:
\begin{gather*}
3\varepsilon_1+\varepsilon_2-2\varepsilon_p+\rho-p(\varepsilon_2-\varepsilon_{p})=(p+2,\colorbox{colormy}{\color{white}$-1$},p-3,\dots,2,1,\colorbox{colormy}{\color{white}$p-2$}).
\end{gather*}
\item The Weyl factor $\Delta_1((r-2)\varepsilon_1+2\varepsilon_2-\varepsilon_p)$ of 
$T_{n}^{r,s}$ for $(r,s)=(r,1)$ with $4\leq r$ is a non-simple $\Uo$-module:
\begin{gather*}
(r-2)\varepsilon_1+2\varepsilon_2-\varepsilon_p+\rho-p(\varepsilon_2-\varepsilon_{p})=(p+r-3,\colorbox{colormy}{\color{white}$0$},p-3,\dots,2,1,\colorbox{colormy}{\color{white}$p-1$}).
\end{gather*}
Note that $4\leq r$ ensures that $(r-2)\varepsilon_1+2\varepsilon_2-\varepsilon_p$ 
occurs in $T_{n}^{r,s}$ (the $2$ in front of $\varepsilon_2$ is needed 
for $\alpha=\varepsilon_2-\varepsilon_{p}$ to give a 
contribution to \textbf{JSF}).
\end{itemize}
As in the proof of Lemma~\ref{lem-walledsecondb}, 
semisimple algebras have semisimple quotients. Hence, 
surjectivity in Theorem~\ref{thm-schur-weyl2} and 
Corollary~\ref{cor-cellsemisimple} provide 
the `only if' part of (a).
Thus, we have proven the statement,
because the `if' part of (a) follows from 
Lemma~\ref{lem-walledfirst} and Corollary~\ref{cor-picture2}.
\end{proof}

\subsection{Proof of the semisimplicity criterion for \texorpdfstring{$\cal{B}_{r,s}(\delta)$}{B_r,s(delta)}}

\begin{proof}[Proof of Theorem~\ref{thm-walledbrauer}]
\textit{(1).} The `only if' part of (a) follows from 
Lemmas~\ref{lem-walledsecondb},~\ref{lem-walledsecondc} 
and~\ref{lem-walledsecondd}

By Lemma~\ref{lem-walledsecond}, the only missing
case for the `if' part is the case $r+s=\delta_p+1$ and $p>\max\{r,s\}$, since in this case 
the corresponding 
Schur-Weyl duality gives only a surjection. As in the proof of Lemma~\ref{lem-walledsecond} 
we see that $T_{n}^{r,s}$ for $n+1=r+s=\delta_p+1$ is a semisimple $\Uo$-module. Thus, 
by  
Theorem~\ref{thm-schur-weyl2} and Theorem~\ref{thm-cellsemisimple}, 
we have that the algebra $\cal{B}_{r,s}(\delta)$ has $\End_{\Uo}(T_{n}^{r,s})$ 
as a semisimple quotient. We have calculated $\ker(\Phi_{\mathrm{wBr}})$ 
in Proposition~\ref{prop-kernel} from above: $\ker(\Phi_{\mathrm{wBr}})$ is one 
dimensional and spanned by the 
idempotent $e_{r,s}(n)$. The conclusion follows 
from Proposition~\ref{prop-abstract-nonsense}.

\noindent\textit{(2).} We can use Theorem~\ref{thm-transfer2}. That is, 
the statement in (2) can be 
obtained from the statement (1) by `taking the limit $p\to\infty$'.

\noindent\textit{(3), (4) and (5).} Directly from Lemma~\ref{lem-walledfirstb} 
and Theorem~\ref{thm-heckesemi} (for the 
cases where we have either $r=0$ or $s=0$).

\noindent\textit{(6).} Analogous to (2) by Theorem~\ref{thm-transfer2}, but using  
(3) instead of (1).

This finishes the proof.
\end{proof}

\begin{rem}\label{rem-semi2}
The semisimplicity criterion for $\cal{B}_{r,s}(\delta)$ is again of course not new: 
it was already discussed in~\cite[Theorem~6.3]{cddm}, but using a 
very different approach.
\end{rem}

\begin{rem}\label{rem-qauntizedwb}
Our approach works perfectly fine for the quantized walled Brauer algebras as well. The only 
difference is that one has to consider the versions~\eqref{eq-jsuma} and~\eqref{eq-jsumb} 
of \textbf{JSF} instead of~\eqref{eq-jsumc}. For brevity, we do not discuss the details here.
\end{rem}
\section{Semisimplicity: the Brauer algebra}\label{sec-brauer}
Let $d\in\Z_{>0}$. Choose 
$\delta$ and $\delta_p$ (recalling 
$\delta_0=|\delta|$) again as in 
Conventions~\ref{nota-delta}.

\begin{thm}(\textbf{Semisimplicity criterion for the Brauer algebra})\label{thm-brauer}
\newline
$\cal{B}_{d}(\delta)$ 
is semisimple if and only if one of the following conditions hold:
\begin{enumerate}
\item $\delta_p\neq 0$ odd, $\Char(\K)=p>2$ and $d\leq\min\{\delta_p+1,\frac{1}{2}(p-\delta_p+2)\}$.
\item $\delta_p\neq 0$ even, $\Char(\K)=p>2$ and $d\leq\min\{\delta_p+1,p-\delta_p+3,p-1\}$.
\item $\delta_0\neq 0$, $\Char(\K)=0$ and $d\leq\delta_0+1$.
\item $\delta_p=0$, $\Char(\K)=p>2$, $d\in\{1,3,5\}$ and $d<p$.
\item $\delta_0=0$, $\Char(\K)=0$ and $d\in\{1,3,5\}$.
\item $\Char(\K)=2$ and $d=1$.
\end{enumerate}
\end{thm}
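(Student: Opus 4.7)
The plan is to adapt the blueprint developed for the walled Brauer algebras in Section~\ref{sec-walled}, now based on Schur-Weyl-Brauer duality (Theorem~\ref{thm-schur-weyl3}), the refined image description in type $\DM$ (Theorem~\ref{thm-brauerimage}), and the explicit description of $\ker(\Phi_{\mathrm{Br}})$ from Proposition~\ref{prop-kernelbrauer}. A preliminary observation, playing the role of Proposition~\ref{prop-lem1}, is that $\Delta_1(0)$ is a Weyl factor of $V^{\otimes 2}$ in types $\BM,\CM,\DM$ (via the invariant bilinear form on $V$). Hence every Weyl factor of $T_n^{d-2}$ reappears in $T_n^{d}$, so non-semisimplicity of $\End_{\Uo}(T_n^d)$ propagates from $d$ to $d+2$ once the Schur-Weyl surjection is available; this is the analogue of Corollary~\ref{cor-picture}.

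I would first handle cases (1) and (2): $\Char(\K)=p>2$ with $\delta_p\neq 0$. The parity of $\delta_p$ dictates the type of $\fg$: type $\BM$ for odd $\delta_p$, and one of $\CM$, $\DM$ for even $\delta_p$ (note that in type $\DM$ one must pass to $\Uoo$ via Theorem~\ref{thm-brauerimage} to identify $\cal{B}_d(\delta)$ with an endomorphism algebra); in all cases $n=\delta_p$. For $d$ within the stated bounds, a direct computation shows that $\langle\lambda+\rho,\alpha^{\vee}\rangle<p$ for every Weyl factor $\Delta_1(\lambda)$ of $T_n^d$ and every $\alpha\in\Phi^+$, so Jantzen's sum formula~\eqref{eq-jsumc} has empty summation range, all Weyl factors are simple, and Corollary~\ref{cor-cellsemisimple} forces the endomorphism algebra to be semisimple. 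When the Schur-Weyl map is already an isomorphism this finishes the argument. At the critical boundary $d=\delta_p+1$, the map is only surjective; Proposition~\ref{prop-kernelbrauer} identifies $\ker(\Phi_{\mathrm{Br}})$ with the one-dimensional span of an idempotent $E_d(n)$ (a true idempotent under $\Char(\K)>d$), and Proposition~\ref{prop-abstract-nonsense} lifts semisimplicity to $\cal{B}_d(\delta)$. The additional constraint $d\leq p-1$ in case (2) is precisely what guarantees the idempotent property (if $d\geq p$, then $E_d(n)$ is nilpotent and Lemma~\ref{lem-brauersecondnew} directly obstructs semisimplicity).

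For the converse in (1) and (2), I would, for each $d$ just above the admissible range, exhibit an explicit highest weight $\lambda$ such that a Weyl factor $\Delta_1(\lambda)$ appearing in $T_n^d$ is non-simple, by producing a dot-regular $\mu=\lambda+\rho-k\ell\alpha$ whose contribution to the Jantzen sum formula is not cancelled by the dot-action of $W$. This is the main technical obstacle of the proof: unlike in type $\AM$, cancellations \emph{do} occur in types $\BM,\CM,\DM$ (see Example~\ref{ex-JSF2}), so a single non-zero summand is not enough, and each candidate $\lambda$ must be checked after a careful bookkeeping of the signs $(-1)^{l(w)}$; a case-split by Dynkin type and by the parity of $\delta_p$ is required. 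The $d\mapsto d+2$ propagation then closes out all larger $d$.

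Finally, case (3) ($\Char(\K)=0$, $\delta_0\neq 0$) follows from (1)-(2) via the transfer Theorem~\ref{thm-transfer2}, using that $\cal{B}_d(\delta)$ is defined over $\Z$; this also covers the parameters (such as $\delta\in\Z_{<0}$ odd) where $\cal{B}_d(\delta)$ is not directly realized as an endomorphism algebra in characteristic zero. Cases (4) and (5) ($\delta_p=0$) are outside the direct reach of Schur-Weyl duality with $n=\delta_p$, but one shifts $\delta$ by a multiple of $p$ (using $\cal{B}_d(\delta)\cong\cal{B}_d(\delta\pm ap)$) and takes $n=p$, reducing to the previous analysis; the small admissible range $d\in\{1,3,5\}$ is then verified by direct calculation of Weyl factors via Jantzen's sum formula, while for $d\in\{2,4\}$ and by propagation for all $d\geq 6$ one exhibits explicit non-simple Weyl factors. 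Case (6) with $\Char(\K)=2$ is forced: type $\BM$ is excluded in characteristic two, leaving only the trivially semisimple $d=1$.
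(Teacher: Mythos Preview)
Your overall strategy mirrors the paper's, but there is a genuine gap in how you handle the boundary $d=\delta_p+1$. Proposition~\ref{prop-kernelbrauer} applies only in the symplectic setting with $n=2d-2=-\delta$; it does not describe $\ker(\Phi_{\mathrm{Br}})$ for type $\BM$ with $n=\delta_p=d-1$ or for type $\DM$. With your type assignments (odd $\delta_p \rightsquigarrow \BM$, even $\delta_p \rightsquigarrow \DM$, always $n=\delta_p$), the Schur--Weyl map ceases to be an isomorphism precisely at $d=\delta_p+1$, but this is not the configuration of Proposition~\ref{prop-kernelbrauer}, so you cannot invoke it. (Also note that type $\CM$ with $n=\delta_p$ does not realize $\cal{B}_d(\delta)$ for even $\delta_p\neq 0$, since the symplectic parameter is $-n$.)

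The paper resolves this by assigning the types the other way around. For odd $\delta_p$ it works in type $\CM$ with $n=p-\delta_p$ (using $\cal{B}_d(\delta_p)\cong\cal{B}_d(\delta_p-p)$); the isomorphism then breaks at $d=\tfrac12(p-\delta_p+2)$, where $n=2d-2$ on the nose, and now Proposition~\ref{prop-kernelbrauer} applies. For even $\delta_p$ the paper does \emph{not} use Proposition~\ref{prop-kernelbrauer} at all: the boundary $d=\delta_p+1$ is handled by shifting to type $\BM$ with $n=p+\delta_p$ and carrying out a direct and rather delicate \textbf{JSF} computation in which nontrivial cancellations must be tracked explicitly (Lemma~\ref{lem-brauersecondbb}). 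This asymmetry between the odd and even cases is the content of Remark~\ref{rem-evenodd}, and your uniform treatment glosses over it.

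Two smaller points: for $\delta_p=0$ the $d\mapsto d+2$ propagation from $d\in\{2,4\}$ only reaches even $d$, so a separate base case $d=7$ is needed to exclude odd $d\geq 7$ (see Lemma~\ref{lem-brauerfirstb}); and in case~(6) the non-semisimplicity of $\cal{B}_d(\delta)$ for $d\geq 2$ in characteristic two follows from Lemma~\ref{lem-brauersecondnew} (the $\K[S_d]$ quotient), not from the unavailability of type $\BM$.
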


We split the proof of Theorem~\ref{thm-brauer} into several lemmas.

\begin{rem}\label{rem-evenodd}
Note the difference between (1) and (2): 
the restriction $d\leq\frac{1}{2}(p-\delta_p+2)$ in the odd case is in general 
stronger than the restriction $d\leq p-\delta_p+3$ in the even case.
The reason for this 
is that odd $\delta$ 
corresponds via Schur-Weyl-Brauer duality to $\Uo(\soo{2m+1})$ 
whereas even $\delta$ corresponds to $\Uo=\Uo(\soo{2m})$. 
In the latter case the Brauer algebra $\cal{B}_d(\delta)$ does not 
control $\End_{\Uo}(T_{n}^{d})$ well enough  
since there is a non-trivial automorphism of the Dynkin diagram, see Section~\ref{sec-schur} (which  
we use below in the proof of Lemma~\ref{lem-brauersecondb}).
In particular, the semisimplicity in the even case is much harder to 
prove than in the odd case.
\end{rem}

\subsection{A summary of the proof}

The proof of Theorem~\ref{thm-brauer} is slightly involved.
For the convenience of the reader we summarize its proof.
We like to note that the proof itself is mostly smooth -- except 
for a short list of special cases coming 
from the fact that the types $\BM$, $\CM$ and $\DM$ 
are `special' for small $m$ (we 
tend to omit the calculations for these for brevity).

First we assume $\Char(\K)=p>2$ where we 
use Lemma~\ref{lem-brauersecondnew} to further assume $p>d$. We can deduce the 
case $\Char(\K)=0$ from it by trace form arguments, see~\ref{sec-app2}. 
In the remaining case, 
$\Char(\K)=2$ and $d=1$, semisimplicity 
of $\cal{B}_d(\delta)$ is immediate.

In the situation $\Char(\K)=p>2$, 
we start by deducing a general argument that enables us to go from $d$ to $d+2$ (similarly 
as in the walled Brauer case).
We then 
separate three cases: $\delta_p\neq 0$ odd, $\delta_p\neq 0$ even and $\delta_p=0$. In all three 
cases there is a $d_0$ such that 
$\cal{B}_d(\delta)$ is semisimple for 
$d < d_0$ and non-semisimple for $d \geq d_0$.
We verify these cases separately.
For example, our argumentation in the first case ($\delta_p\neq 0$ odd)
can be illustrated as follows.
\begin{gather*}
\xy
(0,0)*{
\begin{tikzpicture}[scale=.75]
\draw [very thick, colormy] (3,0) to (8,0);
\draw [very thick, colormy, dashed] (0,0) to (3,0);
\draw [very thick, mycolor] (0,0) to (-5,0);
\draw [very thick, colormy] (0,-0.7) to (8,-0.7);
\draw [very thick, colormy] (0,0) to (0,-0.7);
\draw [very thick, dashed, ->] (3,0.6) to (3,0.05);
\draw [very thick, dashed, ->] (0,0.6) to (0,0.05);
	\node at (0,-0.025) {\tiny $\bullet$};
	\node at (3,-0.025) {\tiny $\bullet$};
	\node at (0,.7) {\tiny $d=\delta_p+1$};
	\node at (3,.7) {\tiny $d=\frac{1}{2}(p-\delta_p+2)$};
	\node at (-2.5,.2) {\tiny semisimple};
	\node at (5.5,.2) {\tiny non-semisimple};
	\node at (-2.5,-.2) {\tiny Lemma~\ref{lem-brauerseconda}};
	\node at (5.5,-.2) {\tiny Lemma~\ref{lem-brauersecondc}};
	\node at (1.5,-.5) {\tiny Lemma~\ref{lem-brauerseconde}};
\end{tikzpicture}
};
\endxy\\
\xy
(0,0)*{
\begin{tikzpicture}[scale=.75]
\draw [very thick, colormy] (3,0) to (8,0);
\draw [very thick, colormy, dashed] (0,0) to (3,0);
\draw [very thick, mycolor] (0,0) to (-5,0);
\draw [very thick, colormy] (0,-0.7) to (8,-0.7);
\draw [very thick, colormy] (0,0) to (0,-0.7);
\draw [very thick, dashed, ->] (3,0.6) to (3,0.05);
\draw [very thick, dashed, ->] (0,0.6) to (0,0.05);
	\node at (0,-0.025) {\tiny $\bullet$};
	\node at (3,-0.025) {\tiny $\bullet$};
	\node at (0,.7) {\tiny $d=\frac{1}{2}(p-\delta_p+2)$};
	\node at (3,.7) {\tiny $d=\delta_p+1$};
	\node at (-2.5,.2) {\tiny semisimple};
	\node at (5.5,.2) {\tiny non-semisimple};
	\node at (-2.5,-.2) {\tiny Lemma~\ref{lem-brauerseconda}};
	\node at (5.5,-.2) {\tiny Lemma~\ref{lem-brauerseconde}};
	\node at (1.5,-.5) {\tiny Lemma~\ref{lem-brauersecondc}};
\end{tikzpicture}
};
\endxy
\end{gather*}
Here the top case is $\delta_p+1\leq\frac{1}{2}(p-\delta_p+2)$, while the 
bottom is $\frac{1}{2}(p-\delta_p+2)\leq\delta_p+1$. We have illustrated the 
`boundary value' where $\cal{B}_d(\delta)$ stops to be semisimple and what lemmas we use to 
deduce (non-)semisimplicity. Note that, as in the walled Brauer case, one `boundary case' 
remains to be verified. We do this, as before, by using our explicit description of 
the kernel of the Schur-Weyl-Brauer action from Proposition~\ref{prop-kernelbrauer}.
Similarly in the other cases.

\subsection{The Schur-Weyl-Brauer dual story: from $d$ to $d+2$}

Let $\Uo=\Uo(\fg)$ and 
$\fg$ be either $\mathfrak{so}_{2m+1}$, $\mathfrak{sp}_{2m}$ or $\mathfrak{so}_{2m}$ 
(types $\BM,\CM$ and $\DM$ respectively). We set $n=2m+1$ for 
$\fg=\mathfrak{so}_{2m+1}$ and $n=2m$ otherwise. 
Moreover, let $V$ and
$T_{n}^{d}$ be as in Theorem~\ref{thm-schur-weyl3}.  
Again, $V,T_{n}^{d}\in\T$ by Proposition~\ref{prop-prop}.
As usual, we can 
calculate the Weyl factors of 
$T_{n}^{d}$ as in the classical case.

\begin{prop}\label{prop-lem1brauer}
If $T_{n}^{d}$ is a non-semisimple $\Uo$-module, 
then so is $T_{n}^{d+2}$.
\end{prop}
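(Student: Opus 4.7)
The plan is to adapt the argument of Proposition~\ref{prop-lem1} from the walled Brauer case to the unoriented Brauer setting. The key observation is that $V$ carries a non-degenerate invariant bilinear form in each of the types $\BM$, $\CM$ and $\DM$ (symmetric in types $\BM,\DM$, symplectic in type $\CM$), so $V\cong V^*$ as $\Uo$-modules and hence there is a non-zero $\Uo$-equivariant pairing $V\otimes V\to \K$. Since $\mathrm{ch}(T^2_n)$ is given as in the classical case by Proposition~\ref{prop-classchar}, and since in the classical Littlewood–Richardson style decomposition for these types $V\otimes V$ always contains the trivial module as a direct summand, we conclude that $\Delta_1(0)\cong\K$ occurs as a Weyl factor of $T^2_n=V\otimes V$.

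Next I would use the isomorphism $T^{d+2}_n\cong T^d_n\otimes T^2_n$ of $\Uo$-modules. Picking a $\Delta_q$-filtration $T^d_n=M_0\supset M_1\supset\cdots\supset M_i=0$ with successive quotients $\Delta_1(\lambda_{i'})$, and tensoring with $T^2_n$, produces a filtration of $T^{d+2}_n$ whose associated graded is $\bigoplus_{i'}\Delta_1(\lambda_{i'})\otimes T^2_n$. Since $\T$ is closed under tensor products (Proposition~\ref{prop-prop}) each summand has a $\Delta_1$-filtration, and because $\Delta_1(0)$ is a Weyl factor of $T^2_n$, every $\Delta_1(\lambda_{i'})$ that occurs as a Weyl factor of $T^d_n$ also occurs as a Weyl factor of $T^{d+2}_n$ (the filtrations are unique up to reordering by the discussion in Section~\ref{sec-prelim}).

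Now suppose $T^d_n$ is not semisimple. By Lemma~\ref{lem-cellsemisimple} some Weyl factor $\Delta_1(\lambda)$ of $T^d_n$ is a non-simple $\Uo$-module. By the previous paragraph the same $\Delta_1(\lambda)$ occurs as a Weyl factor of $T^{d+2}_n$, so another application of Lemma~\ref{lem-cellsemisimple} shows $T^{d+2}_n$ is also non-semisimple, completing the proof.

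The only subtlety — and the step I would check most carefully — is the assertion that $\Delta_1(0)$ genuinely appears as a Weyl factor in each of the three Dynkin types; this is a purely classical character computation, trivial in types $\CM$ and $\DM$ (where the invariant form is the obvious one on the standard representation) and equally direct in type $\BM$ under our standing assumption $\Char(\K)\neq 2$. Everything else is formal bookkeeping with $\Delta_q$-filtrations.
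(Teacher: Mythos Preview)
Your argument is correct and follows exactly the paper's approach: observe that $\Delta_1(0)$ is a Weyl factor of $T_n^2$, deduce that every Weyl factor of $T_n^d$ is a Weyl factor of $T_n^{d+2}\cong T_n^d\otimes T_n^2$, and conclude via Lemma~\ref{lem-cellsemisimple}. One small imprecision: the fact that each $\Delta_1(\lambda_{i'})\otimes T_n^2$ has a $\Delta_1$-filtration is not a consequence of $\T$ being closed under tensor products (since $\Delta_1(\lambda_{i'})$ need not be tilting); rather, the cleanest justification is that Weyl multiplicities in the tilting module $T_n^{d+2}$ are determined by characters (Proposition~\ref{prop-classchar}), which behave as in the classical case.
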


\begin{proof}
Note that $\Delta_1(0)\cong\K$ is a Weyl factor of $T_{n}^{2}$. As in the proof 
of Proposition~\ref{prop-lem1}: any Weyl factor 
of $T_{n}^{d}$ is also a Weyl factor of $T_{n}^{d+2}$.
The conclusion follows from Lemma~\ref{lem-cellsemisimple}.
\end{proof}

\begin{cor}\label{cor-picturebrauer}
Let $\Char(\K)=p$. If 
$\cal{B}_{d}(\delta)$ is non-semisimple, then so is $\cal{B}_{d+2}(\delta)$.
\end{cor}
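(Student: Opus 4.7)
The plan is to mirror, line for line, the argument used for Corollary~\ref{cor-picture} in the walled Brauer setting. First I would realize $\cal{B}_{d}(\delta)$ via Schur-Weyl-Brauer duality as an endomorphism algebra. Concretely, pick $\fg\in\{\mathfrak{so}_{2m+1},\mathfrak{sp}_{2m},\mathfrak{so}_{2m}\}$ according to the parity and sign of $\delta_p$, and choose $n$ large with $n\equiv\delta_p\;\text{mod}\;p$ (and of the correct parity when needed) so that Theorem~\ref{thm-schur-weyl3} supplies isomorphisms $\cal{B}_{d}(\delta)\xrightarrow{\cong}\End_{\Uo}(T_n^d)$ and $\cal{B}_{d+2}(\delta)\xrightarrow{\cong}\End_{\Uo}(T_n^{d+2})$. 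This requires $n\ge d+2$ (type $\BM$), $n\ge 2(d+2)$ (type $\CM$) or $n\ge 2(d+2)+1$ (type $\DM$), all of which can be arranged by adding multiples of $p$ (or $2p$, when parity is to be preserved) to $n$ without changing the residue class modulo $p$.

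Once $n$ is fixed, the argument is a four-step chain: transfer the assumed non-semisimplicity of $\cal{B}_{d}(\delta)$ to $\End_{\Uo}(T_n^d)$ via the iso; apply Theorem~\ref{thm-cellsemisimple} to the $\Uo$-tilting module $T_n^d$ (which is tilting by Proposition~\ref{prop-prop}) to conclude that $T_n^d$ itself is non-semisimple as a $\Uo$-module; invoke Proposition~\ref{prop-lem1brauer} to obtain non-semisimplicity of $T_n^{d+2}$; and finally apply Theorem~\ref{thm-cellsemisimple} in the reverse direction to deduce that $\End_{\Uo}(T_n^{d+2})\cong\cal{B}_{d+2}(\delta)$ is non-semisimple.

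The only real subtlety is type $\DM$, where the surjectivity bound $n\ge 2d+1$ in Theorem~\ref{thm-schur-weyl3} is significantly stronger than $n\ge d$ or $n\ge 2d$. This causes no issue here because we have full freedom in $n$, but it is essentially the reason for the existence of the separate Theorem~\ref{thm-brauerimage}: in the `boundary' lemmas of the next subsection, where $n$ is forced to stay comparable to $d$, one should instead use $\cal{B}_d(\delta)\cong\End_{\Uoo}(T_n^d)$ from that theorem. Under the running assumption $\Char(\K)\ne 2$, the algebra $\Uoo$ is free of rank two over $\Uo$, so semisimplicity of $T_n^d$ viewed as a $\Uo$- or $\Uoo$-module coincide, and the same chain of implications applies. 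The remaining case $\Char(\K)=2$ is handled outside of this framework by Lemma~\ref{lem-brauersecondnew}, which renders both $\cal{B}_d(\delta)$ (for $d\ge 2$) and $\cal{B}_{d+2}(\delta)$ automatically non-semisimple.
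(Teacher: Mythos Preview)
Your proposal is correct and follows essentially the same approach as the paper: choose $n$ large enough with $n\equiv\delta_p\;\text{mod}\;p$, then chain together Theorem~\ref{thm-schur-weyl3}, Theorem~\ref{thm-cellsemisimple}, and Proposition~\ref{prop-lem1brauer}. The paper's proof is a single terse sentence citing exactly these three results; your version spells out the four-step chain explicitly, which is fine.

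A couple of minor remarks. First, your phrase ``according to the parity and sign of $\delta_p$'' is slightly off: by Convention~\ref{nota-delta}, $\delta_p\ge 0$ always, so there is no sign to speak of. What you actually need is to pick a representative $\delta'\equiv\delta_p\;\text{mod}\;p$ of suitable size and sign, and then choose the type accordingly (e.g.\ type $\CM$ with $n=-\delta'$ when $\delta'<0$ is even, as in Lemma~\ref{lem-brauerseconda}). Second, your digressions on Theorem~\ref{thm-brauerimage} and the $\Char(\K)=2$ case, while correct, are not needed for this corollary: since $n$ can be taken arbitrarily large, the type~$\DM$ surjectivity issue never arises here, and the $p=2$ case is indeed trivially covered by Lemma~\ref{lem-brauersecondnew} (or one can simply work in type~$\CM$). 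These are harmless additions, but the core argument is complete without them.
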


\begin{proof}
Choose $n\geq 2d$ with $n\equiv \delta_p\;\text{mod}\;p$. Then the statement 
follows directly from Theorem~\ref{thm-schur-weyl3}, Proposition~\ref{prop-lem1brauer} 
and Theorem~\ref{thm-cellsemisimple}.
\end{proof}

Corollary~\ref{cor-picturebrauer} again 
fails in general for $\Char(\K)=0$ for the same 
reasons as in Corollary~\ref{cor-picture}.

Analogously to the case of walled Brauer algebras, we use Corollary~\ref{cor-picturebrauer} 
to check certain boundary values. For example, if $\delta_p\neq 0$ is odd, 
then this can be illustrated as
\[
\xy
(0,0)*{
\begin{tikzpicture}[scale=.75]
\draw [very thick, mycolor] (0,0) to (4.5,0);
\draw [very thick, colormy, ->] (4.5,0) to (13.5,0);
\draw [very thick, dashed, ->] (4.5,-.65) to (4.5,-.1);
\draw [very thick, dashed] (4.5,0) to (4.5,1);
\draw [very thick, blue, ->] (4.5,0.05) [out=45, in=180] to (6,1) to [out=0, in=135] (7.5,0.05);
\draw [very thick, blue, ->] (7.5,0.05) [out=45, in=180] to (9,1) to [out=0, in=135] (10.5,0.05);
\draw [very thick, blue, ->] (6,-0.05) [out=315, in=180] to (7.5,-1) to [out=0, in=225] (9,-0.05);
\draw [very thick, blue, ->] (9,-0.05) [out=315, in=180] to (10.5,-1) to [out=0, in=225] (12,-0.05);
	\node at (0,-0.025) {\tiny $\bullet$};
	\node at (1.5,-0.025) {\tiny $\bullet$};
	\node at (3,-0.025) {\tiny $\bullet$};
	\node[red] at (4.5,-0.025) {$\bullet$};
	\node[green] at (6,-0.025) {\tiny $\blacksquare$};
	\node at (7.5,-0.025) {\tiny $\bullet$};
	\node at (9,-0.025) {\tiny $\bullet$};
	\node at (10.5,-0.025) {\tiny $\bullet$};
	\node at (12,-0.025) {\tiny $\bullet$};
	\node at (13.5,-.25) {\tiny $d$};
	\node at (4,-1.00) {\tiny $d=\min\{\delta_p+1,\frac{1}{2}(p-\delta_p+2)\}+1$};
	\node at (2.25,.2) {\tiny semisimple};
	\node at (9,.2) {\tiny non-semisimple};
	\node at (11,.5) {\tiny $\cdots$};
	\node at (12.5,-.5) {\tiny $\cdots$};
	\node at (0,-.25) {\tiny $0$};
\end{tikzpicture}
};
\endxy
\]
Again, there are two boundary values (displayed as a red dot respectively green box).

We want to point out that the relation of $\cal{B}_{d}(\delta)$ and 
$\cal{B}_{d+2}(\delta)$ underlying Corollary~\ref{cor-picturebrauer} 
was already observed in~\cite[Section~1.3]{martin} 
and~\cite[Section~5]{dwh}, while the 
`trick' to add $p$, used in Corollary~\ref{cor-picturebrauer} and below, 
appeared in~\cite[Section~5]{dt}, but both 
in a different setting.

\subsection{The case \texorpdfstring{$\delta_p\neq 0$}{delta_p not 0} is odd or even}

Let $\Char(\K)=p>2$, $p>d$ and let $\delta_p\neq 0$ be odd or even.

\begin{lem}\label{lem-brauerseconde}
If $d>\delta_p+1$, then $\cal{B}_{d}(\delta)$ is non-semisimple.
\end{lem}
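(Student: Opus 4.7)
The plan is to adapt the template used in Lemma~\ref{lem-walledsecondd}: exhibit, at the two `boundary values' of $d$, an explicit Weyl factor of $T^d_n$ that is a non-simple $\Uo$-module, and then propagate via Corollary~\ref{cor-picturebrauer}.

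First, by Lemma~\ref{lem-brauersecondnew} we may assume $p>d$, so in particular $\delta_p+2\leq d\leq p-1$ and $\delta_p\leq p-3$. Since non-semisimplicity propagates under $d\mapsto d+2$ by Corollary~\ref{cor-picturebrauer}, it suffices to verify the two boundary cases $d_0=\delta_p+2$ and $d_0+1=\delta_p+3$. I would set $n=\delta_p$ and take $\fg$ of type $\BM$ when $\delta_p$ is odd and of type $\DM$ when $\delta_p$ is even, so that $T^{d_0}_n\in\T$ by Proposition~\ref{prop-prop}. By Corollary~\ref{cor-cellsemisimple}, together with Theorem~\ref{thm-brauerimage} in type $\DM$ (which lets us replace $\End_{\Uo}$ by $\End_{\Uoo}$ where the surjection of Theorem~\ref{thm-schur-weyl3} fails), it suffices in each case to exhibit a Weyl factor $\Delta_1(\lambda)$ of $T^{d_0}_n$ that is not simple.

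At each boundary I would guess a candidate $\lambda$ of the form $\lambda=r\varepsilon_1+\varepsilon_2+\cdots+\varepsilon_k$, with $r,k$ depending on $d_0-\delta_p\in\{2,3\}$ and on the type, and a single `long' positive root $\alpha\in\Phi^+$ such as $\varepsilon_1\pm\varepsilon_n$ or $2\varepsilon_1$. Using the $\rho$'s tabulated in~\ref{sec-app}, one then evaluates $\lambda+\rho-p\alpha$ in coordinates. Because $d<p$, the $\beta\in\Phi^+$ with $\langle\lambda+\rho,\beta^\vee\rangle>p$ form a short, easily enumerated list, and the plan is to ensure that our chosen $\alpha$ produces a dot-regular translate, yielding a non-zero summand in \textbf{JSF}~\eqref{eq-jsumc}.

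The main obstacle, in contrast to type $\AM$ (Remark~\ref{rem-nocan}), is that cancellations between distinct summands of \textbf{JSF} can a priori occur in types $\BM$, $\CM$, $\DM$, as illustrated in Example~\ref{ex-JSF2}. I would handle this by checking, for the candidate $(\lambda,\alpha)$, that the remaining roots $\beta$ in the short list above produce either singular translates $\lambda+\rho-p\beta$ or translates lying in Weyl group orbits distinct from that of $\lambda+\rho-p\alpha$, so no cancellation is possible. Once this is verified, the corresponding endomorphism algebra fails to be semisimple by Corollary~\ref{cor-cellsemisimple}; hence $\mathcal{B}_{d_0}(\delta)$ itself fails to be semisimple, since it surjects onto (or, in type $\DM$, is isomorphic via Theorem~\ref{thm-brauerimage} to) that endomorphism algebra, and quotients of semisimple algebras are semisimple.
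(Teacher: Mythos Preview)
Your overall template (reduce to the two boundary values via Corollary~\ref{cor-picturebrauer}, exhibit a non-simple Weyl factor, conclude via surjectivity) matches the paper's, but your choice $n=\delta_p$ is the fatal gap.

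First, with $n=\delta_p$ the rank $m$ is roughly $\tfrac{1}{2}\delta_p$, so all coordinates of $\lambda+\rho$ are bounded by $d+m\le \tfrac{3}{2}\delta_p+3$. When $\delta_p$ is small compared to $p$ (say $\delta_p=5$, $p=101$, $d=7$), \emph{every} positive root $\alpha$ satisfies $\langle\lambda+\rho,\alpha^\vee\rangle<p$, so the inner sum in \textbf{JSF}~\eqref{eq-jsumc} is empty and all Weyl factors of $T^d_n$ are simple. Then $\End_{\Uo}(T^d_n)$ is semisimple, and since you only have a surjection $\cal{B}_d(\delta)\twoheadrightarrow\End_{\Uo}(T^d_n)$, this yields no information about $\cal{B}_d(\delta)$. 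Second, your appeal to Theorem~\ref{thm-brauerimage} in type $\DM$ is illegitimate: that theorem requires $d\le 2m=n=\delta_p$, but your boundary values are $d=\delta_p+2,\delta_p+3>n$. (For the same reason Theorem~\ref{thm-schur-weyl3} gives neither injectivity nor surjectivity there.)

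The paper fixes both issues by the $p$-shift trick: take $n=p+\delta_p$ instead. Now $n$ is large enough that the Schur--Weyl--Brauer map is an isomorphism in all but a handful of explicitly listed small cases, and, crucially, $\langle\lambda+\rho,\alpha^\vee\rangle$ can exceed $p$ so \textbf{JSF} has content. Note that this flips your parity assignment: $p+\delta_p$ is odd when $\delta_p$ is even (type~$\BM$) and even when $\delta_p$ is odd (type~$\DM$). The paper then takes $\lambda=2\varepsilon_1+\varepsilon_2+\cdots+\varepsilon_{\delta_p+1}$ (for $d=\delta_p+2$) and $\mu=3\varepsilon_1+2\varepsilon_2+\cdots+\varepsilon_{\delta_p}$ (for $d=\delta_p+3$), with the roots $\varepsilon_1+\varepsilon_{\delta_p-1}$ and $\varepsilon_2+\varepsilon_{\delta_p}$ giving the non-cancelling contributions.
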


\begin{proof}
By Corollary~\ref{cor-picturebrauer}, it 
suffices to check the boundary values $d=\delta_p+2$ and $d=\delta_p+3$. 
Consider 
$T_{n}^{d}$ for $n=p+\delta_p$. First let us assume 
that $\delta_p$ is even (type $\BM$ with $m=\frac{1}{2}(p+\delta_p-1)$). 
Consider 
$\lambda=2\varepsilon_1+\varepsilon_2+\dots+\varepsilon_{\delta_p+1}$ (for $d=\delta_p+2$)
and $\mu=3\varepsilon_1+2\varepsilon_2+\dots+\varepsilon_{\delta_p}$ (for $d=\delta_p+3$). We get
\begin{gather*}
\lambda+\rho-p(\varepsilon_1+\varepsilon_{\delta_p-1})=
{\textstyle\frac{1}{2}}\cdot(\colorbox{colormy}{\color{white}$-p+\delta_p+2$},p+\delta_p-2,\dots,\colorbox{colormy}{\color{white}$-p-\delta_p$},
p-\delta_p-4,\dots,3,1),\\
\mu\!+\!\rho\!-\!p(\varepsilon_2+\varepsilon_{\delta_p})\!=\!
{\textstyle\frac{1}{2}}\!\cdot\!(p+\delta_p+4,\colorbox{colormy}{\color{white}$-p+\delta_p$},p+\delta_p-4,\dots,\colorbox{colormy}{\color{white}$-p-\delta_p+2$},
p-\delta_p-2,\dots,3,1),
\end{gather*}
for the two boundary values.
Because we assume $d<p$, we have that $\delta_p\leq p-3$. Thus, the 
maximal $k$ in \textbf{JSF} is $k=1$ and
a direct computation verifies that the contribution of the positive roots 
$\alpha\in\Phi^+$ of the forms 
$\alpha=\varepsilon_1+\varepsilon_{\delta_p-1}$ and 
$\alpha=\varepsilon_2+\varepsilon_{\delta_p}$ to the \textbf{JSF} of 
$\Delta_1(\lambda)$ and $\Delta_1(\mu)$ from above are not cancelled.
Hence, \textbf{JSF} of $\Delta_1(\lambda)$ and of $\Delta_1(\mu)$ are non-zero.

Now assume $\delta_p$ is odd. We take again the same $\lambda$ and $\mu$ and the same 
reasoning as above works (which takes place in type $\DM$ for $m=\frac{1}{2}(p+\delta_p)$ now). 
The surjectivity in Theorem~\ref{thm-schur-weyl3} and 
Corollary~\ref{cor-cellsemisimple} provide the statement, 
since semisimple algebras have semisimple quotients. Note that the surjectivity fails in type $\DM$ for 
the cases $\delta_p=p-4$ and $d=\delta_p+2=p-2$ or $d=\delta_p+3=p-1$, or 
$\delta_p=p-6$ and $d=\delta_p+3=p-3$ (note that $m\geq 4$ in the remaining cases). 
These have to be $p$-shifted twice to
$\BM$ by taking $m=\frac{1}{2}(2p+\delta_p-1)$, but the argument is again similar 
(that is, using \textbf{JSF}), but slightly involved due to 
the `size' of the numbers in question and omitted 
for brevity.
\end{proof}

\subsection{The case \texorpdfstring{$\delta_p\neq 0$}{delta_p not 0} is odd}

Let $\Char(\K)=p>2$, $p>d$ and let $\delta_p\neq 0$ be odd.

\begin{lem}\label{lem-brauerseconda}
If $d\leq\delta_p+1$ and $d<\frac{1}{2}(p-\delta_p+2)$, 
then $\cal{B}_{d}(\delta)$ is semisimple.
\end{lem}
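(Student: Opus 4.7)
The plan is to apply Schur-Weyl-Brauer duality with the smallest Lie algebra that has the right dimension. Set $n=\delta_p$, which is odd, so that we work in type $\BM$ with $m=(\delta_p-1)/2$ and $\Uo=\Uo(\soo{n})$. The $\Uo$-module $T_n^d=V^{\otimes d}$ lies in $\T$ by Proposition~\ref{prop-prop}, and Theorem~\ref{thm-schur-weyl3}(b) yields a surjection
$\Phi_{\mathrm{Br}}\colon\cal{B}_d(\delta)\twoheadrightarrow\End_{\Uo}(T_n^d)$
which is an isomorphism as soon as $n\geq d$, that is, as soon as $d\leq\delta_p$.

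The main computational step is to show that $T_n^d$ is semisimple as a $\Uo$-module. By Corollary~\ref{cor-cellsemisimple} and Theorem~\ref{thm-jsum}, this reduces to verifying that \textbf{JSF}~\eqref{eq-jsumc} vanishes for every Weyl factor $\Delta_1(\lambda)$ occurring in $T_n^d$. Every such $\lambda$ satisfies $|\lambda|\leq d$ by the characteristic-free branching (Proposition~\ref{prop-classchar}). The extremal value of $\langle\lambda+\rho,\alpha^{\vee}\rangle$ over $\alpha\in\Phi^+$ and all such $\lambda$ is attained at the short root $\alpha=\varepsilon_1$ with $\lambda=d\varepsilon_1$, which gives
\[
\langle d\varepsilon_1+\rho,(\varepsilon_1)^{\vee}\rangle=2d+2m-1=2d+\delta_p-2.
\]
The hypothesis $d<\tfrac{1}{2}(p-\delta_p+2)$ forces this quantity to be strictly less than $p$, so no positive root can contribute to \textbf{JSF} and each Weyl factor is simple. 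Consequently $T_n^d$ is semisimple and, by Theorem~\ref{thm-cellsemisimple}, so is $\End_{\Uo}(T_n^d)$.

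In the interior range $d\leq\delta_p$, the isomorphism $\cal{B}_d(\delta)\cong\End_{\Uo}(T_n^d)$ gives the lemma at once. The boundary case $d=\delta_p+1$ is the main obstacle, since $\Phi_{\mathrm{Br}}$ is there only surjective. The plan is to argue mutatis mutandis as in Proposition~\ref{prop-kernelbrauer}, transferring the argument from type $\CM$ to the present type-$\BM$ setting. Concretely, I expect $\ker(\Phi_{\mathrm{Br}})$ to be one-dimensional and spanned by the antisymmetrizer $E_d(n)\in\cal{B}_d(\delta)$. Vanishing of $E_d(n)$ on $T_n^d$ follows by the same pigeonhole-style cancellation as in Proposition~\ref{prop-kernel}: any basis vector of $T_n^d$ uses at most $n=d-1$ distinct labels in $d$ tensor positions, so two must coincide and the signed sum collapses. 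One-dimensionality of the kernel is characteristic-independent by Proposition~\ref{prop-prop} and is a consequence of the classical Schur-Weyl-Brauer decomposition. Since the hypotheses imply $d\leq(p-\delta_p)/2<p$, the element $E_d(n)$ is a quasi-idempotent; Proposition~\ref{prop-abstract-nonsense} then lifts the semisimplicity of $\End_{\Uo}(T_n^d)$ to $\cal{B}_d(\delta)$.
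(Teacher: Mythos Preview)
Your type-$\BM$ approach with $n=\delta_p$ is correct for the interior range $d\leq\delta_p$, and the roles of the two hypotheses are exactly swapped compared to the paper: you use $d<\tfrac{1}{2}(p-\delta_p+2)$ to make \textbf{JSF} vanish, whereas the paper uses $d\leq\delta_p+1$ for that purpose.

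The gap is at the boundary $d=\delta_p+1$. Your claim that $\ker(\Phi_{\mathrm{Br}})$ is one-dimensional for $n=d-1$ in type $\BM$ is false. In the symplectic situation of Proposition~\ref{prop-kernelbrauer} one has $n=2d-2$, so $m=d-1$ and the only partition of size $\leq d$ with too many rows is the single column $(1^d)$; its Brauer-simple is one-dimensional, whence the one-dimensional kernel. In your orthogonal situation $m=(\delta_p-1)/2=(d-2)/2$, so \emph{many} partitions with $|\lambda|\leq d$ have more than $m$ rows and are killed. For instance, with $\delta_p=3$, $d=4$ (so $m=1$, $\soo{3}\cong\sll{2}$) one computes $\dim\End_{\Uo}(T_3^4)=91$ while $\dim\cal{B}_4(3)=105$, giving a $14$-dimensional kernel. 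Your pigeon-hole cancellation at best shows $E_d(n)\in\ker(\Phi_{\mathrm{Br}})$; it does not span it, so Proposition~\ref{prop-abstract-nonsense} is inapplicable.

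The paper avoids this altogether by passing to type $\CM$: since $\cal{B}_d(\delta_p)=\cal{B}_d(\delta_p-p)$ and $\delta_p-p$ is negative even, one may take $n=p-\delta_p$ and $\Uo=\Uo(\spo{n})$. Then the isomorphism criterion $n\geq 2d$ in Theorem~\ref{thm-schur-weyl3} is \emph{equivalent} to the hypothesis $d<\tfrac{1}{2}(p-\delta_p+2)$, so no boundary case arises inside this lemma; and the bound $\langle\lambda+\rho,\alpha^{\vee}\rangle\leq d+n-1\leq p$ (using $d\leq\delta_p+1$) kills \textbf{JSF}. The remaining boundary $d=\tfrac{1}{2}(p-\delta_p+2)$ is postponed to the proof of Theorem~\ref{thm-brauer}, where Proposition~\ref{prop-kernelbrauer} genuinely applies because there $n=2d-2$.
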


\begin{proof}
Note that $p$ is odd and $p>\delta_p$. Hence, $n=p-\delta_p\geq 2d$ is a positive, 
even number. 
Consider $T^d_n$ (type $\CM$ with $m={\textstyle \frac{1}{2}}(p-\delta_p)$). 
Then Theorem~\ref{thm-schur-weyl3} gives
\[
\cal{B}_d(\delta)\cong\cal{B}_d(\delta_p)\cong\cal{B}_d(\delta_p-p)\cong\End_{\Uo}(T_{n}^{d}).
\] 
Since $d\leq \delta_p+1$, all Weyl factors $\Delta_1(\lambda)$ of $T_{n}^{d}$ satisfy
\[
\langle\lambda+\rho,\alpha^{\vee}\rangle\leq\langle d\varepsilon_1+\rho,(\varepsilon_1+\varepsilon_2)^{\vee}\rangle
=d+p-\delta_p-1\leq p,
\]
for all $\alpha\in\Phi^+$.
Hence, the corresponding \textbf{JSF}s are all zero and the statement 
follows from 
Corollary~\ref{cor-cellsemisimple} as long as $m\geq 3$. The case $m=1$ only occurs if 
$\delta_p=p-2$ and thus, $d<\frac{1}{2}(p-\delta_p+2)$ gives $d<2$ where 
semisimplicity is clear. The case $m=2$ only occurs if 
$\delta_p=p-4$ for $p\geq 5$ and thus, $d<\frac{1}{2}(p-\delta_p+2)$ gives $d<3$. Semisimplicity 
of $\cal{B}_d(\delta)$ for $d=2$ and $p\geq 5$ follows because 
the following
pairwise orthogonal, primitive idempotents
\[
{\textstyle\frac{1}{2}}\cdot
\xy
(0,0)*{
\begin{tikzpicture}[scale=.3]
	\draw [very thick] (-1,-1) to (-1,1);
	\draw [very thick] (1,-1) to (1,1);
\end{tikzpicture}
};
\endxy
-{\textstyle\frac{1}{2}}\cdot
\xy
(0,0)*{
\begin{tikzpicture}[scale=.3]
	\draw [very thick] (-1,-1) to (1,1);
	\draw [very thick] (1,-1) to (-1,1);
\end{tikzpicture}
};
\endxy\,,\quad\quad
{\textstyle\frac{1}{\delta}}\cdot
\xy
(0,0)*{
\begin{tikzpicture}[scale=.3]
	\draw [very thick] (5,-1) to [out=90,in=180] (6,-.35) to [out=0,in=90] (7,-1);
	\draw [very thick] (7,1) to [out=270,in=0] (6,.35) to [out=180,in=270] (5,1);
\end{tikzpicture}
};
\endxy\,,\quad\quad
{\textstyle\frac{1}{2}}\cdot
\xy
(0,0)*{
\begin{tikzpicture}[scale=.3]
	\draw [very thick] (-1,-1) to (-1,1);
	\draw [very thick] (1,-1) to (1,1);
\end{tikzpicture}
};
\endxy
+{\textstyle\frac{1}{2}}\cdot
\xy
(0,0)*{
\begin{tikzpicture}[scale=.3]
	\draw [very thick] (-1,-1) to (1,1);
	\draw [very thick] (1,-1) to (-1,1);
\end{tikzpicture}
};
\endxy
-{\textstyle\frac{1}{\delta}}\cdot
\xy
(0,0)*{
\begin{tikzpicture}[scale=.3]
	\draw [very thick] (5,-1) to [out=90,in=180] (6,-.35) to [out=0,in=90] (7,-1);
	\draw [very thick] (7,1) to [out=270,in=0] (6,.35) to [out=180,in=270] (5,1);
\end{tikzpicture}
};
\endxy
\]
form a basis of $\cal{B}_2(\delta)$. 
Hence, $\cal{B}_2(\delta)\cong\K\oplus\K\oplus\K$.
\end{proof}

\begin{lem}\label{lem-brauersecondc}
If $d>\frac{1}{2}(p-\delta_p+2)$, then $\cal{B}_{d}(\delta)$ is non-semisimple.
\end{lem}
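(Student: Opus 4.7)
The plan is to proceed as follows. First, by Corollary~\ref{cor-picturebrauer} applied in both parities, it suffices to verify non-semisimplicity of $\cal{B}_d(\delta)$ at the two consecutive boundary values $d_0=\tfrac{1}{2}(p-\delta_p)+2$ and $d_0+1$ (both integers, as $p,\delta_p$ are odd). Combined with Theorem~\ref{thm-schur-weyl3}, Theorem~\ref{thm-cellsemisimple}, Lemma~\ref{lem-cellsemisimple}, Theorem~\ref{thm-jsum}, and the fact that semisimple algebras have semisimple quotients, this reduces the task to exhibiting a non-simple Weyl factor $\Delta_1(\lambda)$ of $T_n^d$ for a suitable $\Uo=\Uo(\fg)$ and $n$ such that the Schur-Weyl-Brauer map $\Phi_{\mathrm{Br}}$ is surjective. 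Non-simplicity of $\Delta_1(\lambda)$ will then be detected by a non-vanishing~\eqref{eq-jsumc}.

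My intended setup is type $\BM$ with $\Uo=\Uo(\mathfrak{so}_{2m+1})$, $n=2p+\delta_p$, and $m=p+(\delta_p-1)/2$. Since $n$ is odd and $n\equiv\delta_p\pmod{p}$, this realises $\cal{B}_d(\delta)=\cal{B}_d(n)$ with Schur-Weyl-Brauer duality in force, and the inequality $n\geq d$ holds automatically, ensuring surjectivity of $\Phi_{\mathrm{Br}}$. For $d=d_0$, I take $\lambda=d\varepsilon_1$ and focus on the short root $\alpha=\varepsilon_1$ with $\alpha^\vee=2\varepsilon_1$. The pairing $\langle\lambda+\rho,2\varepsilon_1\rangle=2d+2m-1=3p+2$ allows contributions in~\eqref{eq-jsumc} for $k=1,2,3$. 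A direct check shows $\lambda+\rho-3p\varepsilon_1$ is dot-regular: its first coordinate $-(3p-2)/2$ has absolute value strictly exceeding every remaining $\rho_j=m-j+\tfrac{1}{2}\leq p+(\delta_p-4)/2<(3p-2)/2$, so no $W$-reflection can identify it with another coordinate. In contrast, the $k=1,2$ shifts have first coordinates $(p+2)/2$ respectively $-(p-2)/2$, which coincide with $\rho_j$ respectively $-\rho_j$ for the explicit indices $j=(p+\delta_p)/2-1$ respectively $j=(p+\delta_p)/2+1$, and hence are dot-singular. An analogous computation with $\lambda=(d-1)\varepsilon_1+\varepsilon_2$ handles the parity $d=d_0+1$.

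The main obstacle I expect is ruling out cancellation of the $(\alpha=\varepsilon_1,k=3)$ contribution against contributions from other positive roots $\alpha'\in\Phi^+$, since Remark~\ref{rem-nocan} warns that such cancellations are allowed outside type $\AM$. My approach is a size-argument: after reduction to the dominant chamber, the surviving $\chi$ above equals $\chi\bigl(\tfrac{1}{2}(p-\delta_p)\varepsilon_1\bigr)$, whose shifted orbit representative $\mu+\rho$ has maximal absolute coordinate $(3p-2)/2$. For each $\alpha'\in\{\varepsilon_j,\varepsilon_i\pm\varepsilon_j\}$ a direct bound shows $\langle\lambda+\rho,(\alpha')^\vee\rangle<3p$ (concretely, $\leq 2m-3$ for $\alpha'=\varepsilon_j,\varepsilon_i\pm\varepsilon_j$ with $i,j\geq 2$, and $\leq d+m+\tfrac{1}{2}<2p$ for $\alpha'=\varepsilon_1\pm\varepsilon_j$), so the summation index $k'$ never reaches $3$, and no shift with $k'\leq 2$ produces a weight whose $W$-orbit meets that of $\mu+\rho$. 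Hence the $(\varepsilon_1,3)$-term survives uncancelled, so $\Delta_1(\lambda)$ is non-simple. Should small-rank degeneracies arise when $\delta_p$ is close to $p$, I $p$-shift further (to $n=\delta_p+4p$ in type $\BM$, or pass to type $\DM$ with $n=p+\delta_p$, mirroring the fix in the proof of Lemma~\ref{lem-brauerseconde}); the same weight strategy applies verbatim, completing the plan.
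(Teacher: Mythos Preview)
Your overall strategy is sound and genuinely different from the paper's: you work in type $\BM$ with $n=2p+\delta_p$ and detect non-simplicity via the short root $\alpha=\varepsilon_1$ at $k=3$, whereas the paper works in type $\DM$ with $n=p+\delta_p$ and uses the long root $\alpha=\varepsilon_1+\varepsilon_{m-1}$ (respectively $\varepsilon_1+\varepsilon_{m-2}$) at $k=1$. Your choice has the pleasant feature that it handles $\delta_p=1$ uniformly, while the paper has to switch to type $\CM$ (and a further ad hoc fix for $p=5$) in that case.

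There is, however, a genuine gap in your cancellation argument. The bound you state for $\alpha'=\varepsilon_1+\varepsilon_j$ is wrong: one has
\[
\langle\lambda+\rho,\varepsilon_1+\varepsilon_j\rangle=(\lambda+\rho)_1+\rho_j\leq d+2m-2=\tfrac{5p+\delta_p-2}{2},
\]
not $d+m+\tfrac12$. This lies strictly between $2p$ and $3p$, so $k'=2$ is allowed for several $j$, and your size argument (``$k'$ never reaches $3$ and $k'\le 2$ is too small'') does not rule these out. Fortunately the gap is repairable with the observation you already use for $k=1,2$ on $\varepsilon_1$: for any $\alpha'=\varepsilon_1\pm\varepsilon_j$ and $k'=1$, the first coordinate becomes $(p+2)/2=\rho_{j_0}$ with $j_0=(p+\delta_p)/2-1$, which forces singularity unless $j=j_0$; and for $k'=2$, $\alpha'=\varepsilon_1+\varepsilon_j$, the first coordinate becomes $-(p-2)/2$ whose absolute value equals $\rho_{j_1}$ with $j_1=(p+\delta_p)/2+1$, again forcing singularity unless $j=j_1$. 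The two exceptional cases $j=j_0,j_1$ are then seen to be either singular or outside the summation range. With this correction your argument goes through. The paper's route trades this bookkeeping for a case split on $\delta_p$: since only $k=1$ arises there, no such analysis is needed.
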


\begin{proof}
As above, by Corollary~\ref{cor-picturebrauer} it 
remains to verify non-semisimplicity for the boundary values $d=\frac{1}{2}(p-\delta_p+2)+1$ 
and $d=\frac{1}{2}(p-\delta_p+2)+2$. We first assume that $\delta_p\geq 3$ 
and we take $T_{n}^{d}$ for $n=p+\delta_p$ (thus, 
we are in type $\DM$ with $m=\frac{1}{2}(p+\delta_p)\geq 4$). 
By surjectivity in Theorem~\ref{thm-schur-weyl3} and by
Corollary~\ref{cor-cellsemisimple}, it remains to find 
Weyl factors $\Delta_1(\lambda)$
for both $T_{n}^{d}$ which are non-simple $\Uo$-modules. 
We take $\lambda=d\varepsilon_1$ 
and $\alpha=\varepsilon_1+\varepsilon_{m-1}$ (for $d=\frac{1}{2}(p-\delta_p+2)+1$) 
and $\alpha=\varepsilon_1+\varepsilon_{m-2}$ (for $d=\frac{1}{2}(p-\delta_p+2)+2$):
\begin{gather*}
\begin{aligned}
\lambda+\rho-p(\varepsilon_1+\varepsilon_{m-1})&=
(\colorbox{colormy}{\color{white}$1$},{\textstyle \frac{1}{2}}(p+\delta_p)-2,{\textstyle \frac{1}{2}}(p+\delta_p)-3,\dots,3,2,\colorbox{colormy}{\color{white}$p+1$},0),\\
\lambda+\rho-(\varepsilon_1+\varepsilon_{m-2})&=
(\colorbox{colormy}{\color{white}$2$},{\textstyle \frac{1}{2}}(p+\delta_p)-2,{\textstyle \frac{1}{2}}(p+\delta_p)-3,\dots,3,\colorbox{colormy}{\color{white}$p+2$},1,0).
\end{aligned}
\end{gather*}
These are regular $\Uo$-weights since $\delta_p<p$ which are 
not cancelled in \textbf{JSF}: only positive roots 
$\alpha\in\Phi^+$ of the form 
$\alpha=\varepsilon_1+\varepsilon_j$ for $j\neq 1$ can contribute to \textbf{JSF} 
(and at most once) and all 
of these yield singular $\Uo$-weights. Thus, \textbf{JSF}s 
of these $\Delta_1(\lambda)$'s are non-zero.

It remains to verify the case $\delta_p=1$.
First note that we do not have to consider 
$p=3$, since 
$d>\frac{1}{2}(p-\delta_p+2)=3=p$ by assumption.
For the remaining cases first assume $p\geq 7$. We take 
$\Uo=\Uo(\spo{2m})$ with 
$m=\frac{1}{2}(p-\delta_p)$ (hence, $m\geq 3$) 
and $T_{n}^{d}$ with $n=p-\delta_p$. Then we proceed as before, 
but in type $\CM$ and with $\alpha=\varepsilon_1+\varepsilon_{m}$ 
instead of $\alpha=\varepsilon_1+\varepsilon_{m-1}$ and 
$\alpha=\varepsilon_1+\varepsilon_{m-1}$ 
instead of $\alpha=\varepsilon_1+\varepsilon_{m-2}$. The remaining case $p=5,\delta_p=1$ and 
$d=4$ can be done by going to type $\BM$ with $m=5$.
\end{proof}

\subsection{The case \texorpdfstring{$\delta_p\neq 0$}{delta_p not 0} is even}

Let $\Char(\K)=p>2$, $p>d$ and let $\delta_p\neq 0$ be even.

\begin{lem}\label{lem-brauersecondb}
If $d\leq\min\{\delta_p,p-\delta_p+3\}$, then $\cal{B}_{d}(\delta)$ is semisimple.
\end{lem}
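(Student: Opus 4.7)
The plan is to identify $\cal{B}_d(\delta)$ with the endomorphism algebra $\End_{\Uoo}(T^d_n)$ via Theorem~\ref{thm-brauerimage}, and then deduce semisimplicity by checking that all Weyl factors of $T^d_n$ are simple $\Uo$-modules.

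First, I would choose $n=\delta_p$ and work in type $\DM$ with $\fg=\soo{2m}$ for $m=\delta_p/2$. Since $d\leq\delta_p=2m$ by assumption, Theorem~\ref{thm-brauerimage} gives $\cal{B}_d(\delta)\cong\End_{\Uoo}(T^d_n)$. Now $T^d_n$ has finite length, so semisimplicity of $T^d_n$ as a $\Uoo$-module forces its endomorphism algebra to be semisimple by Artin-Wedderburn. Moreover, since $\Char(\K)=p\neq 2$, the group algebra $\K(\Z/2\Z)$ is semisimple, and a standard Maschke-style averaging argument over $\tau$ shows that a $\Uoo$-module is semisimple if and only if its restriction to $\Uo$ is semisimple. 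Because $T^d_n\in\T$ is a $\Uo$-tilting module by Proposition~\ref{prop-prop}, Corollary~\ref{cor-cellsemisimple} reduces the task to proving that every Weyl factor $\Delta_1(\lambda)$ of $T^d_n$ is a simple $\Uo$-module.

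Next, I would apply Jantzen's sum formula~\eqref{eq-jsumc} to each Weyl factor. Since the Weyl factors of $T^d_n$ have highest weights $\lambda\in\Lambda^+(d-2i)$ for $0\leq i\leq\lfloor d/2\rfloor$, we have $\lambda_1\leq d$ and $\lambda_m\geq 0$. Using $\rho=(m-1,m-2,\dots,1,0)$, it suffices to establish $\langle\lambda+\rho,\alpha^{\vee}\rangle\leq p$ for every positive root $\alpha\in\Phi^+$, since then no $k\geq 1$ can satisfy $kp<\langle\lambda+\rho,\alpha^{\vee}\rangle$ and the right-hand side of \textbf{JSF} collapses to zero. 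The extreme positive roots are $\alpha=\varepsilon_1+\varepsilon_2$, yielding $\langle\lambda+\rho,\alpha\rangle\leq d+\delta_p-3\leq p$ by the hypothesis $d\leq p-\delta_p+3$, and $\alpha=\varepsilon_1-\varepsilon_m$, yielding $\langle\lambda+\rho,\alpha\rangle\leq d+\delta_p/2-1$, which is bounded by $p$ via a short case distinction depending on whether $\delta_p\leq(p+3)/2$ (in which case the bound $d\leq\delta_p$ is the tighter one) or $\delta_p>(p+3)/2$ (in which case $d\leq p-\delta_p+3$ is tighter). All other positive roots give strictly smaller values, so no contributions to \textbf{JSF} arise.

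The main subtlety, which distinguishes this even case from the odd one treated in Lemma~\ref{lem-brauerseconda}, is that in type $\DM$ the Schur-Weyl-Brauer homomorphism $\Phi_{\mathrm{Br}}$ is not surjective onto $\End_{\Uo}(T^d_n)$; instead, its image is the smaller algebra $\End_{\Uoo}(T^d_n)$, which is precisely what Theorem~\ref{thm-brauerimage} records, and this is the pivotal input that makes the whole strategy go through. A secondary technical nuisance concerns small $m$, namely $\delta_p\in\{2,4,6\}$, where $\soo{2m}$ is degenerate or coincides with a type $\AM$ algebra ($\soo{2}$ is abelian, $\soo{4}\cong\mathfrak{sl}_2\times\mathfrak{sl}_2$, $\soo{6}\cong\mathfrak{sl}_4$); these cases will be dealt with by direct inspection using the exceptional isomorphisms together with Theorem~\ref{thm-heckesemi}.
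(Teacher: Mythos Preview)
Your proposal is correct and follows essentially the same route as the paper: identify $\cal{B}_d(\delta)$ with $\End_{\Uoo}(T^d_n)$ via Theorem~\ref{thm-brauerimage} for $n=\delta_p$ in type $\DM$, then show all Weyl factors of $T^d_n$ are simple via the bound $\langle\lambda+\rho,(\varepsilon_1+\varepsilon_2)^{\vee}\rangle\leq d+\delta_p-3\leq p$, which already dominates all other positive roots (so your separate analysis of $\varepsilon_1-\varepsilon_m$ is unnecessary for $m\geq 2$).

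One minor but genuine difference: to pass from semisimplicity of $T^d_n$ as a $\Uo$-module to semisimplicity as a $\Uoo$-module, the paper invokes the Clifford-theoretic classification of simple $\Uoo$-modules (Proposition~\ref{prop-clifford} and Lemma~\ref{lem-decomptwist}, arguing these remain valid here because $T^d_n$ is a direct sum of simple Weyl modules), whereas you use a Maschke-type averaging over $\tau$, which works since $2$ is invertible in $\K$. Your argument is more elementary and self-contained at this step; the paper's approach has the advantage of already having the explicit $\Uoo$-decomposition available from the multiplicity comparison in Section~\ref{sec-schur}. For the small-$m$ cases the paper simply appeals to Remark~\ref{rem-oneextra2} (Theorem~\ref{thm-brauerimage} and the formal $\DM$ root-system computation go through for $m\leq 3$), which is more uniform than your proposed detour through the exceptional isomorphisms and Theorem~\ref{thm-heckesemi}.
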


\begin{proof}
We take the Schur-Weyl-Brauer data as in Theorem~\ref{thm-brauerimage}, 
that is $\Uo=\Uo(\soo{2m})$ with $n=\delta_p$ (type $\DM$ with $m=\frac{1}{2}\delta_p$) and the 
$d$-fold tensor product $T^d_n$ of its vector representation (we note that 
our arguments go through in case $m\leq 3$ as well, see Remark~\ref{rem-oneextra2}).

We claim that $T^d_n$ is a semisimple $\Uo$-module:
it, as usual, remains to check that all Weyl factors $\Delta_1(\lambda)$ of $T^d_n$ 
have zero as their \textbf{JSF}. This follows almost directly, since, for all positive roots 
$\alpha\in\Phi^+$, we have (recall that $d\leq p-\delta_p+3$)
\[
\langle\lambda+\rho,\alpha^{\vee}\rangle\leq\langle d\varepsilon_1+\rho,(\varepsilon_1+\varepsilon_2)^{\vee}\rangle
=d+2m-3=d+\delta_p-3\leq p.
\]
Thus, $T^d_n$ is a semisimple $\Uo$-module and hence, 
a direct sum of simple Weyl modules. By Proposition~\ref{prop-clifford} and 
Lemma~\ref{lem-decomptwist} 
(which is valid in this specific $\Char(\K)=p$ case since $T^d_n$ is 
a direct sum of simple Weyl modules), we have that $T^d_n$ is also a semisimple $\Uoo$-module.
Hence, $\End_{\Uoo}(T^d_n)$ is semisimple.
Since we assume $d\leq\delta_p$, we 
have $\cal{B}_{d}(\delta)\cong\End_{\Uoo}(T^d_n)$ by Theorem~\ref{thm-brauerimage}. 
The statement follows.
\end{proof}

\begin{lem}\label{lem-brauersecondbb}
If $d\leq\min\{\delta_p+1,p-\delta_p+3\}$, then $\cal{B}_{d}(\delta)$ is semisimple.
\end{lem}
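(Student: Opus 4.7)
If $d\leq\delta_p$ (and $d\leq p-\delta_p+3$), the claim is already covered by Lemma~\ref{lem-brauersecondb}, so it suffices to treat the new boundary case $d=\delta_p+1$ under the standing assumption $d\leq p-\delta_p+3$, which is equivalent to $2\delta_p\leq p+2$. The plan is to mimic the setup of Lemma~\ref{lem-brauersecondb}: take $n=\delta_p=2m$ and consider $T^d_n$ as a module for $\Uo=\Uo(\soo{n})$ in type $\DM$, with $m=\delta_p/2$ and hence $d=2m+1$.

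First I would verify that the \textbf{JSF} bound used in the proof of Lemma~\ref{lem-brauersecondb} still applies in this extended range. For any Weyl factor $\Delta_1(\lambda)$ of $T^d_n$ (so $|\lambda|\leq d$ and $\lambda_1+\lambda_2\leq d$) and any $\alpha\in\Phi^+$ of type $\DM$, one has
\[
\langle\lambda+\rho,\alpha^{\vee}\rangle\leq d+2m-3=2\delta_p-2\leq p,
\]
where the last inequality uses precisely $2\delta_p\leq p+2$. (A similar, slightly tighter bound applies to $\alpha=\varepsilon_i-\varepsilon_j$ since $\lambda_1+m-1\leq 3\delta_p/2$, which is also $\leq p$ under our hypothesis.) Hence~\eqref{eq-jsumc} contributes nothing and every Weyl factor of $T^d_n$ is a simple $\Uo$-module, so $T^d_n$ is a direct sum of simple Weyl modules. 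The Clifford-theoretic decomposition from Proposition~\ref{prop-clifford} and Lemma~\ref{lem-decomptwist} then applies to the semisimple $\Uo$-module $T^d_n$ (these arguments only require the $\Uo$-module to be semisimple, not $\Char(\K)=0$), giving that $T^d_n$ is also semisimple as a $\Uoo$-module. In particular $\End_{\Uoo}(T^d_n)$ is a semisimple algebra.

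It then remains to identify $\cal{B}_d(\delta)\cong\End_{\Uoo}(T^d_n)$ at the boundary value $d=2m+1$. Although Theorem~\ref{thm-brauerimage} is stated for $d\leq 2m$, its proof via the dimension comparison of Corollary~\ref{cor-important} extends to $d=2m+1$: the crucial inductive step in Lemma~\ref{lem-yetanotherone} continues to work, since for any partition $\lambda^{\prime}$ with $|\lambda^{\prime}|\leq d=2m+1$ one still has $\lambda^{\prime}_{m+1}\leq 1$ (otherwise $|\lambda^{\prime}|\geq 2(m+1)>2m+1$), and the same recursive multiplicity formulas from Proposition~\ref{prop-decomptwist2} apply. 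Combined with Proposition~\ref{prop-prop} and the inclusion $\operatorname{im}(\Phi_{\mathrm{Br}})\subseteq\End_{\Uoo}(T^d_n)$, this yields the desired isomorphism, and semisimplicity of $\cal{B}_d(\delta)$ follows.

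The main obstacle is the last step: carefully extending Theorem~\ref{thm-brauerimage} (equivalently, Corollary~\ref{cor-important} and Lemma~\ref{lem-yetanotherone}) from $d\leq 2m$ to $d=2m+1$. The structural ingredients are unchanged, but one must verify that the boundary case $\lambda^{\prime}_{m+1}=1$ (which can now occur for $|\lambda^{\prime}|=2m+1$) does not introduce new terms that spoil the recursion. As a possible alternative to circumvent this, one could reduce via $p$-shifting to a type $\BM$ setup with $n^{\prime}=\delta_p+p$ (where Schur-Weyl-Brauer is already an isomorphism by Theorem~\ref{thm-schur-weyl3}), but this trades the dimension-comparison difficulty for a more delicate \textbf{JSF} cancellation analysis, which appears harder in general.
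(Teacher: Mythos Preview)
Your reduction to the boundary case $d=\delta_p+1$ and your verification that the \textbf{JSF} bound $\langle\lambda+\rho,\alpha^{\vee}\rangle\leq 2\delta_p-2\leq p$ still holds in type $\DM$ with $n=\delta_p=2m$ are both correct. The gap is in the last step: the extension of Theorem~\ref{thm-brauerimage} to $d=2m+1$ is false, not merely delicate. Concretely, for $d=2m+1$ the partition $\lambda'=(1^{2m+1})$ now occurs as a Weyl factor of $(T')^d_{n'}$ (with multiplicity~$1$), and it has $\lambda'_{2m+1}=1$; the assignment $\tilde\lambda_i=\lambda'_i-\lambda'_{2m+1-i}$ only sees coordinates $1,\dots,2m$, so it sends $(1^{2m+1})$ to $\tilde\lambda=0$, and Lemma~\ref{lem-yetanotherone}(b) would predict $(T^d_n:\tilde\Delta_1^-(0))=1$. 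But $\tilde\Delta_1^-(0)$ is the $\Uo$-trivial module with $\tau$ acting by $-1$, and the $\Uo$-trivial module only appears in $V^{\otimes d}$ for even $d$, so this multiplicity is $0$. Hence the dimension comparison of Corollary~\ref{cor-important} is off by exactly $1$ at $d=2m+1$: one has $\dim(\End_{\Uoo}(T^d_n))=\dim(\cal{B}_d(\delta))-1$, and $\Phi_{\mathrm{Br}}$ has a one-dimensional kernel rather than being an isomorphism. (Your check that $\lambda'_{m+1}\leq 1$ is not the relevant constraint; the recursion breaks because $\lambda'$ can now have $2m+1>2m$ nonzero parts.)

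This is salvageable in principle---one would show the kernel is spanned by a quasi-idempotent (the $O(2m)$-analog of Proposition~\ref{prop-kernelbrauer}) and then invoke Proposition~\ref{prop-abstract-nonsense}---but this is exactly what Remark~\ref{re-stupidproof} warns would ``require going to the reductive group $O_{2m}$''. The paper instead takes the route you list as a harder alternative: it $p$-shifts to type $\BM$ with $n=p+\delta_p$, where Theorem~\ref{thm-schur-weyl3} gives an honest isomorphism $\cal{B}_d(\delta)\cong\End_{\Uo}(T^d_n)$, and then carries out the \textbf{JSF} cancellation analysis you anticipated, splitting on whether $\lambda_t=0$ or $\lambda_t=1$ for $t=\tfrac{1}{2}(d+1)$ and showing that the non-singular contributions cancel in pairs.
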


\begin{proof}
By Lemma~\ref{lem-brauersecondb} it suffices to check 
that $\cal{B}_d(\delta)$ is semisimple for $d=\delta_p+1<p-\delta_p+3$.

In order to do so, we fix $n=p+\delta_p$ and the 
statement follows by Theorem~\ref{thm-schur-weyl3}, if we show that 
$T_{n}^{d}$ is a semisimple 
$\Uo=\Uo(\soo{2m+1})$-module (type $\BM$ with $m=\frac{1}{2}(p+\delta_p-1)$).

Our argument below 
uses $p\geq 7$. Since $\delta_p+1< p-\delta_p+3$ 
gives $\delta_p<\frac{1}{2}p+1$, only $\delta_p=2$ and $p=3,5$ 
are the cases with $p<7$ for which we need to check semisimplicity of $\cal{B}_d(\delta)$. 
In case $p=3$ we would have to check $d=\delta_p+1=3$. Since we assume $p>d$, 
this case does not occur. The case $p=5$ and $d=3$ can be verified as 
usual using \textbf{JSF} and is in particular very similar to the case $p\geq 7$
discussed below (the stated inequalities below are not true anymore and there are a few extra 
cases to check). We leave the details to the reader.

Assume now $p\geq 7$.
Following 
our usual recipe, we have to show that all Weyl factors $\Delta_1(\lambda)$ 
of $T_{n}^{d}$ have zero \textbf{JSF}.
Note now that such $\lambda$'s satisfy $\lambda\in\Lambda^+(d-2i^{\prime})$ for 
some $i^{\prime}=0,\dots,\lfloor\frac{1}{2}d\rfloor$. It turns out that 
there are two different cases:
the first case is $\lambda_{t}=0$ and the second
is $\lambda_{t}=1$ (for $t=\frac{1}{2}(d+1)$).
Note that these are all cases since $\lambda_{t}>1$ can not 
occur for $\lambda\in\Lambda^+(d-2i^{\prime})$ (in particular, we always have 
$\lambda_{t^{\prime}}\in\{0,1\}$ for all $t\leq t^{\prime}\leq d-2i^{\prime}$).

Fix now $\lambda\in\Lambda^+(d-2i^{\prime})$. A direct verification shows 
that positive roots $\alpha\in\Phi^+$ of the form $\alpha=\varepsilon_i-\varepsilon_j$ 
(for $1\leq i<j\leq m$) will 
never yield contributions to \textbf{JSF} of $\Delta_1(\lambda)$ (this can 
be seen similarly as in the proof of Theorem~\ref{thm-heckesemi}). Thus, we only 
need to check positive roots $\alpha\in\Phi^+$ of the form 
$\alpha=\varepsilon_i+\varepsilon_j$ (for $1\leq i<j\leq m$) or of the form
$\alpha=\varepsilon_i$ (for $i=1,\dots,m$).
Note now that 
$d=\delta_p+1$, $\delta_p< \frac{1}{2}p+1$ and $p\geq 7$ gives
\begin{gather*}
\begin{aligned}
\langle\lambda+\rho,(\varepsilon_i+\varepsilon_j)^{\vee}\rangle &\leq\langle d\varepsilon_1+\rho,(\varepsilon_1+\varepsilon_2)^{\vee}\rangle=p+\delta_p+d-3=p+2\delta_p-2< 2p,\\
\langle\lambda+\rho,(\varepsilon_i)^{\vee}\rangle &\leq\langle d\varepsilon_1+\rho,(\varepsilon_1)^{\vee}\rangle=p+\delta_p+2d-2=p+3\delta_p<{\textstyle\frac{5}{2}}p+3\leq 3p.
\end{aligned}
\end{gather*}
Thus, it suffices to consider $k=1$ or $k=1,2$ in \textbf{JSF} of $\Delta_1(\lambda)$.

\noindent\textit{Case $\lambda_{t}=0$.} 
There is a `tail' in 
$\lambda+\rho$: every value of the form $\frac{1}{2}(2k^{\prime}+1)$ (for $k^{\prime}\in\N$) 
appears after $(\lambda+\rho)_{t}=\rho_{t}=\frac{1}{2}p-1$. 
Assume
$p<\langle\lambda+\rho,(\varepsilon_i)^{\vee}\rangle< 3p$ for some $i=1,\dots,m$ 
(it follows that $i\leq t$). Thus, 
$\frac{1}{2}p<\lambda_i+m-\frac{1}{2}-i<\frac{3}{2}p$. Then $(\lambda+\rho-p\varepsilon_i)_i$ 
will always be in the `tail' and hence, $\lambda+\rho-kp\varepsilon_i$ is 
a singular $\Uo$-weight for $k=1$ and such $\lambda$:
\[
0<|(\lambda+\rho-p\varepsilon_i)_i|=|\lambda_i+m+\tfrac{1}{2}-i-p|\leq {\textstyle\frac{1}{2}}p-1 =(\lambda+\rho)_{t}=\rho_t.
\]
Similarly, 
$\lambda+\rho-p(\varepsilon_i+\varepsilon_j)$ (for $1\leq i<j\leq m$) is 
a singular $\Uo$-weight except if we have
$(\lambda+\rho-p\varepsilon_i)_i=(\lambda+\rho)_j$. The latter occurs if and only if 
$2p<\langle\lambda+\rho,(\varepsilon_i)^{\vee}\rangle< 3p$. These 
$\Uo$-weights give contributions to \textbf{JSF} of 
$\Delta_1(\lambda)$, but are cancelled by the contribution for $\varepsilon_i$ and $k=2$.

In summary, \textbf{JSF} of $\Delta_1(\lambda)$ is zero : either $1\leq i<j\leq m$ 
does not have any contributions for 
$\alpha=\varepsilon_i+\varepsilon_j$ or $\alpha=\varepsilon_i$
(in the case $0<\langle\lambda+\rho,(\varepsilon_i)^{\vee}\rangle\leq p$) 
or only singular $\Uo$-weights appear 
(this happens in the case $p< \langle\lambda+\rho,(\varepsilon_i)^{\vee}\rangle\leq 2p$) or 
there will be cancellations 
(this happens in the case $2p< \langle\lambda+\rho,(\varepsilon_i)^{\vee}\rangle< 3p$).

\noindent\textit{Case $\lambda_{t}=1$.} Similarly as before. We omit the 
details for brevity and only note that the assumption $\lambda_{t}=1$ 
ensures that there will be only one `gap in the tail'. Hence, the same 
argumentation as above goes through with the extra case that $(\lambda+\rho-p\varepsilon_i)_i$ 
can precisely land in this gap. In this case $\lambda+\rho-p\varepsilon_i$ 
is a regular $\Uo$-weight, but it is again cancelled in \textbf{JSF} of $\Delta_1(\lambda)$ 
(this time by $\lambda+\rho-p(\varepsilon_i+\varepsilon_j)$ where $j$ is the entry of the gap).

Thus, $T_{n}^{d}$ is a semisimple $\Uo$-module which shows the statement.
\end{proof}

\begin{rem}\label{re-stupidproof}
The boundary case in Lemma~\ref{lem-brauersecondbb} could also 
be done by analyzing the kernel of the Schur-Weyl-Brauer action 
$\Phi_{\mathrm{Br}}$ as in the 
proof of Theorem~\ref{thm-walledbrauer} and as 
in the proof of Theorem~\ref{thm-brauer} below. 
But this would require going to the reductive group $O_{2m}$ (Brauer 
already observed in~\cite[Page~870]{bra} that surjectivity of $\Phi_{\mathrm{Br}}$ fails 
in general for $SO_{2m}$). In order to keep the paper reasonably 
self-contained, we avoid using the reductive group setting here.
\end{rem}

\begin{lem}\label{lem-brauersecondd}
If $d>p-\delta_p+3$, then $\cal{B}_{d}(\delta)$ is non-semisimple.
\end{lem}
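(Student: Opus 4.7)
The plan is to mirror Lemma~\ref{lem-brauerseconde} with the roles of the two upper bounds on $d$ interchanged. Applying Corollary~\ref{cor-picturebrauer} reduces the task to checking non-semisimplicity at the two boundary values $d = p - \delta_p + 4$ and $d = p - \delta_p + 5$. Moreover, combining with Lemma~\ref{lem-brauerseconde}, we may additionally assume $d \leq \delta_p + 1$, so the hypothesis $d > p - \delta_p + 3$ together with the standing assumption $p > d$ forces $\delta_p \geq 6$ (hence $\delta_p > (p+2)/2$) and in particular $p \geq 7$.

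Next, I would pass to the Schur-Weyl-Brauer dual setting in type $\BM$ by choosing $n = p + \delta_p$ and $\Uo = \Uo(\mathfrak{so}_{2m+1})$ with $m = \frac{1}{2}(p + \delta_p - 1)$. Since $n \geq d$, Theorem~\ref{thm-schur-weyl3} supplies the surjection $\Phi_{\mathrm{Br}} \colon \cal{B}_d(\delta) \twoheadrightarrow \End_{\Uo}(T_n^d)$, and as semisimple algebras have semisimple quotients, it suffices to show that $\End_{\Uo}(T_n^d)$ is non-semisimple. By Corollary~\ref{cor-cellsemisimple} and Theorem~\ref{thm-jsum}, this in turn reduces to exhibiting a Weyl factor $\Delta_1(\lambda)$ of $T_n^d$ whose \textbf{JSF} is non-zero.

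The test weights I propose are $\lambda = d\varepsilon_1$ for $d = p - \delta_p + 4$ and $\lambda = (d-2)\varepsilon_1 + 2\varepsilon_2$ for $d = p - \delta_p + 5$. For each, a systematic case analysis of the positive roots $\varepsilon_i \pm \varepsilon_j$ and $\varepsilon_i$ should show that nearly every summand $\chi(\lambda - kp\alpha)$ yields a dot-singular weight: the condition $(\lambda + \rho - kp\alpha)_i = \pm(\lambda + \rho - kp\alpha)_s$ typically pins down a unique collision index $s$ (often $s = \delta_p - 3$ or $s = \delta_p - i$). The remaining regular summands form a short explicit list whose dominant $W$-images I expect to include a distinguished weight attained with multiplicity one among the surviving terms---for instance $(p-2)\varepsilon_1$ in the first case (from $\alpha = \varepsilon_1$, $k = 3$) and $(p - \delta_p + 1)\varepsilon_1$ in the second case (from $\alpha = \varepsilon_1 + \varepsilon_2$, $k = 2$). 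This ensures the \textbf{JSF} does not vanish.

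The main obstacle is the bookkeeping of cancellations: unlike type $\AM$ (cf.\ Remark~\ref{rem-nocan}), type $\BM$ genuinely admits non-trivial cancellations, so producing a single non-zero summand is not enough. Several regular summands may share the same dominant $W$-image (for instance the $\Delta_1(\varepsilon_1)$-components arising from $\alpha = \varepsilon_1$, $k = 2$ and from $\alpha = \varepsilon_1 + \varepsilon_{\delta_p - 2}$, $k = 1$ in the second case) and cancel pairwise, which is why the argument must identify a dominant image occurring with multiplicity exactly one among the surviving terms. A few small-parameter combinations (e.g.\ when the indicated collision index falls outside $[1, m]$, or $p = 7$ with $\delta_p = 6$) will require separate ad hoc verification, handled in the spirit of the edge cases treated at the end of the proof of Lemma~\ref{lem-brauerseconde}.
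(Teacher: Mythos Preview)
Your overall strategy coincides with the paper's: the same type $\BM$ setup with $n = p + \delta_p$, the same reduction to the two boundary values via Corollary~\ref{cor-picturebrauer}, and your second test weight $(d-2)\varepsilon_1 + 2\varepsilon_2$ is exactly the paper's $\mu$. However, there is a genuine error in your first boundary case.

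For $d = p - \delta_p + 4$ and $\lambda = d\varepsilon_1$, you propose that the surviving term in the \textbf{JSF} comes from $\alpha = \varepsilon_1$ with $k = 3$. But this summand is not in the sum at all: the constraint is $kp < \langle \lambda + \rho, \varepsilon_1^{\vee}\rangle = 2d + 2m - 1 = 3p - \delta_p + 6$, so $k = 3$ would require $\delta_p < 6$, contradicting the bound $\delta_p \geq 6$ you yourself derived. Your candidate distinguished contribution is therefore absent, and the argument for this boundary value collapses.

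The paper sidesteps this by taking $\lambda = (d-1)\varepsilon_1 + \varepsilon_2$ (not $d\varepsilon_1$) and the root $\alpha = \varepsilon_1 + \varepsilon_{\delta_p-2}$ with $k = 1$; for the second boundary value it keeps your $\mu$ but again uses $\alpha = \varepsilon_1 + \varepsilon_{\delta_p-2}$ with $k = 1$ rather than $\varepsilon_1 + \varepsilon_2$ with $k = 2$. These choices keep every relevant pairing small enough that only $k = 1$ ever enters, only roots with first index $i = 1$ contribute, and the displayed regular term is visibly not cancelled. Your route through higher $k$ values not only forces the heavier cancellation bookkeeping you flag as the main obstacle, but in the first case produces a term that is not even present.
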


\begin{proof}
We take $n=p+\delta_p$ (type $\BM$ with $m=\frac{1}{2}(p+\delta_p-1)$ again). 
As usual, by the surjectivity 
in Theorem~\ref{thm-schur-weyl3} and 
Corollary~\ref{cor-cellsemisimple}, it suffices to give 
a Weyl factor of $T_{n}^{d}$ that is a non-simple $\Uo$-module. By 
Corollary~\ref{cor-picturebrauer}, it remains to give such 
factors in the cases $d=(p-\delta_p+3)+1$ and $d=(p-\delta_p+3)+2$. Take 
$\lambda=(d-1)\varepsilon_1+\varepsilon_2$ and 
$\mu=(d-2)\varepsilon_1+2\varepsilon_2$:
\begin{gather*}
\begin{aligned}
\lambda+\rho-p(\varepsilon_1+\varepsilon_{\delta_p-2}) &={\textstyle\frac{1}{2}}(\colorbox{colormy}{\color{white}$p-\delta_p+4$},p+\delta_p-2,p+\delta_p-6,\dots,\colorbox{colormy}{\color{white}$-p-\delta_p+4$},\dots,3,1),\\
\mu+\rho-p(\varepsilon_1+\varepsilon_{\delta_p-2}) &={\textstyle\frac{1}{2}}(\colorbox{colormy}{\color{white}$p-\delta_p+4$},p+\delta_p,p+\delta_p-6,\dots,\colorbox{colormy}{\color{white}$-p-\delta_p+4$},\dots,3,1).
\end{aligned}
\end{gather*}
Here we assume that $\delta_p\geq 4$ (for $\delta_p=2$  
we have $d>p$ and we can use Lemma~\ref{lem-brauersecondnew}). 
Only the positive roots $\alpha\in\Phi^+$ 
with $\alpha=\varepsilon_i\pm\varepsilon_j$ 
or $\alpha=\varepsilon_i$ for $i=1$ can 
give other non-zero contributions to \textbf{JSF}s. 
These remaining positive roots $\alpha\in\Phi^+$ 
do not cancel the contributions above 
($k\leq 1$ for all of these). Hence, \textbf{JSF}s for $\Delta_1(\lambda)$ 
and $\Delta_1(\mu)$ are non-zero.
\end{proof}

\subsection{The \texorpdfstring{$\delta_p=0$}{delta_p=0} case}

Let $\Char(\K)=p>2$, $p>d$ and $\delta_p=0$.

\begin{lem}\label{lem-brauerfirst}
Let $p\geq 5$. Then $\cal{B}_{3}(\delta)$ is semisimple. 
If $p\geq 7$, then 
$\cal{B}_{5}(\delta)$ is also semisimple.
\end{lem}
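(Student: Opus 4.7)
The plan is to adapt the strategy of Lemma~\ref{lem-walledfirst} to the Brauer setting. Since $\delta_p = 0$, we shift to $n = p$, so that $n \equiv \delta_p \pmod{p}$, and work with $\Uo = \Uo(\soo{p})$ of type $\BM$ with $m = \tfrac{1}{2}(p-1)$. As $d < p = n$, Theorem~\ref{thm-schur-weyl3}(b) supplies an isomorphism $\Phi_{\mathrm{Br}} \colon \cal{B}_d(\delta) \xrightarrow{\cong} \End_{\Uo}(T_n^d)$; by Theorem~\ref{thm-cellsemisimple} and Lemma~\ref{lem-cellsemisimple}, it therefore suffices to prove that every Weyl factor $\Delta_1(\lambda)$ appearing in $T_n^d$ is a simple $\Uo$-module.

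The relevant highest weights (computed classically via Proposition~\ref{prop-classchar}) are the partitions of $d - 2i$ for $i = 0, \ldots, \lfloor d/2 \rfloor$ fitting into at most $m$ rows. For $d = 3$ this is a short list ($3\varepsilon_1$, $2\varepsilon_1 + \varepsilon_2$, possibly $\varepsilon_1 + \varepsilon_2 + \varepsilon_3$, and $\varepsilon_1$); for $d = 5$ one gets at most nine weights, with the tightest case $p = 7$, $m = 3$. For each such $\lambda$ I would apply \textbf{JSF}~\eqref{eq-jsumc} with $\rho = (m - \tfrac{1}{2}, m - \tfrac{3}{2}, \ldots, \tfrac{1}{2})$. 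Because $|\lambda|$ is small compared to $p$, the pairing $\langle \lambda + \rho, \alpha^\vee \rangle$ stays below $2p$ for long roots $\alpha = \varepsilon_i \pm \varepsilon_j$ and below $3p$ for short roots $\alpha = \varepsilon_i$, so only $k = 1$ (or $k \leq 2$ for short roots) contributes.

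The arithmetic-progression shape of $\rho$ forces most perturbations $\lambda + \rho - kp\alpha$ to be dot-singular (typically two coordinates become equal up to sign), so by~\eqref{eq-cancel} the corresponding summand vanishes, cf.\ Example~\ref{ex-JSF}. The remaining non-singular summands pair up and cancel. For instance, for $\lambda = 5\varepsilon_1$ with $d = 5$, $p = 7$, the non-singular terms from $(\alpha, k) = (\varepsilon_1, 2)$ and $(\varepsilon_1 + \varepsilon_3, 1)$ are both Weyl-conjugate to the dominant weight $(4, 0, 0)$ and appear with opposite signs; and for $\lambda = 3\varepsilon_1 + 2\varepsilon_2$, the non-singular terms from $(\varepsilon_1, 1)$ and $(\varepsilon_1 + \varepsilon_2, 1)$ are both Weyl-conjugate to $\varepsilon_1$ with opposite signs.

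The main obstacle is precisely tracking these cancellations: unlike in type $\AM$ (Remark~\ref{rem-nocan}), non-trivial sign cancellations between Weyl-conjugate non-singular summands do occur in types $\BM$, $\CM$ and $\DM$ (Example~\ref{ex-JSF2}), so one cannot simply conclude from the appearance of a single surviving non-zero term and must carefully compute Weyl group signs across all positive roots. The result is a finite but tedious table of case distinctions, entirely parallel in flavor to the analysis in Lemma~\ref{lem-brauersecondbb}, and I expect no new conceptual input beyond \textbf{JSF}.
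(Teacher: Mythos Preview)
Your approach is essentially identical to the paper's: both use Schur--Weyl--Brauer duality with $n=p$ in type $\BM$ ($m=\tfrac{1}{2}(p-1)$) and verify that each Weyl factor of $T_n^d$ has vanishing \textbf{JSF}. The paper treats $d=3$ explicitly (only singular contributions arise) and for $d=5$ simply lists the eleven Weyl factors and asserts the calculation ``works as above''; you go somewhat further by correctly noting that for $d=5$, $p=7$ genuine cancellations (not just singular terms) are needed, and your two worked examples for $\lambda=5\varepsilon_1$ and $\lambda=3\varepsilon_1+2\varepsilon_2$ are accurate. One minor slip: for $d=5$ there are up to \emph{eleven} Weyl factors (partitions of $5$, $3$, $1$), not nine---only for $p=7$ do the two partitions with more than $m=3$ parts drop out---but this does not affect the argument.
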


\begin{proof}
Again, we check the corresponding \textbf{JSF}. To this end, 
we consider $T_{n}^{d}$ for $n=p$ (we are in type $\BM$ 
with $m=\frac{1}{2}(p-1)$) and $d=3$. 
Then $T_{n}^{d}$ 
has only Weyl factors which are simple $\Uo$-modules: its Weyl factors are 
$\Delta_1(3\varepsilon_1)$, $\Delta_1(2\varepsilon_1+\varepsilon_2)$, $\Delta_1(\varepsilon_1+\varepsilon_2+\varepsilon_3)$ and $\Delta_1(\varepsilon_1)$, 
all of which have zero \textbf{JSF}. This can be seen as usual and we only 
do the first case explicitly here.

A direct computations shows that 
$\langle 3\varepsilon_1+\rho,\alpha^{\vee}\rangle\leq p$ for 
all positive roots $\alpha\in\Phi^+$ except of $\alpha=\varepsilon_1$, where 
$\langle3\varepsilon_1+\rho,\varepsilon_1^{\vee}\rangle=p+4<2p$ 
(recall that $p\geq 5$). Then, because $p\geq 5$,
\begin{gather*}
3\varepsilon_1+\rho-p\varepsilon_1={\textstyle\frac{1}{2}}\cdot(\colorbox{mycolor}{$4-p$},\colorbox{mycolor}{$p-4$},p-6,\dots,3,1),
\end{gather*}
is a singular $\Uo$-weight. Thus, 
\textbf{JSF} of $\Delta_1(3\varepsilon_1)$ 
is zero.

Similarly for $d=5$, $T_{n}^{d}$ 
has only Weyl factors that are simple $\Uo$-modules:
\begin{gather*}
\Delta_1(5\varepsilon_1),\quad\quad\Delta_1(4\varepsilon_1+\varepsilon_2),\quad\quad\Delta_1(3\varepsilon_1+2\varepsilon_2),\quad\quad\Delta_1(3\varepsilon_1+\varepsilon_2+\varepsilon_3),\\
\Delta_1(2\varepsilon_1+2\varepsilon_2+\varepsilon_3),\quad\quad\Delta_1(2\varepsilon_1+\varepsilon_2+\varepsilon_3+\varepsilon_4),\quad\quad\Delta_1(\varepsilon_1+\varepsilon_2+\varepsilon_3+\varepsilon_4+\varepsilon_5),\quad\quad\Delta_1(3\varepsilon_1),\\
\Delta_1(2\varepsilon_1+\varepsilon_2),\quad\quad\Delta_1(\varepsilon_1+\varepsilon_2+\varepsilon_3),\quad\quad\Delta_1(\varepsilon_1),
\end{gather*} 
all of which have zero as their \textbf{JSF} when $p\geq 7$ 
(we omit the calculation which works as above).
The statement follows from Theorem~\ref{thm-schur-weyl3} (note that 
$n=p>d$) and Corollary~\ref{cor-cellsemisimple}.
\end{proof}

\begin{lem}\label{lem-brauerfirstb}
$\cal{B}_{d}(\delta)$  is non-semisimple for 
$d\in\{2,4,6,8,\dots\}\cup\{7,9,11,13,\dots\}$.
\end{lem}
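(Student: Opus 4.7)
The plan is to reduce to verifying non-semisimplicity at two base cases $d=2$ and $d=7$ by Corollary~\ref{cor-picturebrauer}: iterated application then covers all even $d\geq 2$ and all odd $d\geq 7$. In each base case I will work in type $\BM$ with $\Uo=\Uo(\soo{n})$ for $n=p$ (so $m=\tfrac{1}{2}(p-1)$), which is consistent with $\delta_p=0$. Because of the standing hypothesis $p>d$, Theorem~\ref{thm-schur-weyl3} yields that $\Phi_{\mathrm{Br}}$ is surjective; consequently, showing that $T_n^d$ contains a non-simple Weyl factor $\Delta_1(\lambda)$ suffices, since by Lemma~\ref{lem-cellsemisimple} and Theorem~\ref{thm-cellsemisimple} this produces a non-semisimple quotient of $\cal{B}_d(\delta)$, forcing $\cal{B}_d(\delta)$ itself to be non-semisimple.

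For $d=2$ I will take $\lambda=2\varepsilon_1\in\Lambda^+(2)$, which appears in the decomposition of $T_n^2$. A routine inspection of $\langle\lambda+\rho,\alpha^\vee\rangle$ for $\alpha\in\Phi^+$ shows that only $\alpha=\varepsilon_1$ with $k=1$ gives a term in \textbf{JSF}; the resulting $\lambda+\rho-p\varepsilon_1$ has coordinates $\tfrac{1}{2}(2-p,p-4,p-6,\dots,1)$, whose absolute values agree with those of $\rho$, so by \eqref{eq-cancel} this term equals $\pm\chi(0)=\pm 1\neq 0$, making \textbf{JSF}$\neq 0$.

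For $d=7$ (so $p\geq 11$) I will take $\lambda=4\varepsilon_1+2\varepsilon_2+\varepsilon_3\in\Lambda^+(7)$, which again appears as a Weyl factor of $T_n^7$. A direct case-by-case check shows that exactly three positive roots contribute, namely $\alpha\in\{\varepsilon_1,\,\varepsilon_1+\varepsilon_2,\,\varepsilon_1+\varepsilon_3\}$ each at $k=1$. The two $\mu+\rho$'s arising from $\alpha=\varepsilon_1$ and from $\alpha=\varepsilon_1+\varepsilon_2$ have identical absolute value profiles (so produce the same dominant weight $\nu=\varepsilon_1$) but differ by a sign flip on the second coordinate, hence by an odd-length Weyl group element; by \eqref{eq-cancel} their contributions are negatives of each other and cancel. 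The contribution from $\alpha=\varepsilon_1+\varepsilon_3$ corresponds instead to $\nu=3\varepsilon_1+2\varepsilon_2$, which is a genuinely different dominant weight, so it survives uncancelled and forces \textbf{JSF}$\neq 0$.

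The main technical obstacle is the precise sign cancellation in the $d=7$ case. The cleanest bookkeeping is to identify $(-1)^{\ell(w)}$ with $\det(w)$ by viewing elements of $W(\BM)$ as signed permutation matrices, so that each sign flip contributes a factor $-1$ automatically and one never has to exhibit a reduced expression. With that shortcut in place the remaining work reduces to elementary inequalities on $\langle\lambda+\rho,\alpha^\vee\rangle$ (to rule out other positive roots and other $k$) and a careful comparison of sorted absolute values (to compute the resulting $\nu$); these computations are uniform in $p\geq 11$ because $\lambda$ has only three non-zero coordinates.
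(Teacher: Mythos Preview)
Your approach is essentially identical to the paper's: reduce via Corollary~\ref{cor-picturebrauer} to $d=2$ and $d=7$, work in type $\BM$ with $n=p$, use the Weyl factors $\Delta_1(2\varepsilon_1)$ and $\Delta_1(4\varepsilon_1+2\varepsilon_2+\varepsilon_3)$, and in the $d=7$ case observe that the $\varepsilon_1$ and $\varepsilon_1+\varepsilon_2$ contributions cancel while the $\varepsilon_1+\varepsilon_3$ contribution survives. Your sign bookkeeping via $\det(w)$ is correct and your identification of the resulting dominant weights ($\varepsilon_1$ versus $3\varepsilon_1+2\varepsilon_2$) is accurate.

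There is one small gap. For $d=2$ the standing hypothesis $p>d$ allows $p=3$, but then $n=p=3$ gives $m=\tfrac12(p-1)=1$, which lies outside the paper's conventions for type~$\BM$ (see Appendix~\ref{sec-app}, where $m\geq 2$ is assumed). Your coordinate formula $\tfrac12(2-p,\,p-4,\dots,1)$ and the claim ``absolute values agree with those of $\rho$'' tacitly use $m\geq 2$. The paper treats $p=3$, $d=2$ separately by passing to $n=3p=9$ (type~$\BM$ with $m=4$) and checking \textbf{JSF} there; you should do the same, or at least note that the case needs an ad~hoc adjustment. Apart from this boundary case your argument is complete and matches the paper's proof.
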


\begin{proof}
By Corollary~\ref{cor-picturebrauer} it remains to 
verify the boundary values $d=2$ and $d=7$.

Assume first that $d=2$ and $p\geq 5$. 
We consider $T_{n}^{d}$ for $n=p$ (we are in type $\BM$ 
with $m=\frac{1}{2}(p-1)$). We have that $T_n^d$ 
has a Weyl factor of the form $\Delta_1(2\varepsilon_1)$. We calculate
\begin{gather*}
2\varepsilon_1+\rho-p\varepsilon_1=
{\textstyle\frac{1}{2}}\cdot(\colorbox{colormy}{\color{white}$2-p$},p-4,p-6\dots,3,1).
\end{gather*}
Thus, the positive root $\alpha\in\Phi^+$ of the form 
$\alpha=\varepsilon_1$ gives a non-zero 
contribution to \textbf{JSF} of $\Delta_1(2\varepsilon_1)$ 
(no other positive root $\alpha\in\Phi^+$ contributes and hence, we have 
no cancellations). 
As before, $\cal{B}_d(\delta)$ is non-semisimple by Theorem~\ref{thm-schur-weyl3} 
and by Corollary~\ref{cor-cellsemisimple}.

Assume now that $d=2$ and $p=3$.
This case can be done analogously by 
considering $T_{n}^{d}$ for $n=9=3p$ (type $\BM$ with 
$m=4$). We leave the details to the reader.

Next, let $d=7$ and $p\geq 7$. Then we proceed similar as above: 
we take $T_{n}^{d}$ for $n=p$ and 
the Weyl factor 
$\Delta_1(4\varepsilon_1+2\varepsilon_2+\varepsilon_3)$. 
Hence, we only need to check 
positive roots $\alpha\in\Phi^+$ of the form $\alpha=\varepsilon_1$, 
$\alpha=\varepsilon_1+\varepsilon_2$ and 
$\alpha=\varepsilon_1+\varepsilon_3$. 
All of them contribute at most once. We get 
(for $\lambda=4\varepsilon_1+2\varepsilon_2+\varepsilon_3$)
\begin{gather*}
\begin{aligned}
\lambda+\rho
-p\varepsilon_1&={\textstyle\frac{1}{2}}\cdot(\colorbox{colormy}{\color{white}$6-p$},p,p-4,p-8,p-10,\dots,3,1),\\
\lambda+\rho
-p(\varepsilon_1+\varepsilon_2)&={\textstyle\frac{1}{2}}\cdot(\colorbox{colormy}{\color{white}$6-p$},\colorbox{colormy}{\color{white}$-p$},p-4,p-8,p-10,\dots,3,1),\\
\lambda+\rho
-p(\varepsilon_1+\varepsilon_3)&={\textstyle\frac{1}{2}}\cdot(\colorbox{colormy}{\color{white}$6-p$},p,\colorbox{colormy}{\color{white}$-p-4$},p-8,p-10,\dots,3,1).
\end{aligned}
\end{gather*}
All of these are regular $\Uo$-weights and the first two cancel each other, but 
the last remains.
Thus, \textbf{JSF} of 
$\Delta_1(4\varepsilon_1+2\varepsilon_2+\varepsilon_3)$ is non-zero. 
The conclusion that $\cal{B}_{7}(\delta)$ is non-semisimple
follows again from Theorem~\ref{thm-schur-weyl3} 
and Corollary~\ref{cor-cellsemisimple}.
\end{proof}

\subsection{Proof of the semisimplicity criterion for \texorpdfstring{$\cal{B}_{d}(\delta)$}{B_d(delta)}}

\begin{proof}[Proof of Theorem~\ref{thm-brauer}]
\textit{(1).} 
By Lemma~\ref{lem-brauerseconda}, only 
the case $d=\frac{1}{2}(p-\delta_p+2)\leq\delta_p+1$ is missing 
for the `if' part, because in this case 
the corresponding 
Schur-Weyl-Brauer duality gives only a surjection. As in the 
proof of Theorem~\ref{thm-walledbrauer}, we 
can use Propositions~\ref{prop-kernelbrauer} and \ref{prop-abstract-nonsense} 
to handle this missing case. The `only if' part follows from 
Lemmas~\ref{lem-brauerseconde} 
and~\ref{lem-brauersecondc}.

\noindent\textit{(2).} This follows from Lemma~\ref{lem-brauersecondbb} respectively from the  
Lemmas~\ref{lem-brauerseconde} 
and~\ref{lem-brauersecondd}.

\noindent\textit{(3).} As in the 
proof of Theorem~\ref{thm-walledbrauer} 
we can again use Theorem~\ref{thm-transfer2}.

\noindent\textit{(4).} Directly from the Lemmas~\ref{lem-brauerfirst} 
and~\ref{lem-brauerfirstb}, since 
$\cal{B}_1(\delta)$ is always semisimple.

\noindent\textit{(5).} Again by Theorem~\ref{thm-transfer2}.

\noindent\textit{(6).} $\cal{B}_1(\delta)$ is 
clearly semisimple, while $\cal{B}_d(\delta)$ for 
$d\geq 2$ is not because of Lemma~\ref{lem-brauersecondnew}.

The theorem follows.
\end{proof}

\begin{rem}\label{rem-semi3}
Of course, the
semisimplicity criterion from Theorem~\ref{thm-brauer} 
was already observed 
before. In particular, the case $\K=\C$ and $\delta\in\Z$
goes back to a paper of Brown~\cite[Theorem~8D]{brown} and, in case $\delta$ is not an integer 
or $\K$ is an arbitrary field of characteristic zero 
and arbitrary $\delta\in\K$, to work of 
Wenzl~\cite[Corollary~3.3]{wenzl}. The case for $\K$ being a field of
arbitrary characteristic is treated by Rui  
in~\cite[Theorem~1.2]{rui}. To see that Rui's criterion matches ours,
we note that a slight reformulation of Rui's criterion was given by Rui together with Si later
in~\cite[Corollary~2.5]{rs}. The latter is easily seen to coincide with the one we obtain.
\end{rem}

\begin{rem}\label{rem-qauntizedb}
Using our approach we could reprove the semisimplicity criterion for the BMW algebra found 
in~\cite[Theorem~5.9]{rs2}, but decided to stay in the 
$q=1$ case.
\end{rem}
%
\appendix
\renewcommand{\thesection}{\Alph{section}}
\section{From positive characteristic to characteristic zero}\label{sec-app2}
Here we recall some algebraic notions 
which we use to transfer our results from positive characteristic to characteristic zero. 
To this end, 
given a $\Z$-algebra $\cal{A}^{\Z}$ and any fixed field $\K$, we denote 
by $\cal{A}^{\K}=\cal{A}^{\Z}\otimes_{\Z}\K$ the \textit{scalar extension} of 
$\cal{A}$. Moreover, 
we assume 
throughout that all $\Z$-algebras 
are finitely generated and free.

Fix a $\Z$-algebra $\cal{A}^{\Z}$.
Recall that there is a \textit{trace form} 
$\langle\cdot,\cdot\rangle\colon \cal{A}^{\Z}\otimes \cal{A}^{\Z}\to \Z$ on $\cal{A}^{\Z}$ given as 
follows. Denote by $R_a\in\End_{\Z}(\cal{A}^{\Z})$ the right multiplication 
with $a\in \cal{A}^{\Z}$. By choosing a basis, we can identify $R_a$ with a 
matrix in $M_{\dim(\cal{A}^{\Z})}(\Z)$ and define
$
\langle a,b\rangle=\mathrm{tr}(R_b\circ R_a)\in\Z.
$
One can easily show that this assignment is independent of 
the choice of basis.

\begin{prop}\label{prop-trace}
Let $\{a_1,\dots,a_{\dim(\cal{A}^{\Z})}\}$ be any basis of $\cal{A}^{\Z}$. Then 
$\cal{A}^{\K}$ is semisimple if and only if
$
\det(M_{\Z})\in\K-\{0\}$, where $M_{\Z}=\left(\langle a_i,a_j\rangle\right)_{i,j=1}^{\dim(\cal{A}^{\Z})}.
$
\end{prop}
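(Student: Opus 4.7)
The condition $\det(M_\Z)\in\K-\{0\}$ (which is basis-independent because any two $\Z$-bases of the free $\Z$-module $\cal{A}^\Z$ are related by a matrix of determinant $\pm 1$) is exactly the non-degeneracy of the $\K$-bilinear trace form $\langle\cdot,\cdot\rangle_\K$ on $\cal{A}^\K=\cal{A}^\Z\otimes_\Z\K$: the entry-wise reduction of $M_\Z$ via $\Z\to\K$ is the Gram matrix of $\langle\cdot,\cdot\rangle_\K$ in the induced basis $\{a_i\otimes 1\}$. So the plan is to prove the classical equivalence
\[
\cal{A}^\K \text{ is semisimple}\;\;\Longleftrightarrow\;\; \langle\cdot,\cdot\rangle_\K \text{ is non-degenerate on }\cal{A}^\K.
\]

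For the direction ``non-degenerate $\Rightarrow$ semisimple'' I would use that the Jacobson radical $J=\mathrm{rad}(\cal{A}^\K)$ of a finite-dimensional $\K$-algebra is a nilpotent two-sided ideal. Given $r\in J$ and $a\in\cal{A}^\K$, the product $ra$ lies in $J$ and is nilpotent; hence $R_{ra}\in\End_\K(\cal{A}^\K)$ is nilpotent and $\langle a,r\rangle_\K=\mathrm{tr}(R_{ra})=0$. Non-degeneracy of $\langle\cdot,\cdot\rangle_\K$ then forces $r=0$, so $J=0$ and $\cal{A}^\K$ is semisimple.

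For ``semisimple $\Rightarrow$ non-degenerate'' I would apply Wedderburn--Artin to write $\cal{A}^\K\cong\prod_{i=1}^t B_i$ with $B_i\cong M_{n_i}(D_i)$ for finite-dimensional division $\K$-algebras $D_i$. For $i\neq j$ and any $a\in B_i,\, b\in B_j$ we have $ab=0$, so $\langle a,b\rangle_\K=\mathrm{tr}(R_{ab})=0$; distinct blocks are orthogonal and non-degeneracy of $\langle\cdot,\cdot\rangle_\K$ reduces to non-degeneracy on each simple factor $B_i$. On $B_i$ I would express the regular trace in terms of the reduced trace $B_i\to Z(B_i)\to\K$ and appeal to the standard result that the reduced trace bilinear form on a separable central simple algebra is non-degenerate. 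The main obstacle is this simple-case analysis: the Wedderburn reduction and the orthogonality of blocks are straightforward, but verifying non-degeneracy on each $B_i$ requires care with the separability of the division algebras $D_i$ over $\K$ and the precise relationship between the regular and reduced traces, and this is where the argument becomes delicate and absorbs essentially all of the technical work.
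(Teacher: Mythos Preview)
Your reduction to ``$\cal{A}^\K$ semisimple $\Longleftrightarrow$ the trace form $\langle\cdot,\cdot\rangle_\K$ is non-degenerate'' is exactly the content the paper invokes (it simply cites a textbook for this equivalence), and your argument for ``non-degenerate $\Rightarrow$ semisimple'' via nilpotency of the Jacobson radical is correct.

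The gap is in the converse. You are right to flag separability as the obstacle, but you have not resolved it, and in fact the implication ``semisimple $\Rightarrow$ trace form non-degenerate'' is \emph{false} for an arbitrary finite-dimensional $\K$-algebra: a purely inseparable field extension $L/\K$ is semisimple with identically zero trace form. So appealing to ``the standard result for separable central simple algebras'' is not enough unless you first show the simple factors are separable, and nothing in your Wedderburn decomposition argument supplies that.

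What saves the proposition is the integral structure, which you have not yet used for this direction. Let $k$ be the prime subfield of $\K$ (so $k=\Q$ or $k=\F_p$). Then $\cal{A}^\K=(\cal{A}^\Z\otimes_\Z k)\otimes_k\K=\cal{A}^k\otimes_k\K$. If $\cal{A}^\K$ is semisimple then so is $\cal{A}^k$ (a nonzero nilpotent ideal of $\cal{A}^k$ would base-change to one of $\cal{A}^\K$). Since $k$ is perfect, every finite-dimensional semisimple $k$-algebra is separable, hence has non-degenerate trace form; thus $\det(M_\Z)\neq 0$ in $k$, and therefore in $\K$. Equivalently: $\det(M_\Z)$ is an \emph{integer}, so its image in $\K$ vanishes iff its image in the prime field $k$ vanishes, and over the perfect field $k$ the equivalence ``semisimple $\Leftrightarrow$ non-degenerate trace form'' is genuinely classical. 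Either formulation closes the gap; your block-by-block reduced-trace analysis is then unnecessary.
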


\begin{proof}
This is proven in~\cite[Proposition~4.46]{katu}: the 
vanishing of the determinant as above is equivalent to the degeneracy of 
the trace form.
\end{proof}

Recall that we denote by $\F$ the finite field with $p$ 
elements.

\begin{prop}\label{prop-transfer}
Let $\K$ be a field with $\Char(\K)=0$. Then $\cal{A}^{\K}$ is 
semisimple if and only if $\cal{A}^{\F}$ is semisimple for infinitely many primes $p$.
\end{prop}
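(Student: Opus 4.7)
The plan is to reduce both sides of the equivalence to a single integer: the determinant $D$ of the trace form matrix computed with respect to a $\Z$-basis of $\cal{A}^{\Z}$. Since the entries $\langle a_i,a_j\rangle$ of $M_{\Z}$ lie in $\Z$ and the definition of the trace form is compatible with base change (right multiplication is $\K$-linear and the trace of $R_b\circ R_a$ on $\cal{A}^{\Z}\otimes_{\Z}\K$ equals the image of $\mathrm{tr}(R_b\circ R_a)$ in $\K$), the matrix $M_{\F}$ over $\F$ and $M_{\K}$ over $\K$ are obtained from $M_{\Z}$ simply by applying the corresponding ring homomorphism $\Z\to\F$ or $\Z\hookrightarrow\K$. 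Hence $\det(M_{\K})$ is the image of $D$ in $\K$ and $\det(M_{\F})$ is the image of $D$ in $\F$.

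First I would apply Proposition~\ref{prop-trace} to conclude that $\cal{A}^{\K}$ is semisimple if and only if $D\neq 0$ in $\K$, and since $\Char(\K)=0$ the natural map $\Z\hookrightarrow\K$ is injective, so this is equivalent to $D\neq 0$ in $\Z$. Similarly, $\cal{A}^{\F}$ is semisimple if and only if the image of $D$ in $\F$ is non-zero, that is, if and only if $p\nmid D$.

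Next I would split into the two implications. If $\cal{A}^{\K}$ is semisimple, then $D\neq 0$ in $\Z$, so $D$ has only finitely many prime divisors; thus $p\nmid D$ for all but finitely many $p$, and in particular $\cal{A}^{\F}$ is semisimple for infinitely many primes $p$. Conversely, if $\cal{A}^{\K}$ is not semisimple, then $D=0$ in $\Z$, hence $D\equiv 0\;\text{mod}\;p$ for every prime $p$; thus $\cal{A}^{\F}$ is non-semisimple for all $p$, and in particular for all but finitely many of them. Taking contrapositives yields the claimed equivalence.

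There is no serious obstacle: the only point requiring (minor) care is the base-change compatibility of the trace form, namely that $\mathrm{tr}(R_a\circ R_b)$ commutes with extension of scalars along $\Z\to\K$ and $\Z\to\F$. This follows at once from choosing the same $\Z$-basis of $\cal{A}^{\Z}$ as $\K$- or $\F$-basis of $\cal{A}^{\K}$ or $\cal{A}^{\F}$ (using that $\cal{A}^{\Z}$ is finitely generated and free), so that the matrix representing $R_a$ has the same integer entries in all three settings. Everything else is elementary.
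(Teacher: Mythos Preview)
Your argument is correct and follows essentially the same route as the paper: both reduce to the integer determinant $D=\det(M_{\Z})$ of the trace form via Proposition~\ref{prop-trace}, and then observe that $D\neq 0$ in $\Z$ is equivalent to $D\neq 0$ in $\K$ (since $\Char(\K)=0$) and to $p\nmid D$ for all but finitely many primes. The paper even notes the slightly stronger fact that a single prime with $\cal{A}^{\F}$ semisimple already forces $D\neq 0$, but otherwise the reasoning is the same.
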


\begin{proof}
Assume 
there is a prime $p$ such that the algebra $\cal{A}^{\F}$ is semisimple. 
Then, by Proposition~\ref{prop-trace}, we have that $p$ does not 
divide $\det(M_{\Z})$. In particular, $\det(M_{\Z}) \in \Z- \{0\} \subset \K- \{0\}$ 
and $\cal{A}^{\K}$ is therefore semisimple according to the same proposition.

If $\cal{A}^{\K}$ is semisimple, 
then $\det(M_{\Z}) \in \K- \{0\}$ by Proposition~\ref{prop-trace}. 
By choosing a $\Z$-basis of $\cal{A}^{\K}$, this 
in turn implies $\det(M_{\Z}) \in \Z- \{0\}$. Hence, by Proposition~\ref{prop-trace}, 
$\mathcal {A}^{\F}$ is semisimple for all primes $p > |\det(M_{\Z})|$.
\end{proof}

\begin{thm}\label{thm-transfer2}
Let $\Char(\K)=0$ and $\delta\in\Z$.
\begin{enumerate}[(a)]
\item $\cal{B}_{r,s}(\delta)$ is semisimple over $\K$ if and only if 
$\cal{B}_{r,s}(\delta)$ is semisimple over $\F$ for infinitely many primes $p$.
\item $\cal{B}_d(\delta)$ is semisimple over $\K$ if and only if 
$\cal{B}_d(\delta)$ is semisimple over $\F$ for infinitely many primes $p$.
\end{enumerate}
\end{thm}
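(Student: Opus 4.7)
The plan is to apply Proposition~\ref{prop-transfer} directly. For this I need to exhibit integral forms $\cal{B}_{r,s}(\delta)^{\Z}$ and $\cal{B}_d(\delta)^{\Z}$ that are finitely generated, free as $\Z$-modules, and satisfy $\cal{B}_{r,s}(\delta)^{\Z}\otimes_{\Z}\K\cong\cal{B}_{r,s}(\delta)$ (respectively for the Brauer algebra) for every field $\K$ and every $\delta\in\Z\subseteq\K$.

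First I would observe that, since $\delta\in\Z$, the presentations in Definitions~\ref{defn-walledbrauer} and~\ref{defn-brauer} make perfect sense over $\Z$: all structure constants are integers. This produces natural $\Z$-forms $\cal{B}_{r,s}(\delta)^{\Z}$ and $\cal{B}_d(\delta)^{\Z}$. Next I would invoke the diagrammatic description recalled in Remarks~\ref{rem-walledbrauer} and~\ref{rem-brauer}: the primitive (walled) Brauer diagrams form a $\Z$-basis. Since this basis is preserved by base change and has finite cardinality, both integral forms are free $\Z$-modules of finite rank, and $\cal{B}_{r,s}(\delta)^{\Z}\otimes_{\Z}\K$ respectively $\cal{B}_d(\delta)^{\Z}\otimes_{\Z}\K$ recover the algebras $\cal{B}_{r,s}(\delta)$ and $\cal{B}_d(\delta)$ over $\K$ for any field $\K$ (noting that $\Char(\K)=0$ ensures $\Z\hookrightarrow\K$, so there is no ambiguity in specializing $\delta$).

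Having set this up, both parts (a) and (b) follow immediately from Proposition~\ref{prop-transfer}, applied with $\cal{A}^{\Z}=\cal{B}_{r,s}(\delta)^{\Z}$ and $\cal{A}^{\Z}=\cal{B}_d(\delta)^{\Z}$ respectively. There is essentially no obstacle, since all the work has been done in Proposition~\ref{prop-transfer}; what remains is just the verification that the hypotheses of that proposition hold, which amounts to the existence of a $\Z$-basis of primitive diagrams. I do not expect any serious difficulty: the only point worth spelling out is the compatibility of the diagram basis with arbitrary base change, which is built into the construction of (walled) Brauer algebras as Kauffman-type tangle algebras.
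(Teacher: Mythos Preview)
Your proposal is correct and follows essentially the same approach as the paper: both arguments reduce to Proposition~\ref{prop-transfer} after observing that $\cal{B}_{r,s}(\delta)$ and $\cal{B}_d(\delta)$ admit integral forms that are free of finite rank over $\Z$. The only difference is cosmetic: the paper cites the literature for the integral forms, whereas you construct them directly from the primitive diagram bases described in Remarks~\ref{rem-walledbrauer} and~\ref{rem-brauer}.
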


\begin{proof}
Note that $\cal{B}_{r,s}(\delta)$ and 
$\cal{B}_d(\delta)$ given in 
Definitions~\ref{defn-walledbrauer} 
and~\ref{defn-brauer} and considered over $\K$ are obtained 
from integral versions $\cal{B}^{\Z}_{r,s}(\delta)$ 
and $\cal{B}^{\Z}_d(\delta)$ via scalar extension 
(the integral versions of these algebras can be found 
in~\cite[Section~2]{dds} and in~\cite[Section~4]{gl} 
respectively). Hence, 
the two statements follow from Proposition~\ref{prop-transfer}.
\end{proof}
\section{Root systems of types \texorpdfstring{$\AM,\BM,\CM$}{Am-1,Bm,Cm} and \texorpdfstring{$\DM$}{Dm}}\label{sec-app}
For the convenience of the reader we list here the root and weight data of types 
$\AM$ attached to $\fg=\gll{m}$, of type $\BM$ attached to $\fg=\soo{2m+1}$ for $m\geq 2$, 
of type $\CM$ attached to $\fg=\spo{2m}$ for $m\geq 3$, and of 
type $\DM$ attached to $\fg=\soo{2m}$ for $m\geq 4$.  
We assume $1\leq i,j\leq m$, $i\not=j$  and $1\leq i^{\prime}\leq m-1$.
The root system and its dual is realized
inside the Euclidean space $E=\R^m$ with standard 
basis $\varepsilon_1,\dots,\varepsilon_m$ and inner product 
determined by 
$\langle\varepsilon_i,\varepsilon_j\rangle=\delta_{i,j}$. 
Then the data is given as follows:
\newline
\renewcommand{\arraystretch}{1.3}
\begin{array}[t]{c|c|c}
&\AM&\BM\\
\hline
\hline
\Phi 
&\{\varepsilon_i-\varepsilon_j\mid 1\leq i\neq j\leq m\}
&\{\pm\varepsilon_i\pm\varepsilon_j,\pm\varepsilon_i\mid 1\leq i\neq j\leq m\}
\\
\hline
\Phi^+
&\{\varepsilon_i-\varepsilon_j\mid 1\leq i<j\leq m\}
&\{\varepsilon_i\pm\varepsilon_j,\varepsilon_i\mid 1\leq i<j\leq m\}
\\
\hline
\Pi
&\{\alpha_{i^{\prime}}=\varepsilon_{i^{\prime}}-\varepsilon_{i^{\prime}+1}\}
&\{\alpha_{i^{\prime}}=\varepsilon_{i^{\prime}}-\varepsilon_{i^{\prime}+1}\}\cup\{\alpha_m=\varepsilon_m\}
\\
\hline
\Pi^\vee
&\{\alpha^{\vee}_{i^{\prime}}=\varepsilon_{i^{\prime}}-\varepsilon_{i^{\prime}+1}\}
&\{\alpha^{\vee}_{i^{\prime}}=\varepsilon_{i^{\prime}}-\varepsilon_{i^{\prime}+1}\}\cup\{\alpha^\vee_m=2\varepsilon_{m}\}
\\
\hline
\raisebox{-.35cm}{$\rho$} &\raisebox{-.35cm}{$(m-1,m-2,\dots,1,0)$}&
\begin{array}[t]{c}(m-{\textstyle\frac{1}{2}},m-{\textstyle\frac{3}{2}},\dots,{\textstyle\frac{3}{2}},{\textstyle\frac{1}{2}})\\
={\textstyle\frac{1}{2}}\cdot (2m-1,2m-3,\dots,3,1)
\end{array}
\\
\hline
X&\hspace*{0.18cm}\{\lambda=\sum_{i=1}^{m}\lambda_i\varepsilon_i\in \R^m\mid \lambda_i\in\Z\}\hspace*{0.18cm}&\{\lambda=\sum_{i=1}^{m}\lambda_i\varepsilon_i\in \R^m\mid \lambda_i\in\frac{1}{2}\Z,\;\lambda_i-\lambda_j\in\Z\}\\
\hline
X^+&\{\lambda\in X\mid \lambda_1\geq\dots\geq\lambda_m\}&\{\lambda\in X\mid \lambda_1\geq\dots\geq\lambda_m\geq0\}\\
\hline
\omega_i
&\omega_{i^{\prime}}=\sum_{j=1}^{i^{\prime}}\varepsilon_j
&\omega_{i^{\prime}}=\sum_{j=1}^{i^{\prime}}\varepsilon_j,\quad \omega_m=  \frac{1}{2}(\varepsilon_1+\dots+\varepsilon_m)
\\
\hline
W&S_m&S_m\ltimes(\Z/2\Z)^m
\end{array}
\vspace{0.1cm}

\begin{center}
\begin{array}[t]{c|c|c}
&\CM&\DM\\
\hline
\hline
\Phi 
&\{\pm\varepsilon_i\pm\varepsilon_j,\pm2\varepsilon_i\mid 1\leq i\neq j\leq m\}
&\{\pm\varepsilon_i\pm\varepsilon_j\mid 1\leq i\neq j\leq m\}\\
\hline
\Phi^+
&\{\varepsilon_i\pm\varepsilon_j,2\varepsilon_i\mid 1\leq i<j\leq m\}
&\{\varepsilon_i\pm\varepsilon_j\mid 1\leq i<j\leq m\}\\
\hline
\Pi
&\{\alpha_{i^{\prime}}=\varepsilon_{i^{\prime}}-\varepsilon_{i^{\prime}+1}\}\cup\{\alpha_m=2\varepsilon_m\}
&\{\alpha_{i^{\prime}}=\varepsilon_{i^{\prime}}-\varepsilon_{i^{\prime}+1}\}\cup\{\alpha_m=\varepsilon_{m-1}+\varepsilon_m\}
\\
\hline
\Pi^\vee
&\{\alpha_{i^{\prime}}^{\vee}=\varepsilon_{i^{\prime}}-\varepsilon_{i^{\prime}+1}\}\cup\{\alpha^\vee_m=\varepsilon_m\}
&\{\alpha_{i^{\prime}}^{\vee}=\varepsilon_{i^{\prime}}-\varepsilon_{i^{\prime}+1}\}\cup\{\alpha^\vee_m=\varepsilon_{m-1}+\varepsilon_m\}
\\
\hline
\rho
&(m,m-1,\dots,2,1)&(m-1,m-2,\dots,1,0)
\\
\hline
X&\{\lambda=\sum_{i=1}^{m}\lambda_i\varepsilon_i\in \R^m\mid \lambda_i\in\Z\}&\{\lambda=\sum_{i=1}^{m}\lambda_i\varepsilon_i\in \R^m\mid \lambda_i\in\frac{1}{2}\Z,\;\lambda_i-\lambda_j\in\Z\}\\
\hline
 X^+&\{\lambda\in X\mid \lambda_1\geq\dots\geq\lambda_m\geq0\}&\{\lambda\in X\mid \lambda_1\geq\dots\geq\lambda_{m-1}\geq |\lambda_m|\geq 0\}\\
 \hline
\raisebox{-.7cm}{$\omega_i$}&\raisebox{-.7cm}{$\omega_i=\sum_{j=1}^i\varepsilon_j$}&
\begin{array}[t]{c}
\omega_{i^{\prime\prime}}=\sum_{j=1}^{i^{\prime\prime}}\varepsilon_{i^{\prime}},\quad (1\leq i^{\prime\prime}\leq m-2),\\
 \omega_{m-1}=  \frac{1}{2}(\varepsilon_1+\dots+\varepsilon_{m-1}-\varepsilon_m),\\
 \omega_{m}=  \frac{1}{2}(\varepsilon_1+\dots+\varepsilon_{m-1}+\varepsilon_m)\\
\end{array}
\\
\hline
W&S_m\ltimes(\Z/2\Z)^m&S_m\ltimes(\Z/2\Z)^{m-1}
\end{array}
\end{center}
\vspace*{0.1cm}

In type $\AM$, the simple transpositions $s_r$ of $S_m=W$ act on $X$ 
via permutation. The (dot-)singular type $\AM$ weights 
in the sense of Definition~\ref{defn-singular} 
and Convention~\ref{nota-rho} are:
\begin{gather*}
\begin{aligned}
\lambda\in X\text{ is dot-singular }&\Leftrightarrow\text{ there exist }i\neq j\text{ such that }(\lambda+\rho)_i=(\lambda+\rho)_j,
\\
\lambda\in X\text{ is singular }&\Leftrightarrow\text{ there exist }i\neq j\text{ such that }\lambda_i=\lambda_j.
\end{aligned}
\end{gather*}
For types $\BM$ and $\CM$ and
$i=1,\dots,m-1$ the elements $s_i\in S_m$ act as in type $\AM$, while $s_m\colon(\lambda_1,\dots,\lambda_m)\mapsto
(\lambda_1,\dots,-\lambda_m)$. The (dot-)singular type 
$\BM$ and $\CM$ weights are:
\begin{gather*}
\begin{aligned}
\lambda\in X\text{ is dot-singular }&\Leftrightarrow\text{ there exist }i\neq j\text{ such that }(\lambda+\rho)_i=\pm(\lambda+\rho)_j\text{ or }(\lambda+\rho)_i=0,
\\
\lambda\in X\text{ is singular }&\Leftrightarrow\text{ there exist }i\neq j\text{ such that }\lambda_i=\pm\lambda_j\text{ or }\lambda_i=0.
\end{aligned}
\end{gather*}
For type $\DM$, the action of $W$ on $X$ is as in types $\BM$ and $\CM$, but 
$s_m$ changes two signs instead of one. 
The (dot-)singular type 
$\DM$ weights are given as accordingly.


\bibliographystyle{plainurl}
\bibliography{semi-simplicity}
\vspace*{-.1cm}
\end{document}